\numberwithin{equation}{section}
\newtheorem{theorem}{Theorem}[section]
\newtheorem{lemma}[theorem]{Lemma}
\newtheorem{corollary}[theorem]{Corollary}
\newtheorem{proposition}[theorem]{Proposition}
\theoremstyle{definition}
\theoremstyle{remark}
\newtheorem{remark}[theorem]{Remark}
\newtheorem*{remark*}{Note}
\numberwithin{equation}{section}
\newcommand{\RNum}[1]{\uppercase\expandafter{\romannumeral #1\relax}}
\newcommand{\specificthanks}[1]{\@fnsymbol{#1}}
\DeclareFontFamily{OML}{rsfs}{\skewchar\font'177}
\DeclareFontShape{OML}{rsfs}{m}{n}{ <5> <6> rsfs5 <7> <8> <9>
	rsfs7 <10> <10.95> <12> <14.4> <17.28> <20.74> <24.88> rsfs10 }{}
\DeclareMathAlphabet{\mathfs}{OML}{rsfs}{m}{n}
\newcounter{cnstcnt}
\newcommand{\cl}{%
	\refstepcounter{cnstcnt}%
	\ensuremath{c_{\thecnstcnt}}}
\newcommand{\cref}[1]{\ensuremath{c_{\ref*{#1}}}}
\newcounter{newcnstcnt}
\newcommand{\Cl}{%
	\refstepcounter{newcnstcnt}%
	\ensuremath{C_{\thenewcnstcnt}}}
\newcommand{\Cref}[1]{\ensuremath{C_{\ref*{#1}}}}
\DeclareFontFamily{U}{mathx}{}
\DeclareFontShape{U}{mathx}{m}{n}{<-> mathx10}{}
\DeclareSymbolFont{mathx}{U}{mathx}{m}{n}
\DeclareMathAccent{\widehat}{0}{mathx}{"70}
\DeclareMathAccent{\widecheck}{0}{mathx}{"71}
\begin{document}

	\title{Separation and cut edge in macroscopic clusters for metric graph Gaussian free fields}

		\author{Zhenhao Cai$^1$}
		\address[Zhenhao Cai]{Faculty of Mathematics and Computer Science, Weizmann Institute of Science}
		\email{zhenhao.cai@weizmann.ac.il}
		\thanks{$^1$Faculty of Mathematics and Computer Science, Weizmann Institute of Science}

		\author{Jian Ding$^2$}
		\address[Jian Ding]{New Cornerstone Science Laboratory, School of Mathematical Sciences, Peking University}
		\email{dingjian@math.pku.edu.cn}
	\thanks{$^2$New Cornerstone Science Laboratory, School of Mathematical Sciences, Peking University}
	
	
	\maketitle
	%
	%

	 	\begin{abstract}
	 	 We prove that for the Gaussian free field (GFF) on the metric graph of $\mathbb{Z}^d$ (for all $d\ge 3$ except the critical dimension $d_c=6$), with uniformly positive probability there exist two distinct sign clusters of diameter at least $cN$ within a box of size $N$ such that their graph distance is less than $N^{-[(d-2)\vee (2d-8)]}$. This phenomenon contrasts sharply with the two-dimensional case, where the distance between two macroscopic clusters is typically comparable to their diameters, following from the basic property of the scaling limit ``conformal loop ensembles'' $\mathrm{CLE}_4$ (Sheffield-Werner'2001).


 	          As a byproduct, we derive that the number of pivotal edges for the one-arm event (i.e., the sign cluster containing the origin has diameter at least $N$) is typically of order $N^{(\frac{d}{2}-1)\land 2}$. This immediately implies that for the incipient infinite cluster (IIC) of the metric graph GFF, the dimension of cut edges (i.e., edges whose removal disconnects the IIC) equals $(\frac{d}{2}-1)\land 2$. Translated in the language of critical loop soups (whose clusters, by the isomorphism theorem, have the same distribution as GFF sign clusters), this leads to the analogous estimates where the counterpart of a pivotal edge is a pivotal loop at scale $1$. This result hints at the new and possibly surprising idea that already in dimension $3$, microscopic loops (even those at scale $1$) play a crucial role in the construction of macroscopic loop clusters.



  	 	\end{abstract}

\section{Introduction}\label{section_intro}

In this paper, we study the geometric properties of Gaussian free field (GFF) sign clusters on the metric graph $\widetilde{\mathbb{Z}}^d$. For precision, we first recall the definitions of $\widetilde{\mathbb{Z}}^d$ and the GFF on this graph. Unless otherwise specified, we assume that $d\ge 3$. For the $d$-dimensional integer lattice $\mathbb{Z}^d$, we denote its edge set by $\mathbb{L}^d:=\{\{x,y\}:x,y\in \mathbb{Z}^d, \|x-y\|=1\}$, where $\|\cdot \|$ represents the Euclidean distance. For each edge $e=\{x,y\} \in \mathbb{L}^d$, consider a compact interval $I_e$ of length $d$, whose endpoints are identical to $x$ and $y$ respectively. The metric graph $\widetilde{\mathbb{Z}}^d$ is defined as the union of $I_e$ for all $e\in \mathbb{L}^d$. The GFF $\{\widetilde{\phi}_v\}_{v\in \widetilde{\mathbb{Z}}^d}$ can be generated as follows: 
\begin{enumerate}

	\item[(i)]  Sample a discrete GFF $\{\phi_x\}_{x\in \mathbb{Z}^d}$, i.e., mean-zero Gaussian random variables with covariance $\mathbb{E}\big[ \phi_x\phi_y \big] = G(x,y)$ for all $x,y\in \mathbb{Z}^d$, where $G(x,y)$ is the Green's function, defined as the expected number of visits to $y$ by a simple random walk on $\mathbb{Z}^d$ starting from $x$.

	\item[(ii)] For each $e=\{x,y\}\in \mathbb{L}^d$, independently sample a Brownian bridge on $I_e$ (associated with a Brownian motion with variance $2$ at time $1$) with boundary conditions $\phi_x$ at $x$ and $\phi_y$ at $y$ (sampled by Step (i)). For any $v\in \widetilde{\mathbb{Z}}^d$, let $\widetilde{\phi}_v$ be the value at $v$ of the Brownian bridge on the unique interval $I_e$ containing $v$.

\end{enumerate}
The level-set $\widetilde{E}^{\ge h}:=\{v\in \widetilde{\mathbb{Z}}^d: \widetilde{\phi}_v\ge h \}$ for $h\in \mathbb{R}$ have been extensively studied. A fundamental property of $\widetilde{E}^{\ge h}$ is that the critical threshold for percolation exactly equals zero. Precisely, it was proved in \cite{lupu2016loop} that $\widetilde{E}^{\ge h}$ percolates (i.e., contains an infinite connected component) if and only if $h<0$. The key ingredient for proving this critical threshold is the following two-point function estimate: 
\begin{equation}\label{two-point1}
 	\mathbb{P}\big(x\xleftrightarrow{\ge 0} y  \big)= \pi^{-1}\arcsin\Big( \tfrac{G(x,y)}{\sqrt{G(x,x)G(y,y)}} \Big)\asymp \|x-y\|^{2-d},\ \  \forall x\neq y \in \mathbb{Z}^d. 
 \end{equation} 
 Here $A_1\xleftrightarrow{\ge 0} A_2$ denotes the event that that there exists a path in $\widetilde{E}^{\ge 0}$ connecting $A_1$ and $A_2$, and $f\asymp g$ means that $f$ and $g$ satisfy $cg\le f\le Cg$ for some constants $C>c>0$ depending only on $d$. This estimate has since served as a starting point for a quantitative analysis of the growth of critical level-sets. In particular, the exact order of the (critical) one-arm probability has been established through a series of works \cite{ding2020percolation, drewitz2023critical, cai2024high, drewitz2023arm, drewitz2024critical, cai2024one}. Specifically, for the one-arm probability $\theta_d(N):=\mathbb{P}\big(\bm{0}\xleftrightarrow{\ge 0} \partial B(N)\big)$, where $\bm{0}:=(0,0,\cdots,0)$ is the origin of $\mathbb{Z}^d$, $B(N):=[-N,N]^d\cap \mathbb{Z}^d$ is the box of side length $\lfloor 2N \rfloor$ centered at $\bm{0}$, and $\partial A:=\{x\in A: \exists y\in \mathbb{Z}^d\setminus A\ \text{such that}\ \{x,y\}\in \mathbb{L}^d\}$ is the boundary of $A\subset \mathbb{Z}^d$, it is known that 
  \begin{align}
	&\text{when}\ 3\le d<6,\ \ \ \ \ \ \ \ \  \theta_d(N) \asymp N^{-\frac{d}{2}+1};\label{one_arm_low} \\
	&\text{when}\ d=6,\ \ \ \ \  N^{-2}\lesssim \theta_6(N) \lesssim N^{-2+\varsigma(N)}, \  \text{where}\ \varsigma(N):= \tfrac{\ln\ln(N)}{[\ln(N)]^{1/2}}\ll 1; \label{one_arm_6} \\
	&\text{when}\ d>6,\ \ \ \ \ \ \ \ \ \ \ \ \ \ \ \theta_d(N) \asymp N^{-2}.\label{one_arm_high}
\end{align} 
  Here $f\lesssim g$ means that the functions $f$ and $g$ satisfy $f\le Cg$ for some constant $C>0$ depending only on $d$. Notably, it has been conjectured in \cite{cai2024one} that $\theta_6(N)\asymp N^{-2}[\ln(N)]^\delta$ for some $\delta>0$. To avoid the additional complexity caused by the diverging disparity between the current upper and lower bounds on $\theta_6(N)$, we assume that $d\neq 6$ unless stated otherwise. (The techniques developed for $d\neq 6$ can usually be extended to $d=6$, leading to weaker estimates with error terms similar to that in (\ref{one_arm_6}).) As a natural extension of the one-arm probability $\theta_d(N)$, the crossing probability $\rho_d(n,N):=\mathbb{P}\big(B(n)\xleftrightarrow{\ge 0}\partial B(N)\big)$ for $N> n\ge 1$ was also computed. Precisely, it has been proved in \cite{cai2024one} that 
   \begin{align}
	&\text{when}\ 3\le d<6,\  \rho_d(n,N) \asymp \big(n/N\big)^{\frac{d}{2}-1};\label{crossing_low} \\
	&\text{when}\ d>6, \ \ \ \ \   \ \    \rho_d(n,N) \asymp  (n^{d-4}N^{-2})\land 1. \label{crossing_high}
\end{align}
Moreover, other quantities, such as volume exponent \cite{cai2024quasi, drewitz2024cluster} and cluster dimension \cite{cai2024incipient, drewitz2024cluster}, were also studied. At a more structural level, the incipient infinite cluster (IIC), a key concept introduced by \cite{kesten1986incipient} to understand the behavior of large clusters at criticality, was constructed and analyzed in \cite{cai2024quasi, cai2024incipient, werner2025switching}. Specifically, building on the quasi-multiplicativity established in \cite{cai2024quasi}, it was proved in \cite{cai2024incipient} that for any $d\ge 3$ with $d\neq 6$, the following four limiting measures all exist and are the same: 
\begin{equation}\label{iic1}
\mathbb{P}^{(1)}_{d,\mathrm{IIC}}(\cdot):=	\lim\limits_{N\to \infty}\mathbb{P} \big( \cdot   \mid  \bm{0}\xleftrightarrow{\ge 0} \partial B(N)  \big) ,	 
\end{equation}
\begin{equation}\label{iic2}
	\mathbb{P}^{(2)}_{d,\mathrm{IIC}}(\cdot):=	\lim\limits_{h \uparrow 0}\mathbb{P} \big( \cdot   \mid  \bm{0}\xleftrightarrow{\ge h } \infty  \big) ,	 
\end{equation}
 \begin{equation}\label{iic3}
	\mathbb{P}^{(3)}_{d,\mathrm{IIC}}(\cdot):=	\lim\limits_{x \to \infty}\mathbb{P} \big( \cdot   \mid  \bm{0}\xleftrightarrow{\ge 0} x   \big), 
\end{equation} 
\begin{equation}\label{iic4}
\mathbb{P}^{(4)}_{d,\mathrm{IIC}}(\cdot):=	\lim\limits_{T\to \infty}\mathbb{P} \big( \cdot   \mid  \mathrm{cap}(\mathcal{C}_{\bm{0}}^+ )\ge T  \big),  
\end{equation}
where $\mathrm{cap}(\mathcal{C}_{\bm{0}}^+ )$ represents the capacity of the cluster $\mathcal{C}_{\bm{0}}^+:=\{v\in \widetilde{\mathbb{Z}}^d: v\xleftrightarrow{\ge 0} \bm{0}\}$. The IIC refers to the random cluster containing $\bm{0}$ under any of these limiting measures. Notably, the switching identity introduced in \cite{werner2025switching} (see the precise statement in Lemma \ref{lemma_switching}) provides an elegant proof of the existence of $\mathbb{P}^{(3)}_{d,\mathrm{IIC}}(\cdot)$ for all $d\ge 3$. Although this method is only applicable to $\mathbb{P}^{(3)}_{d,\mathrm{IIC}}(\cdot)$, it remarkably yields an explicit description of the IIC---the cluster consisting of a Brownian excursion from $\bm{0}$ to infinity and a loop soup on $\widetilde{\mathbb{Z}}^d\setminus \{\bm{0}\}$ of intensity $\frac{1}{2}$ (the definitions of Brownian excursion and loop soup will be provided below). Random walks and the chemical distance on the IIC in high dimensions were studied in \cite{ganguly2024ant, ganguly2024critical}.


      The derivation of many of the aforementioned results relies crucially on the isomorphism theorem, which establishes a rigorous equivalence between the GFF and the loop soup. Specifically, the loop soup $\widetilde{\mathcal{L}}_{\alpha}$ of intensity $\alpha>0$ is a Poisson point process with intensity measure $\alpha\widetilde{\mu}$. Here $\widetilde{\mu}$ (referred to as the loop measure) is a $\sigma$-finite measure on the space of rooted loops (i.e., continuous paths on $\widetilde{\mathbb{Z}}^d$ that start and end at the same point), defined by 
      \begin{equation}\label{def_mu}
	\widetilde{\mu}(\cdot) := \int_{v\in \widetilde{\mathbb{Z}}^d} \mathrm{d}\mathrm{m}(v) \int_{0< t< \infty} t^{-1} \widetilde{q}_t(v,v)\widetilde{\mathbb{P}}^t_{v,v}(\cdot) \mathrm{d}t,
\end{equation}
where $\mathrm{m}(\cdot)$ is the Lebesgue measure on $\widetilde{\mathbb{Z}}^d$, $\widetilde{q}_t(v_1, v_2)$ is the transition density of the Brownian motion on $\widetilde{\mathbb{Z}}^d$ (see Section \ref{section_notation} for its precise definition), and $\widetilde{\mathbb{P}}^t_{v_1,v_2}(\cdot)$ denotes the law of the Brownian bridge on $\widetilde{\mathbb{Z}}^d$ from $v_1$ to $v_2$ with duration $t$ (its transition density is given by $\frac{\widetilde{q}_s(v_1,\cdot )\widetilde{q}_{t-s}(\cdot,v_2)}{\widetilde{q}_t(v_1, v_2)}$ for $0\le s\le t$). The isomorphism theorem (see \cite[Proposition 2.1]{lupu2016loop}) constructs a coupling between the GFF $\{\widetilde{\phi}_v\}_{v\in \widetilde{\mathbb{Z}}^d}$ and the critical loop soup $\widetilde{\mathcal{L}}_{1/2}$ (the criticality of the intensity $\frac{1}{2}$ for percolation was verified in \cite{lupu2016loop, chang2024percolation}) satisfying the following properties: 
\begin{enumerate}

	\item[(a)] $\widehat{\mathcal{L}}_{1/2}^v=\frac{1}{2}\widetilde{\phi}_v^2$ for all $v\in \widetilde{\mathbb{Z}}^d$, where $\widehat{\mathcal{L}}_{1/2}^v$ represents the total local time at $v$ of all loops in $\widetilde{\mathcal{L}}_{1/2}$. In particular, each sign cluster (i.e., a maximal connected subgraph where GFF values have the same sign) is exactly a (critical) loop cluster (i.e., a connected component consisting of loops in $\widetilde{\mathcal{L}}_{1/2}$).

	\item[(b)]  Given all loop clusters, the sign of the GFF values on each loop cluster is independent of the others, taking ``$+$'' (or ``$-$'') with probability $\frac{1}{2}$.

\end{enumerate}

\subsection{Graph distance between macroscopic clusters}

For most discrete stochastic models endowed with spatial structures, constructing the scaling limit usually constitutes a central research objective. Particularly, in the  two-dimensional case, the scaling limit of loop clusters (or equivalently, GFF sign clusters) has been established. Precisely, in the celebrated work \cite{sheffield2012conformal}, the conformal loop ensembles $\mathrm{CLE}_\kappa$ (where the parameter $\kappa\in (\frac{8}{3},4]$), defined as the boundaries of (continuum) loop clusters in the Brownian loop soup of intensity $\frac{(3\kappa-8)(6-\kappa)}{4\kappa}$, were shown to consist of loops in the form of Schramm-Loewner evolution ($\mathrm{SLE}_\kappa$). Afterwards, it was proved in \cite{lupu2019convergence} that for the discrete half-plane and the corresponding metric graph, the boundaries of (discrete) loop clusters converge to $\mathrm{CLE}_4$. In particular, this together with the property of $\mathrm{CLE}_4$ that every two loops do not touch each other (which follows from the non-crossing property and Property (1) of $\mathrm{CLE}_\kappa$ in \cite[Section 2.2]{sheffield2012conformal}) implies that within a box, the minimal distance between all macroscopic (discrete) loop clusters is macroscopic.

 

The main result of this paper, as a sharp contrast, shows that the aforementioned minimal distance is microscopic (and may even be $o(1)$!) when $d>2$.  Precisely, for $A\subset \widetilde{\mathbb{Z}}^d$, its diameter is defined as $\mathrm{diam}(A):=\sup_{v,w\in A}|v-w|$, where $|v-w|$ denotes the graph distance between $v$ and $w$ on $\widetilde{\mathbb{Z}}^d$. The distance between two sets $D,D'\subset \widetilde{\mathbb{Z}}^d$ is defined as $\mathrm{dist}(D,D'):=\inf_{v\in D,v'\in D'}|v-v'|$. For any $N\ge 1$ and $\delta>0$, we denote by $\mathfrak{C}_{N,\delta}$
the collection of sign clusters contained in $\widetilde{B}(N)$ with diameter at least $\delta N$, where $\widetilde{B}(N):=\cup_{e\in \mathbb{L}^d:I_e \cap (-N,N)^d\neq \emptyset}I_e$ is the box in $\widetilde{\mathbb{Z}}^d$ of radius $N$ centered at $\bm{0}$. We then define the minimal distance 
\begin{equation}\label{def_min_distance}
	\mathcal{D}_{N,\delta}:=\min\big\{\mathrm{dist}(\mathcal{C},\mathcal{C}' ) : \mathcal{C}\neq \mathcal{C}'\in \mathfrak{C}_{N,\delta} \big\}.
\end{equation}
 In addition, for any functions $f$ and $g$ that depend on $d$, we define the operation 
\begin{equation}
	f\boxdot g(d):= f(d)\cdot \mathbbm{1}_{d\le 6}+ g(d)\cdot \mathbbm{1}_{d>6}. 
\end{equation}


 \begin{theorem}\label{thm_minimal_distance}
	For any $d\ge 3$ with $d\neq 6$, there exists a constant $\cl\label{const_minimal_distance}(d)>0$ such that for any $N\ge 1$ and $\chi>0$, 
	\begin{equation}\label{ineq_minimal_distance}
		\mathbb{P}\big(\mathcal{D}_{N,\cref{const_minimal_distance}}\le \chi  \big) \asymp (\chi^{\frac{1}{2}}N^{(\frac{d}{2}-1)\boxdot (d-4)})\land 1. 
	\end{equation}
\end{theorem}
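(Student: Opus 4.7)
The approach is to reduce the event $\{\mathcal{D}_{N, \cref{const_minimal_distance}} \le \chi\}$ to a count of \emph{pivotal edges} and then to exploit the independence of the Brownian bridges on distinct edges given the GFF values at the vertices.

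The geometric reduction is as follows. If two clusters $\mathcal{C} \neq \mathcal{C}' \in \mathfrak{C}_{N, \cref{const_minimal_distance}}$ satisfy $\mathrm{dist}(\mathcal{C}, \mathcal{C}') \le \chi < d$, then by following a minimizing path between them, some edge $e \in \mathbb{L}^d$ contained in $B(N)$ carries a bridge $\widetilde{\phi}|_{I_e}$ with an opposite-sign excursion of length $\le \chi$; moreover, replacing this excursion by a same-sign bridge would merge (portions of) $\mathcal{C}$ and $\mathcal{C}'$ into a single sign cluster. Call $e$ \emph{pivotal} if, conditionally on $\widetilde{\phi}$ outside the open segment $I_e^{\circ}$, the two sides of $e$ belong to distinct same-sign clusters whose union, after gluing via any same-sign bridge on $I_e$, forms a sign cluster of diameter $\ge \cref{const_minimal_distance} N$. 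Then
\begin{equation*}
\bigl\{\mathcal{D}_{N, \cref{const_minimal_distance}} \le \chi\bigr\} \subset \bigcup_{e \subset B(N)} \bigl\{e \text{ is pivotal}\bigr\} \cap \bigl\{\widetilde{\phi}|_{I_e} \text{ has an opposite-sign excursion of length} \le \chi\bigr\}.
\end{equation*}

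\textbf{Upper bound.} By the construction of $\widetilde{\phi}$ in Step (ii) of the introduction, conditionally on $\widetilde{\phi}|_{\widetilde{\mathbb{Z}}^d \setminus I_e^{\circ}}$ the restriction $\widetilde{\phi}|_{I_e}$ is an independent Brownian bridge between $\widetilde{\phi}_x$ and $\widetilde{\phi}_y$. A Brownian-bridge local-time computation yields that, whenever the endpoints lie in a fixed compact range with a definite sign (which is the case on $\{e \text{ pivotal}\}$ up to truncating endpoint tails), the conditional probability of an opposite-sign excursion of length $\le \chi$ is $\asymp \sqrt{\chi}$. A union bound over edges then gives
\begin{equation*}
\mathbb{P}\bigl(\mathcal{D}_{N, \cref{const_minimal_distance}} \le \chi\bigr) \lesssim \sqrt{\chi} \cdot \mathbb{E}\bigl[\#\{e \subset B(N): e \text{ is pivotal}\}\bigr].
\end{equation*}
The core quantitative input is the two-sided estimate $\mathbb{E}[\#\{e : e \text{ pivotal}\}] \asymp N^{(d/2-1) \boxdot (d-4)}$, which is essentially a two-arm probability at each edge and is to be derived from the one-arm and crossing bounds \eqref{one_arm_low}--\eqref{crossing_high}, together with the quasi-multiplicativity machinery of \cite{cai2024quasi, cai2024incipient}.

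\textbf{Lower bound and main obstacle.} Conversely, one shows that with uniformly positive probability there are $\gtrsim N^{(d/2-1) \boxdot (d-4)}$ pivotal edges whose endpoint values lie in a fixed compact range bounded away from $0$; given this ``backbone'' the bridges on these edges are conditionally independent, and each produces an opposite-sign excursion of length $\le \chi$ with conditional probability $\gtrsim \sqrt{\chi}$. A second-moment computation on the number of ``good'' (pivotal and short-excursion) edges then yields the matching lower bound $(\sqrt{\chi} N^{(d/2-1)\boxdot(d-4)}) \land 1$. The main technical obstacle is the two-sided pivotal count $N^{(d/2-1) \boxdot (d-4)}$: note that for $d > 6$ this strictly exceeds $(d/2-1) \land 2$, the cut-edge dimension of the IIC highlighted in the abstract, and the excess comes from pivotal structures \emph{between distinct} macroscopic clusters in $\widetilde{B}(N)$, reflecting the multi-cluster/mean-field geometry in high dimensions. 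Establishing this count in both directions requires a delicate two-arm analysis at the edge scale, together with careful second-moment control on the correlations between pivotal events at nearby edges.
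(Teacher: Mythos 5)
Your geometric reduction --- relating $\{\mathcal{D}_{N,c}\le\chi\}$ to a pivotal-edge count times a Brownian-bridge gap factor $\asymp\sqrt{\chi}$ --- is the right intuition, and it is essentially the mechanism behind the paper's \emph{lower} bound: conditionally on the pivotal structure $\mathsf{A}_e$ (GFF values at the trisection points $v_e^\pm$ of opposite sign, each connected heterochromatically to $\partial B(\cref{const_dagger_box}N)$), each of $\sim\chi^{-1}$ length-$\chi$ subintervals of $I_e$ produces the close approach with conditional probability $\gtrsim\chi^{3/2}$, contributing $\gtrsim\chi^{1/2}$ per edge and leading, via a second-moment argument on the binomial count, to the stated lower bound.

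However, you critically misidentify the source of the quantitative input, and this gap is load-bearing. The pivotal count $\mathbb{E}[\#\{e:e\text{ pivotal}\}]\asymp N^{(\frac{d}{2}-1)\boxdot(d-4)}$ is the heterochromatic two-arm probability $\mathbb{P}(\mathsf{H}_{v',\partial B(N)}^{v,\partial B(N)})$ at $|v-v'|\asymp 1$, and this is \emph{not} derivable from the one-arm and crossing bounds \eqref{one_arm_low}--\eqref{crossing_high} plus quasi-multiplicativity: the positive and negative arms must be carried by \emph{disjoint} loop clusters, a far more delicate event than the intersection of two one-arm events, and it is the main theorem of the companion paper \cite{inpreparation_twoarm}, cf.\ \eqref{thm1_small_n}--\eqref{thm1_large_n}, which the present proof takes as a black box. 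Likewise, the ``careful second-moment control on correlations between pivotal events at nearby edges'' that you correctly flag as the main obstacle is Proposition~\ref{lemma_four_arms}, a nontrivial four-arm decoupling estimate whose proof occupies all of Section~\ref{section_proof_two_branch}, resting on the switching identity (Lemma~\ref{lemma_switching}) and a multi-scale decomposition of the conditioned cluster (Lemma~\ref{lemma41}); your proposal gives no route to such a bound, and for $3\le d\le 5$ the naive BKR/FKG approach does not close it, which is why the paper runs a two-stage second-moment argument with auxiliary anchor points $w_\pm\in\mathfrak{B}_{\pm1}$. Finally, for the \emph{upper} bound the paper sidesteps your factorization entirely, covering $\widetilde{B}(N)$ by $\lesssim\chi^{-1}N^d$ sets of diameter $2\chi$ and applying the two-arm estimate at scale $\chi$ directly (probability $\asymp\chi^{3/2}N^{-[(\frac{d}{2}+1)\boxdot 4]}$ per set), which avoids having to prove that your $\sqrt{\chi}$ bridge-excursion probability is \emph{uniform} over the endpoint values $\widetilde{\phi}_x,\widetilde{\phi}_y$ --- a genuine issue since that probability degenerates as $\widetilde{\phi}_x\widetilde{\phi}_y\to 0$, exactly the regime your ``truncating endpoint tails'' remark does not handle.
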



The starting point for Theorem \ref{thm_minimal_distance} is the estimates on heterochromatic two-arm probabilities established in the companion paper \cite{inpreparation_twoarm}. Specifically, for any subsets $A_i\subset \widetilde{\mathbb{Z}}^d$ for $1\le i\le 4$, consider the event $\mathsf{H}^{A_1,A_2}_{A_3,A_4}:= \big\{A_1\xleftrightarrow{\ge 0} A_2,A_3\xleftrightarrow{\le 0} A_4  \big\}$. Here $A_3\xleftrightarrow{\le 0} A_4$ denotes the event that there exists a path in the negative cluster $\widetilde{E}^{\le 0}:=\{v\in \widetilde{\mathbb{Z}}^d: \widetilde{\phi}_v\le 0 \}$ connecting $A_3$ and $A_4$. It was proved in \cite{inpreparation_twoarm} that for any $N\ge C$ and $v,v'\in \widetilde{B}(cN)$,  
 \begin{align}
	 	 &\text{when}\ \chi=:|v-v'|\le 1, \  \mathbb{P}\big(\mathsf{H}_{v',\partial B(N)}^{v,\partial B(N)}  \big)\asymp \chi^{\frac{3}{2}}N^{-[(\frac{d}{2}+1)\boxdot 4]};   \label{thm1_small_n}\\
	&\text{when}\ \chi\ge 1,\   \ \ \ \  \  \ \ \ \ \ \ \   \  \  	\mathbb{P}\big(\mathsf{H}_{v',\partial B(N)}^{v,\partial B(N)} \big)\asymp \chi^{(3-\frac{d}{2})\boxdot 0}N^{-[(\frac{d}{2}+1)\boxdot 4]}.   \label{thm1_large_n} 
	\end{align}
  As noted in \cite[Remark 1.2]{inpreparation_twoarm}, these bounds (with $\chi \asymp 1$) imply that the expected number of edges within a box of size $N$, whose endpoints are contained in distinct sign clusters with diameter at least $N$, is of order $N^{(\frac{d}{2}-1)\boxdot (d-4)}$. This observation suggests that two macroscopic sign clusters may nearly touch at many locations, leading to an extremely small graph distance between them (as the configurations near different touching points are independent). Since the first moment of the number of touching points is well understood, this naturally motivates computing its second moment. In the present setting, the resulting second-moment bound turns out to be of the same order as the square of the first moment, allowing for an application of the second moment method to verify the abundance of touching points, from which the vanishing of the minimal distance $\mathcal{D}_{N,\delta}$ follows.


  As shown in Section \ref{section_proof_thm1.1}, estimating the second moment of the number of touching points reduces to controlling the probability, associated with the joint occurrence of two touching points, that two triples of points are contained in two distinct clusters. To this end, we decompose this event into connecting events of the form $\mathsf{H}^{v,w}_{v',w'}$ through a sequence of steps, as illustrated in Figures \ref{fig1}-\ref{fig3}. In practice, due to the long-range correlations of the GFF (or equivalently, the loop soup), carrying out such a decomposition typically necessitates a multi-scale analysis, which substantially increases the technical complexity of the argument. This complexity persists even in the high-dimensional case $d\ge 7$ (see \cite[Section 7]{cai2024high}). A related multiscale analysis also appears in the construction of the IICs (see \cite[Section 3]{cai2024incipient}).

{\color{blue} 


    
    }

	 {\color{red}
	

	
	}

  \begin{remark}[Bernoulli percolation]
  For critical Bernoulli percolation on the triangular lattice, the clusters have been well understood. Specifically, the groundbreaking work \cite{SMIRNOV2001239} proved that the boundary of a cluster converges to $\mathrm{SLE}_6$. Based on this result, \cite{camia2006two, camia2007critical} (see \cite{SLECLE} for a comprehensive review) constructed the full scaling limit of all cluster boundaries, consisting of continuum loops which locally behave like $\mathrm{SLE}_6$ curves. Moreover, as stated in \cite[Theorem 2]{camia2006two}, these loops touch each other frequently, which is consistent with the behavior of GFF sign clusters established in Theorem \ref{thm_minimal_distance}.



    \end{remark}

\begin{remark}[construction of the scaling limit]
 \cite[Conjecture A]{werner2021clusters} proposed that the scaling limit of loop clusters on $\widetilde{\mathbb{Z}}^3$ is exactly the clusters of the Brownian loop soup of intensity $\frac{1}{2}$ on $\mathbb{R}^3$. As explained in \cite[Section 3]{werner2021clusters}, this conjecture corresponds to the intuition that the contribution of microscopic loops to the construction of macroscopic clusters is negligible in the scaling limit. Indeed, assuming this intuition, each macroscopic cluster depends essentially only on the macroscopic loops in $\widetilde{\mathcal{L}}_{1/2}$. As established in \cite{lawler2007random}, these loops admit a bijective approximation by continuum loops in the corresponding Brownian loop soup, (plausibly) leading to the convergence between the discrete and continuum clusters in \cite[Conjecture A]{werner2021clusters}. However, one can notice that in the approximation scheme of \cite{lawler2007random}, the Hausdorff distance between a discrete loop (of diameter $N$) and its corresponding continuum loop is typically $O(\log(N))$ (this bound is essentially sharp, referring to \cite[Theorem 2]{KMT_sharp}). Consequently, when the graph distance between two (discrete) loop clusters of diameters $N$ is substantially smaller than $O(\log(N))$ (which, by Theorem \ref{thm_minimal_distance}, occurs rather frequently), the error in the approximation of loops will, theoretically, disrupt the correspondence between discrete and continuum clusters. So, Theorem \ref{thm_minimal_distance} already indicates that there is a problem with this conjecture, as inherent fluctuations in the approximation scheme provide different outcomes for the limits of metric graph loop soups associated to a given continuum loop soup.


 In a companion paper \cite{inpreparation_gap}, we confirm these ideas in more precise terms and show that \cite[Conjecture A]{werner2021clusters} does indeed not hold. More precisely, we show that removing microscopic loops (even those at scale 1) does significantly reduce the connecting probabilities for loop soup clusters, and further show that the dimension of clusters in the Brownian loop soup in $\mathbb{R}^3$ is actually strictly smaller than the one arising from the scaling limits of metric graph clusters.


\end{remark}

\subsection{Pivotal edge and loop} 
In percolation models, a bit (vertex in site percolation, or edge in bond percolation) is called pivotal for a connecting event (i.e., an event of the form $A_1\xleftrightarrow{} A_2$) if flipping its state changes the occurrence of this event. As emphasized in many significant articles (e.g., \cite[Section 1]{garban2013pivotal}), the pivotal bit is a key concept to understand the near-critical behavior in percolation. For planar Bernoulli percolation, the establishment of the scaling limit \cite{SMIRNOV2001239} has led to a deep understanding of the distribution of pivotal points \cite{garban2013pivotal}. See also \cite{benjamini1999noise, garban2010fourier, schramm2010quantitative} for applications of pivotal points to noise sensitivity and dynamical percolation. For Bernoulli percolation on non-planar graphs and other percolation models, our understanding of pivotal bits remains rather limited. In the context of this paper, we say an edge $e\in \mathbb{L}^d$ is pivotal for the event $\big\{\bm{0}\xleftrightarrow{\ge 0} \partial B(N)\big\}$ if $\big\{\bm{0}\xleftrightarrow{\ge 0} \partial B(N)\big\}$ occurs but $\big\{\bm{0}\xleftrightarrow{\widetilde{E}^{\ge 0}\setminus I_e} \partial B(N)\big\}$ does not. Let $\mathbf{Piv}_N^+$ denote the collection of all these edges. In the next result, we show that for the one-arm event $\big\{  \bm{0}\xleftrightarrow{\ge 0} \partial B(N)\big\}$, the number of pivotal edges is typically of order $N^{(\frac{d}{2}-1)\boxdot 2}$:



\begin{theorem}\label{thm_1.5}
	For any $d\ge 3$ with $d\neq 6$, and any $\epsilon\in (0,1)$, there exist constants $\cl\label{const_GFF_pivotal1}(d,\epsilon),\Cl\label{const_GFF_pivotal3}(d,\epsilon),\Cl\label{const_GFF_pivotal4}(d,\epsilon)>0$ such that for any $N\ge \Cref{const_GFF_pivotal3}$, 
	\begin{equation}\label{new115}
		\mathbb{P}\big(  | \mathbf{Piv}_N^+  | \in \big[\cref{const_GFF_pivotal1} N^{(\frac{d}{2}-1)\boxdot 2} ,\Cref{const_GFF_pivotal4} N^{(\frac{d}{2}-1)\boxdot 2} \big] \mid \bm{0}\xleftrightarrow{\ge 0} \partial B(N) \big)\ge 1-\epsilon.
			\end{equation}
\end{theorem}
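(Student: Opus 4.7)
The plan is to analyze $|\mathbf{Piv}_N^+|$ conditional on $\{\bm{0} \xleftrightarrow{\ge 0} \partial B(N)\}$ via matching first- and second-moment bounds, followed by a multi-scale concentration argument to upgrade a uniformly positive conditional probability to one of at least $1-\epsilon$. Throughout, I would rely on the one-arm and crossing estimates (\ref{one_arm_low})--(\ref{crossing_high}), the heterochromatic two-arm estimates (\ref{thm1_small_n})--(\ref{thm1_large_n}) from the companion paper \cite{inpreparation_twoarm}, and the quasi-multiplicativity/arm-separation machinery developed in \cite{cai2024quasi, cai2024incipient, inpreparation_twoarm}.

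First I would characterize $\{e\in \mathbf{Piv}_N^+\}$ for $e=\{x,y\}$ as the intersection of $\{\widetilde{\phi}\ge 0\text{ on }I_e\}$ with the event that $\bm{0}$ and $\partial B(N)$ lie in two distinct positive clusters of $\widetilde{B}(N)\setminus I_e$, both touching $I_e$. Using the strong Markov property of the GFF on the boundary of a unit ball around $I_e$, the long-range connection to $\bm{0}$ and the one to $\partial B(N)$ become approximately independent given the local field near $e$. A Kesten-type decomposition of the per-edge probability into three scale-dependent factors --- arm from $\bm{0}$ to $e$, local pinch at $e$, arm from $e$ to $\partial B(N)$ --- then gives $\mathbb{P}(e\in \mathbf{Piv}_N^+)\asymp N^{-d}$ for edges $e$ in the bulk of $B(N)$, with analogous formulas near $\bm{0}$ or $\partial B(N)$. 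A dyadic sum over $e\subset B(N)$ yields $\mathbb{E}[|\mathbf{Piv}_N^+|]\asymp 1$, so that $\mathbb{E}\bigl[|\mathbf{Piv}_N^+|\bigm|\bm{0}\xleftrightarrow{\ge 0}\partial B(N)\bigr]\asymp \theta_d(N)^{-1}\asymp N^{(\frac{d}{2}-1)\boxdot 2}$. Markov's inequality then delivers the upper bound in (\ref{new115}) with conditional probability $\ge 1-\epsilon/2$, for $\Cref{const_GFF_pivotal4}$ chosen sufficiently large.

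For the matching lower bound I would estimate the conditional second moment. When two distinct edges $e\ne e'$ are both pivotal, the positive crossing traverses both, so by arm separation and quasi-multiplicativity applied to the three scales determined by $|e|$, $|e-e'|$ and $\mathrm{dist}(e',\partial B(N))$ (after ordering), the joint probability $\mathbb{P}(\{e,e'\}\subset \mathbf{Piv}_N^+)$ factorizes into one-arm factors across scales times local pinch factors at $e$ and $e'$. Summing over pairs gives $\mathbb{E}\bigl[|\mathbf{Piv}_N^+|^2 \mathbbm{1}_{\bm{0}\xleftrightarrow{\ge 0}\partial B(N)}\bigr]\lesssim N^{(\frac{d}{2}-1)\boxdot 2}$, which matches $\theta_d(N)\cdot\bigl(N^{(\frac{d}{2}-1)\boxdot 2}\bigr)^2$, so Paley--Zygmund yields $\mathbb{P}\bigl(|\mathbf{Piv}_N^+|\ge \cref{const_GFF_pivotal1} N^{(\frac{d}{2}-1)\boxdot 2}\bigm| \bm{0}\xleftrightarrow{\ge 0}\partial B(N)\bigr)\ge c$ for some $c>0$. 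To boost this from positive probability to $\ge 1-\epsilon/2$, I would apply the same Paley--Zygmund estimate separately in each of $\Theta(\log N)$ geometric annuli of $\widetilde{B}(N)$, use the strong Markov property to decouple scale-separated contributions up to bounded multiplicative factors, and combine via a Chebyshev concentration across the essentially independent scales.

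The main obstacle is the matching two-sided bound on the per-edge and per-pair pivotal probabilities, i.e., the ``local pinch'' factor that measures the probability that removing $I_e$ severs the positive cluster at $e$, conditionally on the macroscopically arriving arms. This is a Kesten-style arm-separation statement for the metric graph GFF, which must be established by carefully combining the two-arm estimates of \cite{inpreparation_twoarm} with the quasi-multiplicativity from \cite{cai2024quasi}. In the high-dimensional regime $d>6$, a further subtlety arises: while the heterochromatic two-arm estimate (\ref{thm1_small_n}) produces $\asymp N^{d-4}$ near-contact pairs between different sign clusters in $B(N)$, the pivotal count here is strictly smaller ($N^2$), because pivotality is a homochromatic (both-positive) two-arm pinch of a single cluster rather than a meeting of two different clusters. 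The argument must therefore carefully isolate the genuine one-cluster pinch event, which in high dimensions occurs much less frequently than inter-cluster encounters, by exploiting the mean-field connectivity estimates of \cite{cai2024high, inpreparation_twoarm}.
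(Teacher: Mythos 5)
Your high-level plan has the right shape (first moment via a three-factor decomposition, second moment via quasi-multiplicativity, Paley--Zygmund to get uniformly positive probability, and multi-scale concentration to push it to $1-\epsilon$), and you have correctly identified the key high-dimensional subtlety: in $d\ge 7$ the per-edge pinch count must be $N^2$ rather than the $N^{d-4}$ suggested by the heterochromatic two-arm estimates, because pivotality is a single-cluster pinch rather than a two-cluster contact. However, your route diverges from the paper's in one structurally important way and leaves a genuine gap elsewhere.

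The paper does not estimate $|\mathbf{Piv}_N^+|$ two-sidedly and directly. It proves the lower bound by the inclusion $\widetilde{\mathbf{Piv}}_N\subset \mathbf{Piv}_N$ (every edge containing a pivotal scale-$1$ loop is a pivotal edge) together with the loop-soup Theorem~\ref{thm_pivotal_loop}, whose lower bound is established via the partial-cluster exploration $\widehat{\mathfrak{C}}_i$ of \cite{cai2024incipient}. Only the upper bound on $\mathbf{Piv}_N^+$ is proved directly, and it requires only a first-moment estimate (Lemma~\ref{lemma61}) plus Markov. Your plan instead requires a genuine second-moment bound on $\mathbf{Piv}_N^+$-pairs, which is strictly harder: for $\widetilde{\mathbf{Piv}}_N$ the pivotal mechanism is localized to an explicit scale-$1$ loop and decouples cleanly from the long-range arms, whereas for $\mathbf{Piv}_N^+$ the ``pinch'' is not witnessed by a local object and one has to control the event that the two connecting clusters from $\bm{0}$ and from $\partial B(N)$ overlap precisely on $I_e$. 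The paper handles this via the Brownian-excursion decomposition of loops at the two endpoints $x_1,x_2$ of $e$ (the split into $\mathfrak{E}^{\mathrm{i}}_j,\mathfrak{E}^{\mathrm{ii}}_j,\mathfrak{E}^{\mathrm{iii}}_j$ in Section~\ref{section_pivotal_dege}), not by an abstract ``strong Markov on the boundary of a unit ball.''

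The concrete gap is in dimension $d=3$. Your sketch treats the local pinch factor as a black box, but in $d=3$ the naive bound obtained from the per-endpoint escape probability does not close: the dyadic sum over scales around the edge diverges. The paper's Lemma~\ref{lemma_escape_3d} repairs this by showing that, conditionally on $\mathfrak{E}^{\mathrm{i}}_1$ reaching scale $r$, the expected probability that the Brownian motion from $x_2$ escapes $\mathfrak{E}^{\mathrm{i}}_1$ out to scale $r$ gains an additional factor $r^{-1+\delta}$ for every $\delta\in(0,\tfrac12)$, by comparing $\mathfrak{E}^{\mathrm{i}}_1$ to two independent random walks and invoking a non-intersection estimate (\cite[Proposition 2.1.2, Corollary 10.2.2]{lawler2010random}). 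This extra decay is what makes the first-moment sum (\ref{nice635}) converge when $d=3$; for $d=4,5$ the exponent $4-d\le 0$ already makes the sum summable, and for $d\ge 7$ the paper uses a direct BKR argument with (\ref{6-2d_2-d}). Without identifying and supplying this $d=3$ correction, your three-factor decomposition will produce a diverging first-moment bound in the lowest dimension, and the second-moment bound you propose would inherit the same failure.

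A secondary issue: the multi-scale upgrade is not merely ``Chebyshev across essentially independent scales.'' The paper decouples the scales by exploring the partial clusters $\widehat{\mathfrak{C}}_i$ so that, conditionally, the residual loop soup is still a Poisson soup on $\widetilde{\mathbb{Z}}^d\setminus(\widehat{\mathfrak{C}}_i\cap\widetilde{B}(n_{2i}))$, and uses Lemma~\ref{lemma_large_loop} to control loops crossing many scales. The ``essentially independent'' step requires this exploration machinery, as pivotality at scale $i$ is a constraint on both the arms and the absence of alternative paths, and these are not independent across annuli without the partial-cluster conditioning.
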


As a direct corollary of Theorem \ref{thm_1.5}, the dimension of cut edges in the IIC (as defined by any of the limiting measures in (\ref{iic1})-(\ref{iic4})) equals $(\frac{d}{2}-1)\boxdot 2$. 


 From the perspective of the loop soup, a loop is called pivotal for a connecting event if removing it from the loop soup changes the occurrence of this event. To imitate the pivotal edge in bond percolation, we restrict the discussion to the loops contained in a single interval $I_e$ and intersecting both trisection points of $I_e$ (let $\mathfrak{L}$ denote the collection of all these loops). For any $A_1,A_2\subset \widetilde{\mathbb{Z}}^d$, let $A_1\xleftrightarrow{} A_2$ denote the event that there exists a loop cluster of $\widetilde{\mathcal{L}}_{1/2}$ connecting $A_1$ and $A_2$. For any $N\ge 1$, we define the edge collection 
  \begin{equation}\label{close_def_Piv_N}
	\widetilde{\mathbf{Piv}}_N:= \big\{ e\in \mathbb{L}^d: \exists \ \text{pivotal}\ \widetilde{\ell}\in \mathfrak{L}\ \text{contained in}\ I_e\ \text{for}\  \bm{0} \xleftrightarrow{} \partial B(N) \big\}. 
\end{equation}  
 Next, we present the analogue of Theorem \ref{thm_1.5} for the loop soup $\widetilde{\mathcal{L}}_{1/2}$.


\begin{theorem}\label{thm_pivotal_loop}
For any $d\ge 3$ with $d\neq 6$, and any $\epsilon>0$, there exist constants $\cl\label{const_pivotal_1}(d,\epsilon),\Cl\label{const_pivotal_2}(d,\epsilon),\Cl\label{const_pivotal_3}(d,\epsilon)>0$ such that for any $N\ge \Cref{const_pivotal_2}$, 
	\begin{equation}\label{new116}
		\mathbb{P}\big(  \big|\widetilde{\mathbf{Piv}}_N\big|\in \big[\cref{const_pivotal_1} N^{(\frac{d}{2}-1)\boxdot 2} ,\Cref{const_pivotal_3} N^{(\frac{d}{2}-1)\boxdot 2} \big]  \mid \bm{0} \xleftrightarrow{} \partial B(N) \big)\ge 1-\epsilon.
	\end{equation}
\end{theorem}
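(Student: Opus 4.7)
The plan is to derive Theorem \ref{thm_pivotal_loop} from Theorem \ref{thm_1.5} via the isomorphism theorem, by establishing a tight comparison between the loop-soup pivotal set $\widetilde{\mathbf{Piv}}_N$ and the GFF pivotal set $\mathbf{Piv}_N^+$. By the $\pm$ symmetry of the GFF together with Property (a) of the isomorphism (each sign cluster equals a loop cluster), one has $\mathbb{P}(\bm{0}\xleftrightarrow{}\partial B(N))=2\theta_d(N)$, and conditional on $\{\bm{0}\xleftrightarrow{}\partial B(N)\}$ the positive cluster of $\bm{0}$ reaches $\partial B(N)$ with conditional probability exactly $1/2$. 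Hence it suffices to prove (\ref{new116}) conditional on $\{\bm{0}\xleftrightarrow{\ge 0}\partial B(N)\}$.

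For the upper bound, I will establish the deterministic inclusion $\widetilde{\mathbf{Piv}}_N\subset\mathbf{Piv}_N^+$ on $\{\bm{0}\xleftrightarrow{\ge 0}\partial B(N)\}$. Suppose $e\in\widetilde{\mathbf{Piv}}_N$ with pivotal loop $\widetilde{\ell}\in\mathfrak{L}$ contained in $I_e$, but $e\notin\mathbf{Piv}_N^+$, so there is a path $\pi\subset\widetilde{E}^{\ge 0}\setminus I_e$ from $\bm{0}$ to $\partial B(N)$. By Property (a), every point of $\pi$ lies in the loop cluster of $\bm{0}$; at each interior point $v$ of $\pi$ one can pick a loop of $\widetilde{\mathcal{L}}_{1/2}$ whose trace contains $v$, and consecutive such loops along $\pi$ can be chosen so that adjacent ones share a vertex lying on $\pi$ (hence outside $I_e$). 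This chain is disjoint from $\widetilde{\ell}$ and remains connected in $\widetilde{\mathcal{L}}_{1/2}\setminus\{\widetilde{\ell}\}$, so $\bm{0}\xleftrightarrow{}\partial B(N)$ persists after removing $\widetilde{\ell}$, contradicting its pivotality. Together with Theorem \ref{thm_1.5}, this gives the upper bound in (\ref{new116}) with conditional probability at least $1-\epsilon/2$.

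For the lower bound, I will show that for each $e\in\mathbf{Piv}_N^+$, the conditional probability of $e\in\widetilde{\mathbf{Piv}}_N$ given $\widetilde{\phi}$ is bounded below by a positive constant $c_0$ for typical $\widetilde{\phi}$. On $\{e\in\mathbf{Piv}_N^+\}$ the bridge on $I_e$ is $\ge 0$ everywhere, so the positive cluster traverses $I_e$ from $x$ to $y$. I consider the joint event: (i) exactly one loop of $\mathfrak{L}$ is contained in $I_e$, and (ii) no loop of $\widetilde{\mathcal{L}}_{1/2}$ entering $I_e$ through $x$ or $y$ traverses both trisection points of $I_e$. On (i)+(ii), the unique $\mathfrak{L}$-loop is pivotal: removing it severs the left third of $I_e$ (connected to $x$) from the right third (connected to $y$) within $I_e$, while the GFF-pivotality of $e$ forbids any alternative connection from $\bm{0}$ to $\partial B(N)$ outside $I_e$. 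A direct computation with the Brownian excursion decomposition on $I_e$ shows the conditional probability of (i)+(ii) given $\widetilde{\phi}$ is at least $c_0>0$ whenever $\widetilde{\phi}|_{I_e}$ at the trisection points lies in a bounded range, which happens with high conditional probability on $\{e\in\mathbf{Piv}_N^+\}$.

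To upgrade this pointwise lower bound into high-probability concentration, I use that given $\widetilde{\phi}$, for well-separated $e_1,e_2\in\mathbf{Piv}_N^+$ the indicators $\mathbbm{1}_{e_i\in\widetilde{\mathbf{Piv}}_N}$ are approximately independent: condition (i) involves loops contained in $I_{e_i}$, which are strictly independent since the bridges on different edges are independent given the vertex values, while condition (ii) involves loops entering through $x$ or $y$, which are independent across edges whose endpoint neighborhoods are disjoint. A conditional Chebyshev argument then yields $\mathrm{Var}(|\widetilde{\mathbf{Piv}}_N|\mid\widetilde{\phi})\lesssim\mathbb{E}[|\widetilde{\mathbf{Piv}}_N|\mid\widetilde{\phi}]$, and combined with Theorem \ref{thm_1.5} guaranteeing $|\mathbf{Piv}_N^+|\ge\cref{const_GFF_pivotal1}N^{(\frac{d}{2}-1)\boxdot 2}$ with conditional probability $\ge 1-\epsilon/4$, this delivers $|\widetilde{\mathbf{Piv}}_N|\ge (c_0/2)|\mathbf{Piv}_N^+|$ with conditional probability $\ge 1-\epsilon/2$ for large $N$. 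The main obstacle is verifying event (ii): the loops entering $I_e$ through $x$ or $y$ that traverse both trisection points without being contained in $I_e$ must be shown absent with positive conditional probability under the pivotal conditioning, which requires a careful analysis of the Brownian path measure on $I_e$ together with the strong Markov property of the loop soup at the endpoints of $I_e$.
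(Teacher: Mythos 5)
Your proposal runs into two genuine problems, one logical and one substantive.

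\textbf{Circularity.} The paper proves Theorem \ref{thm_pivotal_loop} first (Section \ref{section_pivotal}) and then derives Theorem \ref{thm_1.5} from it: the lower bound of $|\mathbf{Piv}_N^+|$ is obtained by combining the deterministic inclusion $\widetilde{\mathbf{Piv}}_N\subset\mathbf{Piv}_N$ with (\ref{54}), which is the core output of the proof of Theorem \ref{thm_pivotal_loop}. Your plan inverts this: you take Theorem \ref{thm_1.5} as given and try to deduce Theorem \ref{thm_pivotal_loop}. Unless you supply a self-contained proof of the lower bound of Theorem \ref{thm_1.5}, the argument is circular. (The upper bound via the inclusion $\widetilde{\mathbf{Piv}}_N\subset\mathbf{Piv}_N^+$ and Lemma \ref{lemma61} is fine; the issue is entirely on the lower bound side.)

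\textbf{The pivotality claim fails.} The central step of your lower bound is that, conditional on $\widetilde{\phi}$ and on $\{e\in\mathbf{Piv}_N^+\}$, the events (i) ``exactly one loop of $\mathfrak L$ is contained in $I_e$'' and (ii) ``no loop entering through $x$ or $y$ traverses both trisection points'' together make the unique $\mathfrak L$-loop pivotal for $\{\bm 0\xleftrightarrow{}\partial B(N)\}$. This is false. After removing the unique $\mathfrak L$-loop, the remaining loops contained in $I_e$ that fail to hit both trisection points can still form an overlapping chain that spans $I_{[v_e^-,v_e^+]}$ --- say one loop $[c,d]$ with $c<v_e^-<d<v_e^+$ overlapping another loop $[e',f]$ with $e'<d<v_e^+<f$. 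Such a chain reconnects the two sides of $I_e$ and thus restores the one-arm event, so the $\mathfrak L$-loop was not pivotal. Since the occupation field on $I_e$ is strictly positive on $\{e\in\mathbf{Piv}_N^+\}$, every point of $I_{[v_e^-,v_e^+]}$ is covered by some loop, and you would need to rule out all such chains, not just ``bridging'' loops as in (i) and (ii). This is precisely the difficulty the paper avoids by going in the opposite direction: the paper starts from a configuration where $\mathfrak{L}_e=0$ and the endpoints of $e$ lie in two distinct clusters reaching $A$ and $\partial B(N)$ (the event $\widehat{\mathsf{C}}_e$ in the proof of Lemma \ref{lemma53}), and then \emph{adds} a single loop in $\mathfrak{L}_e$. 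In that direction the added loop is manifestly the unique bridge and hence pivotal; no delicate conditional decoupling of the loop soup given the GFF is required.

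Finally, even granting the per-edge lower bound, your conditional Chebyshev step is not enough: conditional on $\{\bm 0\xleftrightarrow{}\partial B(N)\}$, the pivotal edges are highly clustered and correlated, so ``$\mathrm{Var}\lesssim\mathbb{E}$'' does not hold edge-by-edge. The paper obtains the $1-\epsilon$ concentration through a specifically designed renewal structure across nested annuli (the partial clusters $\widehat{\mathfrak{C}}_i$ and Lemmas \ref{lemma_large_loop} and \ref{lemma53}), which gives geometric decay $(1-c_\dagger)^{i_\star}$; a generic second-moment argument would at best give a constant lower bound on the probability, not $1-\epsilon$.
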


We outline the proof of Theorem \ref{thm_pivotal_loop} as follows. On the one hand, based on the two-arm estimate in (\ref{thm1_large_n}), we show that for each edge $e=\{x,y\}$ such that $|x|\asymp \mathrm{dist}(x,\partial B(N))\asymp N$, the probability that $x$ and $y$ belong to two distinct loop clusters, one intersecting $\bm{0}$ and the other reaching $\partial B(N)$, is of order $N^{-d}$. Consequently, conditioned on $\{\bm{0} \xleftrightarrow{} \partial B(N)\}$, the expectation of $\big|\widetilde{\mathbf{Piv}}_N\big|$ is proportional to $[\theta_d(N)]^{-1}\asymp N^{(\frac{d}{2}-1)\boxdot 2}$. This together with Markov's inequality implies that $\big|\widetilde{\mathbf{Piv}}_N\big|\le CN^{(\frac{d}{2}-1)\boxdot 2}$ occurs with high probability. On the other hand, we show that the conditional probability of $\big|\widetilde{\mathbf{Piv}}_N\big|\le cN^{(\frac{d}{2}-1)\boxdot 2}$ vanishes uniformly as $c\to 0$ in the following two steps:

\begin{enumerate}

	\item[(i)] We apply the second moment method to show that for all sufficiently small $a>0$, there exists $\lambda(a)>0$ (increasing in $a$) such that conditioned on $\{\bm{0} \xleftrightarrow{} \partial B(N)\}$, with a uniformly positive probability $\delta>0$ there exist at least $\lambda(a) N^{(\frac{d}{2}-1)\boxdot 2}$ pivotal edges within the annulus $B(CaN)\setminus B(aN)$. To estimate the second moment of $\big|\widetilde{\mathbf{Piv}}_N\big|$, it suffices to control, for all edges $e_1=\{x_1,y_1\}$ and $e_2=\{x_2,y_2\}$ in $B(N)$, the probability that the events $\{\bm{0}\xleftrightarrow{} x_1\}$, $\{x_2\xleftrightarrow{} \partial B(N)\}$ and $\{y_1\xleftrightarrow{} y_2\}$ are certified by three distinct loop clusters. As in the proof of Theorem \ref{thm_minimal_distance}, we decompose events
of this type into connecting events of the form $\mathsf{H}^{v,w}_{v',w'}$, and bound their probabilities using the two-arm estimate in (\ref{thm1_large_n}).




   \item[(ii)]  We divide the box $B(N)$ into an increasing sequence of annuli $\{\widetilde{B}(C^{i+1}bN)\setminus \widetilde{B}(C^{i}bN)\}_{1\le i\le K}$, where we require $\lambda(b)>c$ and $K\gtrsim \log(1/b)$. Note that one may further require $b\to 0$ (and hence $K\to \infty$) as $c\to 0$. By adapting the decomposition of loop clusters in \cite[Section 5.1]{cai2024incipient}, we show that for the event $\{\bm{0} \xleftrightarrow{} \partial B(N)\}$, the numbers of edges containing a pivotal loop in $\mathfrak{L}$ are asymptotically independent across different annuli in $\{\widetilde{B}(C^{i+1}bN)\setminus \widetilde{B}(C^{i}bN)\}_{1\le i\le K}$. Combined with Step (i), it yields that the conditional probability of $\big|\widetilde{\mathbf{Piv}}_N\big|\le cN^{(\frac{d}{2}-1)\boxdot 2}$ is approximately upper-bounded by $(1-\delta)^{K}$, and thus vanishes as $c\to 0$ (since $K\to \infty$).


\end{enumerate}

As for Theorem \ref{thm_1.5}, note that each
$e\in \widetilde{\mathbf{Piv}}_N$ must be a pivotal edge for the event
$\{\bm{0}\xleftrightarrow{} \partial B(N)\}$; moreover, according to the
isomorphism theorem, the law of such pivotal edges coincides with that of
pivotal edges for the event $\big\{\bm{0}\xleftrightarrow{\ge 0} \partial B(N)\big\}$. In other words, $|\mathbf{Piv}_N^+|$ stochastically dominates $|\widetilde{\mathbf{Piv}}_N|$, and thus exceeds $cN^{(\frac{d}{2}-1)\boxdot 2}$ with high probability. The opposite domination fails because a pivotal edge for $\{\bm{0}\xleftrightarrow{} \partial B(N)\}$ does not necessarily contain a pivotal loop at scale $1$. Instead, such an edge may arise from the intersection of two loops which, away from this edge, lie in different loop clusters, with one cluster intersecting $\bm{0}$ while the other reaching $\partial B(N)$. To compute the contribution of this scenario to the first moment of $|\mathbf{Piv}_N^+|$, we need to control, for a given annulus, the probability that a loop cluster and a Brownian motion both cross this annulus and remain disjoint. Thanks to the explicit formula for the two-point function (see (\ref{211})), this probability can again be reduced to that of events of the form $\mathsf{H}^{v,w}_{v',w'}$, and the desired bounds then follow from the two-arm estimate (\ref{thm1_large_n}).

%

%
%
%
%
%
%

\textbf{Statements about constants.} The letters $C$ and $c$ denote positive constants, which are allowed to vary depending on context. Numerical constants (e.g., $C_1,c_2$) remain fixed throughout the paper. By convention, $C$ (with or without subscripts) denotes large constants, while $c$ represents small ones. Unless stated otherwise, a constant depends only on the dimension $d$. Dependence on other parameters must be explicitly indicated (e.g., $C(\lambda)$, $c(\delta)$).


\textbf{Organization of this paper.} In Section \ref{section_notation}, we fix the necessary notations and present some relevant results. In Section \ref{section_proof_thm1.1}, we prove Theorem \ref{thm_minimal_distance}, assuming a key estimate (see Proposition \ref{lemma_four_arms}) on the probability of two distinct sign clusters nearly touching at two given points. This estimate is then verified in Sections \ref{section_proof_two_branch}, thereby completing the proof of Theorem \ref{thm_minimal_distance}. Finally, we provide the proofs of Theorems \ref{thm_pivotal_loop} and \ref{thm_1.5} in Sections \ref{section_pivotal} and \ref{section_pivotal_dege} respectively.

  \section{Preliminaries}\label{section_notation}

In this section, we review some primary notations and useful results.

\subsection{Basic notation for graphs}

 \begin{itemize}


  	\item For any $v\in \widetilde{\mathbb{Z}}^d$, we denote by $\bar{v}$ the closest lattice point to $v$ (we break the tie in a predetermined manner).

 	\item  Recall that we have defined the boxes $B(N)$ and $\widetilde{B}(N)$ in Section \ref{section_intro}. We also denote the Euclidean ball $\mathcal{B}(N):=\{x\in \mathbb{Z}^d: \|x\|\le N \}$. For any $v\in \widetilde{\mathbb{Z}}^d$, we define $B_v(N):=\bar{v}+B(N)$, $\widetilde{B}_v(N):=\bar{v}+\widetilde{B}(N)$ and $\mathcal{B}_v(N):=\bar{v}+\mathcal{B}(N)$.

    \item For $N\ge 1$ and $\delta>0$, we denote $\mathfrak{B}^{1}_{N,\delta}:=\widetilde{B}(\delta N)$ and $\mathfrak{B}^{-1}_{N,\delta}:=[\widetilde{B}(\delta^{-1} N)]^{c}$.

 	\item For any $e\in \mathbb{L}^d$ and $v,w\in I_e$, we denote by $I_{[v,w]}$ the subinterval of $I_e$ with endpoints $v$ and $w$.

    \item We use the letter $D$ (possibly with superscripts or subscripts) to denote a subset of $\widetilde{\mathbb{Z}}^d$ consisting of finitely many compact connected components.

 	\item  The boundary of $D\subset \widetilde{\mathbb{Z}}^d$ is $\widetilde{\partial}D:=\{v\in D: \inf_{w\in \widetilde{\mathbb{Z}}^d\setminus D}|v-w|=0 \}$. Since $\widetilde{\mathbb{Z}}^d$ is locally one-dimensional, $\widetilde{\partial}D$ is a finite set.

 	\item  A path on $\widetilde{\mathbb{Z}}^d$ is a continuous function $\widetilde{\eta}:[0,T)\to \widetilde{\mathbb{Z}}^d$, where the duration $T\in \mathbb{R}^+\cup \{\infty\}$. When $T<\infty$, we denote $\widetilde{\eta}(T)=\lim_{t\uparrow T}\widetilde{\eta}(t)$. Recall that a rooted loop $\widetilde{\varrho}$ (of duration $T<\infty$) is a path satisfying $\widetilde{\varrho}(0)=\widetilde{\varrho}(T)$.

 	\item  For a path $\widetilde{\eta}$, we denote its range by $\mathrm{ran}(\widetilde{\eta}):=\{\widetilde{\eta}(t): 0\le t<T \}$.

 	\item For any set $D\subset \widetilde{\mathbb{Z}}^d$ and any path $\widetilde{\eta}$, we denote the first time when $\widetilde{\eta}$ hits $D$ by $\tau_{D}(\widetilde{\eta}):=\inf\{0\le t<T: \widetilde{\eta}(t)\in D\}$ (where we set $\tau_\emptyset:= +\infty$ for completeness). In particular, we abbreviate $\tau_{v}:=\tau_{\{v\}}$ for $v\in \widetilde{\mathbb{Z}}^d$.



 \end{itemize}

 \subsection{Brownian motion on $\widetilde{\mathbb{Z}}^d$}\label{section_brownian_motion}

The Brownian motion on $\widetilde{\mathbb{Z}}^d$, denoted by $\{\widetilde{S}_t\}_{t\ge 0}$, is a continuous-time Markov chain evolving as follows. When $\widetilde{S}_{\cdot}$ is in the interior of an interval $I_e$ ($e\in \mathbb{L}^d$), it behaves as a standard one-dimensional Brownian motion. When $\widetilde{S}_{\cdot}$ reaches a lattice point $x\in \mathbb{Z}^d$, it uniformly picks an interval $I_e$ incident to $x$, and then performs a Brownian excursion from $x$ in $I_e$; in addition, once the excursion hits an endpoint of $I_e$, the process continues evolving in the same manner from this endpoint. For each $v\in \widetilde{\mathbb{Z}}^d$, we denote by $\widetilde{\mathbb{P}}_v$ the law of $\{\widetilde{S}_t\}_{t\ge 0}$ starting from $v$. Under $\widetilde{\mathbb{P}}_v$, we let $\tau_D:=\tau_D(\widetilde{S}_{\cdot})$ denote the first time when the Brownian motion $\widetilde{S}_{\cdot}$ hits $D$. The expectation under $\widetilde{\mathbb{P}}_v$ is denoted by $\widetilde{\mathbb{E}}_v$. 

    \textbf{Green's function.} For any $D\subset \widetilde{\mathbb{Z}}^d$, the Green's function for $D$ is defined by  
    \begin{equation}
 	\widetilde{G}_D(v,w):= \int_0^{\infty} \Big\{\widetilde{q}_t(v,w)-\widetilde{\mathbb{E}}_v\big[ \widetilde{q}_{t-\tau_D}(\widetilde{S}_{\tau_D},w)\cdot \mathbbm{1}_{\tau_D<t} \big]  \Big\} dt, \ \ \forall v,w\in  \widetilde{\mathbb{Z}}^d. 
 \end{equation}
  Note that $\widetilde{G}_D$ is finite, symmetric, continuous, and is decreasing in $D$. In addition, $\widetilde{G}_D(v,w)=0$ if $\{v,w\}\cap D\neq \emptyset$. When $D=\emptyset$, we abbreviate $\widetilde{G}(\cdot,\cdot):=\widetilde{G}_\emptyset(\cdot,\cdot)$. Moreover, restricted to $\mathbb{Z}^d$, $\widetilde{G}(\cdot,\cdot)$ is identical to the Green's function $G(\cdot,\cdot)$ on $\mathbb{Z}^d$, which implies that $\widetilde{G}(v,w) \asymp (|v-w|+1)^{2-d}$ for all $v,w\in \widetilde{\mathbb{Z}}^d$. The strong Markov property of Brownian motion yields that 
 \begin{equation}\label{2.9}
 	\widetilde{G}_D(v,w)= \widetilde{\mathbb{P}}_v\big( \tau_{w}<\tau_{D} \big)\cdot \widetilde{G}_D(w,w). 
 \end{equation}
  In addition, using the potential theory, one has 
  \begin{equation}\label{order_green_function}
  	\widetilde{G}_D(w,w) \asymp \mathrm{dist}(D,w)\land 1. 
  \end{equation}

 For any $D\subset \widetilde{\mathbb{Z}}^d$, we denote by $\mathbb{P}^D$ the law of $\{\widetilde{\phi}_v\}_{v\in \widetilde{\mathbb{Z}}^d}$ conditioned on the event $\{\widetilde{\phi}_v=0,\forall v\in D\}$. Under $\mathbb{P}^D$, the GFF $\widetilde{\phi}_\cdot$ is mean-zero and has covariance 
    \begin{equation}
 	\mathbb{E}^{D}\big[\widetilde{\phi}_v\widetilde{\phi}_w \big]=\widetilde{G}_D(v,w),\ \ \forall v,w\in \widetilde{\mathbb{Z}}^d.
 \end{equation}
 Here $\mathbb{E}^D$ denotes the expectation under $\mathbb{P}^D$. Referring to \cite[Proposition 5.2]{lupu2016loop}, the formula in (\ref{two-point1}) for the two-point function is valid for $\widetilde{\phi}_\cdot\sim \mathbb{P}^D$, i.e., for any $D\subset \widetilde{\mathbb{Z}}^d$ and $v\neq w\in \widetilde{\mathbb{Z}}^d\setminus D$, 
  \begin{equation}\label{211}
  \begin{split}
  	  	\mathbb{P}^D\big( v\xleftrightarrow{\ge 0} w\big)=   &\pi^{-1}\arcsin\Big( \tfrac{\widetilde{G}_D(v,w)}{\sqrt{\widetilde{G}_D(v,v)\widetilde{G}_D(w,w)}} \Big) \\
  	  	 \asymp &  \tfrac{\widetilde{G}_D(v,w)}{\sqrt{\widetilde{G}_D(v,v)\widetilde{G}_D(w,w)}} \overset{(\ref{2.9})}{= } \widetilde{\mathbb{P}}_w\big( \tau_{v}<\tau_{D} \big)\cdot \sqrt{\tfrac{\widetilde{G}_D(v,v)}{\widetilde{G}_D(w,w)}}.
  \end{split}
  \end{equation}

\textbf{Basic properties of Brownian motion.} The first property asserts that the hitting distribution on the boundary of a Euclidean ball is comparable to the uniform distribution.


%
%

  \begin{lemma}\label{lemma_BM_uniform}
 For any $d\ge 3$, $r\ge 1$ and $x\in \partial \mathcal{B}(r)$, the following estimates hold: 
 \begin{enumerate}

 	\item  For any $v\in \mathfrak{B}^{-1}_{N,d^{-1}}$, 
 	\begin{equation}\label{revisenew_21}
 	\widetilde{\mathbb{P}}_v\big( \tau_{\partial \mathcal{B}(r)} =\tau_x <\infty \big)\asymp   r^{1-d}. 
 \end{equation}

 	\item  For any $v\in \mathfrak{B}^{1}_{N,d^{-1}}$,  	
 	 \begin{equation}\label{revisenew_22}
 	\widetilde{\mathbb{P}}_v\big( \tau_{\partial \mathcal{B}(r)} =\tau_x <\infty \big)\lesssim  r^{1-d}. 
 \end{equation}
 
 \end{enumerate}

 \end{lemma}

 \textbf{P.S.} Some points of $\partial \mathcal{B}(r)$ cannot be hit first by a Brownian motion starting from $\mathfrak{B}^{1}_{N,d^{-1}}$. For such a point $x$, the left-hand side of (\ref{revisenew_22}) is zero. Hence, the reverse inequality in (\ref{revisenew_22}) cannot hold for all $x\in \partial \mathcal{B}(r)$.


 \begin{proof}
 The inequality (\ref{revisenew_21}) follows directly from \cite[Lemma 3.7]{cai2025minimal} and \cite[Proposition 6.5.4]{lawler2010random}. We now turn to (\ref{revisenew_22}). Without loss of generality, we assume that $r$ is sufficiently large. By the last-exit decomposition (see e.g., \cite[Section 8.2]{morters2010brownian}), 
 \begin{equation}\label{revise29}
 	\begin{split}
 		\widetilde{\mathbb{P}}_v\big( \tau_{\partial \mathcal{B}(r)} =\tau_x <\infty \big) =  & \sum\nolimits_{z\in \partial \mathcal{B}(r-10)}  \widetilde{G}_{[\mathcal{B}(r-10)]^c}(v,z) \cdot \tfrac{1}{2d} \\
 		& \cdot \sum\nolimits_{z'\in [\mathcal{B}(r-10)]^c:\{z,z'\}\in \mathbb{L}^d} 	\widetilde{\mathbb{P}}_{z'}\big( \tau_{\partial \mathcal{B}(r)} =\tau_x < \tau_{\mathcal{B}(r-10)} \big).
 	\end{split}
 \end{equation}
 Meanwhile, for any $z\in \partial \mathcal{B}(r-10)$, it follows from \cite[Lemma 6.3.7]{lawler2010random} that the probability of $\widetilde{S}_{\cdot }\sim \widetilde{\mathbb{P}}_{v}$ exiting the ball $\mathcal{B}(r-10)$ at $z$ is of order $r^{1-d}$. In addition, this probability is bounded from below by 
 \begin{equation}
 	\widetilde{\mathbb{P}}_{v}\big( \tau_{z}< \tau_{[\mathcal{B}(r-10)]^c} \big)\cdot \tfrac{1}{2d}  \overset{(\ref{2.9})}{\gtrsim}  \widetilde{G}_{[\mathcal{B}(r-10)]^c}(v,z). 
 \end{equation}
 These two observations together imply that for all $z\in \partial \mathcal{B}(r-10)$, 
  \begin{equation}\label{revise_new_211}
  	\widetilde{G}_{[\mathcal{B}(r-10)]^c}(v,z) \lesssim r^{1-d}.
  \end{equation}

  Next, we estimate the hitting probability on the second line of (\ref{revise29}). For any $z'\in [\mathcal{B}(r-10)]^c$ that is adjacent to $\mathcal{B}(r-10)$, one has 
  \begin{equation}\label{revise12}
  	\widetilde{\mathbb{P}}_{z'}\big( \tau_{\partial \mathcal{B}(r)} =\tau_x < \tau_{\mathcal{B}(r-10)} \big)\le \widetilde{\mathbb{P}}_{z'}\big( \tau_x < \tau_{\mathcal{B}(r-10)\cup \partial \mathcal{B}(r+10)} \big). 
  \end{equation}
  To reach $x$ before hitting $\mathcal{B}(r-10)\cup \partial \mathcal{B}(r+10)$, the Brownian motion $\widetilde{S}_\cdot\sim \widetilde{\mathbb{P}}_{z'}$ has to cross a hyper-rectangle of length $c|z-z'|$ and width $20$, whose probability decays exponentially in $|z'-x|$. Combined with (\ref{revise12}), it yields that 
  \begin{equation}\label{new_revise_213}
  	\widetilde{\mathbb{P}}_{z'}\big( \tau_{\partial \mathcal{B}(r)} =\tau_x < \tau_{\mathcal{B}(r-10)} \big)\lesssim  e^{-c'|z'-x|}. 
  \end{equation}
  Plugging (\ref{revise_new_211}) and (\ref{new_revise_213}) into (\ref{revise29}), we obtain the desired bound (\ref{revisenew_22}). 
 \end{proof}

 As shown in the following lemma, the hitting distribution remains stable under the addition of appropriate obstacles. The proof can be found in \cite[Lemma 2.1]{cai2024quasi}.

 \begin{lemma}\label{lemma_BM_stable}
 	For any $d\ge 3$, $N\ge 1$, $x\in \partial B(N)$, $D_1 \subset \mathfrak{B}^{1}_{N,1/2}$ and $D_{-1} \subset \mathfrak{B}^{-1}_{N,1/2}$,  
 	\begin{equation}
 		\widetilde{\mathbb{P}}_{x}\big( \tau_{D_i}= \tau_{v} <\tau_{D_{-i}} \big)  \asymp \widetilde{\mathbb{P}}_{x}\big( \tau_{D_i }= \tau_{v} <\infty \big), \ \ \forall i\in \{1,-1\}\ \text{and}\ v\in \widetilde{\partial} D_i. 
 	\end{equation}
 \end{lemma}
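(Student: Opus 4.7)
The upper bound is immediate from the inclusion $\{\tau_{D_i} = \tau_v < \tau_{D_{-i}}\} \subseteq \{\tau_{D_i} = \tau_v < \infty\}$, so the substantive content is the matching lower bound: the constraint $\tau_{D_{-i}} > \tau_{D_i}$ should cost only a constant factor. My plan is to confine the Brownian motion to a region containing $x$ and $D_i$ but excluding $D_{-i}$, and then show via the strong Markov property that this confinement preserves a positive fraction of the hitting probability. I outline the argument for $i = 1$ (so $D_1 \subseteq \widetilde{B}(N/2)$ is inside and $D_{-1} \subseteq \widetilde{B}(2N)^c$ is outside); the case $i = -1$ is handled dually, trading the transience ingredient for the uniform exit distribution from an inner sphere surrounding $D_1$.

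Fix a large constant $K \ge 2$ depending only on $d$, and set $A := \partial B(KN)$. Since $D_{-1}$ lies outside $\widetilde{B}(2N) \subseteq \widetilde{B}(KN)$, any Brownian path from $x$ avoiding $A$ also avoids $D_{-1}$. Thus, with $h(w) := \widetilde{\mathbb{P}}_w(\tau_{D_1} = \tau_v < \infty)$, it suffices to show $\widetilde{\mathbb{P}}_x(\tau_{D_1} = \tau_v < \tau_A) \gtrsim h(x)$. The strong Markov property at $\tau_A \wedge \tau_{D_1}$ gives
\begin{equation*}
h(x) = \widetilde{\mathbb{P}}_x\bigl(\tau_{D_1} = \tau_v < \tau_A\bigr) + \widetilde{\mathbb{E}}_x\bigl[\mathbf{1}_{\tau_A < \tau_{D_1}}\, h(\widetilde{S}_{\tau_A})\bigr],
\end{equation*}
so the task reduces to bounding the second summand by $\tfrac{1}{2} h(x)$.

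To accomplish this, fix $y \in A$ and note that any path from $y$ reaching $D_1 \subseteq \widetilde{B}(N/2)$ must first enter $\widetilde{B}(N)$. Combining the transience estimate $\widetilde{\mathbb{P}}_y(\tau_{\widetilde{B}(N)} < \infty) \lesssim K^{-(d-2)}$ with a Harnack-type comparison $h(z) \asymp h(x)$ for all $z \in \widetilde{\partial}\widetilde{B}(N)$, a further application of the strong Markov property at the first hit of $\widetilde{\partial}\widetilde{B}(N)$ gives $h(y) \le C K^{-(d-2)} h(x)$. Choosing $K$ large enough that $CK^{-(d-2)} \le \tfrac{1}{2}$ completes the argument.

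The main obstacle is the Harnack-type comparability $h(z) \asymp h(x)$ across $z \in \widetilde{\partial}\widetilde{B}(N)$, uniformly in the (possibly highly irregular) configuration $D_1$ and target point $v \in \widetilde{\partial} D_1$. The hypothesis $D_1 \subseteq \widetilde{B}(N/2)$ is essential: it provides a macroscopic annular buffer $\widetilde{B}(N) \setminus \widetilde{B}(N/2)$ between $D_1$ and $\widetilde{\partial}\widetilde{B}(N)$, within which one can chain Lemma \ref{lemma_BM_uniform} on a dyadic family of intermediate spheres—each giving approximate uniformity of the Brownian hitting distribution—to transfer the hitting probability between any two base points on $\widetilde{\partial}\widetilde{B}(N)$ at only a constant-factor cost. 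Without such a buffer—e.g., if $D_1$ were allowed to touch $\partial B(N)$ at $v$—the quantity $h(z)$ could vary drastically in $z$ and the entire strategy would collapse, which is why the quantitative separation in the definition of $\mathfrak{B}^{i}_{N,1/2}$ is built into the statement.
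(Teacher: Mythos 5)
Your reduction to the bound $\widetilde{\mathbb{P}}_x(\tau_{D_1}=\tau_v<\tau_A)\gtrsim h(x)$ contains a genuine error. You set $A=\partial B(KN)$ for a \emph{large} $K$ depending only on $d$, and assert that since ``$D_{-1}$ lies outside $\widetilde{B}(2N)\subseteq\widetilde{B}(KN)$, any Brownian path from $x$ avoiding $A$ also avoids $D_{-1}$.'' This inference is wrong: $D_{-1}\subset[\widetilde{B}(2N)]^c$ does not put $D_{-1}$ outside $\widetilde{B}(KN)$, and the annulus $\widetilde{B}(KN)\setminus\widetilde{B}(2N)$ is exactly where a path confined to $\widetilde{B}(KN)$ can meet $D_{-1}$. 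The inclusion $\tau_A\le\tau_{D_{-1}}$ that your reduction needs holds precisely when $D_{-1}\subset[\widetilde{B}(KN)]^c$, which fails as soon as $K>2$. Yet your argument relies on taking $K$ large to make $CK^{-(d-2)}\le\tfrac12$, and the Harnack constant $C$ is not under your control; once you are locked into $A=\widetilde{\partial}\widetilde{B}(2N)$, the transience bound yields only a fixed dimensional $\rho<1$, and the product $C\rho$ may exceed $1$.

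The fix is a short bootstrap in place of the free choice of $K$. With $A:=\widetilde{\partial}\widetilde{B}(2N)$ and $g(z):=\widetilde{\mathbb{P}}_z(\tau_{D_1}=\tau_v<\tau_A)$, the strong Markov identity $h(z)=g(z)+\widetilde{\mathbb{E}}_z[\mathbbm{1}_{\tau_A<\tau_{D_1}}\,h(\widetilde{S}_{\tau_A})]$ for $z\in\partial B(N)$, combined with the transience bound $\max_{y\in A}h(y)\le\rho\,\max_{\partial B(N)}h$, gives for $M:=\max_{\partial B(N)}h$
\begin{equation*}
M\ \le\ \max_{\partial B(N)}g+\rho M,\qquad\text{hence}\qquad \max_{\partial B(N)}g\ \ge\ (1-\rho)\,M\ \ge\ (1-\rho)\,h(x).
\end{equation*}
Now apply the Harnack chain to $g$ rather than to $h$: the function $g$ is positive and harmonic in $\widetilde{B}(2N)\setminus D_1$, and $\partial B(N)$ lies at macroscopic distance from both $D_1\subset\widetilde{B}(N/2)$ and $A$, so $g(x)\asymp\max_{\partial B(N)}g\gtrsim h(x)$. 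In this arrangement the Harnack constant and the transience constant never multiply into an inequality that must be below one. The case $i=-1$ is handled analogously, with the escape estimate $\widetilde{\mathbb{P}}_x(\tau_{\widetilde{B}(N/2)}<\infty)\le\rho'<1$ for $x\in\partial B(N)$ replacing the transience bound. Your other ingredients --- the trivial upper bound, the two strong Markov decompositions, the Harnack chaining across the buffer annulus --- are the correct ones.
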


 Next, we extend the stability in Lemma \ref{lemma_BM_stable} to more general sets.

 \begin{lemma}\label{lemma_new_ BM_stable}
 	For any $d\ge 3$, there exists $c>0$ such that for any $N\ge 1$, $i\in \{1,-1\}$, $D_i\subset \widetilde{\mathbb{Z}}^d $, $v\in   (\widetilde{\partial}D_i)\cap \mathfrak{B}^i_{N,1}$ and $D_{-i}\subset \mathfrak{B}^{-i}_{N,c}$, 
 	\begin{equation}\label{great23}
 \sum\nolimits_{x\in \partial \mathcal{B}(d^iN)} \widetilde{\mathbb{P}}_x\big( \tau_{D_i}=\tau_{v}<\tau_{D_{-i}} \big) \asymp  		\sum\nolimits_{x\in \partial \mathcal{B}(d^iN)} \widetilde{\mathbb{P}}_x\big( \tau_{D_i}=\tau_{v}< \infty \big).
 	\end{equation}
 \end{lemma}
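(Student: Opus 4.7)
I treat the case $i=1$; the case $i=-1$ follows by an analogous argument with the inner and outer regions interchanged. The $\lesssim$ direction of~\eqref{great23} is immediate from $\{\tau_{D_1}=\tau_v<\tau_{D_{-1}}\}\subseteq\{\tau_{D_1}=\tau_v<\infty\}$. For the reverse direction, denote
\[
f(x):=\widetilde{\mathbb{P}}_x(\tau_{D_1}=\tau_v<\infty),\qquad g(x):=\widetilde{\mathbb{P}}_x(\tau_{D_1}=\tau_v<\tau_{D_{-1}}),
\]
and $e(x):=f(x)-g(x)=\widetilde{\mathbb{P}}_x(\tau_{D_{-1}}<\tau_{D_1}=\tau_v<\infty)$; since $g\le f$ pointwise, it suffices to show $\sum_{x\in\partial\mathcal{B}(dN)} e(x)\le\tfrac{1}{2}\sum_{x\in\partial\mathcal{B}(dN)} g(x)$ for $c$ small enough depending only on $d$.

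The strategy is a last-visit decomposition at the final visit to $D_{-1}$ prior to $\tau_v$. Since $v\in\widetilde{B}(N)\subset\mathcal{B}(dN)$ while $D_{-1}\subset[\widetilde{B}(c^{-1}N)]^{c}$, any trajectory contributing to $e(x)$ must traverse between these well-separated regions; transience in $d\ge 3$ ensures that it makes only finitely many such excursions, so the last one is well defined. Let $\sigma^*$ be the first hitting time of $\partial\mathcal{B}(dN)$ after this last $D_{-1}$-visit and $x^*:=\widetilde{S}_{\sigma^*}$; then by construction the walk from $x^*$ onwards hits $D_1$ first at $v$ and never revisits $D_{-1}$, with residual probability $g(x^*)$. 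Enumerating the $D_{-1}$-excursions by their entrance stopping times and applying the strong Markov property at each excursion's end, together with the reversibility of Brownian motion on $\widetilde{\mathbb{Z}}^d$ with respect to Lebesgue measure (encapsulated in $G(x,y)=G(y,x)$), produces
\[
\sum_{x\in\partial\mathcal{B}(dN)} e(x) \;\le\; \Lambda\cdot\sum_{x^*\in\partial\mathcal{B}(dN)} g(x^*),
\qquad
\Lambda := \sup_{x^*}\;\sum_{x}\widetilde{\mathbb{P}}_x\bigl(\tau_{D_{-1}}<\tau_{D_1},\ \widetilde{S}_{\sigma^*}=x^*\bigr).
\]

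The main technical task is to prove $\Lambda\lesssim c^{d-2}$. The reversibility step converts the sum over starting points $x$ into the total flux, starting from $x^*$, of excursions out to $D_{-1}$ and back to $\partial\mathcal{B}(dN)$. Each such round trip incurs the transience factor $\widetilde{\mathbb{P}}_y(\tau_{\partial\mathcal{B}(dN)}<\infty)=(dN/|y|)^{d-2}\le(dc)^{d-2}$ for any $y\in\widetilde{\partial}D_{-1}$ (obtained from the harmonic function $|x|^{2-d}$ for BM on $\widetilde{\mathbb{Z}}^d$), while Lemma~\ref{lemma_BM_uniform} applied at scale $dN$ ensures the hitting distribution on $\partial\mathcal{B}(dN)$ from such $y$ has density $\lesssim(dN)^{1-d}$ uniformly, so that the ``return to a specific $x^*$'' is controlled. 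Combining these via strong Markov at $\tau_{D_{-1}}$ gives
\[
\Lambda \;\lesssim\; \sup_{x^*}\,\widetilde{\mathbb{E}}_{x^*}\bigl[\mathbbm{1}_{\tau_{D_{-1}}<\infty}\;\widetilde{\mathbb{P}}_{\widetilde{S}_{\tau_{D_{-1}}}}(\tau_{\partial\mathcal{B}(dN)}<\infty)\bigr] \;\lesssim\; c^{d-2}.
\]
Choosing $c$ small enough so that $\Lambda\le 1/2$ completes the proof.

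The main obstacle is the rigorous formalization of the last-visit decomposition, since the last-visit time is not a stopping time. This is handled by enumerating the excursions (finitely many by transience) into $D_{-1}$ via their entrance stopping times, and performing at each excursion's end the strong Markov and reversibility bookkeeping that extracts a factor of $g(x^*)$ for the last excursion. The case $i=-1$ is entirely parallel: one uses the last visit to $D_1\subset\widetilde{B}(cN)$ before the outward hit of $v\in\widetilde{\partial}D_{-1}\cap[\widetilde{B}(N)]^{c}$, together with the same transience estimate applied to inward excursions.
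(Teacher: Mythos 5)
Your proposal is valid in spirit but follows a genuinely different and heavier route than the paper's. Both begin from $e(x):=f(x)-g(x)=\widetilde{\mathbb{P}}_x(\tau_{D_{-1}}<\tau_{D_1}=\tau_v<\infty)$, but the paper decomposes $e(x)$ at the \emph{first} hit of $D_{-1}$: after entering $D_{-1}\subset[\widetilde{B}(c^{-1}N)]^c$, the walk must cross $\partial\mathcal{B}(10dN)$ and then $\partial\mathcal{B}(dN)$ to reach $v$, so three successive strong Markov applications together with the transience bound $\widetilde{\mathbb{P}}_z(\tau_{\partial\mathcal{B}(10dN)}<\infty)\lesssim c^{d-2}$ for $z\in D_{-1}$ and the uniform hitting estimate of Lemma \ref{lemma_BM_uniform} give a bound on $e(x)$, uniform in $x$, of the form $c^{d-2}N^{1-d}\sum_{z''\in\partial\mathcal{B}(dN)}f(z'')$, whence $\sum_x e(x)\lesssim c^{d-2}\sum_x f(x)$. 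The crucial simplification you miss is that the error is compared to $\sum f$ rather than to $\sum g$: once the walk arrives at $z''\in\partial\mathcal{B}(dN)$, the residual probability is simply $f(z'')$ with no need to exclude future re-entries into $D_{-1}$, so a single forward strong-Markov pass suffices, and $\sum g=\sum f-\sum e\gtrsim\sum f$ follows immediately. Your plan compares $\sum e$ to $\sum g$ directly via a last-exit decomposition at the final $D_{-1}$-visit, which is conceptually sound but, as you acknowledge, requires enumerating the $D_{-1}$-excursions and summing a geometric series in the round-trip return probability $\asymp c^{d-2}$, precisely because $\sigma^*$ is not a stopping time and the event $\{\widetilde{S}_{\sigma^*}=x^*\}$ implicitly constrains the tail of the path. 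The ingredients you invoke—transience, Lemma \ref{lemma_BM_uniform}, reversibility—are the right ones (though note $\widetilde{\mathbb{P}}_y(\tau_{\partial\mathcal{B}(dN)}<\infty)=(dN/|y|)^{d-2}$ is exact only for continuum Brownian motion; on $\widetilde{\mathbb{Z}}^d$ it holds up to constants via (\ref{cap1})--(\ref{cap2}), which is all you need), but the extra bookkeeping buys nothing beyond what the shorter forward decomposition already delivers.
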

 \textbf{P.S.} In fact, it is necessary to take the sum over $x\in \partial \mathcal{B}(d^iN)$ in (\ref{great23}). Otherwise, if one considers the hitting probability in $D_i$ for the Brownian motion starting from a fixed point $x \in \partial \mathcal{B}(d^iN)$, then imposing $D_{-i}$ as an absorbing boundary may drastically decrease this probability, as the following scenario illustrates. Let the set $D_i$ form a tunnel such that there exists a path starting from $x$ and first hitting $D_i$ within $\mathfrak{B}^i_{N,1}$, and that all such paths must intersect $\mathfrak{B}^{-i}_{N,C}$. Note that $\widetilde{\mathbb{P}}_x( \tau_{D_i}=\tau_{v}<\infty)>0$ for this $D_i$. However, for $D_{-i}=\widetilde{\partial} \mathfrak{B}^{-i}_{N,C}$, the probability $\widetilde{\mathbb{P}}_x( \tau_{D_i}=\tau_{v}<\tau_{D_{-i}})$ vanishes.  
 
 \begin{proof}
 We only provide the proof for $i=1$, since the case $i=-1$ follows similarly. For any $x\in \partial \mathcal{B}(d N)$, by the strong Markov property of Brownian motion,  
 	 \begin{equation}\label{great24}
 	 	\begin{split}
 	 		0\le & \widetilde{\mathbb{P}}_x\big( \tau_{D_1}=\tau_{v}< \infty \big) - \widetilde{\mathbb{P}}_x\big( \tau_{D_1}=\tau_{v}<\tau_{D_{-1}} \big) \\
 	 		=&  \widetilde{\mathbb{P}}_x\big( \tau_{D_{-1}} < \tau_{D_1}=\tau_{v}< \infty \big)  \\
 	 		\le & \max_{z \in \widetilde{\partial} D_{-1}}  \widetilde{\mathbb{P}}_z\big( \tau_{\partial \mathcal{B}(10dN) }<\infty \big) \cdot \max_{z'\in \partial \mathcal{B}(10dN)}   \sum\nolimits_{z''\in \partial \mathcal{B}(dN)}  \\
 	 		& \ \ \  \ \ \ \ \ \ \ \ \ \ \ \ \ \ \ \ \ \ \ \ \ \ \ \   \widetilde{\mathbb{P}}_{z'}\big( \tau_{\partial \mathcal{B}(dN)} =\tau_{z''}<\infty  \big)  
 	 		\cdot \widetilde{\mathbb{P}}_{z''}\big(\tau_{D_1}=\tau_{v}< \infty  \big). 
 	 	\end{split}
 	 \end{equation}
 	  From the potential theory of Brownian motion (see e.g., (\ref{cap1}) and (\ref{cap2}) below), one has that for any $D_{-1}\subset \mathfrak{B}^{-1}_{N,c}$ and $z \in \widetilde{\partial} D_{-1}$, 
 	  	 \begin{equation}\label{great25}
 	 	\widetilde{\mathbb{P}}_z\big( \tau_{\partial \mathcal{B}(10dN) }<\infty \big) \lesssim c^{d-2}. 
 	 \end{equation}
 	 Meanwhile, Lemma \ref{lemma_BM_uniform} implies that for any $z'\in \partial \mathcal{B}(10dN)$, 
 	 \begin{equation}\label{great26}
 	 	\widetilde{\mathbb{P}}_{z'}\big( \tau_{\partial \mathcal{B}(dN)} =\tau_{z''}<\infty  \big) \lesssim N^{1-d}. 
 	 \end{equation}
 	 Inserting (\ref{great25}) and (\ref{great26}) into (\ref{great24}), and then summing over $x\in \partial \mathcal{B}(d N)$, we get 
 	  \begin{equation}
 	  	\begin{split}
 	  		0\le &\sum\nolimits_{x\in \partial \mathcal{B}(d N)} \widetilde{\mathbb{P}}_x\big( \tau_{D_1}=\tau_{v}< \infty \big) -   \sum\nolimits_{x\in \partial \mathcal{B}(d N)} \widetilde{\mathbb{P}}_x\big( \tau_{D_1}=\tau_{v}<\tau_{D_{-1}} \big)  \\
 	  		\lesssim  & c^{d-2}N^{1-d}|\partial \mathcal{B}(d N)| \cdot  \sum\nolimits_{z''\in \partial \mathcal{B}(d N)} \widetilde{\mathbb{P}}_{z''}\big( \tau_{D_1}=\tau_{v}< \infty \big). 
 	  	\end{split}
 	  \end{equation}
 	 Combined with $|\partial \mathcal{B}(d N)|\asymp N^{d-1}$, it implies that the inequality (\ref{great23}) holds for all sufficiently small $c>0$.  
 \end{proof}
 
In what follows, we present a useful decomposition of the Green's function.
 
 \begin{lemma}\label{lemma_new_decom_green}
 	 For any $d\ge 3$, there exists $c>0$ such that for any $D\subset \widetilde{\mathbb{Z}}^d$, $R \ge 1$, and $v_1,v_2\in \widetilde{\mathbb{Z}}^d$ with $|v_1-v_2|\ge R$, 
 \begin{equation}\label{23}
 	\begin{split}
 		\widetilde{G}_{D}(v_1,v_2) \lesssim R^{2-d}\prod\nolimits_{i\in \{1,2\}}    \widetilde{\mathbb{P}}_{v_i}\big( \tau_{\partial \mathcal{B}_{v_i}(cR)}<\tau_{D} \big). 	 	
 		\end{split}
 \end{equation}
 \end{lemma}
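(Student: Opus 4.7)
The plan is to decompose $\widetilde{G}_D(v_1,v_2)$ via the first passage to the sphere $\partial\mathcal{B}_{v_2}(cR)$ and then to exploit the time-reversal symmetry of $\widetilde{G}_D$ in order to extract both escape-from-$D$ factors $p_i := \widetilde{\mathbb{P}}_{v_i}\big(\tau_{\partial\mathcal{B}_{v_i}(cR)} < \tau_D\big)$ simultaneously. A naive one-sided Markov decomposition (applied at only one of $\partial\mathcal{B}_{v_1}(cR)$ or $\partial\mathcal{B}_{v_2}(cR)$) would yield $\widetilde{G}_D(v_1,v_2) \lesssim R^{2-d}\sqrt{p_1 p_2}$, so the critical point is to arrange a two-sided decomposition that produces the full product $p_1 p_2$. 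Throughout I would fix $c \le 1/(d+1)$ so that every $x_1 \in \partial\mathcal{B}_{v_1}(cR)$ lies at distance $\ge dcR$ from $v_2$; this places $x_1$ in the far-field regime where Lemma~\ref{lemma_BM_uniform} applies to the sphere $\partial\mathcal{B}_{v_2}(cR)$.

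First, since $v_2$ is the centre of $\mathcal{B}_{v_2}(cR)$, any trajectory from $v_1$ reaching $v_2$ must first cross $\partial\mathcal{B}_{v_2}(cR)$. Applying the strong Markov property at this first crossing and using~(\ref{2.9}), I write
\begin{equation*}
\widetilde{G}_D(v_1,v_2) = \sum_{x_2 \in \partial\mathcal{B}_{v_2}(cR)} \widetilde{\mathbb{P}}_{v_1}\big(\tau_{\partial\mathcal{B}_{v_2}(cR)} = \tau_{x_2} < \tau_D\big) \cdot \widetilde{G}_D(x_2,v_2).
\end{equation*}
A further Markov decomposition at the first exit from $\mathcal{B}_{v_1}(cR)$ pulls out the factor $p_1$, and Lemma~\ref{lemma_BM_uniform} applied to $\partial\mathcal{B}_{v_2}(cR)$ (via the far-field condition on $c$) bounds the remaining conditional probability by $C R^{1-d}$, so the first factor is at most $C p_1 R^{1-d}$.

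For the second factor I would invoke the symmetry $\widetilde{G}_D(x_2,v_2)=\widetilde{G}_D(v_2,x_2)$ and rewrite it via~(\ref{2.9}) as $\widetilde{\mathbb{P}}_{v_2}(\tau_{x_2} < \tau_D)\cdot \widetilde{G}_D(x_2,x_2)$, using $\widetilde{G}_D(x_2,x_2) \lesssim 1$ from~(\ref{order_green_function}). It then suffices to establish
\begin{equation*}
\sum_{x_2 \in \partial\mathcal{B}_{v_2}(cR)} \widetilde{\mathbb{P}}_{v_2}(\tau_{x_2} < \tau_D) \lesssim p_2 \cdot R.
\end{equation*}
Markov at the first exit from $\mathcal{B}_{v_2}(cR)$ (which necessarily precedes any hit of $\partial\mathcal{B}_{v_2}(cR)$, as $v_2$ is the centre) pulls out $p_2$, reducing the task to the uniform bound $\max_{X \in \partial\mathcal{B}_{v_2}(cR)} \sum_{x_2} \widetilde{\mathbb{P}}_X(\tau_{x_2} < \infty) \lesssim R$; via~(\ref{2.9}) and $\widetilde{G}(X,x_2)\asymp (|X-x_2|+1)^{2-d}$, this is the purely geometric sum $\sum_{x_2 \in \partial\mathcal{B}_{v_2}(cR)} (|X-x_2|+1)^{2-d}$.

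The main obstacle is twofold: (i) conceptually, identifying where to insert the time-reversal symmetry so that, after two Markov decompositions (one at each ball), the product $p_1 p_2$ is produced rather than $\sqrt{p_1 p_2}$; (ii) technically, evaluating the geometric sum on the sphere, which I expect to yield $\asymp R$ via a dyadic decomposition. Indeed, since $\partial\mathcal{B}_{v_2}(cR)$ is locally $(d-1)$-dimensional, the number of $x_2$ at distance $\in[r,2r]$ from $X$ is $\asymp r^{d-1}$ for $r \le cR$, so the dyadic shell contributes $r^{d-1}\cdot r^{2-d} = r$ and the telescoping sum over $r=1,2,\ldots,cR$ gives $\asymp R$; the far-field contribution of the $\asymp R^{d-1}$ antipodal points at distance $\asymp R$ is also $\asymp R$. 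Assembling the three steps yields $\widetilde{G}_D(v_1,v_2) \lesssim p_1 R^{1-d} \cdot p_2 R = R^{2-d} p_1 p_2$, as claimed.
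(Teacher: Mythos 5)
Your plan is correct, and the crux you single out --- inserting a reversal so that two Markov decompositions yield the full product $p_1p_2$ rather than $\sqrt{p_1p_2}$ --- is precisely the paper's crux. The execution differs, though. You decompose $\widetilde{G}_D(v_1,v_2)$ directly at the first hit of $\partial\mathcal{B}_{v_2}(cR)$, bound the resulting transition kernel uniformly by $Cp_1R^{1-d}$ via Lemma~\ref{lemma_BM_uniform}, and then handle $\sum_{x_2}\widetilde{G}_D(x_2,v_2)$ through the symmetry of $\widetilde{G}_D$, the identity~(\ref{2.9}), a second Markov step at $\partial\mathcal{B}_{v_2}(cR)$ pulling out $p_2$, and the dyadic surface estimate $\sum_{x_2\in\partial\mathcal{B}_{v_2}(cR)}(|X-x_2|+1)^{2-d}\lesssim R$. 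The paper instead works at the level of the hitting probability $\widetilde{\mathbb{P}}_{v_1}(\tau_{v_2}<\tau_D)$: after the same exit-Markov step at $\mathcal{B}_{v_1}(cR)$, it applies a last-exit decomposition around $v_2$, which produces $\widetilde{G}_D(y,z)\lesssim R^{2-d}$ multiplied against a local escape kernel supported on nearest-neighbor pairs, and then time-reverses that kernel to obtain $[\widetilde{G}_D(v_2,v_2)]^{-1}p_2$ before converting back via~(\ref{2.9}). Both routes are sound; your Green's-function-symmetry substitute for the explicit path reversal is cleaner conceptually, but pays by requiring the $\lesssim R$ dyadic estimate on the discrete sphere (which you correctly anticipate, including the $x_2=X$ term contributing $1\lesssim R$), whereas the last-exit route collapses the spherical sum onto a bounded number of neighbors and so avoids the surface integral entirely.
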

  \begin{proof}
Using the strong Markov property of Brownian motion, we have 
 \begin{equation}\label{ineq26}
 	\begin{split}
 		\widetilde{\mathbb{P}}_{v_1}\big( \tau_{v_2}<\tau_{D} \big) \le  \widetilde{\mathbb{P}}_{v_1}\big( \tau_{\partial \mathcal{B}_{v_1}(cR)}<\tau_{D} \big) \cdot \max_{y\in \partial \mathcal{B}_{v_1}(cR) } \widetilde{\mathbb{P}}_{y}\big( \tau_{v_2}<\tau_{D} \big). 
 	\end{split}
 \end{equation}
 Therefore, it remains to show that for any $y\in \partial \mathcal{B}_{v_1}(cR)$, 
 \begin{equation}\label{revisenew_222}
 	\widetilde{\mathbb{P}}_{y}\big( \tau_{v_2}<\tau_{D} \big) \lesssim R^{2-d} [\widetilde{G}_D(v_2,v_2)]^{-1} \cdot  \widetilde{\mathbb{P}}_{v_2}\big( \tau_{\partial \mathcal{B}_{v_2}(cR)}<\tau_{D} \big).
 \end{equation}
 To achieve this, by the last-exit decomposition, one has  
 \begin{equation}\label{ineq27}
 	 \begin{split}
 	 		\widetilde{\mathbb{P}}_{y}\big( \tau_{v_2}<\tau_{D} \big) \lesssim &  \sum_{z\in \partial \mathcal{B}_{v_2}(cR)} \widetilde{G}_D(y,z) \sum_{z'\in \mathcal{B}_{v_2}(cR): \{z,z'\}\in \mathbb{L}^d} \widetilde{\mathbb{P}}_{z'}\big( \tau_{v_2}<\tau_{D\cup\partial \mathcal{B}_{v_2}(cR)} \big)\\
 	 		\lesssim & R^{2-d} \sum\nolimits_{z' \in F_R }  \widetilde{\mathbb{P}}_{z'}\big( \tau_{v_2}<\tau_{D\cup\partial \mathcal{B}_{v_2}(cR)} \big),
 	 \end{split}
 \end{equation}
where $F_R$ denotes the collection of points in $\mathcal{B}_{v_2}(cR)\setminus \partial  \mathcal{B}_{v_2}(cR)$ that are adjacent to $\partial  \mathcal{B}_{v_2}(cR)$. By (\ref{2.9}) and the symmetry of the Green's function, we have 
\begin{equation}\label{revisenew_224}
	\begin{split}
		\widetilde{\mathbb{P}}_{z'}\big( \tau_{v_2}<\tau_{D\cup\partial \mathcal{B}_{v_2}(cR)} \big) =&  \frac{\widetilde{G}_{D\cup\partial \mathcal{B}_{v_2}(cR)}(z',z')}{\widetilde{G}_{D\cup\partial \mathcal{B}_{v_2}(cR)}(v_2,v_2) }\cdot \widetilde{\mathbb{P}}_{v_2}\big( \tau_{z'}<\tau_{D\cup\partial \mathcal{B}_{v_2}(cR)} \big)\\
		\lesssim &  [\widetilde{G}_D(v_2,v_2)]^{-1} \cdot \widetilde{\mathbb{P}}_{v_2}\big( \tau_{z'}<\tau_{D\cup\partial \mathcal{B}_{v_2}(cR)} \big), 
	\end{split}
\end{equation}
 where the inequality follows from the facts that $\widetilde{G}_{D\cup\partial \mathcal{B}_{v_2}(cR)}(z',z')\lesssim 1$, and that $\widetilde{G}_{D\cup\partial \mathcal{B}_{v_2}(cR)}(v_2,v_2)\asymp \widetilde{G}_D(v_2,v_2)$ (by (\ref{order_green_function})). Inserting (\ref{revisenew_224}) into (\ref{ineq27}), one has 
 \begin{equation}\label{revise_new_225}
 	\begin{split}
 			\widetilde{\mathbb{P}}_{y}\big( \tau_{v_2}<\tau_{D} \big) \lesssim R^{2-d}  [\widetilde{G}_D(v_2,v_2)]^{-1} \cdot \sum\nolimits_{z' \in F_R } \widetilde{\mathbb{P}}_{v_2}\big( \tau_{z'}<\tau_{D\cup\partial \mathcal{B}_{v_2}(cR)} \big). 
 	\end{split}
 \end{equation}
 Note that each $z''\in \partial \mathcal{B}_{v_2}(cR)$ has a neighbor $z'\in F_R$. Moreover, in order for the Brownian motion $\widetilde{S}_\cdot\sim  \widetilde{\mathbb{P}}_{v_2}$ to first hit $\partial \mathcal{B}_{v_2}(cR)$ at $z''$, it may first reach $z'$ and then enter $z''$ through the edge $\{z',z''\}$. As a result,   
\begin{equation}
	\begin{split}
		 \widetilde{\mathbb{P}}_{v_2}\big( \tau_{\partial \mathcal{B}_{v_2}(cR)} =\tau_{z''}<\tau_{D} \big) \ge \tfrac{1}{2d} \cdot \widetilde{\mathbb{P}}_{v_2}\big( \tau_{z'}<\tau_{D\cup\partial \mathcal{B}_{v_2}(cR)} \big). 
	\end{split}
\end{equation} 
Summing over all $z''\in \partial\mathcal{B}_{v_2}(cR)$ yields 
 \begin{equation}\label{revisenew_227}
 	 \widetilde{\mathbb{P}}_{v_2}\big( \tau_{\partial \mathcal{B}_{v_2}(cR)}  <\tau_{D} \big) \gtrsim  \sum\nolimits_{z' \in F_R } \widetilde{\mathbb{P}}_{v_2}\big( \tau_{z'}<\tau_{D\cup\partial \mathcal{B}_{v_2}(cR)} \big). 
 \end{equation}
Plugging (\ref{revisenew_227}) into (\ref{revise_new_225}), we obtain (\ref{revisenew_222}), thereby completing the proof. 
  \end{proof}


 \textbf{Boundary excursion kernel.} For any $D\subset \widetilde{\mathbb{Z}}^d$, the boundary excursion kernel for $D$ is defined as follows: for any $v\neq w\in \widetilde{\partial}D$, 
  \begin{equation}\label{24}
 	\mathbb{K}_D(v,w):=\lim\limits_{\epsilon \downarrow 0} (2\epsilon)^{-1}\sum\nolimits_{v'\in \widetilde{\mathbb{Z}}^d:|v-v'|=\epsilon}\widetilde{\mathbb{P}}_{v'}\big(\tau_{D}=\tau_{w}<\infty \big).
 \end{equation} 
 By \cite[Lemma 3.2]{inpreparation_twoarm}, one has: for any $D\subset \widetilde{\mathbb{Z}}^d$ and $v,w\in \widetilde{\mathbb{Z}}^d\setminus D$ with $|v-w|\ge d$, 
 \begin{equation}
 	\mathbb{K}_{D\cup \{v,w\}}(v,w) \asymp \big[ \widetilde{G}_D(w,w) \big]^{-1}\cdot  \widetilde{\mathbb{P}}_w\big( \tau_v<\tau_D \big). 
 \end{equation}
 Combined with (\ref{211}) and $\widetilde{G}_D(w,w)\vee \widetilde{G}_D(v,v)\lesssim 1$, it implies that 
 \begin{equation}\label{for2.16}
 	\begin{split}
 		 \mathbb{P}^D\big( v\xleftrightarrow{\ge 0} w\big)  \lesssim   \mathbb{K}_{D\cup \{v,w\}}(v,w). 
 		  	\end{split}  
 \end{equation}

  \textbf{Capacity.} For any $D\subset \widetilde{\mathbb{Z}}^d$, the equilibrium measure for $D$ is defined by  
 \begin{equation}
 	\mathbb{Q}_D(v):=\lim\limits_{N\to \infty} \sum\nolimits_{w\in \partial B(N)} \mathbb{K}_{D\cup \partial B(N)}(v,w), \ \forall v\in \widetilde{\partial}D. 
 \end{equation} 
 The capacity of $D$ is defined as the total mass of the equilibrium measure, i.e., $\mathrm{cap}(D):= \sum\nolimits_{v\in \widetilde{\partial}D} \mathbb{Q}_D(v)$. The capacity is closely related to the hitting probability: for any $D\subset \widetilde{B}(N)$, $x\in \partial B(2N)$ and $D'\subset [\widetilde{B}(3N)]^c$,  
  \begin{equation}\label{cap1}
 	\widetilde{\mathbb{P}}_x\big(\tau_{D}<\tau_{D'} \big)\asymp N^{2-d}\cdot \mathrm{cap}(D). 
 \end{equation}
   In addition, the capacity of a box satisfies
   \begin{equation}\label{cap2}
   	\mathrm{cap}(\widetilde{B}(N))\asymp N^{d-2}, \ \ \forall N\ge 1. 
   \end{equation}



 \subsection{Loop soup}

   Recall the definitions of the loop measure $\widetilde{\mu}$ and the loop soup $\widetilde{\mathcal{L}}_{\alpha}$ in Section \ref{section_intro}. By forgetting the roots of all rooted loops, $\widetilde{\mu}$ can be considered as a measure on the space of equivalence classes of rooted loops, where two rooted loops belong to the same class whenever one is a cyclic time-shift of the other. Each equivalence class $\widetilde{\ell}$ is called a loop. Note that equivalent rooted loops have the same range. In light of this, we define $\mathrm{ran}(\widetilde{\ell})$ as $\mathrm{ran}(\widetilde{\varrho})$ for any $\widetilde{\varrho}\in \widetilde{\ell}$.

    \textbf{Isomorphism theorem.} For any $D\subset \widetilde{\mathbb{Z}}^d$, we denote $\widetilde{\mathcal{L}}_{\alpha}^{D}:=\widetilde{\mathcal{L}}_{\alpha}\cdot \mathbbm{1}_{\mathrm{ran}(\widetilde{\ell})\cap D=\emptyset}$. Referring to \cite[Proposition 2.1]{lupu2016loop}, the isomorphism theorem mentioned in Section \ref{section_intro} is valid for the metric graph $\widetilde{\mathbb{Z}}^d\setminus D$. I.e., there exists a coupling between $\{\widetilde{\phi}_v\}_{v\in \widetilde{\mathbb{Z}}^d}\sim \mathbb{P}^D$ and $\widetilde{\mathcal{L}}_{1/2}^{D}$ satisfying Properties (a) and (b) described in Section \ref{section_intro} (where $\widehat{\mathcal{L}}_{1/2}^{v}$ is replaced by $\widehat{\mathcal{L}}_{1/2}^{D,v}$---the total local time at $v$ of all loops in $\widetilde{\mathcal{L}}_{1/2}^{D}$). For later use of this isomorphism theorem, we record the following notation for clusters: 
    \begin{itemize}

    	\item For a point measure $\mathcal{L}$ consisting of paths, we denote by $\cup \mathcal{L}$ the union of ranges of all paths in $\mathcal{L}$. For any $D,A,A'\subset \widetilde{\mathbb{Z}}^d$, we denote by $A\xleftrightarrow{(D)}A'$ the event that there exists a path in $\cup \widetilde{\mathcal{L}}_{1/2}^D$ connecting $A$ and $A'$. We then define the cluster $\mathcal{C}_A^D:=\{v\in \widetilde{\mathbb{Z}}^d:v\xleftrightarrow{(D)}A\}$. When $D=\emptyset$, we write $\mathcal{C}_A^\emptyset$ as $\mathcal{C}_A$ for brevity.

    	\item  For any $A\subset \widetilde{\mathbb{Z}}^d$, we denote the positive (resp. negative) cluster containing $A$ by $\mathcal{C}_A^+:=\{v\in \widetilde{\mathbb{Z}}^d: v\xleftrightarrow{\ge 0} A\}$ (resp. $\mathcal{C}_A^-:=\{v\in \widetilde{\mathbb{Z}}^d: v\xleftrightarrow{\le 0} A\}$). The sign cluster containing $A$ is defined as  $\mathcal{C}_A^\pm:=\mathcal{C}_A^+\cup \mathcal{C}_A^-$.

       \item   For any loop cluster $\mathcal{C}$, let $\mathcal{F}_{\mathcal{C}}$ denote the $\sigma$-field generated by $\mathcal{C}$ and all loops included in $\mathcal{C}$. Similarly, for any sign cluster $\mathcal{C}'$, we denote by $\mathcal{F}_{\mathcal{C}'}$ the $\sigma$-field generated by $\mathcal{C}'$ and all GFF values on $\mathcal{C}'$.

    \end{itemize}

 \textbf{Restriction property.} Conditioned on $\{\mathcal{C}^{D}_{A}=F\}$ (for any proper realization $F$ of $\mathcal{C}^{D}_{A}$), the thinning property of Poisson point processes implies that the point measure $\widetilde{\mathcal{L}}_{1/2}^{D}\cdot \mathbbm{1}_{\mathrm{ran}(\widetilde{\ell})\cap \mathcal{C}^{D}_{A}=\emptyset}$ has the same distribution as $\widetilde{\mathcal{L}}_{1/2}^{D\cup F}$. From the perspective of GFF, the corresponding property also holds. I.e., for $\diamond\in \{+,-,\pm\}$, conditioned on $\{\mathcal{C}^{\diamond}_{A}=F\}$, the law of GFF values on $\widetilde{\mathbb{Z}}^d\setminus \mathcal{C}^{\diamond}_{A}$ is $\mathbb{P}^F$.

 \textbf{Connecting probability.} For any $A_1,A_2,D\subset \widetilde{\mathbb{Z}}^d$, the event $A_1\xleftrightarrow{(D)} A_2$ is measurable with respect to $\mathcal{C}_{A_1}^D$. Thus, when $\mathcal{C}_{A_1}^D$ is disjoint from $D'\subset \widetilde{\mathbb{Z}}^d$ (and hence consists only of loops in $\widetilde{\mathcal{L}}_{1/2}^{D\cup D'}$), we have $\mathcal{C}_{A_1}^D=\mathcal{C}_{A_1}^{D\cup D'}$ and $A_1\xleftrightarrow{(D \cup D')} A_2$. As a result, one has  
 \begin{equation}\label{2.14}
  \big\{ 	A_1\xleftrightarrow{(D)} A_2 \big\} \cap \big\{ A_1\xleftrightarrow{(D)} D' \big\}^c \subset  \big\{ \mathcal{C}_{A_1}^D=\mathcal{C}_{A_1}^{D\cup D'} \big\}\cap  \big\{A_1\xleftrightarrow{(D \cup D')} A_2\big\}. 
 \end{equation}
This immediately implies that the probability of the left-hand side is less equal to that of the right-hand side. In the next lemma, we show that the reverse inequality may hold (up to a constant factor) under appropriate conditions.

 \begin{lemma}\label{lemma24}
 	(1)  For any $3\le d\le 5$, there exists $c>0$ such that for any $D\subset \widetilde{\mathbb{Z}}^d$, $N\ge 1$, $i\in \{1,-1\}$, and $A_1,A_2\subset \mathfrak{B}^{i}_{N,c}$, 
 	\begin{equation}\label{to214}
 \mathbb{P}\big(A_1\xleftrightarrow{(D\cup \widetilde{\partial} \mathfrak{B}^{i}_{N,c})} A_2 \big) 	\lesssim  	\mathbb{P}\big(A_1\xleftrightarrow{(D)} A_2, \big\{A_1\xleftrightarrow{(D)} \partial B(N) \big\}^c   \big). 
 	\end{equation}
 (2) For any $3\le d\le 5$, there exists $C>0$ such that for any $D\subset \widetilde{\mathbb{Z}}^d$, $N_2> N_1\ge 1$, and $A_1,A_2\subset \widetilde{B}(N_2)\setminus \widetilde{B}(N_1)$, 
  	\begin{equation}\label{newto214}
  	\begin{split}
  		& \mathbb{P}\big(A_1\xleftrightarrow{(D\cup \partial B(N_1)\cup \partial B(N_2) )} A_2 \big) \\
  		 	\lesssim  &	\mathbb{P}\big(A_1\xleftrightarrow{(D)} A_2, \big\{A_1\xleftrightarrow{(D)} \partial B(\tfrac{N_1}{C})\cup \partial B(CN_2)) \big\}^c   \big). 
  	\end{split}
 	\end{equation}
 \end{lemma}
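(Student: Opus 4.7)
\emph{Proof proposal.} The approach is to exploit Poisson thinning to decouple the left-hand event from the ``extension to the distant target'', and then bound the extension by the classical crossing probability $\rho_d(n,N)$. For Part~(1), set $K:=\mathfrak{B}^i_{N,c}$ and partition every loop of $\widetilde{\mathcal{L}}_{1/2}^D$ into three classes: $\mathcal{L}^{(a)}$ (those whose range is contained in the interior of $K$), $\mathcal{L}^{(b)}$ (those with range in $K^c$), and $\mathcal{L}^{(c)}$ (those whose range touches $\widetilde{\partial}K$). By Poisson thinning these three sub-soups are mutually independent, and since $A_1,A_2\subset K$ while $\mathcal{L}^{(b)}$ lives in $K^c$, the left-hand event depends only on $\mathcal{L}^{(a)}$.

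Next, on $\{A_1\xleftrightarrow{(D)}\partial B(N)\}$, any continuous path inside the cluster of $A_1$ linking it to $\partial B(N)\subset K^c$ must exit $K$ through $\widetilde{\partial}K$; taking the last such crossing point $v\in\widetilde{\partial}K$, the remaining segment from $v$ to $\partial B(N)$ lies in $K^c$ and is therefore built from loops in $\mathcal{L}^{(b)}\cup\mathcal{L}^{(c)}$. This yields the inclusion
\begin{equation*}
\{A_1\xleftrightarrow{(D)}\partial B(N)\}\;\subset\;\mathsf{F}\,:=\,\big\{\widetilde{\partial}K\xleftrightarrow{\mathcal{L}^{(b)}\cup\mathcal{L}^{(c)}}\partial B(N)\big\},
\end{equation*}
and $\mathsf{F}$ is measurable with respect to $(\mathcal{L}^{(b)},\mathcal{L}^{(c)})$, hence independent of the left-hand event. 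Consequently,
\begin{equation*}
\mathbb{P}\big(\mathrm{LHS},\,A_1\xleftrightarrow{(D)}\partial B(N)\big)\;\le\;\mathbb{P}(\mathrm{LHS})\cdot\mathbb{P}(\mathsf{F})\;\le\;\mathbb{P}(\mathrm{LHS})\cdot\mathbb{P}\big(K\xleftrightarrow{}\partial B(N)\big),
\end{equation*}
where the last step monotonizes to the full soup. By (\ref{crossing_low}), for $3\le d\le 5$ the right-most factor is $\asymp c^{d/2+1}$ (using $\rho_d(cN,N)$ when $i=1$ and the annular crossing $\rho_d(N,c^{-1}N)$ when $i=-1$), which can be made at most $\tfrac12$ by choosing $c$ small. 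The event $\mathrm{LHS}\cap\{A_1\not\xleftrightarrow{(D)}\partial B(N)\}$ then has probability $\ge\tfrac12\mathbb{P}(\mathrm{LHS})$ and is contained in the right-hand event via the inclusion (\ref{2.14}), which establishes (\ref{to214}).

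Part~(2) follows the same template with the two separating spheres $\partial B(N_1)$ and $\partial B(N_2)$: decompose the soup into loops contained in the annulus $\widetilde{B}(N_2)\setminus\widetilde{B}(N_1)$, loops contained in $\widetilde{B}(N_1)\cup[\widetilde{B}(N_2)]^c$, and loops touching $\partial B(N_1)\cup\partial B(N_2)$. The LHS event depends only on the annular class, while the ``inward'' and ``outward'' extension events $\{A_1\xleftrightarrow{(D)}\partial B(N_1/C)\}$ and $\{A_1\xleftrightarrow{(D)}\partial B(CN_2)\}$ are bounded analogously by $\rho_d(N_1/C,N_1)$ and $\rho_d(N_2,CN_2)$, both of order $C^{-d/2-1}$ via (\ref{crossing_low}); choosing $C$ large makes their sum at most $\tfrac12$, from which (\ref{newto214}) follows. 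The main obstacle---and precisely the reason for restricting to $3\le d\le 5$---is the need for these crossing probabilities to decay as $c\to 0$ or $C\to\infty$. By (\ref{crossing_high}), for $d>6$ one has $\rho_d(cN,N)\asymp c^{d-4}N^{d-6}$, which remains of constant order at macroscopic scales and prevents this scheme from succeeding in the high-dimensional regime.
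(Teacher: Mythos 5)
Your proposal is correct and follows essentially the same route as the paper: both rely on the observation that the event $\{A_1\xleftrightarrow{(D\cup\widetilde{\partial}K)}A_2\}$ is measurable with respect to the loops contained in the interior of $K$, while the event that $\widetilde{\partial}K$ is linked to $\partial B(N)$ (your $\mathsf{F}$, the paper's $\{\widetilde{\partial}K\xleftrightarrow{(D)}\partial B(N)\}$) is measurable with respect to the disjoint class of loops that meet $K^c$, so the two are independent by Poisson thinning; one then bounds the latter's probability via the annular crossing estimate (\ref{crossing_low}), which decays as $c\to 0$ precisely when $3\le d\le 5$. The only cosmetic differences are that the paper multiplies directly by the complement $\mathsf{F}^c$ on the left whereas you subtract $\mathbb{P}(\mathrm{LHS}\cap\mathsf{F})$ and then bound the remainder, and that the final containment $\mathrm{LHS}\cap\{A_1\not\leftrightarrow\partial B(N)\}\subset\mathrm{RHS}$ is just monotonicity of the loop soup under addition of absorbing boundary (citing (\ref{2.14}) is not quite the right pointer, as that lemma records the reverse inclusion, but the fact you actually use is the trivial one).
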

 \begin{proof}
(1) Since $\big\{\widetilde{\partial} \mathfrak{B}^{i}_{N,c}\xleftrightarrow{(D)} \partial B(N)  \big\}^c$ depends only on the loops intersecting $[\mathfrak{B}^{i}_{N,c}]^c$, it is independent of the event $A_1\xleftrightarrow{(D\cup \widetilde{\partial} \mathfrak{B}^{i}_{N,c})} A_2$. Therefore,  
 \begin{equation}\label{to215}
 	\begin{split}
 		 &\mathbb{P}\big(A_1\xleftrightarrow{(D\cup \widetilde{\partial} \mathfrak{B}^{i}_{N,c})} A_2, \big\{\widetilde{\partial} \mathfrak{B}^{i}_{N,c}\xleftrightarrow{(D)} \partial B(N)  \big\}^c \big)\\
 		 =&  \mathbb{P}\big(A_1\xleftrightarrow{(D\cup \widetilde{\partial} \mathfrak{B}^{i}_{N,c})} A_2  \big)  \mathbb{P}\big( \big\{\widetilde{\partial} \mathfrak{B}^{i}_{N,c}\xleftrightarrow{(D)} \partial B(N)  \big\}^c \big) 
 		\overset{(\ref{crossing_low})}{ \gtrsim}     \mathbb{P}\big(A_1\xleftrightarrow{(D\cup \widetilde{\partial} \mathfrak{B}^{i}_{N,c})} A_2  \big).
 	\end{split}
 \end{equation}  
 Meanwhile, when $\big\{ A_1\xleftrightarrow{(D\cup \widetilde{\partial} \mathfrak{B}^{i}_{N,c})} A_2\big\}\cap \big\{\widetilde{\partial} \mathfrak{B}^{i}_{N,c}\xleftrightarrow{(D)} \partial B(N)  \big\}^c$ occurs, by $\widetilde{\mathcal{L}}_{1/2}^{D\cup \widetilde{\partial} \mathfrak{B}^{i}_{N,c}}\le \widetilde{\mathcal{L}}_{1/2}^{D}$ we have $A_1\xleftrightarrow{(D)} A_2$; moreover, since $A_1\subset \mathfrak{B}^{i}_{N,c}$, one has $\big\{A_1\xleftrightarrow{(D)} \partial B(N) \big\}^c$. Consequently, we have 
 \begin{equation}\label{to217}
 	\begin{split}
 		& \mathbb{P}\big(A_1\xleftrightarrow{(D\cup \widetilde{\partial} \mathfrak{B}^{i}_{N,c})} A_2, \big\{\widetilde{\partial} \mathfrak{B}^{i}_{N,c}\xleftrightarrow{(D)} \partial B(N)  \big\}^c \big)\\
 		\le &	\mathbb{P}\big(A_1\xleftrightarrow{(D)} A_2, \big\{A_1\xleftrightarrow{(D)} \partial B(N) \big\}^c   \big).  
 	\end{split}
 \end{equation}
Combined with (\ref{to215}), it implies (\ref{to214}).

(2) For the same reason as in (\ref{to215}), we have 
\begin{equation}\label{to218}
	\begin{split}
		&\mathbb{P}\big( \big\{A_1\xleftrightarrow{(D\cup \partial B(N_1)\cup \partial B(N_2) )} A_2\big\}\cap  \big\{  B(\tfrac{N_1}{C})\xleftrightarrow{(D)} \partial B(N_1) \big\}^c  \\
		& \ \  \ \ \ \ \ \ \ \ \ \ \ \ \ \ \ \ \ \ \ \ \ \ \ \ \ \ \ \ \ \ \ \ \ \  \cap  \big\{ B( N_2)\xleftrightarrow{(D)} \partial B(CN_2)  \big\}^c \big) \\
		=&\mathbb{P}\big( A_1\xleftrightarrow{(D\cup \partial B(N_1)\cup \partial B(N_2) )} A_2 \big) \\
		&\cdot \mathbb{P}\big(  \big\{  B(\tfrac{N_1}{C})\xleftrightarrow{(D)} \partial B(N_1) \big\}^c \cap \big\{ B( N_2)\xleftrightarrow{(D)} \partial B(CN_2)  \big\}^c \big) \\
		\overset{\text{(FKG)},(\ref{crossing_low})}{\gtrsim} & \mathbb{P}\big( A_1\xleftrightarrow{(D\cup \partial B(N_1)\cup \partial B(N_2))} A_2 \big). 
	\end{split}
\end{equation}
In addition, as in (\ref{to217}), the event on the left-hand side of (\ref{to218}) implies that the cluster $\mathcal{C}_{A_1}^{D}$ is contained in the annulus $\widetilde{B}(CN_2)\setminus \widetilde{B}(\frac{N_1}{C})$. This observation together with (\ref{to218}) yields (\ref{newto214}). 
 \end{proof}

 As verified in \cite{cai2024quasi}, the connecting probability between two general sets is stable under the addition of certain zero boundary conditions. Precisely, referring to the proof of \cite[Proposition 1.8]{cai2024quasi} in \cite[Section 5]{cai2024quasi}, we have the following property:

 \begin{lemma}\label{lemma_stability_bd}
 	For any $d\ge 3$, there exist $c,c'>0$ such that for any $i\in \{1,-1\}$, $N\ge 1$, $A,D\subset \mathfrak{B}^i_{N,c}$ and $D'\subset \mathfrak{B}^{-i}_{N,c'}$, 
 	\begin{equation}
 \mathbb{P}\big(A\xleftrightarrow{(D)} \partial B(N)  \big) \asymp \mathbb{P}\big( A\xleftrightarrow{(D\cup D')} \partial B(N) \big).
 	\end{equation} 
 \end{lemma}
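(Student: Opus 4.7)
The direction $\mathbb{P}\bigl(A\xleftrightarrow{(D)} \partial B(N)\bigr)\gtrsim \mathbb{P}\bigl(A\xleftrightarrow{(D\cup D')} \partial B(N)\bigr)$ is immediate from the thinning property of Poisson point processes: the point measure $\widetilde{\mathcal{L}}_{1/2}^{D\cup D'}$ is obtained from $\widetilde{\mathcal{L}}_{1/2}^{D}$ by discarding the loops that touch $D'$, so any cluster connection in the former is a connection in the latter. For the matching lower bound, applying the set inclusion \eqref{2.14} with $A_1=A$, $A_2=\partial B(N)$ and obstacle sets $D,D'$ yields
\begin{equation*}
\mathbb{P}\bigl(A\xleftrightarrow{(D\cup D')} \partial B(N)\bigr)\ \ge\ \mathbb{P}\bigl(A\xleftrightarrow{(D)} \partial B(N)\bigr)\ -\ \mathbb{P}\bigl(A\xleftrightarrow{(D)} D'\bigr).
\end{equation*}
It therefore suffices to produce $c,c'>0$ such that
\begin{equation*}
\mathbb{P}\bigl(A\xleftrightarrow{(D)} D'\bigr)\ \le\ \tfrac{1}{2}\,\mathbb{P}\bigl(A\xleftrightarrow{(D)} \partial B(N)\bigr)
\end{equation*}
uniformly in $N,A,D,D'$ in the allowed ranges.

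To establish this reduction I treat the case $i=1$ (the case $i=-1$ is entirely symmetric under swapping the inner and outer scales). Since $A\subset \widetilde{B}(cN)$ while $D'\subset [\widetilde{B}((c')^{-1}N)]^c$, every loop cluster of $\widetilde{\mathcal{L}}_{1/2}^D$ joining $A$ to $D'$ must cross the intermediate sphere $\partial B(\tfrac{1}{2}(c')^{-1}N)$, giving $\mathbb{P}(A\xleftrightarrow{(D)} D')\le \mathbb{P}\bigl(A\xleftrightarrow{(D)} \partial B(\tfrac{1}{2}(c')^{-1}N)\bigr)$. I then invoke the quasi-multiplicativity of the GFF one-arm event in the presence of a local obstacle, which is the engine of \cite[Proposition~1.8]{cai2024quasi}, to get
\begin{equation*}
\mathbb{P}\bigl(A\xleftrightarrow{(D)} \partial B(\tfrac{1}{2}(c')^{-1}N)\bigr)\ \lesssim\ \mathbb{P}\bigl(A\xleftrightarrow{(D)} \partial B(N)\bigr)\cdot \tfrac{\theta_d((c')^{-1}N)}{\theta_d(N)}.
\end{equation*}
By the one-arm asymptotics \eqref{one_arm_low} and \eqref{one_arm_high}, the ratio $\theta_d((c')^{-1}N)/\theta_d(N)$ is a positive power of $c'$ (namely $(c')^{d/2-1}$ for $3\le d\le 5$ and $(c')^{2}$ for $d\ge 7$), hence can be made smaller than $\tfrac{1}{2}$ by taking $c'$ sufficiently small.

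The main obstacle in executing this plan is verifying the above quasi-multiplicativity uniformly in the obstacle $D$ and the source set $A$. In \cite{cai2024quasi} the estimate is proved for $A=\{\bm{0}\}$ and $D=\emptyset$; allowing general $A$ and $D$ requires redoing the annular decomposition of \cite[Section~5]{cai2024quasi} with the Green's function $\widetilde{G}_D$ in place of $\widetilde{G}$ and with $A$ taking the role of the origin. The parameter $c$ must be taken small enough that $D\subset\widetilde{B}(cN)$ is well-separated from the crossing annulus $B(\tfrac{1}{2}(c')^{-1}N)\setminus B(N)$; Lemma~\ref{lemma_new_ BM_stable} then guarantees that the Brownian-motion hitting estimates feeding the gluing argument are preserved up to constants, and the remainder of the argument of \cite[Section~5]{cai2024quasi} carries through verbatim. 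I do not anticipate any genuinely new probabilistic input beyond what is already present in \cite{cai2024quasi}.
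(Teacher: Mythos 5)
Your reduction via the set inclusion \eqref{2.14} and inclusion--exclusion is sound, and for $3\le d\le 5$ the plan can be made to work: the boundary-to-set scaling $\mathbb{P}(A\xleftrightarrow{(D)}\partial B(R))\,R^{d/2-1}\asymp \mathrm{const}$ holds uniformly over $A,D\subset\widetilde B(cR')$ for $R\ge R'$ (combine Lemma~\ref{lemma_213} with its matching lower bound from \cite[Proposition~1.9]{cai2024quasi} and Lemma~\ref{lemma_point_to_set}), so $\mathbb{P}(A\xleftrightarrow{(D)}D')\le\mathbb{P}\bigl(A\xleftrightarrow{(D)}\partial B(\tfrac12(c')^{-1}N)\bigr)\lesssim (c')^{d/2-1}\,\mathbb{P}(A\xleftrightarrow{(D)}\partial B(N))$, which is what you need.

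The plan fails, however, in high dimensions $d\ge 7$. Your central inequality --- that the ratio $\mathbb{P}(A\xleftrightarrow{(D)}\partial B(M))/\mathbb{P}(A\xleftrightarrow{(D)}\partial B(N))$ is bounded by $\theta_d(M)/\theta_d(N)\asymp(N/M)^2$ --- is simply false for general $A\subset\widetilde B(cN)$. Take $A=\widetilde{\partial}\widetilde B(cN/2)$ and $D=\emptyset$. By \eqref{crossing_high}, for $d\ge 7$,
\begin{equation*}
\mathbb{P}\bigl(A\xleftrightarrow{}\partial B(R)\bigr)\asymp\rho_d(cN/2,R)\asymp\bigl((cN)^{d-4}R^{-2}\bigr)\land 1,
\end{equation*}
and since $(cN)^{d-4}R^{-2}\asymp c^{d-4}(c')^2 N^{d-6}\to\infty$ for $d\ge 7$, both choices $R=N$ and $R=\tfrac12(c')^{-1}N$ give a probability of order $1$ once $N$ is large. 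Thus the ratio is of order $1$ no matter how small $c'$ is, whereas your argument requires it to be $\lesssim(c')^2$, and the subtraction $\mathbb{P}(A\xleftrightarrow{(D)}\partial B(N))-\mathbb{P}(A\xleftrightarrow{(D)}D')$ is useless because both terms are $\asymp 1$. The same problem arises for $i=-1$. This is exactly the obstruction noted in \cite[Remark~1.3]{cai2024quasi}: above six dimensions the scaling $\mathbb{P}(A\xleftrightarrow{(D)}\partial B(R))\,R^2\approx\mathrm{const}$ holds only when $A,D$ lie in the drastically smaller box $\widetilde B(cN^{2/(d-4)})$, and fails at scale $\widetilde B(cN)$ whenever $A$ has capacity of order $N^{d-2}$. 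So your claim that ``allowing general $A$ and $D$ requires redoing the annular decomposition... I do not anticipate any genuinely new probabilistic input'' underestimates the difficulty: for $d\ge 7$ the difference-bound strategy cannot deliver the lemma, and one genuinely needs the multiplicative stability argument of \cite[Section~5]{cai2024quasi} (which the paper cites in place of a proof), where far-away boundary conditions are shown to change each relevant hitting/two-point quantity by a bounded multiplicative factor through a loop-crossing decomposition, rather than by showing a putative error term is additively small.
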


 In addition to Lemma \ref{lemma_stability_bd}, we also present the corresponding stability for point-to-set connecting probabilities as follows.

 \begin{lemma}\label{lemma_new_stable}
 	For any $3\le d\le 5$, there exist $C,c>0$ such that for any $N\ge C$, $x\in \partial B(N)$, $A,D\subset \mathfrak{B}^1_{N,c}$ and $D'\subset \mathfrak{B}^{-1}_{N,c}$, 
 	\begin{equation}\label{ineq_new_stable}
 		 \mathbb{P}\big(A\xleftrightarrow{(D)} x  \big) \asymp \mathbb{P}\big( A\xleftrightarrow{(D\cup D')} x \big). 
 	\end{equation}
 	For any $d\ge 7$, there exists $c'>0$ such that (\ref{ineq_new_stable}) is valid if the following additional condition holds:
 	\begin{equation}\label{condition_new_stable}
 		 \sum\nolimits_{z\in \mathbb{Z}^d: \widetilde{B}_z(1)\cap D'\neq \emptyset}|z|^{2-d}\le c'N^{6-d}. 
 		  	\end{equation}	
 \end{lemma}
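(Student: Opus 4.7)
The easy direction $\mathbb{P}(A\xleftrightarrow{(D\cup D')} x)\le \mathbb{P}(A\xleftrightarrow{(D)} x)$ follows from the monotonicity $\widetilde{\mathcal{L}}_{1/2}^{D\cup D'}\subset\widetilde{\mathcal{L}}_{1/2}^{D}$. For the matching lower bound, the plan is to decompose based on whether the cluster $\mathcal{C}_A^D$ reaches the far set $D'$:
\begin{equation*}
\mathbb{P}(A\xleftrightarrow{(D)} x)=\mathbb{P}(A\xleftrightarrow{(D)} x,\,A\not\xleftrightarrow{(D)} D')+\mathbb{P}(A\xleftrightarrow{(D)} x,\,A\xleftrightarrow{(D)} D').
\end{equation*}
By the inclusion (\ref{2.14}), the first summand is at most $\mathbb{P}(A\xleftrightarrow{(D\cup D')} x)$; thus it suffices to show that the ``bad'' second summand is at most $\tfrac{1}{2}\mathbb{P}(A\xleftrightarrow{(D)} x)$ under the hypotheses on $c$ (and $c'$ for $d\ge 7$).

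For $3\le d\le 5$, since $D'\subset [\widetilde{B}(c^{-1}N)]^c$, the bad event is contained in $\{A\xleftrightarrow{(D)} x\}\cap\{A\xleftrightarrow{(D)}\partial B(c^{-1}N)\}$. Inserting the intermediate sphere $\partial B(c^{-1/2}N)$ as a free boundary and running a cluster-exploration argument analogous to the proof of Lemma \ref{lemma24}(2), combined with Lemma \ref{lemma_stability_bd} (to absorb the added zero boundary at constant cost) and Lemma \ref{lemma_BM_stable} (for Brownian stability near $x$), I would establish the decorrelation estimate
\begin{equation*}
\mathbb{P}(A\xleftrightarrow{(D)} x,\,A\xleftrightarrow{(D)}\partial B(c^{-1}N))\lesssim \mathbb{P}(A\xleftrightarrow{(D)} x)\cdot\rho_d(c^{-1/2}N,c^{-1}N).
\end{equation*}
By (\ref{crossing_low}) the second factor is $\asymp c^{(d+2)/4}$, which is $\le \tfrac{1}{2}$ for $c$ sufficiently small. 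For $d\ge 7$, this crossing probability no longer decays in $c$; here I would instead use a mean-field three-point bound $\mathbb{P}(v\xleftrightarrow{(D)} x,\,v\xleftrightarrow{(D)} z)\lesssim \mathbb{P}(v\xleftrightarrow{(D)} x)\cdot\widetilde{G}(v,z)$ for $v\in\widetilde{\partial}A$ and $z\in D'$, derivable (after sign-flipping) from the heterochromatic two-arm estimates of \cite{inpreparation_twoarm}. Summing over $z\in D'$ and applying the hypothesis (\ref{condition_new_stable}) yields $\mathbb{P}(A\xleftrightarrow{(D)} x,\,A\xleftrightarrow{(D)} D')\lesssim c'N^{6-d}\cdot\mathbb{P}(A\xleftrightarrow{(D)} x)$, which is small for $N$ sufficiently large since $6-d\le -1$ in the relevant range.

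The main technical obstacle is rigorously implementing the decorrelation for the point target $x\in\partial B(N)$, rather than for a set like $\partial B(N)$ as addressed in Lemma \ref{lemma_stability_bd}, given that loop soups do not admit a simple spatial Markov property. My strategy is to explore $\mathcal{C}_A^D$ outward from $A$, sum over the first-hit locations $y$ on the intermediate sphere, and invoke Lemma \ref{lemma_new_ BM_stable}---whose formulation as a sum over starting points (rather than a single starting point, as warned against in its PS note) is precisely what allows one to transfer the BM-level stability to the connecting event targeted at a specific point $x$. In the high-dimensional case, the subtlety lies in justifying the three-point bound uniformly in the arbitrary interior boundary $D$, and in confirming that the capacity-type scaling of (\ref{condition_new_stable}) matches the natural scale of the outward arm from the cluster to $D'$.
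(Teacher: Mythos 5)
For $3\le d\le 5$, your route is genuinely different from the paper's, which dispenses with this case in three lines: by \cite[Proposition~1.9]{cai2024quasi}, $\mathbb{P}(A\xleftrightarrow{(D)}x)\asymp N^{1-\frac{d}{2}}\mathbb{P}(A\xleftrightarrow{(D)}\partial B(N))$; then Lemma~\ref{lemma_stability_bd} gives stability of the boundary-to-set probability under the added zero boundary $D'$; then one converts back. This sidesteps the ``main technical obstacle'' you identify (the point target $x$) by trading it for the boundary target $\partial B(N)$, for which stability is already packaged. Your decomposition on whether $A\xleftrightarrow{(D)}D'$ is logically sound, and the decorrelation estimate you assert is plausible, but it is not a one-liner: you must show that, conditionally on the revealed cluster inside $\widetilde{B}(c^{-1/2}N)$, the outward crossing probability stays of order $\rho_d$ uniformly over realizations, which essentially re-proves a piece of Lemma~\ref{lemma_stability_bd}. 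So for low dimensions your route is longer but should close.

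For $d\ge 7$ there is a genuine gap, and it is twofold. First, the three-point bound you quote, $\mathbb{P}(v\xleftrightarrow{(D)}x,\,v\xleftrightarrow{(D)}z)\lesssim\mathbb{P}(v\xleftrightarrow{(D)}x)\,\widetilde{G}(v,z)$, is a single-source-point statement; but the lemma concerns an \emph{arbitrary} $A\subset\widetilde{B}(cN)$, and in $d\ge 7$ the one-arm probability from such a set to a remote point carries the factor $[\mathrm{diam}(A)]^{d-4}$ (cf.\ \cite[Corollary~2.18]{cai2024quasi}, used as (\ref{greatB42})). The set-version of your three-point bound inherits this factor, and for $\mathrm{diam}(A)\asymp cN$ the sum over $z\in D'$ against (\ref{condition_new_stable}) gives an upper bound that is \emph{not} $o(1)$ relative to $\mathbb{P}(A\xleftrightarrow{(D)}x)$. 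Second, and more structurally, your decomposition bounds the too-large event $\{A\xleftrightarrow{(D)}x\}\cap\{A\xleftrightarrow{(D)}D'\}$ rather than the pivotality event $\mathsf{F}=\{A\xleftrightarrow{(D)}x\}\setminus\{A\xleftrightarrow{(D\cup D')}x\}$. On $\mathsf{F}$, the pivotal loop forces the arms $\{w_1\xleftrightarrow{(D)}A\}$ and $\{w_2\xleftrightarrow{(D)}x\}$ to occur disjointly; this disjointness, fed into the triangle loop kernel $|w_1-w_2|^{2-d}|w_2-w_3|^{2-d}|w_3-w_1|^{2-d}$ of \cite[Lemma~2.3]{cai2024high} together with the crossing-path decomposition of \cite[Section~2.6.3]{cai2024high} (indispensable in the paper's Case~4, where both $w_1,w_2$ may lie near the origin), is precisely what lets the paper squeeze the final bound down to $(N^{6-d}+c')\cdot\mathbb{P}(A\xleftrightarrow{(D)}x)$. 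A naive union bound over $z\in D'$ discards this structure, and the high-dimensional argument as sketched does not close; it is also not a routine consequence of estimates in \cite{inpreparation_twoarm}.
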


 The proof of Lemma \ref{lemma_new_stable} will be provided in Appendix \ref{app0_stable}.

%

 The next lemma shows that this stability holds for the average of two-point functions on the boundary of a Euclidean ball, with more general absorbing boundaries.

 	 \begin{lemma}\label{lemma_new26}
 	 	For any $3\le d\le 5$, there exists $c>0$ such that for any $D\subset \widetilde{\mathbb{Z}}^d$, $r_{-1}>r_1\ge 1$, $i\in \{1,-1\}$, $D'\subset \mathfrak{B}^i_{r_i,c}$, and $x\in \partial \mathcal{B}(r_{-i})$, 
 	 	\begin{equation}\label{for220}
 	 		\begin{split}
 	 			 \sum\nolimits_{y\in  \partial \mathcal{B}(r_{i})}  \mathbb{P}\big( x\xleftrightarrow{(D)}  y \big) \asymp \sum\nolimits_{y\in  \partial \mathcal{B}(r_{i})}  \mathbb{P}\big( x\xleftrightarrow{(D\cup D')}  y \big). 
 	 			  	 		\end{split}
 	 	\end{equation} 
 	 \end{lemma}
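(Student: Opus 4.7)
One direction, $\sum_y \mathbb{P}\bigl(x \xleftrightarrow{(D \cup D')} y\bigr) \le \sum_y \mathbb{P}\bigl(x \xleftrightarrow{(D)} y\bigr)$, is immediate from monotonicity of the loop soup in its killing set. The content lies in the reverse inequality. The plan is to reduce the loop-soup comparison to a Brownian-motion hitting-probability comparison and then invoke Lemma \ref{lemma_new_ BM_stable}.

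By (\ref{211}) and (\ref{2.9}),
\[
\mathbb{P}\bigl(x \xleftrightarrow{(D)} y\bigr) \asymp \widetilde{\mathbb{P}}_y(\tau_x < \tau_D)\sqrt{\widetilde{G}_D(x,x)/\widetilde{G}_D(y,y)},
\]
and the same formula holds for $D \cup D'$. Because $D' \subset \mathfrak{B}^{i}_{r_i,c}$ lies at distance $\gtrsim r_i$ from $x \in \partial \mathcal{B}(r_{-i})$ and from every $y \in \partial \mathcal{B}(r_i)$ once $c$ is small, a direct strong-Markov computation combined with the capacity bounds (\ref{cap1})--(\ref{cap2}) yields $\widetilde{G}_{D \cup D'}(y,y) = (1 - O(c^{d-2}))\widetilde{G}_D(y,y)$ and similarly for $x$, uniformly in $y$ (even for $y$ arbitrarily close to $D$, since the correction is a small multiple of $\widetilde{G}_D(y,y)$ itself). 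The self-Green's-function weights therefore cancel in the comparison, and the lemma reduces to
\[
\sum_{y \in \partial \mathcal{B}(r_i)} \widetilde{\mathbb{P}}_y(\tau_x < \tau_D) \;\asymp\; \sum_{y \in \partial \mathcal{B}(r_i)} \widetilde{\mathbb{P}}_y(\tau_x < \tau_{D \cup D'}).
\]

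This hitting-probability comparison is an application of Lemma \ref{lemma_new_ BM_stable} with $D_{i_L} := D \cup \{x\}$, $v := x$, $D_{-i_L} := D'$, $i_L := -i$, and $N := d^{-i_L} r_i$, so that the summation sphere $\partial \mathcal{B}(d^{i_L} N)$ coincides with $\partial \mathcal{B}(r_i)$. The identities $\{\tau_{D \cup \{x\}} = \tau_x < \tau_{D'}\} = \{\tau_x < \tau_{D \cup D'}\}$ and $\{\tau_{D \cup \{x\}} = \tau_x < \infty\} = \{\tau_x < \tau_D\}$ then transform the conclusion of Lemma \ref{lemma_new_ BM_stable} into exactly what is needed. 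The hypotheses $v \in \mathfrak{B}^{i_L}_{N,1}$ and $D_{-i_L} \subset \mathfrak{B}^{-i_L}_{N,c'}$ translate, after shrinking $c$, to our standing assumptions on $x$ and $D'$, provided $r_i$ and $r_{-i}$ are separated by at least a factor of $d$.

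The main obstacle is the borderline case in which $r_{-i}$ and $r_i$ differ by less than a factor of $d$, where $v = x$ fails to lie in $\mathfrak{B}^{i_L}_{N,1}$ and Lemma \ref{lemma_new_ BM_stable} is not directly applicable. I would handle this by running the Brownian motion starting from each $y \in \partial \mathcal{B}(r_i)$ up to its first hit of an auxiliary sphere at a scale well-separated from both $r_i$ and $r_{-i}$, and then applying Lemma \ref{lemma_new_ BM_stable} on that intermediate sphere. By Lemma \ref{lemma_BM_uniform}, the harmonic measure on the intermediate sphere is comparable to uniform, so the per-$y$ decomposition transfers the stability back to the sum over $\partial \mathcal{B}(r_i)$ at the cost of a universal constant factor. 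Combining this with the well-separated case yields (\ref{for220}).
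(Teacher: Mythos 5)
Your overall strategy matches the paper's proof: reduce from loop-soup connecting probabilities to Brownian hitting probabilities and self-Green's-function ratios via (\ref{211}), handle the ratio factors separately, and invoke Lemma \ref{lemma_new_ BM_stable} for the summed hitting probabilities. Your setup for Lemma \ref{lemma_new_ BM_stable} (with $D_{i_L}=D\cup\{x\}$, $v=x$, $N=d^{-i_L}r_i$) is the same as what the paper implicitly does, and you correctly identify that it requires $r_{-i}/r_i$ to exceed a factor of $d$.

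Two remarks. First, for the Green's-function ratios your $1-O(c^{d-2})$ expansion is correct but not needed. The paper invokes (\ref{order_green_function}), $\widetilde{G}_D(w,w)\asymp\mathrm{dist}(D,w)\wedge 1$, and observes that for $w\in\{x,y\}$ the hypotheses force $\mathrm{dist}(D',w)\ge 1$, hence $\mathrm{dist}(D\cup D',w)\wedge 1=\mathrm{dist}(D,w)\wedge 1$, which gives $\widetilde{G}_D(w,w)\asymp\widetilde{G}_{D\cup D'}(w,w)$ immediately and uniformly in $y$, without any expansion.

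Second, and more seriously, the auxiliary-sphere argument you sketch for the borderline regime $r_{-i}<dr_i$ has a gap. You invoke Lemma \ref{lemma_BM_uniform} to claim the harmonic measure on the intermediate sphere is comparable to uniform, so that the per-$y$ decomposition lets you average Lemma \ref{lemma_new_ BM_stable} back onto $\partial\mathcal{B}(r_i)$. But the hitting measure you actually need to control is that of Brownian motion started at $y$ and killed on $D\cup\{x\}$, and $D$ is completely arbitrary. Such a conditioned harmonic measure can be concentrated on an arbitrarily small portion of the intermediate sphere (think of a $D$ that nearly shields the sphere except for a narrow corridor), and Lemma \ref{lemma_BM_uniform}, which concerns free Brownian motion, gives no control over it. The conclusion of Lemma \ref{lemma_new_ BM_stable} is a comparison of \emph{unweighted} sums over a sphere; to propagate it to the weighted sum arising from your decomposition you would need those weights to be comparable to uniform, and that is precisely what fails. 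As written, the borderline case is not closed, and a different reduction (for example, decomposing along the last exit from a small neighbourhood of $D'$ with the $D$-killed Green's function kept explicit, or proving a version of Lemma \ref{lemma_new_ BM_stable} with the summing sphere placed at the scale of $D'$ and then comparing that sum to the one over $\partial\mathcal{B}(r_i)$) would be needed.
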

 	 \textbf{P.S.} As in Lemma \ref{lemma_new_ BM_stable}, it is necessary to take the sum over $y\in  \partial \mathcal{B}(r_{i})$ in (\ref{for220}), which can be explained via an example analogous to the one proposed below (\ref{great23}). The same applies to the subsequent corollaries of Lemma \ref{lemma_new26}.

 	 \begin{proof}
 	 The proofs for $i=1$ and $i=-1$ rely on similar arguments, so we only show the details for $i=1$. Since $\widetilde{\mathcal{L}}_{1/2}^D\ge \widetilde{\mathcal{L}}_{1/2}^{D\cup D'}$, the left-hand side of (\ref{for220}) is greater equal to the right-hand side. Thus, by (\ref{211}), it suffices to show that
 	 	 \begin{equation}\label{for221}
 	 	 	 \sum_{y\in  \partial \mathcal{B}(r_{1})} \widetilde{\mathbb{P}}_{x}\big( \tau_{y}<\tau_{D}\big)  \sqrt{\tfrac{\widetilde{G}_D(y,y)}{\widetilde{G}_D(x,x)}}   \lesssim  \sum_{y\in  \partial \mathcal{B}(r_{1})} \widetilde{\mathbb{P}}_{x}\big( \tau_{y}<\tau_{D\cup D'}\big)  \sqrt{\tfrac{\widetilde{G}_{D\cup D'}(y,y)}{\widetilde{G}_{D\cup D'}(x,x)}} .  
 	 	 \end{equation}
 	 	 To see this, it follows from Lemma \ref{lemma_new_ BM_stable} that  
 	 		\begin{equation}\label{for223}
 	 		\sum\nolimits_{y\in  \partial \mathcal{B}(r_{1})}\widetilde{\mathbb{P}}_{x}\big( \tau_{y}<\tau_{D}\big) \asymp   \sum\nolimits_{y\in  \partial \mathcal{B}(r_{1})}\widetilde{\mathbb{P}}_{x}\big( \tau_{y}<\tau_{D\cup D'}\big)  . 
 	 	\end{equation}
 	 	 Meanwhile, note that $\mathrm{dist}(D',z)\ge 1$ for all $z\in \partial \mathcal{B}(r_{1})\cup  \partial \mathcal{B}(r_{-1})$. Thus, for each $w\in \{x,y\}$ (where $x\in \partial \mathcal{B}(r_{-1})$ and $y\in \partial \mathcal{B}(r_{1})$), it follows from (\ref{order_green_function}) that 
 	 	\begin{equation}\label{great236}
 	 		\widetilde{G}_D(w,w) \asymp \mathrm{dist}(D,w)\land 1 \asymp \mathrm{dist}(D\cup D',w)\land 1 \asymp \widetilde{G}_{D\cup D'}(w,w). 
 	 	\end{equation}
 	 	 Combining (\ref{for223}) and (\ref{great236}), we get (\ref{for221}) and thus complete the proof. 
 	 	  	 \end{proof}

 In what follows, we present several corollaries of Lemmas \ref{lemma24} and \ref{lemma_new26}. We say an event $\mathsf{A}$ is a two-point event for $\widetilde{\mathcal{L}}_{1/2}^D$ outside $D'$ if there exist $v,w\in \widetilde{\mathbb{Z}}^d$ such that $\mathsf{A}=\big\{v \xleftrightarrow{(D)}  w \big\}\cap \big\{v \xleftrightarrow{(D)}  D' \big\}^c$. The intersection of finitely many two-point events is called a pairwise connecting event.

 \begin{corollary}\label{coro27}
 We maintain the same conditions for $d,D,i,r_{-1}$ and $r_1$ as in Lemma \ref{lemma_new26}. Then there exist $c>c'>0$ such that the following hold:
 \begin{enumerate}

 	\item For any $x\in \partial \mathcal{B}(r_{-i})$, and any pairwise connecting event $\mathsf{A}$ for $\widetilde{\mathcal{L}}_{1/2}^D$ outside $[\mathfrak{B}^i_{r_i,c'}]^c$, 
 	\begin{equation}\label{form228}
 	\begin{split}
 		 		 \sum_{y\in \partial \mathcal{B}(r_i)} \mathbb{P}\big( \mathsf{A}\cap \big\{x\xleftrightarrow{(D)} y \big\}\cap  \big\{x\xleftrightarrow{(D)} \widetilde{\partial}\mathfrak{B}^i_{r_i,c}  \big\}^c \big)  
 		 		 \gtrsim   \mathbb{P}\big( \mathsf{A} \big) \cdot \sum_{y\in \partial \mathcal{B}(r_i)} \mathbb{P}\big( x \xleftrightarrow{(D)} y \big). 
 	\end{split}
 	\end{equation}

 	\item   For any pairwise connecting event $\mathsf{A}'$ for $\widetilde{\mathcal{L}}_{1/2}^D$ outside $\mathfrak{B}^{-1}_{r_{-1},c'}\setminus \mathfrak{B}^1_{r_1,c'} $,
 	\begin{equation}\label{form229}
 		\begin{split}
 			&	 \sum_{x\in \partial \mathcal{B}(r_1),y\in \partial \mathcal{B}(r_{-1})} \mathbb{P}\big( \mathsf{A}'\cap \big\{x\xleftrightarrow{(D)} y \big\}\cap  \big\{x\xleftrightarrow{(D)} \widetilde{\partial}\mathfrak{B}^1_{r_1,c}  \cup \widetilde{\partial}\mathfrak{B}^{-1}_{r_{-1},c}  \big\}^c \big)  \\
 				 \gtrsim & \mathbb{P}\big( \mathsf{A}' \big) \cdot  \sum_{x\in \partial \mathcal{B}(r_1),y\in \partial \mathcal{B}(r_{-1})} \mathbb{P}\big( x \xleftrightarrow{(D)} y \big).  
 		\end{split}
 	\end{equation}
 \end{enumerate}
 \end{corollary}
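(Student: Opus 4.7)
I would prove Part~(1) with $i=1$ (the case $i=-1$ is symmetric) and then indicate the parallel handling of Part~(2); recall throughout that Lemma~\ref{lemma_new26}, and hence Corollary~\ref{coro27}, is restricted to $3\le d\le 5$.

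\textbf{Step 1: Poissonian splitting of the loop soup.} Let $c_0>0$ be the constant from Lemma~\ref{lemma_new26}. I would fix $c\in(0,c_0)$ and $c'\in(0,c/10)$, with $c$ to be shrunk at the end. By Poisson thinning I would decompose $\widetilde{\mathcal{L}}_{1/2}^D$ into three independent sub-processes based on the location of loop ranges: $\widetilde{\mathcal{L}}_{\mathrm{in}}$ (loops with range in $\widetilde{B}(cr_1)$), $\widetilde{\mathcal{L}}_{\mathrm{out}}$ (loops with range in the complement), and $\widetilde{\mathcal{L}}_{\mathrm{cross}}$ (loops whose range meets $\widetilde{\partial}\widetilde{B}(cr_1)$). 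Since $\mathsf{A}$ is a pairwise connecting event outside $[\mathfrak{B}^1_{r_1,c'}]^c$, the restriction property forces each cluster defining $\mathsf{A}$ to lie in $\mathfrak{B}^1_{r_1,c'}=\widetilde{B}(c'r_1)\subset\widetilde{B}(cr_1)$. Hence $\mathsf{A}$ is measurable with respect to the sub-collection of $\widetilde{\mathcal{L}}_{\mathrm{in}}$ consisting of loops contained in $\widetilde{B}(c'r_1)$, and in particular is independent of $\widetilde{\mathcal{L}}_{\mathrm{out}}\cup\widetilde{\mathcal{L}}_{\mathrm{cross}}$.

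\textbf{Step 2: Independent factorization of the $y$-event.} On $\{x\xleftrightarrow{(D)}\widetilde{\partial}\widetilde{B}(cr_1)\}^c$ the cluster $\mathcal{C}_x^D$ is contained in $\widetilde{B}(cr_1)^c$ and thus can contain no loop from $\widetilde{\mathcal{L}}_{\mathrm{in}}\cup\widetilde{\mathcal{L}}_{\mathrm{cross}}$ (any such loop would meet $\widetilde{B}(cr_1)$ or its boundary). I would infer that
\[
E_y:=\bigl\{x\xleftrightarrow{(D)}y\bigr\}\cap\bigl\{x\xleftrightarrow{(D)}\widetilde{\partial}\widetilde{B}(cr_1)\bigr\}^c
\]
is $\sigma(\widetilde{\mathcal{L}}_{\mathrm{out}}\cup\widetilde{\mathcal{L}}_{\mathrm{cross}})$-measurable and hence independent of $\mathsf{A}$. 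Summing over $y\in\partial\mathcal{B}(r_1)$ and factoring, (\ref{form228}) reduces to the $\mathsf{A}$-free bound
\[
\sum_{y\in\partial\mathcal{B}(r_1)}\mathbb{P}(E_y)\gtrsim\sum_{y\in\partial\mathcal{B}(r_1)}\mathbb{P}\bigl(x\xleftrightarrow{(D)}y\bigr).
\]

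\textbf{Step 3: The boundary obstruction is cheap for small $c$.} Writing $\sum_y\mathbb{P}(x\xleftrightarrow{(D)}y)=\sum_y\mathbb{P}(E_y)+\sum_y\mathbb{P}(x\xleftrightarrow{(D)}y,\,x\xleftrightarrow{(D)}\widetilde{\partial}\widetilde{B}(cr_1))$, I would show that the second (``bad'') sum is at most $Cc^{d/2+1}$ times the full sum. On the bad event the cluster of $x$ simultaneously reaches $y\in\partial\mathcal{B}(r_1)$ and the deeper boundary $\widetilde{\partial}\widetilde{B}(cr_1)$; combining the quasi-multiplicativity from \cite{cai2024quasi} with the crossing estimate (\ref{crossing_low}), $\rho_d(cr_1,r_1)\asymp c^{d/2+1}$, would produce the extra scale factor $c^{d/2+1}$. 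Shrinking $c$ so that $Cc^{d/2+1}\le 1/2$ completes Part~(1). For Part~(2) I would run the same template with the two boundaries $\widetilde{\partial}\widetilde{B}(cr_1)$ and $\widetilde{\partial}\widetilde{B}(r_{-1}/c)$: now $\mathsf{A}'$'s constituent clusters live on loops in the annulus $\widetilde{B}(r_{-1}/c')\setminus\widetilde{B}(c'r_1)$, so $\mathsf{A}'$ is simultaneously independent of loops outside $\widetilde{B}(r_{-1}/c)$ and loops inside $\widetilde{B}(cr_1)$, and the bad event is suppressed by the crossing estimate at each of the two scales. The main obstacle is Step~3: proving that the summed two-point function is perturbed only by a factor $c^{d/2+1}$ under the added absorbing boundaries requires a careful application of quasi-multiplicativity in the presence of the absorbing set $D$ together with the low-dimensional crossing bound (\ref{crossing_low}).
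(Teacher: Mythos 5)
Your Step~1--2 independence factorization does not hold, and this is a genuine gap. The pairwise connecting event $\mathsf{A}$ is an intersection of events of the form $\{v\xleftrightarrow{(D)}w\}\cap\{v\xleftrightarrow{(D)}[\mathfrak{B}^1_{r_1,c'}]^c\}^c$; the second piece requires that the cluster of $v$ in the \emph{full} loop soup $\widetilde{\mathcal{L}}_{1/2}^D$ does not escape $\widetilde{B}(c'r_1)$, which in particular requires that no loop with range reaching into $\widetilde{B}(c'r_1)$ intersects that cluster. A loop $\ell\in\widetilde{\mathcal{L}}_{\mathrm{cross}}$ whose range extends from inside $\widetilde{B}(c'r_1)$ out past $\partial\widetilde{B}(cr_1)$ is precisely such a loop, and whether $\ell$ touches the cluster of $v$ determines whether $\mathsf{A}$ holds. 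Thus $\mathsf{A}$ is \emph{not} measurable with respect to the sub-collection of $\widetilde{\mathcal{L}}_{\mathrm{in}}$ (nor with respect to $\widetilde{\mathcal{L}}_{\mathrm{in}}$ alone), and $\mathsf{A}$ is not independent of $\widetilde{\mathcal{L}}_{\mathrm{cross}}$. Since $E_y$ also depends on $\widetilde{\mathcal{L}}_{\mathrm{cross}}$ (a crossing loop touching $\mathcal{C}_x^D$ makes that cluster hit $\widetilde{\partial}\widetilde{B}(cr_1)$ and kills $E_y$), the joint probability $\mathbb{P}(\mathsf{A}\cap E_y)$ does not split as $\mathbb{P}(\mathsf{A})\cdot\mathbb{P}(E_y)$. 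A quick sanity check: $\mathsf{A}$ is a decreasing event, yet if it were $\sigma(\text{loops in }\widetilde{B}(c'r_1))$-measurable it would be unaffected by adding a loop that runs from inside $\widetilde{B}(c'r_1)$ to outside and meets a cluster in $\mathsf{A}$ --- a contradiction.

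The paper avoids a hard independence split entirely. It conditions on the union $\overline{\mathcal{C}}$ of the clusters certifying $\mathsf{A}$ and applies the restriction property: given $\overline{\mathcal{C}}$, the remaining loop soup is $\widetilde{\mathcal{L}}_{1/2}^{D\cup\overline{\mathcal{C}}}$. The conditional probability of $\{x\leftrightarrow y\}\cap\{x\not\leftrightarrow\widetilde{\partial}\mathfrak{B}^i_{r_i,c}\}$ is then bounded below via Lemma~\ref{lemma24}(1) by a two-point probability with an absorbing intermediate boundary $\widetilde{\partial}\mathfrak{B}^i_{r_i,\sqrt{cc'}}$; since $\overline{\mathcal{C}}\subset\mathfrak{B}^i_{r_i,c'}$ lies strictly inside this boundary, it is shadowed and can be dropped, and finally Lemma~\ref{lemma_new26} removes the intermediate boundary from the summed two-point function. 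Your Step~3 is morally similar to Lemma~\ref{lemma24} (controlling the cost of the non-escape condition by the crossing estimate and quasi-multiplicativity), but it cannot rescue the argument because the quantity $\sum_y\mathbb{P}(\mathsf{A}\cap E_y)$ has not been reduced to $\mathbb{P}(\mathsf{A})\cdot\sum_y\mathbb{P}(E_y)$ in the first place. If you want to keep the thinning picture, you would need to insert the conditioning on $\overline{\mathcal{C}}$ before splitting, at which point you are essentially reproducing the paper's proof.
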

\begin{proof}
	(1) Let $\overline{\mathcal{C}}$ denote the union of all clusters involved in the event $\mathsf{A}$. Since $\mathsf{A}$ is outside $[\mathfrak{B}^i_{r_i,c'}]^c$, one has $\mathsf{A}\subset \{\overline{\mathcal{C}} \subset \mathfrak{B}^i_{r_i,c}\}$. Thus, using the restriction property, (\ref{to214}) and Lemma \ref{lemma_new26}, we obtain (\ref{form228}) as follows: 
	\begin{equation*}
		\begin{split}
		 &	\sum\nolimits_{y\in \partial \mathcal{B}(r_i)} \mathbb{P}\big( \mathsf{A}\cap  \big\{x\xleftrightarrow{(D)} y \big\}\cap  \big\{x\xleftrightarrow{(D)} \widetilde{\partial}\mathfrak{B}^i_{r_i,c}  \big\}^c \big) \\
			\overset{}{=}  & \mathbb{E}\big[ \mathbbm{1}_{ \mathsf{A}}  \cdot	\sum\nolimits_{y\in \partial \mathcal{B}(r_i)} \mathbb{P}\big(  x\xleftrightarrow{(D\cup \overline{\mathcal{C}})} y, \big\{x\xleftrightarrow{(D\cup \overline{\mathcal{C}})} \widetilde{\partial}\mathfrak{B}^i_{r_i,c}  \big\}^c \mid \mathcal{F}_{\overline{\mathcal{C}}} \big) \big]\\
			\overset{(\ref{to214})}{\gtrsim } & \mathbb{P}\big( \mathsf{A} \big) \cdot \sum\nolimits_{y\in \partial \mathcal{B}(r_i)} \mathbb{P}\big( x \xleftrightarrow{(D\cup \widetilde{\partial}\mathfrak{B}^i_{r_i,\sqrt{c\cdot c'}} ) } y \big) 
			\overset{(\text{Lemma}\ \ref{lemma_new26})}{\asymp  }   \mathbb{P}\big( \mathsf{A} \big) \cdot \sum\nolimits_{y\in \partial \mathcal{B}(r_i)} \mathbb{P}\big( x \xleftrightarrow{(D) } y \big).  
					\end{split}
	\end{equation*} 
	
	(2) Let $\overline{\mathcal{C}}'$ be the union of all relevant clusters in $\mathsf{A}'$, and note that $\mathsf{A}'\subset \{\overline{\mathcal{C}} \subset (\mathfrak{B}^{-1}_{r_{-1},c'}\setminus \mathfrak{B}^1_{r_1,c'} )^c \}$. Therefore, similar to the proof in Item (1), we have 
	\begin{equation*}
		\begin{split}
			&  \sum_{x\in \partial \mathcal{B}(r_1),y\in \partial \mathcal{B}(r_{-1})} \mathbb{P}\big( \mathsf{A}'\cap \big\{x\xleftrightarrow{(D)} y \big\}\cap  \big\{x\xleftrightarrow{(D)} \widetilde{\partial}\mathfrak{B}^1_{r_1,c}  \cup \widetilde{\partial}\mathfrak{B}^{-1}_{r_{-1},c}  \big\}^c \big) \\
			 \overset{}{=}  &\mathbb{E}\Big[ \mathbbm{1}_{ \mathsf{A}'}  \cdot 	\sum_{x\in \partial \mathcal{B}(r_1),y\in \partial \mathcal{B}(r_{-1})}  \mathbb{P}\big(  x\xleftrightarrow{(D\cup \overline{\mathcal{C}}')} y, \big\{x\xleftrightarrow{(D\cup \overline{\mathcal{C}}')} \widetilde{\partial}\mathfrak{B}^1_{r_1,c}  \cup \widetilde{\partial}\mathfrak{B}^{-1}_{r_{-1},c}   \big\}^c \mid \mathcal{F}_{\overline{\mathcal{C}}'} \big) \Big]\\
		\overset{(\ref{newto214})}{\gtrsim } &	\mathbb{P}\big( \mathsf{A}' \big) \cdot\sum_{x\in \partial \mathcal{B}(r_1),y\in \partial \mathcal{B}(r_{-1})} \mathbb{P}\big( x \xleftrightarrow{(D\cup \widetilde{\partial}\mathfrak{B}^1_{r_1,\sqrt{c\cdot c'}}\cup \widetilde{\partial}\mathfrak{B}^{-1}_{r_{-1},\sqrt{c\cdot c'}} ) } y \big).  
		\end{split}
	\end{equation*}	 
	Meanwhile, by applying Lemma \ref{lemma_new26} twice, one has 
	\begin{equation*}
	\begin{split}
			  \sum_{x\in \partial \mathcal{B}(r_1),y\in \partial \mathcal{B}(r_{-1})} \mathbb{P}\big( x \xleftrightarrow{(D\cup \widetilde{\partial}\mathfrak{B}^1_{r_1,\sqrt{c\cdot c'}}\cup \widetilde{\partial}\mathfrak{B}^{-1}_{r_{-1},\sqrt{c\cdot c'}} ) } y \big)  
				\asymp     \sum_{x\in \partial \mathcal{B}(r_1),y\in \partial \mathcal{B}(r_{-1})} \mathbb{P}\big( x \xleftrightarrow{(D)} y   \big).
	\end{split}
	\end{equation*}
	Plugging this into the last inequality, we obtain (\ref{form229}).
\end{proof}

 	The following result relates the escape probability to the two-point event.

\begin{corollary}\label{coronew27} 
		For any $3\le d\le 5$, there exist $C,c>0$ such that for any $R\ge 1$, $v\in \widetilde{B}(cR)$ and $D\subset \widetilde{\mathbb{Z}}^d$, 
	\begin{equation*}
		\begin{split}
			\widetilde{\mathbb{P}}_v\big(\tau_{\partial B(R)} <\tau_D \big) \lesssim  R^{-1}\sqrt{\widetilde{G}_D(v,v)}  \sum\nolimits_{z\in \partial  \mathcal{B}(\frac{R}{10})}  \mathbb{P}\big( v \xleftrightarrow{(D)} z , \big\{ v \xleftrightarrow{(D)}  \partial B(CR) \big\}^c \big).
		\end{split}
	\end{equation*}
\end{corollary}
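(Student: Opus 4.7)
The plan is to bound the escape probability by a sum of loop-cluster two-point probabilities over lattice points on the sphere $\partial \mathcal{B}(R/10)$. The argument has three stages: first, pass from the escape event to a hitting event at $\partial \mathcal{B}(R/10)$ and extract the characteristic weight $R^{-1}$ via a last-exit representation; second, convert Green functions into two-point functions using (\ref{211}); third, insert the one-arm restriction via Lemmas~\ref{lemma24} and~\ref{lemma_new26}.

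For the first stage, since $v \in \widetilde{B}(cR)$ with $c$ small, the sphere $\partial \mathcal{B}(R/10)$ separates $v$ from $\partial B(R)$, so $\widetilde{\mathbb{P}}_v(\tau_{\partial B(R)} < \tau_D) \le \widetilde{\mathbb{P}}_v(\tau_{\partial \mathcal{B}(R/10)} < \tau_D)$. I would then establish
\[
\widetilde{\mathbb{P}}_v(\tau_{\partial \mathcal{B}(R/10)} < \tau_D) \lesssim R^{-1} \sum_{z \in \partial \mathcal{B}(R/10)} \widetilde{G}_D(v, z)
\]
by a last-exit decomposition at $\partial \mathcal{B}(R/10)$, writing the hitting probability as a sum of the form $\sum_z \widetilde{G}_D(v, z)\, \rho(z)$ where $\rho$ is the per-point equilibrium-measure density of $\partial \mathcal{B}(R/10)$ relative to the absorbing set $D$. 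By (\ref{cap2}) combined with Lemma~\ref{lemma_BM_uniform}, this equilibrium measure has total mass $\asymp R^{d-2}$ distributed comparably uniformly over the $\asymp R^{d-1}$ lattice points on the sphere, yielding $\rho(z) \lesssim R^{-1}$ uniformly in $z$. For the second stage, for each lattice point $z \in \partial \mathcal{B}(R/10) \setminus D$, (\ref{order_green_function}) gives $\widetilde{G}_D(z, z) \asymp 1$; combined with (\ref{211}) and the isomorphism identity $\mathbb{P}^D(v \xleftrightarrow{\ge 0} z) \asymp \mathbb{P}(v \xleftrightarrow{(D)} z)$, this yields $\widetilde{G}_D(v, z) \lesssim \sqrt{\widetilde{G}_D(v, v)}\, \mathbb{P}(v \xleftrightarrow{(D)} z)$. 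Substituting produces
\[
\widetilde{\mathbb{P}}_v(\tau_{\partial B(R)} < \tau_D) \lesssim R^{-1} \sqrt{\widetilde{G}_D(v, v)} \sum_{z \in \partial \mathcal{B}(R/10)} \mathbb{P}(v \xleftrightarrow{(D)} z).
\]

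Finally, to insert the restriction $\{v \xleftrightarrow{(D)} \partial B(CR)\}^c$, I would choose $C$ large enough so that $\widetilde{\partial} \mathfrak{B}^{1}_{CR, c_0}$ lies well outside $\partial \mathcal{B}(R/10)$ for an appropriate small $c_0$. Lemma~\ref{lemma_new26} then yields $\sum_z \mathbb{P}(v \xleftrightarrow{(D)} z) \asymp \sum_z \mathbb{P}(v \xleftrightarrow{(D \cup \widetilde{\partial} \mathfrak{B}^{1}_{CR, c_0})} z)$, and Lemma~\ref{lemma24}(1) bounds each summand on the right by a constant times $\mathbb{P}(v \xleftrightarrow{(D)} z, \{v \xleftrightarrow{(D)} \partial B(CR)\}^c)$, completing the proof. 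The hardest part will be the potential-theoretic estimate in the first stage, namely controlling the density $\rho(z)$ uniformly by $R^{-1}$ when $D$ can intrude near the sphere; this requires a stability argument for the equilibrium measure of $\partial \mathcal{B}(R/10)$ under additional absorbing boundaries, in the spirit of Lemma~\ref{lemma_new_ BM_stable}. A minor technical point is that $v$ need not sit on the sphere $\partial \mathcal{B}(r_1)$ demanded by Lemma~\ref{lemma_new26}; one resolves this by replacing $v$ with its nearest lattice point $\bar{v}$, which changes the sums only by bounded constants.
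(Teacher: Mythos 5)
Your Stages 2 and 3 reproduce the paper's argument: from (\ref{211}) and (\ref{order_green_function}) you get $\widetilde{G}_D(v,z)\lesssim\sqrt{\widetilde{G}_D(v,v)}\,\mathbb{P}(v\xleftrightarrow{(D)} z)$, and then Lemmas \ref{lemma_new26} and \ref{lemma24} insert the one-arm restriction exactly as in the paper's (\ref{for236}). The gap is in Stage 1. The intermediate bound you propose,
\begin{equation*}
\widetilde{\mathbb{P}}_v\big(\tau_{\partial\mathcal{B}(R/10)}<\tau_D\big)\lesssim R^{-1}\sum\nolimits_{z\in\partial\mathcal{B}(R/10)}\widetilde{G}_D(v,z),
\end{equation*}
is false for general $D$. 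Take $D=\partial\mathcal{B}(\tfrac{R}{10}+5)$, a thin lattice sphere sitting a bounded distance outside $\partial\mathcal{B}(R/10)$. Then the left side equals $1$, since $\partial\mathcal{B}(R/10)$ separates $v$ from $D$ and must be crossed first. On the right, $\widetilde{G}_D(v,\cdot)$ is the Dirichlet Green's function of a ball of radius $\approx R/10$ evaluated at a point a bounded distance from the absorbing boundary, so $\widetilde{G}_D(v,z)\asymp R^{1-d}$ and $R^{-1}\sum_z\widetilde{G}_D(v,z)\asymp R^{-1}$. Your bound would force $1\lesssim R^{-1}$. Equivalently, the last-exit density $\rho(z)$ (the probability that Brownian motion from $z$ hits $D$ before returning to $\partial\mathcal{B}(R/10)$) is $\asymp 1$ here, not $\lesssim R^{-1}$, because the shell $D$ absorbs the walk before it can return. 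No stability argument in the spirit of Lemma~\ref{lemma_new_ BM_stable} can rescue a pointwise estimate that fails: that lemma controls only sums of hitting probabilities and, crucially, requires the extra obstacle to live at a scale well separated from the one being probed, which is precisely what is violated when $D$ intrudes near $\partial\mathcal{B}(R/10)$. (The Corollary itself survives this example because $\widetilde{\mathbb{P}}_v(\tau_{\partial B(R)}<\tau_D)=0$; the information is discarded already at your first, monotonicity step.)

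The paper never passes to the hitting probability of $\partial\mathcal{B}(R/10)$. Its display (\ref{for234}) applies the last-exit decomposition to the escape event $\{\tau_{\partial B(R)}<\tau_D\}$ itself, decomposing over the last visit to $\mathcal{B}(R/10)$ before $\partial B(R)$ is reached. This produces the Green-function factor $\widetilde{G}_D(v,z)$ multiplied by an \emph{outgoing} escape probability $\widetilde{\mathbb{P}}_{z'}(\tau_{\partial B(R)}<\tau_{D\cup\mathcal{B}(R/10)})$ from a neighbor $z'$ of the sphere; dropping $D$ from the absorbing set only increases this, so it is bounded by $\widetilde{\mathbb{P}}_{z'}(\tau_{\partial B(R)}<\tau_{\mathcal{B}(R/10)})\lesssim R^{-1}$ uniformly in $D$, by classical potential theory as in (\ref{for235}). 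That is the single correct source of the factor $R^{-1}$, and it is automatically stable under adding obstacles. Replace your Stage 1 with this decomposition and the rest of your proposal closes unchanged.
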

\begin{proof} 
  	By the last-exit decomposition, we have 
 		\begin{equation}\label{for234}
 			\begin{split}
 				&\widetilde{\mathbb{P}}_v\big(\tau_{\partial B(R)} <\tau_D \big) \\
 				 \lesssim   & \sum\nolimits_{z\in \partial  \mathcal{B}(\frac{R}{10})}   \widetilde{G}_D(v,z)  
 				\cdot \sum\nolimits_{z'\in [\mathcal{B}(\frac{R}{10})]^c:\{z,z'\}\in \mathbb{L}^d} \widetilde{\mathbb{P}}_{z'}\big( \tau_{\partial B(R)} <\tau_{D\cup \mathcal{B}(\frac{R}{10})} \big). 
 			\end{split}
 		\end{equation}	
  	Using the potential theory of Brownian motion, one has 
 	\begin{equation}\label{for235}
 		\widetilde{\mathbb{P}}_{z'}\big( \tau_{\partial B(R)} <\tau_{D\cup \mathcal{B}(\frac{R}{10})} \big) \le \widetilde{\mathbb{P}}_{z'}\big( \tau_{\partial B(R)} <\tau_{ \mathcal{B}(\frac{R}{10})} \big)  \lesssim R^{-1}. 
 	\end{equation}
 		Moreover, by (\ref{211}), (\ref{to214}) and Lemma \ref{lemma_new26}, we have 
 		\begin{equation}\label{for236}
 			\begin{split}
 			\sum_{z\in \partial  \mathcal{B}(\frac{R}{10})} 	\widetilde{G}_D(v,z) \overset{(\ref{211}) }{ \lesssim} & \sqrt{\widetilde{G}_D(v,v)} \sum_{z\in \partial  \mathcal{B}(\frac{R}{10})}  \mathbb{P}\big(v\xleftrightarrow{(D)} z \big)\\
 		\overset{(\text{Lemma}\ \ref{lemma_new26})}{ \lesssim} &  \sqrt{\widetilde{G}_D(v,v)}\sum_{z\in \partial  \mathcal{B}(\frac{R}{10})}  \mathbb{P}\big(v\xleftrightarrow{(D\cup \widetilde{\partial} \mathfrak{B}^{-1}_{R,c})} z \big)\\ 
 		 		\overset{(\ref{to214})}{ \lesssim} &\sqrt{\widetilde{G}_D(v,v)}  \sum_{z\in \partial  \mathcal{B}(\frac{R}{10})} \mathbb{P}\big( v \xleftrightarrow{(D)} z , \big\{ v \xleftrightarrow{(D)} \partial B(CR)  \big\}^c \big).
 			\end{split}
 		\end{equation}
 		Plugging (\ref{for235}) and (\ref{for236}) into (\ref{for234}), we complete the proof.
\end{proof}

 	  	By Lemma \ref{lemma_new_decom_green} and Corollary \ref{coronew27}, we obtain the following decomposition.

 \begin{corollary} \label{lemma_cut_two_points}
 For any $3\le d\le 5$, there exist $C,c>0$ such that for any $D\subset \widetilde{\mathbb{Z}}^d$, $R \ge C$, and $v_1,v_2\in \widetilde{\mathbb{Z}}^d$ with $|v_1-v_2|\ge R$, 
 \begin{equation}
 	\begin{split}
 			\mathbb{P}\big( v_1\xleftrightarrow{(D)} v_2 \big) \lesssim R^{-d}\prod_{i\in \{1,2\}} \sum_{z_i\in \partial \mathcal{B}_{v_i}(cR)}  \mathbb{P}\big( v_i\xleftrightarrow{(D)} z_i, \big\{ v_i\xleftrightarrow{(D)} \partial B_{v_i}(\tfrac{R}{100}) \big\}^c \big). 
 	\end{split}
 \end{equation}
 \end{corollary}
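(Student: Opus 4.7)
The plan is to chain three existing estimates: the arcsin two-point formula~(\ref{211}), the Green's function decomposition of Lemma~\ref{lemma_new_decom_green}, and the escape-to-connection conversion of Corollary~\ref{coronew27}. First, by the isomorphism theorem the loop clusters of $\widetilde{\mathcal{L}}_{1/2}^D$ coincide with the sign clusters of the GFF under $\mathbb{P}^D$, so by sign symmetry $\mathbb{P}(v_1 \xleftrightarrow{(D)} v_2) = 2\,\mathbb{P}^D(v_1 \xleftrightarrow{\ge 0} v_2)$, and (\ref{211}) yields
\begin{equation*}
\mathbb{P}\bigl(v_1 \xleftrightarrow{(D)} v_2\bigr) \lesssim \frac{\widetilde{G}_D(v_1,v_2)}{\sqrt{\widetilde{G}_D(v_1,v_1)\widetilde{G}_D(v_2,v_2)}}.
\end{equation*}

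Next I apply Lemma~\ref{lemma_new_decom_green} to the numerator, obtaining a constant $c_1>0$ with
\begin{equation*}
\widetilde{G}_D(v_1,v_2) \lesssim R^{2-d}\prod_{i\in\{1,2\}} \widetilde{\mathbb{P}}_{v_i}\bigl(\tau_{\partial\mathcal{B}_{v_i}(c_1 R)} < \tau_D\bigr).
\end{equation*}
Since $B(c_1 R/\sqrt{d}) \subset \mathcal{B}(c_1 R)$, each such escape probability is controlled, up to a dimensional factor, by the corresponding box-escape probability, to which a translated version of Corollary~\ref{coronew27} applies with effective radius of order $c_1 R$. This produces constants $c_2,C_3>0$ and
\begin{equation*}
\widetilde{\mathbb{P}}_{v_i}\bigl(\tau_{\partial\mathcal{B}_{v_i}(c_1 R)} < \tau_D\bigr) \lesssim R^{-1}\sqrt{\widetilde{G}_D(v_i,v_i)}\sum_{z_i\in\partial\mathcal{B}_{v_i}(c_2 R)} \mathbb{P}\bigl(v_i \xleftrightarrow{(D)} z_i,\, \bigl\{v_i \xleftrightarrow{(D)} \partial B_{v_i}(C_3 c_1 R)\bigr\}^c\bigr).
\end{equation*}

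Combining the three bounds, the $\sqrt{\widetilde{G}_D(v_i,v_i)}$ factors produced by Corollary~\ref{coronew27} cancel exactly those in the denominator coming from (\ref{211}), and the radial prefactors multiply to $R^{2-d}\cdot R^{-1}\cdot R^{-1} = R^{-d}$, which matches the target prefactor in the claim. To align the stated constants I choose $c_1$ small enough that $C_3 c_1 \le 1/100$; then $\{v_i \xleftrightarrow{(D)} \partial B_{v_i}(C_3 c_1 R)\}^c \subseteq \{v_i \xleftrightarrow{(D)} \partial B_{v_i}(R/100)\}^c$ (since the inner boundary is closer to $v_i$, failing to reach it is strictly stronger), allowing the inner-radius replacement in the final bound. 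Finally I set $c := c_2$ to align the sum radius with that of the statement.

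There is no single analytic difficulty; the work is essentially bookkeeping. The main delicate step will be calibrating the radii jointly so that a single constant $c>0$ simultaneously serves as the summation radius, while the no-connect radius coming out of Corollary~\ref{coronew27} can be taken at most $R/100$, and the translation between Euclidean balls (used in Lemma~\ref{lemma_new_decom_green}) and boxes (used in Corollary~\ref{coronew27}) only costs a dimensional multiplicative factor absorbed into $\lesssim$. A secondary subtlety is verifying that the passage from $\mathbb{P}(v_1 \xleftrightarrow{(D)} v_2)$ to the $\mathbb{P}^D$-two-point function in (\ref{211}) is correct; this is an immediate consequence of properties (a)-(b) of the isomorphism theorem stated in Section~\ref{section_intro}.
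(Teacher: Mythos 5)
Your proposal is correct and follows precisely the same chain used by the paper: apply the two-point formula~(\ref{211}), then the Green's function decomposition of Lemma~\ref{lemma_new_decom_green}, then the escape-to-connection conversion of Corollary~\ref{coronew27}, with the $\sqrt{\widetilde{G}_D(v_i,v_i)}$ factors cancelling to leave the $R^{-d}$ prefactor. The bookkeeping you carry out (ball-vs-box radius comparisons and shrinking $c_1$ so the no-connect radius fits under $R/100$) is left implicit in the paper but is exactly the right way to reconcile the constants.
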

 	 \begin{proof}
 	 	Applying (\ref{211}), Lemma \ref{lemma_new_decom_green} and Corollary \ref{coronew27}, we have 
 	 	\begin{equation}
 	 		\begin{split}
 	 			\mathbb{P}\big( v_1\xleftrightarrow{(D)} v_2 \big)  \overset{(\ref{211})}{\asymp} & \widetilde{G}_D(v_1,v_2)\prod\nolimits_{i\in \{1,2\}} \big[ \widetilde{G}_D(v_i,v_i)\big]^{-\frac{1}{2}}\\
 	 			\overset{(\text{Lemma}\ \ref{lemma_new_decom_green})}{\lesssim} &R^{2-d}\prod\nolimits_{i\in \{1,2\}} \big[ \widetilde{G}_D(v_i,v_i)\big]^{-\frac{1}{2}} \cdot    \widetilde{\mathbb{P}}_{v_i}\big( \tau_{\partial \mathcal{B}_{v_i}(10cR)}<\tau_{D} \big)\\
 	 		\overset{(\text{Corollary}\ \ref{coronew27})}{\lesssim}  & R^{-d}\prod_{i\in \{1,2\}} \sum_{z_i\in \partial \mathcal{B}_{v_i}(cR)}  \mathbb{P}\big( v_i\xleftrightarrow{(D)} z_i, \big\{ v_i\xleftrightarrow{(D)} \partial B_{v_i}(\tfrac{R}{100}) \big\}^c \big). \qedhere
 	 		\end{split}
 	 	\end{equation}

 	 \end{proof}

 	Following similar arguments as in the proof of Lemma \ref{lemma_new_ BM_stable}, we may derive the mean value property of connecting probabilities as follows.

 \begin{corollary}\label{newcoro_29}
 	For any $d\ge 3$, there exists $c>0$ such that for any $D\subset \widetilde{\mathbb{Z}}^d$, $R\ge 1$, $v\in \widetilde{B}^1_{R,c}$ and $w\in \widetilde{B}^{-1}_{R,c}$, 
 	\begin{equation}
 		\mathbb{P}\big( v\xleftrightarrow{(D)} w \big)\lesssim |w|^{2-d}R^{-1}\cdot  \sum\nolimits_{x\in \partial \mathcal{B}(R) } \mathbb{P}\big( v\xleftrightarrow{(D)} x  \big). 
 	\end{equation}
 \end{corollary}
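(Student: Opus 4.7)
My plan is to combine a Brownian-motion decomposition at an intermediate sphere $\partial\mathcal{B}(10R)$ with a last-exit decomposition through $\partial\mathcal{B}(R)$, and then to translate the resulting Green's-function estimate back to a connecting-probability one via~\eqref{211}. The overall structure mirrors the proof of Lemma~\ref{lemma_new_ BM_stable}.

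Fix $c\in(0,1/20)$ small enough that $v\in\widetilde{B}^1_{R,c}$ lies inside $\mathcal{B}(R/10)$ while $w\in\widetilde{B}^{-1}_{R,c}$ lies outside $\mathcal{B}(20R)$. Every Brownian path from $v$ to $w$ then crosses $\partial\mathcal{B}(10R)$ before reaching $w$, so the strong Markov property at the first hit of that sphere gives
\begin{equation*}
  \widetilde{\mathbb{P}}_v(\tau_w<\tau_D) \;\le\; \sum_{y\in\partial\mathcal{B}(10R)}\widetilde{\mathbb{P}}_v\bigl(\tau_{\partial\mathcal{B}(10R)}=\tau_y<\tau_D\bigr)\widetilde{\mathbb{P}}_y(\tau_w<\infty).
\end{equation*}
Since $|y-w|\asymp|w|$ uniformly in $y\in\partial\mathcal{B}(10R)$ and $\widetilde{G}(w,w)\asymp 1$, one has $\widetilde{\mathbb{P}}_y(\tau_w<\infty)=\widetilde{G}(y,w)/\widetilde{G}(w,w)\asymp|w|^{2-d}$, so summing over $y$ yields
\begin{equation*}
  \widetilde{\mathbb{P}}_v(\tau_w<\tau_D)\;\lesssim\;|w|^{2-d}\,\widetilde{\mathbb{P}}_v\bigl(\tau_{\partial\mathcal{B}(10R)}<\tau_D\bigr).
\end{equation*}

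Next, I apply the last-exit decomposition through $\partial\mathcal{B}(R)$ in the exact manner of the proof of Corollary~\ref{coronew27}. Conditioning on the last visit to $\mathcal{B}(R)$ before hitting $\partial\mathcal{B}(10R)$ and using the standard Brownian escape bound $\widetilde{\mathbb{P}}_{x'}(\tau_{\partial\mathcal{B}(10R)}<\tau_{\mathcal{B}(R)})\lesssim R^{-1}$ for every lattice neighbor $x'$ of $\partial\mathcal{B}(R)$ lying outside $\mathcal{B}(R)$ (valid in every dimension $d\ge 3$), one obtains
\begin{equation*}
  \widetilde{\mathbb{P}}_v\bigl(\tau_{\partial\mathcal{B}(10R)}<\tau_D\bigr)\;\lesssim\;R^{-1}\sum_{x\in\partial\mathcal{B}(R)}\widetilde{G}_D(v,x).
\end{equation*}
Chaining with the preceding display and invoking~\eqref{2.9} in the form $\widetilde{G}_D(v,w)=\widetilde{\mathbb{P}}_v(\tau_w<\tau_D)\widetilde{G}_D(w,w)$ gives
\begin{equation*}
  \widetilde{G}_D(v,w)\;\lesssim\;|w|^{2-d}R^{-1}\widetilde{G}_D(w,w)\sum_{x\in\partial\mathcal{B}(R)}\widetilde{G}_D(v,x).
\end{equation*}

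To finish, I translate back via~\eqref{211} together with the diagonal bound $\widetilde{G}_D(\cdot,\cdot)\lesssim 1$ from~\eqref{order_green_function}. Using $\mathbb{P}(v\xleftrightarrow{(D)}w)\asymp\widetilde{G}_D(v,w)/\sqrt{\widetilde{G}_D(v,v)\widetilde{G}_D(w,w)}$ and $\widetilde{G}_D(v,x)\lesssim\sqrt{\widetilde{G}_D(v,v)}\,\mathbb{P}(v\xleftrightarrow{(D)}x)$ (where the latter uses $\widetilde{G}_D(x,x)\lesssim 1$), substitution leaves a residual factor $\sqrt{\widetilde{G}_D(w,w)}\lesssim 1$ that is absorbed into the implicit constant, producing the claimed inequality. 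The one genuinely delicate step is the last-exit decomposition, but it is essentially routine and dimension-independent, so I expect no real obstacle beyond bookkeeping.
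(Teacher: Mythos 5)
Your proof is correct, and it takes a genuinely different route from the paper's, though the two are in a sense mirror images of each other. The paper decomposes the Brownian motion started at $w$: it peels off the factor $\widetilde{G}_D(w,w)$ by first forcing the walk to reach $\partial B_w(2)$, uses the capacity bound~\eqref{cap1}--\eqref{cap2} to get from there into $B(dR)$, invokes the uniform hitting estimate of Lemma~\ref{lemma_BM_uniform} to spread the hitting distribution over $\partial\mathcal{B}(R)$ (producing the factor $R^{1-d}$), and only then introduces the point-to-point probabilities $\widetilde{\mathbb{P}}_x(\tau_v<\tau_D)$. You instead start the walk at $v$, apply the strong Markov property once at $\partial\mathcal{B}(10R)$ to pull out the factor $|w|^{2-d}$, and then run a last-exit decomposition through $\partial\mathcal{B}(R)$ exactly as in the proof of Corollary~\ref{coronew27}, which yields the factor $R^{-1}$ together with the Green's functions $\widetilde{G}_D(v,x)$ that translate directly into the two-point probabilities $\mathbb{P}(v\xleftrightarrow{(D)} x)$. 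Both routes end with the same bookkeeping via~\eqref{211} and the diagonal bound $\widetilde{G}_D(\cdot,\cdot)\lesssim 1$, so the square-root factors cancel as you describe. Your version is arguably a little more economical, since it reuses the last-exit machinery already developed for Corollary~\ref{coronew27} rather than invoking the capacity/uniformity pair; the paper's version has the virtue of staying entirely within strong-Markov forward decompositions and the already-cited Lemmas~\ref{lemma_BM_uniform} and~\ref{lemma_BM_stable}. Either way, the decomposition at a sphere of radius comparable to $R$ and the subsequent averaging over $\partial\mathcal{B}(R)$ are the essential steps, and your argument carries them out correctly in all dimensions $d\ge 3$.
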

 	\begin{proof}
 		Recall from (\ref{211}) that 
 		\begin{equation}\label{242}
 			\begin{split}
 				\mathbb{P}\big( v\xleftrightarrow{(D)} w \big)\asymp \widetilde{\mathbb{P}}_w\big( \tau_{v}<\tau_{D} \big)\cdot \sqrt{\tfrac{\widetilde{G}_D(v,v)}{\widetilde{G}_D(w,w)}}. 		
 					\end{split}
 		\end{equation}
 		By the strong Markov property of Brownian motion, one has 
 		\begin{equation}\label{243}
 			\begin{split}
 				\widetilde{\mathbb{P}}_w\big( \tau_{v}<\tau_{D} \big) \le &\widetilde{\mathbb{P}}_{w}\big( \tau_{\partial B_w(2)} <\tau_{D} \big)\cdot \max_{z\in \partial B_w(2)}  \widetilde{\mathbb{P}}_{z}\big( \tau_{B(dR)} < \infty \big) \\
 		 &	\cdot 	\max_{ z' \in \partial B(dR)}  \sum_{x\in \partial \mathcal{B}(R)  }  \widetilde{\mathbb{P}}_{z'}\big(\tau_{\partial \mathcal{B}(R) }  = \tau_{x}<\infty \big)\cdot \widetilde{\mathbb{P}}_x\big(\tau_{v}  <\tau_D \big). 
 			\end{split}
 		\end{equation}
 		 The probabilities on the right-hand side can be estimated as follows. Firstly, using the formulas \cite[(3.10)]{inpreparation_twoarm} and (\ref{order_green_function}) in turn, one has 
 		\begin{equation}\label{244}
 			\widetilde{\mathbb{P}}_{w}\big( \tau_{\partial B_w(2)} <\tau_{D} \big) \lesssim \mathrm{dist}(w,D) \land 1  \asymp \widetilde{G}_D(w,w). 
 		\end{equation}
 		 Secondly, by (\ref{cap1}) and (\ref{cap2}), we have: for any $z\in \partial B_w(2)$, 
 		 \begin{equation}
 		 	\widetilde{\mathbb{P}}_{z}\big( \tau_{B(dR)} < \infty \big)  \asymp  \big(  \tfrac{R}{|w|}\big)^{d-2}.  		 
 		 	\end{equation}
 		 Thirdly, it follows from Lemma \ref{lemma_BM_uniform} that for any $z' \in \partial B(dR)$ and $x\in \partial \mathcal{B}(R)$, 
 		 \begin{equation}
 		 	\widetilde{\mathbb{P}}_{z'}\big(\tau_{\partial \mathcal{B}(R) }  = \tau_{x}<\infty \big) \lesssim R^{1-d}. 
 		 \end{equation}
 		Moreover, the formula (\ref{211}) together with $\widetilde{G}(x,x)\lesssim 1$ implies that  
 		\begin{equation}\label{247}
 			\widetilde{\mathbb{P}}_x\big(\tau_{v}  <\tau_D \big) \lesssim \mathbb{P}\big( v\xleftrightarrow{(D)} x  \big)\cdot \big[\widetilde{G}_D(v,v)\big]^{-\frac{1}{2}}. 
 		\end{equation}
 		Plugging (\ref{244})-(\ref{247}) into (\ref{243}), we get 
 		\begin{equation}\label{248}
 			\widetilde{\mathbb{P}}_w\big( \tau_{v}<\tau_{D} \big) \lesssim |w|^{2-d}R^{-1} \cdot \tfrac{\widetilde{G}_D(w,w)}{\sqrt{\widetilde{G}_D(v,v)}}    \cdot\sum\nolimits_{x\in \partial \mathcal{B}(R)  }  \mathbb{P}\big( v\xleftrightarrow{(D)} x  \big). 
 		\end{equation}
 		Combining (\ref{242}), (\ref{248}) and $\widetilde{G}(w,w)\lesssim 1$, we complete the proof. 
 	\end{proof}

 	The following relation between the point-to-set and boundary-to-set connecting probabilities has been verified in \cite[(5.2)]{cai2024quasi}.



\begin{lemma}\label{lemma_relation}
For any $d\ge 3$ with $d\neq 6$, $D\subset \widetilde{\mathbb{Z}}^d$, $R\ge 1$, $i\in \{1,-1\}$, and $A \subset  \mathfrak{B}^{i}_{R,(10d^2)^{-1}}$,
 		\begin{equation} 
 		 			\mathbb{P}\big(A \xleftrightarrow{(D)} \partial B(R) \big) \lesssim R^{-(\frac{d}{2}\boxdot 3) } \sum\nolimits_{x\in \partial \mathcal{B}(d^{-i}R) } \mathbb{P}\big(A \xleftrightarrow{(D)} x \big). 
 		\end{equation}
\end{lemma}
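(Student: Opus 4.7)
By spatial inversion combined with the stability Lemmas \ref{lemma_stability_bd} and \ref{lemma_new_stable}, it suffices to treat the case $i=1$, so $A \subset \widetilde{B}(R/(10d^2))$. My plan is a first-moment comparison that reduces the lemma to a lower bound on the cluster's cross-sectional volume at the intermediate scale $R/d$.

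The starting point is the identity
\begin{equation*}
	\sum_{x \in \partial \mathcal{B}(R/d)} \mathbb{P}\bigl(A \xleftrightarrow{(D)} x,\; A \xleftrightarrow{(D)} \partial B(R)\bigr) = \mathbb{P}\bigl(A \xleftrightarrow{(D)} \partial B(R)\bigr) \cdot \mathbb{E}\bigl[\,N\,\bigm|\, A \xleftrightarrow{(D)} \partial B(R)\bigr],
\end{equation*}
where $N := |\mathcal{C}_A^D \cap \partial \mathcal{B}(R/d)|$. Combined with the trivial bound $\mathbb{P}(A \xleftrightarrow{(D)} x,\, A \xleftrightarrow{(D)} \partial B(R)) \le \mathbb{P}(A \xleftrightarrow{(D)} x)$, this reduces the lemma to establishing the conditional volume estimate
\begin{equation*}
	\mathbb{E}\bigl[\,N\,\bigm|\, A \xleftrightarrow{(D)} \partial B(R)\bigr] \gtrsim R^{(d/2)\boxdot 3}.
\end{equation*}

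To prove the conditional volume bound, I would invoke the quasi-multiplicativity of the one-arm event together with Corollary \ref{coro27}: for each $x \in \partial \mathcal{B}(R/d)$, a separated crossing from $A$ to a neighborhood of $x$ followed by an independent one-arm extension from $x$ out to $\partial B(R)$ yields a factorization of the form $\mathbb{P}(A \xleftrightarrow{(D)} x,\, A \xleftrightarrow{(D)} \partial B(R)) \gtrsim \mathbb{P}(A \xleftrightarrow{(D)} x)\cdot \theta_d(R)/\theta_d(R/d)$. Summing over $x \in \partial \mathcal{B}(R/d)$ and using the one-arm asymptotics (\ref{one_arm_low})--(\ref{one_arm_high}) produces the exponent $R^{(d/2)\boxdot 3}$. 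Geometrically this matches the expected cross-sectional volume of a typical critical cluster at scale $R/d$, namely $(R/d)^{d/2}$ in low dimensions (volume exponent $d/2+1$) and $(R/d)^{3}$ in high dimensions (effective four-dimensional scaling above the upper critical dimension).

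The hard part will be making the factorization rigorous uniformly in the intermediate point $x$ and the absorbing set $D$: one needs a ``separation lemma'' guaranteeing that, conditioned on $A \xleftrightarrow{(D)} x$, the cluster still has uniformly positive probability of having enough capacity near $x$ to launch a further one-arm excursion reaching $\partial B(R)$. In the low-dimensional regime $3 \le d \le 5$ this is supplied by Lemma \ref{lemma_stability_bd} and Corollary \ref{coro27}; the high-dimensional regime $d \ge 7$ is more delicate since Lemma \ref{lemma_new_stable} requires the capacity-type condition \eqref{condition_new_stable}, which must be controlled along the random cluster boundary. For this reason Lemma \ref{lemma_relation} is quoted from \cite{cai2024quasi} rather than reproved here, as a self-contained proof would essentially rebuild the quasi-multiplicativity framework of Section 5 of that paper.
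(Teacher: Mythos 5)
Your reduction of the lemma to the conditional volume estimate $\mathbb{E}\bigl[N \mid A \xleftrightarrow{(D)} \partial B(R)\bigr] \gtrsim R^{(\frac{d}{2})\boxdot 3}$, via the identity
\begin{equation*}
\mathbb{P}\bigl(A \xleftrightarrow{(D)} \partial B(R)\bigr)\cdot \mathbb{E}\bigl[N \mid A \xleftrightarrow{(D)} \partial B(R)\bigr] = \sum_{x\in \partial\mathcal{B}(R/d)} \mathbb{P}\bigl(A \xleftrightarrow{(D)} x,\, A \xleftrightarrow{(D)} \partial B(R)\bigr) \le \sum_x \mathbb{P}\bigl(A \xleftrightarrow{(D)} x\bigr),
\end{equation*}
is correct. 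However, the factorization step you then propose does not establish that estimate — it is circular. Since $\theta_d(R)/\theta_d(R/d)\asymp 1$, your factorization reads $\mathbb{P}(A \xleftrightarrow{(D)} x,\, A \xleftrightarrow{(D)} \partial B(R)) \gtrsim \mathbb{P}(A \xleftrightarrow{(D)} x)$, which together with the trivial $\le$ bound merely says the two sides are comparable. Summing over $x$ and inserting this into the identity yields $\mathbb{E}[N\mid\cdot] \asymp \Sigma/P$ with $\Sigma := \sum_x \mathbb{P}(A \xleftrightarrow{(D)} x)$ and $P := \mathbb{P}(A \xleftrightarrow{(D)} \partial B(R))$, which is precisely the tautology $P\cdot\mathbb{E}[N\mid\cdot]\asymp\Sigma$. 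It does \emph{not} produce the desired lower bound on $\mathbb{E}[N\mid\cdot]$ (equivalently, on $\Sigma/P$), which is the whole content of the lemma.

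The ``one-arm asymptotics produce the exponent $R^{(d/2)\boxdot 3}$'' step hides the real difficulty: plugging in $\mathbb{P}(\bm{0}\xleftrightarrow{}x)\asymp|x|^{2-d}$ and $P\asymp\theta_d(R)$ does give $\Sigma/P\asymp R^{(d/2)\boxdot 3}$, but that computation only works for $A=\{\bm 0\}$ and $D=\emptyset$. The lemma is stated for \emph{arbitrary} $A$ and absorbing sets $D$, for which neither $\mathbb{P}(A \xleftrightarrow{(D)} x)$ nor $\mathbb{P}(A \xleftrightarrow{(D)} \partial B(R))$ has a closed-form expression, and it is exactly this uniform point-to-set versus boundary-to-set comparison that \cite[Proposition 1.9]{cai2024quasi} provides (and which the paper is implicitly quoting through (5.2) there). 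Your instinct to defer the actual proof to \cite{cai2024quasi} is reasonable — the paper does the same — but the sketch you give of how one would reprove it does not close, and the $\theta_d(R)/\theta_d(R/d)$ factor should be dropped since it is misleadingly presented as carrying content.
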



Combining Lemmas \ref{lemma24}, \ref{lemma_new26} and \ref{lemma_relation}, we obtain the following estimate.

 	\begin{corollary}\label{lemma_point_to_boundary}
 		For any $3\le d\le 5$, there exist $C,c>0$ such that for any $D\subset \widetilde{\mathbb{Z}}^d$, $R\ge1 $, $v\in [\widetilde{B}(CR)]^c$, 
 		\begin{equation}
 			\mathbb{P}\big( v\xleftrightarrow{(D)} \partial B(R)  \big)\lesssim  R^{-\frac{d}{2}}\sum\nolimits_{z\in \partial \mathcal{B}(dR)}\mathbb{P}\big( v \xleftrightarrow{(D)} z , \big\{ v \xleftrightarrow{(D)} \partial B(cR) \big\}^c \big).  
 		\end{equation} 
 	\end{corollary}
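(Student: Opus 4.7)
My plan is to chain three earlier tools: first use Lemma \ref{lemma_relation} to reduce the point-to-boundary connecting probability to an averaged two-point sum on the Euclidean sphere $\partial \mathcal{B}(dR)$; then use Lemma \ref{lemma_new26} to insert an inner absorbing sphere of radius of order $R$ into the reference set $D$; and finally use Lemma \ref{lemma24}(1) to translate this auxiliary absorbing sphere into the desired non-connection event $\{v \xleftrightarrow{(D)} \partial B(cR)\}^c$.

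For the reduction step, set $C := 10 d^2$, so that the hypothesis $v \in [\widetilde{B}(CR)]^c$ places $v$ in $\mathfrak{B}^{-1}_{R,(10 d^2)^{-1}}$. Lemma \ref{lemma_relation} applied with $A = \{v\}$ and $i = -1$, combined with $\frac{d}{2}\boxdot 3 = \frac{d}{2}$ for $3 \le d \le 5$, immediately yields
\[
\mathbb{P}\bigl(v \xleftrightarrow{(D)} \partial B(R)\bigr) \lesssim R^{-d/2} \sum_{z \in \partial \mathcal{B}(dR)} \mathbb{P}\bigl(v \xleftrightarrow{(D)} z\bigr).
\]

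For the upgrade, let $c_L$ and $c_\star$ denote the constants furnished by Lemmas \ref{lemma_new26} and \ref{lemma24} respectively, and pick $c_1 > 0$ small enough that $c_1 \le c_L d$ and $c_1 < d \land C$. The first inequality guarantees that $D' := \partial B(c_1 R) \subset \mathfrak{B}^1_{dR,c_L}$, so Lemma \ref{lemma_new26} applied with $i = 1$, $r_1 = dR$, $r_{-1}$ the scale of $v$, $x = v$, and this $D'$ gives
\[
\sum_{z \in \partial \mathcal{B}(dR)} \mathbb{P}\bigl(v \xleftrightarrow{(D)} z\bigr) \asymp \sum_{z \in \partial \mathcal{B}(dR)} \mathbb{P}\bigl(v \xleftrightarrow{(D \cup \partial B(c_1 R))} z\bigr).
\]
To finish, apply Lemma \ref{lemma24}(1) with $i = -1$, $N := c_\star c_1 R$, $A_1 = \{v\}$, and $A_2 = \{z\}$. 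The second inequality $c_1 < d \land C$ ensures that $A_1, A_2 \subset \mathfrak{B}^{-1}_{N,c_\star} = [\widetilde{B}(c_1 R)]^c$, and the added boundary $\widetilde{\partial}\mathfrak{B}^{-1}_{N,c_\star}$ equals exactly $\partial B(c_1 R)$, so, writing $c := c_\star c_1$,
\[
\mathbb{P}\bigl(v \xleftrightarrow{(D \cup \partial B(c_1 R))} z\bigr) \lesssim \mathbb{P}\bigl(v \xleftrightarrow{(D)} z,\ \{v \xleftrightarrow{(D)} \partial B(cR)\}^c\bigr).
\]
Summing this over $z \in \partial \mathcal{B}(dR)$ and chaining the three displays produces the announced bound.

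The only delicate point I anticipate is that both Lemma \ref{lemma_new26} and Lemma \ref{lemma24} are phrased for lattice starting points (in Lemma \ref{lemma_new26}, one requires $x \in \partial \mathcal{B}(r_{-i})$), whereas the $v$ in Corollary \ref{lemma_point_to_boundary} is an arbitrary point of $\widetilde{\mathbb{Z}}^d$. This is essentially cosmetic: because $v$ sits at Euclidean distance at least $CR$ from every set appearing in the argument, one can either pass from $v$ to its nearest lattice point $\bar{v}$ at the cost of a bounded multiplicative factor (the Brownian motion from $v$ reaches $\bar{v}$ before leaving the edge containing $v$, so every two-point and hitting probability used in the argument changes by at most a constant), or simply observe that the proofs of both lemmas extend verbatim to a source point at the prescribed scale. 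Beyond this bookkeeping, the proof is a clean three-step composition of the lemmas already at our disposal.
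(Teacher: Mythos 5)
Your proof is correct and follows essentially the same three-step chain as the paper's own proof, which likewise applies Lemma \ref{lemma_relation} to pass to an averaged two-point sum, then Lemma \ref{lemma_new26} to insert an inner absorbing sphere, and finally Lemma \ref{lemma24}(1) to convert that sphere into the non-connection event $\{v\xleftrightarrow{(D)}\partial B(cR)\}^c$. The bookkeeping about constants and about passing from $v$ to $\bar v$ that you flag is harmless and handled (implicitly) the same way in the paper.
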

 	\begin{proof}
 		 Using Lemmas \ref{lemma24}, \ref{lemma_new26} and \ref{lemma_relation}, we have 
 		\begin{equation}
 			\begin{split}
 				\mathbb{P}\big( v\xleftrightarrow{(D)} \partial B(R)  \big)\overset{(\text{Lemma}\ \ref{lemma_relation})  }{\lesssim } & R^{-\frac{d}{2}} \sum\nolimits_{z\in \partial \mathcal{B}(dR)}\mathbb{P}\big( v \xleftrightarrow{(D)} z  \big)\\
 				\overset{(\text{Lemma}\ \ref{lemma_new26})  }{\lesssim } &R^{-\frac{d}{2}} \sum\nolimits_{z\in \partial \mathcal{B}(dR)}\mathbb{P}\big( v \xleftrightarrow{(D\cup \partial B(\sqrt{c}R))} z  \big)  \\
 					\overset{(\text{Lemma}\ \ref{lemma24})  }{\lesssim } & R^{-\frac{d}{2}} \sum\nolimits_{z\in \partial \mathcal{B}(dR)}\mathbb{P}\big( v \xleftrightarrow{(D)} z , \big\{ v \xleftrightarrow{(D)} \partial B(cR) \big\}^c \big).   \qedhere
 					 			\end{split}
 		\end{equation}

 	\end{proof}

We also cite the following result on the scaling of point-to-set connecting probabilities (see \cite[Proposition 1.5]{cai2024quasi}).

 \begin{lemma}\label{lemma_point_to_set}
 	For any $d\ge 3$ with $d\neq 6$, there exists $c>0$ such that for any $R\ge 1$, $A,D\subset \widetilde{B}(cR)$ and $x_1,x_2 \in  [\widetilde{B}(R)]^c$, 
 \begin{equation}
 	\mathbb{P}\big(A\xleftrightarrow{(D)} x_1 \big) \cdot |x_1|^{d-2}\asymp \mathbb{P}\big(A\xleftrightarrow{(D)} x_2 \big) \cdot |x_2|^{d-2}. 
 \end{equation} 	
 \end{lemma}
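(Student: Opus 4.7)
The strategy is to extract the factor $|x_i|^{2-d}$ from $\mathbb{P}(A\xleftrightarrow{(D)} x_i)$ by decomposing the connecting event at an intermediate sphere $\partial\mathcal{B}(R/2)$ and invoking Harnack's principle for Brownian hitting probabilities. Throughout, I choose $c$ small enough that $A, D \subset \widetilde{B}(cR)$ with, say, $c < d^{-2}$, and reduce to showing that $\mathbb{P}(A\xleftrightarrow{(D)} x_i)\asymp |x_i|^{2-d}\cdot Q(A,D,R)$ with $Q$ independent of $x_i$.

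First I would handle the single-point case $A=\{v\}$. Since $\widetilde{G}_D(x_i,x_i)\asymp 1$ whenever $x_i \in [\widetilde{B}(R)]^c$, the two-point formula \eqref{211} reduces to
\begin{equation*}
\mathbb{P}\big(v \xleftrightarrow{(D)} x_i\big) \asymp \sqrt{\widetilde{G}_D(v,v)}\,\widetilde{\mathbb{P}}_{x_i}\big(\tau_v < \tau_D\big).
\end{equation*}
Decomposing the Brownian motion started at $x_i$ via its first visit to $\partial\mathcal{B}(R/2)$ and combining Lemma \ref{lemma_BM_uniform} with the capacity estimates \eqref{cap1}--\eqref{cap2} and Harnack's inequality on the annulus $\{R/2\le \|\cdot\|\le |x_i|\}$, one obtains
\begin{equation*}
\widetilde{\mathbb{P}}_{x_i}\big(\tau_{\partial \mathcal{B}(R/2)} = \tau_y < \infty\big) \asymp |x_i|^{2-d}\, R^{1-d}
\end{equation*}
uniformly in $y\in\partial\mathcal{B}(R/2)$. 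Multiplying by $\widetilde{\mathbb{P}}_y(\tau_v<\tau_D)$, summing over $y$, and using the strong Markov property then yields the desired factorization $\mathbb{P}(v\xleftrightarrow{(D)} x_i)\asymp |x_i|^{2-d}\cdot Q(v,D,R)$.

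To pass from a single point to a general set $A$, I would invoke the restriction property of the loop soup: conditional on $\mathcal{F}:=\mathcal{C}^D_A\cap \widetilde{B}(R/2)$, the loops touching $[\widetilde{B}(R/2)]^c$ that avoid $\mathcal{F}$ form an independent loop soup $\widetilde{\mathcal{L}}^{D\cup\mathcal{F}}_{1/2}$, so that the event $A\xleftrightarrow{(D)}x_i$ is equivalent to $\mathcal{F}\xleftrightarrow{(D\cup\mathcal{F})}x_i$ within that outer soup. Bounding this outer connection by the boundary excursion kernel \eqref{for2.16} summed over anchor points of $\mathcal{F}\cap \partial\mathcal{B}(R/2)$, and then repeating the Brownian hitting decomposition above with $D\cup\mathcal{F}$ in place of $D$, I would obtain
\begin{equation*}
\mathbb{P}\big(\mathcal{F}\xleftrightarrow{(D\cup\mathcal{F})}x_i \,\big|\, \mathcal{F}\big) \asymp |x_i|^{2-d}\cdot \widetilde{Q}(\mathcal{F},D,R),
\end{equation*}
with $\widetilde{Q}$ independent of $x_i$. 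Taking expectations over $\mathcal{F}$ then gives
\begin{equation*}
\mathbb{P}\big(A\xleftrightarrow{(D)} x_i\big) \asymp |x_i|^{2-d}\cdot \mathbb{E}\big[\widetilde{Q}(\mathcal{F},D,R)\big],
\end{equation*}
and since the right-hand expectation is independent of $x_i$, the comparability follows.

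The main obstacle is to ensure that the Harnack-type comparison constants in the single-point argument are truly uniform once the absorbing set in the Brownian hitting picture becomes the random set $D\cup\mathcal{F}$. This requires careful use of the stability results of Lemmas \ref{lemma_new_ BM_stable}, \ref{lemma_stability_bd} and \ref{lemma_new26}, which guarantee that adjoining the random obstacle $\mathcal{F}$ (lying well inside $\widetilde{B}(R/2)$) does not distort the single-point estimates. Once this uniformity is in hand, the factorization $|x_i|^{2-d}\cdot(\text{quantity independent of }x_i)$ survives the expectation over $\mathcal{F}$, and the bifurcation $d\neq 6$ enters only through Lemma \ref{lemma_relation} (used implicitly when comparing boundary-to-point and point-to-point probabilities in the auxiliary estimates).
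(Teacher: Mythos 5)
The paper does not prove this lemma internally---it is imported from \cite[Proposition~1.5]{cai2024quasi}---so there is no in-paper argument to compare with; I assess your proposal on its own terms. The single-point reduction is sound in outline, though the displayed asymptotic should read $\widetilde{\mathbb{P}}_{x_i}(\tau_{\partial\mathcal{B}(R/2)}=\tau_y<\infty)\asymp |x_i|^{2-d}R^{-1}$, i.e., the product of the hitting probability $\asymp(R/|x_i|)^{d-2}$ and the $\asymp R^{1-d}$ harmonic-measure weight, not $|x_i|^{2-d}R^{1-d}$; the discrepancy is $R$-only and does not affect the comparability in $x_1$ versus $x_2$.

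The step from a single point to a general $A$ rests on a restriction-property statement that is false as written. The restriction property applies to the \emph{entire} cluster $\mathcal{C}_A^D$: given $\mathcal{F}_{\mathcal{C}_A^D}$, the unused loops have law $\widetilde{\mathcal{L}}_{1/2}^{D\cup\mathcal{C}_A^D}$, and once $\mathcal{C}_A^D$ is fixed the event $\{A\xleftrightarrow{(D)}x_i\}=\{x_i\in\mathcal{C}_A^D\}$ is already decided, so there is no outer connection left to compute. Conditioning only on $\mathcal{F}:=\mathcal{C}_A^D\cap\widetilde{B}(R/2)$ does not turn the remaining loops into a loop soup $\widetilde{\mathcal{L}}_{1/2}^{D\cup\mathcal{F}}$, and the event is also not equivalent to $\mathcal{F}\xleftrightarrow{(D\cup\mathcal{F})}x_i$: loops touching both $\mathcal{F}$ and $[\widetilde{B}(R/2)]^c$ contribute to $A\xleftrightarrow{(D)}x_i$ but are excluded from the soup you describe. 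If you instead use a legitimate partial exploration in the style of $\widehat{\mathfrak{C}}$ from Section~\ref{section_pivotal} (attach every loop hitting the growing cluster inside $\widetilde{B}(R/2)$), the conditioning is valid, but the resulting cluster is no longer confined to $\widetilde{B}(R/2)$---it can carry past $\partial B(R)$, especially for $d\ge 7$ where a single loop can cross the annulus---so the ``localized source, distant target'' structure you need to re-run the single-point argument on the outer soup is lost, and controlling $\widetilde{Q}(\mathcal{F},D,R)$ uniformly over the unbounded random $\mathcal{F}$ becomes a substantive problem that Lemmas~\ref{lemma_stability_bd}, \ref{lemma_new_ BM_stable} and \ref{lemma_new26} do not by themselves resolve. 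A cleaner route, consistent with the tools of Appendix~\ref{app_new_decompose_point_to_set}, is to pass through the isomorphism theorem, condition on the full negative cluster $\mathcal{C}_A^-$, apply Lemma~\ref{lemmaB1} to write $\mathbb{P}^D(x_i\xleftrightarrow{\ge 0}\mathcal{C}_A^-\mid\mathcal{F}_{\mathcal{C}_A^-})\asymp\big([\widetilde{G}_{D\cup\mathcal{C}_A^-}(x_i,x_i)]^{-1/2}\mathcal{H}_{x_i}\big)\land 1$, and then compare $\mathcal{H}_{x_1}$ with $\mathcal{H}_{x_2}$ directly via the Brownian hitting estimates you already invoke; this keeps the source set fixed and sidesteps the partial-exploration issue.
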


 Combining Lemmas \ref{lemma_relation} and \ref{lemma_point_to_set}, one may obtain the following bound:

 \begin{lemma}\label{lemma_213}
 	For any $d\ge 3$ with $d\neq 6$, there exists $c>0$ such that for any $R\ge 1$, $A,D\subset \widetilde{B}(cR)$ and $x\in [\widetilde{B}(R)]^{c}$, 
 	\begin{equation}\label{ineqforlemma216}
 		\mathbb{P}\big(A \xleftrightarrow{(D)} \partial B(R) \big) \lesssim R^{-[(\frac{d}{2}-1)\boxdot 2] } |x|^{d-2}\cdot   \mathbb{P}\big(A \xleftrightarrow{(D)} x \big). 
 	\end{equation}
 \end{lemma}

In the analysis of loop clusters, decomposing connection probabilities is a fundamental technique. A key property in this context is quasi-multiplicativity (see \cite[Theorem 1.1]{cai2024quasi}), which was instrumental in the derivation of the IIC.

 \begin{lemma}[quasi-multiplicativity]\label{lemma_quasi}
 	For any $d\ge 3$ with $d\neq 6$, there exist $c,C>0$ such that for any $R\ge 1$, $A_1,D_1\subset \widetilde{B}(cR^{1\boxdot \frac{2}{d-4}})$ and $A_{-1},D_{-1}\subset \big[\widetilde{B}(CR^{1\boxdot \frac{d-4}{2}})\big]^c$,  
 	\begin{equation}\label{ineq_quasi}
 		\mathbb{P}\big( A_1\xleftrightarrow{(D_1\cup D_{-1})} A_{-1}\big) \asymp R^{0\boxdot (6-d)}\prod\nolimits_{i\in \{1,-1\}}\mathbb{P}\big( A_i \xleftrightarrow{(D_i)} \partial B(R)\big).
 	\end{equation}
 \end{lemma}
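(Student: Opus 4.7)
The plan is to establish the two directions of (\ref{ineq_quasi}) separately. Both are based on reducing the joint connection to summed two-point functions on intermediate spheres of radii $d^{-1}R$ and $dR$, and then using Lemma \ref{lemma_relation} to exchange these averaged sums for $\mathbb{P}(A_i \xleftrightarrow{(D_i)} \partial B(R))$ at the cost of a factor $R^{(d/2) \boxdot 3}$ per side.

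For the upper bound, any cluster realizing $\{A_1 \xleftrightarrow{(D_1 \cup D_{-1})} A_{-1}\}$ must cross both spheres $\partial \mathcal{B}(d^{-1}R)$ and $\partial \mathcal{B}(dR)$, so I would perform a last-exit/first-entry decomposition at these two spheres. By the restriction property, conditionally on the endpoint pair $(x_1, x_{-1})$ the three pieces inside $\widetilde{B}(d^{-1}R)$, inside the annulus, and outside $\widetilde{B}(dR)$ decouple. This should yield, as a set-wise analog of Corollary \ref{lemma_cut_two_points},
\begin{equation*}
\mathbb{P}\bigl(A_1 \xleftrightarrow{(D_1 \cup D_{-1})} A_{-1}\bigr) \lesssim R^{-d} \sum_{x_1 \in \partial \mathcal{B}(d^{-1}R)} \sum_{x_{-1} \in \partial \mathcal{B}(dR)} \mathbb{P}\bigl(A_1 \xleftrightarrow{(D_1)} x_1\bigr)\, \mathbb{P}\bigl(A_{-1} \xleftrightarrow{(D_{-1})} x_{-1}\bigr),
\end{equation*}
where $R^{-d}$ captures the bridging two-point function $\asymp R^{2-d}$ times the combinatorial factor $R^{-2}$ from summing over the two spheres. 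Applying Lemma \ref{lemma_relation} to each sum and noting the arithmetic $R^{-d + 2((d/2)\boxdot 3)} = R^{0 \boxdot (6-d)}$ gives the upper bound.

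For the lower bound, I would build an explicit witness: the event $\mathsf{G}_{x_1, x_{-1}}$ that $A_1$ connects to $x_1 \in \partial \mathcal{B}(d^{-1}R)$ using only loops inside $\widetilde{B}(d^{-1}R)$, that $A_{-1}$ connects to $x_{-1} \in \partial \mathcal{B}(dR)$ using only loops outside $\widetilde{B}(dR)$, and that a bridging cluster inside the annulus joins $x_1$ and $x_{-1}$. Once zero boundary conditions are imposed on $\partial B(d^{-1}R) \cup \partial B(dR)$, these three sub-events become genuinely independent by the restriction property, and the stability estimates (Lemmas \ref{lemma_stability_bd}, \ref{lemma_new_stable} and Corollary \ref{coro27}) guarantee that introducing and then removing these artificial boundaries costs only constant factors. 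Summing $\mathbb{P}(\mathsf{G}_{x_1, x_{-1}})$ over $(x_1, x_{-1})$ and invoking Lemma \ref{lemma_relation} in reverse on each marginal produces the matching lower bound.

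The main obstacle is the lower-bound construction in high dimensions ($d > 6$), where arms are tree-like and do not automatically touch at their endpoints. The factor $R^{6-d} < 1$ encodes the combinatorial cost of pairing the typical entry points of the inner arm on $\partial \mathcal{B}(d^{-1}R)$ with the corresponding exit points of the outer arm on $\partial \mathcal{B}(dR)$ through the bridging cluster. To push this through rigorously, I would rely on the Brownian hitting stability of Lemma \ref{lemma_BM_stable} (and its averaged consequence in Lemma \ref{lemma24}) to verify that imposing the bridging cluster as an absorbing boundary does not distort the hitting distributions of the two arms by more than a constant factor. It is precisely to guarantee these constants that the asymmetric hypotheses $A_1 \subset \widetilde{B}(cR^{2/(d-4)})$ and $A_{-1} \subset [\widetilde{B}(CR^{(d-4)/2})]^c$ are imposed in the high-dimensional regime.
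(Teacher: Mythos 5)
The paper does not prove this lemma: it is imported verbatim as a citation to \cite[Theorem 1.1]{cai2024quasi} (see the sentence introducing it: ``quasi-multiplicativity (see \cite[Theorem 1.1]{cai2024quasi})''). There is therefore no in-paper proof to compare your argument against, and your attempt is really a sketch of the companion paper's main theorem.

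Evaluated on its own terms, the sketch is morally on the right track but has a directional error in the upper bound. Lemma \ref{lemma_relation} gives $\mathbb{P}(A\xleftrightarrow{(D)} \partial B(R)) \lesssim R^{-(\frac{d}{2}\boxdot 3)}\sum_x \mathbb{P}(A\xleftrightarrow{(D)} x)$, i.e. a \emph{lower} bound on the spherical sum $\sum_x\mathbb{P}(A\xleftrightarrow{(D)}x)$ in terms of $\mathbb{P}(A\xleftrightarrow{(D)}\partial B(R))$. To close the upper bound as you describe, after decomposing at the two spheres you need the \emph{opposite} inequality, bounding $\sum_x\mathbb{P}(A_i\xleftrightarrow{(D_i)}x)$ from above by $R^{(\frac{d}{2}\boxdot 3)}\mathbb{P}(A_i\xleftrightarrow{(D_i)}\partial B(R))$. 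That reverse direction is not what the cited lemma states; it has to be supplied separately (it does hold, via Lemma \ref{lemma_point_to_set} together with a genuine one-arm lower bound in terms of a single point-to-set probability, but you would need to invoke or prove it). As written, plugging Lemma \ref{lemma_relation} into the sum runs the inequality backwards. The other structural ingredients of your sketch --- a last-exit/first-entry decomposition for clusters at two nested spheres (a set-valued analogue of Corollary \ref{lemma_cut_two_points}), the restriction-property witness construction for the lower bound, and the interpretation of $R^{6-d}$ in high dimensions as a pairing cost --- are the correct conceptual skeleton and match what is done in \cite{cai2024quasi}, but each step is substantial; in particular the set-valued decomposition and the high-dimensional lower bound each occupy a section of that paper and are not reducible to the two-point lemmas available here.
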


%
%

     As a supplement, we present the following decomposition on point-to-set connecting probabilities. To maintain the flow, we defer its proof to Appendix \ref{app_new_decompose_point_to_set}.

       \begin{lemma}\label{lemma_new_decompose_point_to_set}
  	For any $d\ge 3$ with $d\neq 6$, there exist $c,C>0$ such that for any $R\ge 1$, $x\in \partial B(R)$, $y\in \mathfrak{B}^{-1}_{R,c}$, and any sets $A\subset \mathfrak{B}^1_{R,c}$ and $D\subset \mathfrak{B}^1_{R,c}\cup \mathfrak{B}^{-1}_{R,c}$,  
   			\begin{equation}
   			\mathbb{P}\big( A \xleftrightarrow{(D  )} y\big) \gtrsim R^{d-2}\cdot \mathbb{P}\big( A \xleftrightarrow{(D )} x \big) \cdot \big[  \mathbb{P}\big( x\xleftrightarrow{(D )} y\big)-  \tfrac{C[\mathrm{diam}(A)]^{d-4}}{R^{d-4}|y|^{2-d}} \cdot \mathbbm{1}_{d\ge 7} \big].
   	\end{equation}
  	    \end{lemma}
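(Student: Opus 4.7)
The plan is to prove Lemma \ref{lemma_new_decompose_point_to_set} by combining the lower-bound direction of quasi-multiplicativity (Lemma \ref{lemma_quasi}) with a bidirectional comparison between point-to-set and point-to-point connection probabilities derived from Lemmas \ref{lemma_213} and \ref{lemma_point_to_set}. The factor $R^{d-2}$ decomposes cleanly as $R^{[0\boxdot(6-d)] + 2[(d/2-1)\boxdot(d-4)]}$, where the first summand is the quasi-multiplicativity exponent and each $(d/2-1)\boxdot(d-4)$ corresponds to one point-to-set versus point-to-point conversion. This numerology confirms that the following three-step approach is the natural one.

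First I would apply Lemma \ref{lemma_quasi} to the pair $(A, \{y\})$ with intermediate sphere $\partial B(R)$, giving
\[
\mathbb{P}\big(A \xleftrightarrow{(D)} y\big) \gtrsim R^{0 \boxdot (6-d)} \cdot \mathbb{P}\big(A \xleftrightarrow{(D)} \partial B(R)\big) \cdot \mathbb{P}\big(y \xleftrightarrow{(D)} \partial B(CR)\big).
\]
Next I would lower bound each point-to-set factor by its point-to-point counterpart at the single point $x$. Using Lemma \ref{lemma_point_to_set} to establish the uniformity of $\mathbb{P}(A \xleftrightarrow{(D)} x')$ across $x' \in \partial \mathcal{B}(R)$, summing over the $\asymp R^{d-1}$ boundary points and matching with the bound of Lemma \ref{lemma_relation}, one obtains
\[
\mathbb{P}\big(A \xleftrightarrow{(D)} \partial B(R)\big) \gtrsim R^{(d/2-1) \boxdot (d-4)} \cdot \mathbb{P}\big(A \xleftrightarrow{(D)} x\big),
\]
and analogously for the $y$-side. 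Multiplying the three inequalities produces exactly the claimed main term $R^{d-2} \, \mathbb{P}(A \xleftrightarrow{(D)} x) \, \mathbb{P}(x \xleftrightarrow{(D)} y)$.

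The main obstacle is the high-dimensional correction $[\mathrm{diam}(A)]^{d-4}/R^{d-4}$. For $d \ge 7$, Lemma \ref{lemma_quasi} and the uniformity step demand the stricter containment $A \subset \widetilde{B}(cR^{2/(d-4)})$, which may fail when $\mathrm{diam}(A)$ is as large as $cR$. To accommodate this, I would perform the decomposition at the intermediate scale $R_0 := \max(\mathrm{diam}(A), R^{2/(d-4)})$ instead of $R$: the previous argument applies to the outer piece (scales between $R_0$ and $R$), while the missing piece between $\mathrm{diam}(A)$ and $R_0$ is absorbed into the subtracted correction. Tracking this cleanly so as to recover exactly $[\mathrm{diam}(A)]^{d-4}/R^{d-4}$ (rather than a cruder bound) is the most delicate step; it would use the stability Lemmas \ref{lemma_stability_bd} and \ref{lemma_new_stable} to handle the change of absorbing boundaries, together with the cluster decomposition techniques from \cite[Section 5]{cai2024quasi}.
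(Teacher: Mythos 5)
Your proposal takes a genuinely different route from the paper. The paper's proof works entirely on the GFF side: after switching to $\mathbb{P}^D(A\xleftrightarrow{\ge 0}y)$ via the isomorphism theorem, it introduces a preparatory Lemma B.1 expressing point-to-set connection probabilities in terms of a harmonic average $\mathcal{H}_v$ of GFF boundary values on $\mathcal{C}_A^-$, then lower-bounds $\mathcal{H}_y$ by $R^{d-2}\widetilde{\mathbb{P}}_y(\tau_x<\tau_D)\mathcal{H}_x$ via the strong Markov property of Brownian motion, and finally applies FKG. Your plan instead stays on the loop-soup side with quasi-multiplicativity and set-to-point conversions. For $3\le d\le 5$ the two routes are comparable in spirit and your plan is plausible (modulo some care with the uniformity of $\mathbb{P}(y\xleftrightarrow{(D)}x')$ over $x'\in\partial B(R)$, which you implicitly assume), though the paper's method is more direct since it avoids splitting $D$ and recombining.

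For $d\ge 7$, however, there is a genuine gap, and it is more severe than your proposal acknowledges. Lemma \ref{lemma_quasi} at scale parameter $\hat R$ requires both $A\subset\widetilde{B}(c\hat R^{2/(d-4)})$ and $y\in[\widetilde{B}(C\hat R^{(d-4)/2})]^c$. The first forces $\hat R\gtrsim[\mathrm{diam}(A)]^{(d-4)/2}$, the second $\hat R\lesssim|y|^{2/(d-4)}\asymp R^{2/(d-4)}$. These are compatible only when $\mathrm{diam}(A)\lesssim R^{4/(d-4)^2}$, which for $d\ge 7$ is far smaller than the allowed $\mathrm{diam}(A)\lesssim R$. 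Your intermediate scale $R_0=\max(\mathrm{diam}(A),R^{2/(d-4)})$ enforces the inner constraint but breaks the outer one: when $\mathrm{diam}(A)$ approaches $cR$, you would need $|y|\gtrsim R^{(d-4)/2}$, but $|y|$ is only $\asymp R/c$. So quasi-multiplicativity simply cannot be invoked in a large part of the stated range, and there is no scale to decompose at. Separately, quasi-multiplicativity is a multiplicative comparison ($\asymp$), so it does not produce the precise \emph{additive} correction $C[\mathrm{diam}(A)]^{d-4}/(R^{d-4}|y|^{d-2})$; in the paper this term arises organically from a last-exit decomposition of the Brownian motion in the formula for $\mathcal{H}_y$ — splitting the hit on $\mathcal{C}_A^-$ into the part inside $\widetilde{B}(R/10)$ and the remainder — combined with the high-dimensional cluster volume bound $\mathbb{P}(A\xleftrightarrow{}z)\lesssim|z|^{2-d}[\mathrm{diam}(A)]^{d-4}$ from \cite[Corollary 2.18]{cai2024quasi}. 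Without some comparable additive mechanism, "tracking this cleanly" is not a matter of bookkeeping; it is the missing idea.
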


%
%
%
%


  {\color{red}
  

  
 


  }


%
%
%
%
%

   The subsequent corollary provides a decomposition for two-point events.


   \begin{corollary}\label{coro28}
   	For any $3\le d\le 5$, there exist $C>0$ and $c>c'>c''>0$ such that for any $D\subset \widetilde{\mathbb{Z}}^d$, $R\ge C$, $R_1\le c''R$, $R_{-1}\ge \frac{R}{c''}$, and $w_{-1}\in \partial \mathcal{B}(R_{-1})$, 
   	   	\begin{equation}\label{ineq_coro_28}
   		\begin{split}
   		& 	 \sum\nolimits_{w_1\in \partial \mathcal{B}(R_1)}   \mathbb{P}\big(w_1 \xleftrightarrow{(D)} w_{-1}    \big)\\
   			 \lesssim &R^{-d} \sum_{w_1\in \partial \mathcal{B}(R_1),v_1\in \widetilde{\partial}\mathfrak{B}^{1}_{R,c'},v_{-1}\in \widetilde{\partial}\mathfrak{B}^{-1}_{R,c'}}\mathbb{P}\big(  v_1\xleftrightarrow{(D)}w_1, v_{-1}\xleftrightarrow{(D)}w_{-1} , \\
 		&  \ \ \ \ \ \ \ \ \ \ \ \ \ \ \ \   \big\{ v_{1} \xleftrightarrow{(D)} \widetilde{\partial }\mathfrak{B}^{1}_{R,c}\cup  \widetilde{\partial }\mathfrak{B}^{1}_{R_{1},c}  \big\}^c  , \big\{ v_{-1} \xleftrightarrow{(D)} \widetilde{\partial }\mathfrak{B}^{-1}_{R,c} \big\}^c \big). 
   		\end{split}
   	\end{equation} 
   \end{corollary}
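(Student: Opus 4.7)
The strategy is to mirror the proof of Corollary \ref{lemma_cut_two_points}, with the truncation spheres $\partial\mathcal{B}_{w_i}(cR)$ (centred at the endpoints) replaced by the origin-centred spheres $\widetilde{\partial}\widetilde{B}(c'R)$ and $\widetilde{\partial}\widetilde{B}(R/c')$. Fix $w_1\in\partial\mathcal{B}(R_1)$ and $w_{-1}\in\partial\mathcal{B}(R_{-1})$. Under the ordering $c''\ll c'\ll c$ the point $w_1$ lies strictly inside $\widetilde{B}(c'R)$ and $w_{-1}$ strictly outside $\widetilde{B}(R/c')$, so any Brownian path from $w_1$ to $w_{-1}$ must cross both intermediate spheres. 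Applying the strong Markov property at the first hitting of $\widetilde{\partial}\widetilde{B}(c'R)$ starting from $w_1$, and (via the symmetry of the Green's function) at the first hitting of $\widetilde{\partial}\widetilde{B}(R/c')$ starting from $w_{-1}$, combined with (\ref{2.9}), yields the exact decomposition
\[
\widetilde{G}_D(w_1,w_{-1})=\sum_{v_1,v_{-1}}\widetilde{\mathbb{P}}_{w_1}\bigl(\tau_{\widetilde{\partial}\widetilde{B}(c'R)}=\tau_{v_1}<\tau_D\bigr)\,\widetilde{\mathbb{P}}_{w_{-1}}\bigl(\tau_{\widetilde{\partial}\widetilde{B}(R/c')}=\tau_{v_{-1}}<\tau_D\bigr)\,\widetilde{G}_D(v_{-1},v_1).
\]
Since $|v_{-1}-v_1|\asymp R$ uniformly over the sum, $\widetilde{G}_D(v_{-1},v_1)\le\widetilde{G}(v_{-1},v_1)\lesssim R^{2-d}$; summing the hitting distributions then produces the asymmetric analogue of Lemma \ref{lemma_new_decom_green}, namely $\widetilde{G}_D(w_1,w_{-1})\lesssim R^{2-d}\,\widetilde{\mathbb{P}}_{w_1}(\tau_{\widetilde{\partial}\widetilde{B}(c'R)}<\tau_D)\,\widetilde{\mathbb{P}}_{w_{-1}}(\tau_{\widetilde{\partial}\widetilde{B}(R/c')}<\tau_D)$.

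Substituting this bound into (\ref{211}) and applying Corollary \ref{coronew27} to the inner escape (target radius $\asymp R$) together with its outer analogue (described below) to the outer escape, each contributes a factor $R^{-1}\sqrt{\widetilde{G}_D(w_i,w_i)}$ that cancels the corresponding denominator along with a sum over $v_i$ on the target sphere of two-point probabilities $\mathbb{P}(w_i\xleftrightarrow{(D)}v_i,\{w_i\xleftrightarrow{(D)}\partial B(\hat cR)\}^c)$, giving the prefactor $R^{-d}$. On the outer side the single-sided truncation output by the outer Corollary \ref{coronew27} is already of the form $\{v_{-1}\xleftrightarrow{(D)}\widetilde{\partial}\mathfrak{B}^{-1}_{R,c}\}^c$ in (\ref{ineq_coro_28}). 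On the inner side we must upgrade the one-sided truncation to $\{v_1\xleftrightarrow{(D)}\widetilde{\partial}\mathfrak{B}^{1}_{R,c}\cup\widetilde{\partial}\mathfrak{B}^{1}_{R_1,c}\}^c$: after dropping the weaker truncation and using the symmetry $\{w_1\xleftrightarrow{(D)}v_1\}=\{v_1\xleftrightarrow{(D)}w_1\}$, the stability Lemma \ref{lemma_new26} shows that the sum $\sum_{w_1}\mathbb{P}(w_1\xleftrightarrow{(D)}v_1)$ is unchanged (up to constants) by adding zero boundary conditions on $\partial B(CcR_1)$ and $\partial B(cR/C)$, and Lemma \ref{lemma24}(2), applied with $N_1\asymp R_1$ and $N_2\asymp R$ (both $w_1$ and $v_1$ lie in $\widetilde{B}(N_2)\setminus\widetilde{B}(N_1)$ by the ordering $c''\ll c'\ll c$), then converts those zero boundary conditions into the desired two-sided truncation event. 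Finally, the two truncation events force $\mathcal{C}_{v_1}^D\subset\widetilde{B}(cR)\setminus\widetilde{B}(cR_1)$ and $\mathcal{C}_{v_{-1}}^D\subset[\widetilde{B}(R/c)]^c$ to lie in disjoint regions, so by the restriction property of the loop soup the product of the two single-sided probabilities equals (up to constants) the joint probability appearing in (\ref{ineq_coro_28}). Summing over $w_1\in\partial\mathcal{B}(R_1)$ concludes the proof.

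The only new analytic ingredient is the ``outer'' analogue of Corollary \ref{coronew27}: for $u\in\partial\mathcal{B}(R_{-1})$ with $R_{-1}\ge R/c''$, the inward escape probability $\widetilde{\mathbb{P}}_u(\tau_{\widetilde{\partial}\widetilde{B}(R/c')}<\tau_D)$ must be bounded by $R^{-1}\sqrt{\widetilde{G}_D(u,u)}$ times a sum over $v_{-1}\in\widetilde{\partial}\widetilde{B}(R/c')$ of $\mathbb{P}(u\xleftrightarrow{(D)}v_{-1},\{u\xleftrightarrow{(D)}\widetilde{\partial}\widetilde{B}(R/c)\}^c)$. The same last-exit-decomposition/reversal scheme that proves Corollary \ref{coronew27} carries over with ``inside'' and ``outside'' swapped, invoking Lemma \ref{lemma24}(1) with $i=-1$, Lemma \ref{lemma_BM_uniform}, and the capacity identities (\ref{cap1})--(\ref{cap2}) to control the Brownian motion at scales between $R$ and $R_{-1}$. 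The absence of a second truncation on $v_{-1}$'s cluster toward infinity in (\ref{ineq_coro_28}) reflects the fact that $R_{-1}$ is unbounded from above in the statement, so there is no natural outer scale against which to truncate on the outer side.
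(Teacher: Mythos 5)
Your proposal is correct and reaches the same endpoint, but the intermediate decomposition step genuinely differs from the paper's. The paper works directly with the hitting probability $\widetilde{\mathbb{P}}_{w_1}(\tau_{w_{-1}}<\tau_D)$: a single last-exit decomposition around $\widetilde{\partial}\mathfrak{B}^1_{R,c'}$, followed by an escape bound $\widetilde{\mathbb{P}}_{v_1'}(\tau_{\widetilde{\partial}\mathfrak{B}^1_{R,4c'}}<\infty)\lesssim R^{-1}$ (potential theory) and the uniform-hitting bound of Lemma~\ref{lemma_BM_uniform} on $\widetilde{\partial}\mathfrak{B}^{-1}_{R,c'}$, immediately produces $R^{-d}\prod_i\sum_{v_i}\mathbb{P}(w_i\xleftrightarrow{(D)}v_i)$. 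You instead decompose the Green's function $\widetilde{G}_D(w_1,w_{-1})$ exactly via two strong-Markov stops and symmetry, crush the middle factor $\widetilde{G}_D(v_{-1},v_1)\lesssim R^{2-d}$, and then re-express the two escape probabilities through Corollary~\ref{coronew27} plus a yet-to-be-proved inward-directed analogue. That analogue is a genuine extra lemma (the paper never needs it, since its LED treats both sides in one stroke), though your sketch of its proof --- mirror the LED of Corollary~\ref{coronew27} with the annulus escape $\widetilde{\mathbb{P}}_{z'}(\tau_{\widetilde{\partial}\widetilde{B}(R)}<\tau_{[\mathcal{B}(10R)]^c})\lesssim R^{-1}$ --- is sound. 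After that point (sum over $w_1$, apply Lemma~\ref{lemma_new26} and Lemma~\ref{lemma24}(2) to upgrade the inner truncation to a two-sided one, then use Corollary~\ref{coro27}/the restriction property to pass from the product to the joint probability) the two arguments coincide. Net comparison: your route is more modular --- it reuses Corollary~\ref{coronew27} as a black box and the exact Green's-function identity is conceptually cleaner than the paper's LED --- at the cost of having to establish an additional inward analogue of Corollary~\ref{coronew27}, whereas the paper's direct potential-theory bounds make the proof self-contained with what is already in Section~2. One small note: Corollary~\ref{coronew27} outputs the truncation $\{w_1\xleftrightarrow{(D)}\partial B(Cc'R)\}^c$, so to obtain the one-sided component of the statement's truncation at $\widetilde{\partial}\mathfrak{B}^1_{R,c}$ you should either impose $Cc'\le c$ or (as you in effect do) drop the truncation and regenerate it; either works, but it is worth making the constant ordering explicit when writing this up.
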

   \begin{proof}
   For any $w_1\in \partial \mathcal{B}(R_1)$ and $w_{-1}\in \partial \mathcal{B}(R_{-1})$, it follows from (\ref{211}) that 
   \begin{equation}\label{do253}
   	  \mathbb{P}\big(w_1 \xleftrightarrow{(D)} w_{-1}    \big) \asymp \widetilde{\mathbb{P}}_{w_1}\big(\tau_{w_{-1}}<\tau_D \big)\cdot \sqrt{\tfrac{\widetilde{G}_{D}(w_{-1},w_{-1})}{\widetilde{G}_{D}(w_{1},w_{1})}}. 
   \end{equation}
  By the last-exit decomposition and the strong Markov property of Brownian motion,  the probability $\widetilde{\mathbb{P}}_{w_1}\big(\tau_{w_{-1}}<\tau_D \big)$ is upper-bounded by 
     \begin{equation}\label{final267}
   	\begin{split}
   	  &C\sum_{v_1\in \widetilde{\partial}\mathfrak{B}^{1}_{R,c'}} \widetilde{G}_{D}(w_1,v_1) \sum_{v_1'\in [\mathfrak{B}^{1}_{R,c'}]^c: \{v_1,v_1'\}\in \mathbb{L}^d} \widetilde{\mathbb{P}}_{v_1'}\big( \tau_{\partial B(R) }<\tau_{\mathfrak{B}^{1}_{R,c'}} \big)  \\
   	    &\cdot \max_{z\in  \partial B(R)} \sum\nolimits_{v_{-1}\in \widetilde{\partial}\mathfrak{B}^{-1}_{R,c'}} \widetilde{\mathbb{P}}_z\big( \tau_{\widetilde{\partial}\mathfrak{B}^{-1}_{R,c'}}= \tau_{v_{-1}}<\infty  \big)  \cdot \widetilde{\mathbb{P}}_{v_{-1}}\big(\tau_{w_{-1}}<\tau_{D} \big).
   	\end{split}
   \end{equation}
   Moreover, using the potential theory, we have   
   \begin{equation}\label{final268}
   	\widetilde{\mathbb{P}}_{v_1'}\big(  \tau_{\partial B(R) }<\tau_{\mathfrak{B}^{1}_{R,c'}}  \big) \lesssim R^{-1}.
   \end{equation}
   In addition, Lemma \ref{lemma_BM_uniform} implies that for any $z\in \partial B(R)$ and $v_{-1}\in \widetilde{\partial}\mathfrak{B}^{-1}_{R,c'}$, 
   \begin{equation}\label{do256}
   	\widetilde{\mathbb{P}}_z\big( \tau_{\widetilde{\partial}\mathfrak{B}^{-1}_{R,c'}}= \tau_{v_{-1}}<\infty  \big)  \lesssim R^{1-d}.
   \end{equation}
   Plugging (\ref{final268}) and (\ref{do256}) into (\ref{final267}), and then combining with (\ref{do253}), we have 
   \begin{equation*}
   	\begin{split}
   		 \mathbb{P}\big(w_1 \xleftrightarrow{(D)} w_{-1}    \big) \lesssim  & R^{-d} \sum_{v_1\in \widetilde{\partial}\mathfrak{B}^{1}_{R,c'},v_{-1}\in \widetilde{\partial}\mathfrak{B}^{-1}_{R,c'}}\widetilde{G}_{D}(w_1,v_1)      \widetilde{\mathbb{P}}_{v_{-1}}\big(\tau_{w_{-1}}<\tau_{D} \big)  \sqrt{\tfrac{\widetilde{G}_{D}(w_{-1},w_{-1})}{\widetilde{G}_{D}(w_{1},w_{1})}}\\
   		\overset{(\ref{211})}{ \lesssim} & R^{-d} \prod\nolimits_{i\in \{1,-1\}} \sum\nolimits_{v_i \in \widetilde{\partial}\mathfrak{B}^{i}_{R,c'} } \mathbb{P}\big(w_i\xleftrightarrow{(D)}v_i  \big).  
   	\end{split}
   \end{equation*}
   Summing over $w_1\in \partial \mathcal{B}(R_1)$, and using Lemmas \ref{lemma24} and \ref{lemma_new26}, we obtain 
   \begin{equation*}
   	\begin{split}
   			&  \sum\nolimits_{w_1\in \partial \mathcal{B}(R_1)}   \mathbb{P}\big(w_1 \xleftrightarrow{(D)} w_{-1}    \big) \\
   			 \lesssim & R^{-d} \sum\nolimits_{ w_1\in \partial \mathcal{B}(R_1),v_1\in \widetilde{\partial}\mathfrak{B}^{1}_{R,c'} } \mathbb{P}\big(w_1\xleftrightarrow{(D)}v_1   \big)\cdot \sum\nolimits_{v_{-1}\in \widetilde{\partial}\mathfrak{B}^{-1}_{R,c'}}\mathbb{P}\big(w_{-1}\xleftrightarrow{(D)}v_{-1}  \big)\\
   			\overset{(\text{Lemma}\ \ref{lemma_new26})}{\lesssim} & R^{-d} \sum\nolimits_{ w_1\in \partial \mathcal{B}(R_1),v_1\in \widetilde{\partial}\mathfrak{B}^{1}_{R,c'} } \mathbb{P}\big(w_1\xleftrightarrow{(D \cup \widetilde{\partial} \mathfrak{B}^1_{R,\sqrt{c\cdot c'}}\cup \widetilde{\partial} \mathfrak{B}^{1}_{R_1,\sqrt{c\cdot c'}} )}v_1   \big)\\
   			 & \cdot \sum\nolimits_{v_{-1}\in \widetilde{\partial}\mathfrak{B}^{-1}_{R,c'}}\mathbb{P}\big(w_{-1}\xleftrightarrow{(D  \cup \widetilde{\partial} \mathfrak{B}^{-1}_{R,\sqrt{c\cdot c'}})}v_{-1}  \big)\\
   			 \overset{(\text{Lemma}\ \ref{lemma24})}{\lesssim } &R^{-d}  \sum\nolimits_{ w_1\in \partial \mathcal{B}(R_1),v_1\in \widetilde{\partial}\mathfrak{B}^{1}_{R,c'} } \mathbb{P}\big(  v_{1} \xleftrightarrow{(D)} w_{1},  \big\{ v_{1} \xleftrightarrow{(D)} \widetilde{\partial }\mathfrak{B}^{1}_{R,c}\cup  \widetilde{\partial }\mathfrak{B}^{1}_{R_{1},c}  \big\}^c \big) \\
   			 &\cdot  \sum\nolimits_{ v_{-1}\in \widetilde{\partial}\mathfrak{B}^{-1}_{R,c'} }  \mathbb{P}\big(  v_{-1} \xleftrightarrow{(D)} w_{-1},  \big\{ v_{-1} \xleftrightarrow{(D)} \widetilde{\partial }\mathfrak{B}^{-1}_{R,c}  \big\}^c \big). 
   	\end{split}
   \end{equation*}
   Combined with Corollary \ref{coro27}, it implies the desired bound (\ref{ineq_coro_28}).
   \end{proof}

     \subsection{Brownian excursion measure}\label{subsection_BEM}

    We arbitrarily take $D\subset \widetilde{\mathbb{Z}}^d$. As pointed out in \cite{werner2025switching}, the loops of $\widetilde{\mathcal{L}}_{1/2}^D$ intersecting a fixed point can be equivalently described by a Poisson point process of excursions. Specifically, for any $v\in \widetilde{\mathbb{Z}}^d$, rooted loop $\widetilde{\varrho}$ with $\widetilde{\varrho}(0)=\widetilde{\varrho}(T)=v$ (where $T$ is the duration of $\widetilde{\ell}$), and $0<t<T$, define 
    \begin{equation}
    	\delta_t^{-}=\delta_t^{-}(\widetilde{\varrho},v):=\sup\{s\le t:\widetilde{\varrho}(s)=v\}, 
    \end{equation}     
    \begin{equation}
    	\delta_t^{+}=\delta_t^{+}(\widetilde{\varrho},v):=\inf\{s\ge t:\widetilde{\varrho}(s)=v\}. 
    \end{equation}
    Clearly, $\widetilde{\varrho}$ consists of the excursions $\{\widetilde{\varrho}[\delta_t^{-},\delta_t^{+}]:0<t<T,\delta_t^{-}<\delta_t^{+}\}$. Let $\mathscr{E}_v(\widetilde{\varrho})$ denote the point measure supported on these excursions. Note that $\mathscr{E}_v(\widetilde{\varrho})=\mathscr{E}_v(\widetilde{\varrho}')$ when $\widetilde{\varrho}$ and $\widetilde{\varrho}'$ belong to the same loop. Hence, for any loop $\widetilde{\ell}$ intersecting $v$, we define $\mathscr{E}_v(\widetilde{\ell})$ as $\mathscr{E}_v(\widetilde{\varrho})$ for any $\widetilde{\varrho}\in \widetilde{\ell}$ with $\widetilde{\varrho}(0)=\widetilde{\varrho}(T)=v$. For completeness, we set $\mathscr{E}_v(\widetilde{\ell}):=0$ when $\widetilde{\ell}$ is disjoint from $v$.

       For any $D\subset \widetilde{\mathbb{Z}}^d$, $v\in \widetilde{\mathbb{Z}}^d\setminus D$ and $a>0$, conditioned on the event $\big\{\widehat{\mathcal{L}}_{1/2}^{D,v}=a\big\}$, the point measure $\widetilde{\mathcal{E}}_{v}^D:= \sum\nolimits_{\widetilde{\ell}\in \widetilde{\mathcal{L}}^{D}_{1/2}} \mathscr{E}_v(\widetilde{\ell})$ is a Poisson point process, whose intensity measure $\mathbf{e}^{D}_{v,v}$ (commonly referred to as the ``Brownian excursion measure'') is a $\sigma$-finite measure on the space of excursions from $v$. (\textbf{P.S.} As in the case of ``It\^o's excursion measure'', $\mathbf{e}^{D}_{v,v}$ lacks a direct and simple definition.) In addition, for any $v\neq w\in \widetilde{\mathbb{Z}}^d$, we may restrict $\mathbf{e}_{v,v}^{D}$ to excursions that hit $w$, and for each excursion we only keep the part up to the first time it reaches $w$. This yields the measure $\mathbf{e}^{D}_{v,w}$ on the space of excursions from $v$ to $w$. According to \cite[Lemma 3.14]{inpreparation_twoarm}, the total mass of $\mathbf{e}^{D}_{v,w}$ exactly equals the boundary excursion kernel $\mathbb{K}_{D\cup \{v,w\}}(v,w)$.

       The Brownian excursion measure $\mathbf{e}_{v,w}^{D}$ (for both $v=w$ and $v\neq w$) possesses the strong Markov property, which greatly facilitates related computations. Precisely, for any $A\subset \widetilde{\mathbb{Z}}^d\setminus (D\cup \{v,w\})$, under the measure $\mathbf{e}_{v,w}^{D}$, given the configuration of an excursion $\widetilde{\eta}$ up to the first time it hits $A$ (suppose that the first hitting point is $z\in \widetilde{\partial}A$), the remaining part of $\widetilde{\eta}$ is distributed by $	\widetilde{\mathbb{P}}_z\big( \big\{\widetilde{S}_t \big\}_{0 \le t\le \tau_{w}} \in \cdot  \mid \tau_{w}<\tau_{D\cup \{v\}} \big)$. 

 The following lemma, as a strengthened version of \cite[Lemma 3.17]{inpreparation_twoarm}, provides an upper bound on the total mass of excursions that reach a distant location.

\begin{lemma}\label{lemma_for_new62}
	For any $3\le d\le 5$, there exist $C>0$, $C'>C^2$ and $c>0$ such that the following holds. Suppose that $r\ge 1$, $R\ge C'r$, $\{x_i,y_i\}\in \mathbb{L}^d$ satisfying $x_i\in B(\frac{r}{10})$ for $i\in \{1,2\}$, $v_i\in  I_{\{x_i,y_i\}}$ satisfying $|v_i-x_i|\land |v_i-y_i|\ge \frac{1}{10}$ for $i\in \{1,2\}$, $D\subset \widetilde{\mathbb{Z}}^d$ satisfying $x_i\in D$ and $D\cap I_{[v_i,y_i]}=\emptyset$ for $i\in \{1,2\}$, $v_i'\in I_{[v_i,y_i]}$ satisfying $|v_i'-v_i|\land |v_i'-y_i|\ge \frac{1}{100}$ for $i\in \{1,2\}$, and $A\subset [\widetilde{B}(R)]^c$. Then we have  
	\begin{equation}\label{addto242}
 	\begin{split}
 		& \mathbf{e}^D_{v_1,v_2}\big( \big\{ \widetilde{\eta}: \mathrm{ran}(\widetilde{\eta})\cap A \neq \emptyset \big\} \big)\\
 		\lesssim &r^{1-d}R^{1-d}\cdot    \sum\nolimits_{z\in \partial \mathcal{B}(Cr),z'\in \partial \mathcal{B}(\frac{R}{C})} \mathbb{P}\big( z\xleftrightarrow{(D)} z', \big\{z \xleftrightarrow{(D)} \partial B(10r )\cup \partial B(\tfrac{R}{10})\big\}^c\big)  \\
 		&\cdot   \mathbf{Q}(v_1',v_2')\cdot \max_{z''\in \partial \mathcal{B}(\frac{10R}{C})}   \widetilde{\mathbb{P}}_{z''}\big(\tau_A< \tau_{D} \big).
 	\end{split}
 	\end{equation} 
 	Here the quantity $\mathbf{Q}(v_1',v_2')$ is defined as follows:
 	\begin{itemize}
 		
 		\item when $\chi:=|v_1'-v_2'|\le 10c^{-1}$, $\mathbf{Q}(v_1',v_2'):=\widetilde{\mathbb{P}}_{v_1'}\big(\tau_{\partial B(r)}<\tau_{D} \big)$;

 		\item when $\chi> 10c^{-1}$, we define 
 		\begin{equation}
 			\mathbf{Q}(v_1',v_2'):= \chi^{-2}  \prod\nolimits_{i\in \{1,2\}} \sum\nolimits_{z_i\in \partial \mathcal{B}_{v_i'}(c\chi )}  \mathbb{P}\big( v_i'\xleftrightarrow{(D)} z_i, \big\{ v_i'\xleftrightarrow{(D)} \partial B_{v_i'}(\tfrac{\chi}{100}) \big\}^c \big). 
 		\end{equation}

 	\end{itemize}
\end{lemma}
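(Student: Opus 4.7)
My plan is to decompose an excursion contributing to $\mathbf{e}^D_{v_1,v_2}(\{\mathrm{ran}(\widetilde{\eta})\cap A\neq\emptyset\})$ via the strong Markov property at three scales: the local $O(1)$ scale near the edges containing $v_1,v_2$, the intermediate scale $r$, and the far scale $R$ of $A$. These three stages will naturally produce the three blocks $\mathbf{Q}(v_1',v_2')$, the $r^{1-d}$ factor with summed connecting probabilities, and $R^{1-d}\cdot\widetilde{\mathbb{P}}_{z''}(\tau_A<\tau_D)$ appearing in (\ref{addto242}).

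The first and most delicate step would be isolating the local contribution $\mathbf{Q}(v_1',v_2')$. Using the strong Markov property of $\mathbf{e}^D_{v_1,v_2}$ at the first exit from a unit-sized neighborhood of $\{v_1,v_2\}$, the excursion splits into an ``internal'' piece (traveling between $v_1$ and $v_2$ through the edges, possibly with many small oscillations that fail to escape the $O(1)$ neighborhoods) and an ``external'' piece that reaches the scale $r$ shell. When $\chi := |v_1'-v_2'| \le 10c^{-1}$, the two endpoints effectively share a single $O(1)$ escape region, and the internal piece reduces to the escape probability $\widetilde{\mathbb{P}}_{v_1'}(\tau_{\partial B(r)}<\tau_D)$; here the hypotheses $|v_i-x_i|\wedge |v_i-y_i|\ge \tfrac{1}{10}$ and $|v_i'-v_i|\wedge|v_i'-y_i|\ge \tfrac{1}{100}$ ensure $\widetilde{G}_D(v_i,v_i)\asymp\widetilde{G}_D(v_i',v_i') \asymp 1$ via (\ref{order_green_function}), so no degeneracies arise. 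When $\chi > 10c^{-1}$, the path inside the excursion between the two $O(1)$ neighborhoods is macroscopic, and Corollary \ref{lemma_cut_two_points} applied at scale $\chi$ will deliver the product form in the second case of $\mathbf{Q}$.

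For the intermediate and far scales, I would apply the strong Markov property at the first hitting of $\partial \mathcal{B}(Cr)$, which yields a uniform-on-sphere factor $\lesssim r^{1-d}$ via Lemma \ref{lemma_BM_uniform}, followed by a last-exit decomposition from $\partial \mathcal{B}(R/C)$ just before the excursion hits $A$, which produces the factor $\widetilde{\mathbb{P}}_{z''}(\tau_A<\tau_D)$ and a second $R^{1-d}$ factor (again by Lemma \ref{lemma_BM_uniform}). The surviving middle piece is a Brownian hitting probability from $z\in\partial\mathcal{B}(Cr)$ to $z'\in\partial\mathcal{B}(R/C)$, which by (\ref{211}) combined with $\widetilde{G}_D(z,z),\widetilde{G}_D(z',z')\asymp 1$ is comparable to $\mathbb{P}(z\xleftrightarrow{(D)}z')$. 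To replace this by the ``no long crossing'' connecting probability appearing in (\ref{addto242}), I would augment $D$ with artificial zero boundaries near $\partial\mathcal{B}(r)$ and $\partial\mathcal{B}(R)$ at only constant cost on the summed connecting probability via Lemma \ref{lemma_new26}, and then use Lemma \ref{lemma24}(2) to rewrite the boundary-augmented connecting probability as the desired two-point event in $\widetilde{\mathcal{L}}_{1/2}^D$ without long crossings.

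The main obstacle will be cleanly factoring the local block $\mathbf{Q}(v_1',v_2')$ from the intermediate block, particularly at the boundary $\chi\asymp 1$ between the two cases in the definition of $\mathbf{Q}$. The asymmetry created by $x_i\in D$ constrains the excursion's behavior on $I_{\{x_i,y_i\}}$, and one must track the possibility of multiple traversals of $v_i'$ before a successful escape; handling this requires that the single-point escape estimate in the small-$\chi$ case and the two-point decomposition in the large-$\chi$ case agree up to constants in the transitional regime. The hypothesis $R\ge C'r$ with $C'$ chosen large (beyond the implicit constants in $\mathbf{Q}$) ensures that the strong Markov step at $\partial\mathcal{B}(Cr)$ decouples from the local block without contaminating the no-long-crossing event, and that the intermediate-scale sum over $z\in\partial\mathcal{B}(Cr)$ is uniformly valid independently of the precise geometry of $v_1',v_2'$.
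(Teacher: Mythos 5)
Your high-level strategy matches the paper's: reduce the excursion mass via the strong Markov property of $\mathbf{e}^D_{v_1,v_2}$ (using $x_i\in D$ to force passage through $v_i'$) to the Brownian hitting probability $\widetilde{\mathbb{P}}_{v_1'}\big(\tau_A<\tau_{v_2'}<\tau_D\big)$, then chain strong Markov and last-exit steps through scales $\chi$, $r$, $R$. Your handling of the intermediate block (uniformity on $\partial\mathcal{B}(Cr)$ via Lemma \ref{lemma_BM_uniform} for $r^{1-d}$, relating the scale-$r$-to-scale-$R$ hitting probability to $\mathbb{P}(z\xleftrightarrow{(D)}z')$ via (\ref{211}), then restoring the no-long-crossing form via Lemma \ref{lemma_new26} and Lemma \ref{lemma24}) is essentially what the paper does in (\ref{newadd246})--(\ref{newadd247}).

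However, there are two concrete gaps. First, for $\chi>10c^{-1}$ the local contribution is the product of two \emph{escape} probabilities $\prod_{i}\widetilde{\mathbb{P}}_{v_i'}\big(\tau_{\partial\mathcal{B}_{v_i'}(10c\chi)}<\tau_D\big)$, and the paper bounds each factor by $\chi^{-1}\sum_{z_i}\mathbb{P}(v_i'\xleftrightarrow{(D)}z_i,\cdot)$ via Corollary \ref{coronew27}, which yields exactly the $\chi^{-2}$ prefactor in $\mathbf{Q}$. You propose Corollary \ref{lemma_cut_two_points} instead, but that bounds the loop-soup connecting probability $\mathbb{P}(v_1\xleftrightarrow{(D)}v_2)$ --- a different object than the escape probabilities in hand --- and even formally its right-hand side carries the prefactor $\chi^{-d}$, not $\chi^{-2}$. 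Second, your decomposition does not address the return leg of the excursion from $A$ back to $v_2'$. In the paper this leg is crucial: the $R^{1-d}$ prefactor arises as $R^{-1}\cdot R^{2-d}$, where $R^{-1}$ comes from Corollary \ref{coronew27} applied on the forward leg $\partial B(Cr)\to\partial B(\tfrac{10R}{C})$ and $R^{2-d}$ is the Green's-function bound $\widetilde{G}_D(w'',y)\lesssim R^{2-d}$ extracted in the last-exit decomposition of the return trip near $\partial\mathcal{B}_{v_2'}(10c\chi)$ --- the same decomposition that produces the $v_2'$-side escape factor $\mathbf{R}_2$ entering $\mathbf{Q}$. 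Attributing $R^{1-d}$ to a uniform-on-sphere bound near $\partial\mathcal{B}(R/C)$ does not reproduce this split, and more importantly leaves no mechanism for $\mathbf{R}_2$ to appear at all.
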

\begin{proof}
	For $i\in \{1,2\}$, since $x_i\in D$, all excursions $\widetilde{\eta}$ starting from $v_i$ and reaching $A$ before $D$ must intersect $v_i'$. Thus, by the strong Markov property of $\mathbf{e}^D_{v_1,v_2}$, 
	\begin{equation}
		\mathbf{e}^D_{v_1,v_2}\big( \big\{ \widetilde{\eta}: \mathrm{ran}(\widetilde{\eta})\cap A \neq \emptyset \big\} \big) \asymp \widetilde{\mathbb{P}}_{v_1'}\big( \tau_{A}<\tau_{v_2'}<\tau_{D} \big). 
	\end{equation}
	Define $\mathbf{R}_1:=\widetilde{\mathbb{P}}_{v_1'}\big(\tau_{\partial B(r)}<\tau_{D} \big)$ and $\mathbf{R}_2:=1$ when $\chi\le 10c^{-1}$, and define $\mathbf{R}_i:=\widetilde{\mathbb{P}}_{v_i'}\big( \tau_{\partial \mathcal{B}_{v_i'}(10c\chi)} <\tau_D \big)$ for $i\in \{1,2\}$ when $\chi > 10c^{-1}$. Corollary \ref{coronew27} implies that 
	 \begin{equation}\label{addto246}
	 	\mathbf{R}_1\mathbf{R}_2
	 	\lesssim \mathbf{Q}(v_1',v_2'). 
	 \end{equation}	 
	Using the strong Markov property of Brownian motion repeatedly, we have 
	 \begin{equation}\label{newadd245}
	 	\begin{split}
	 		& \widetilde{\mathbb{P}}_{v_1'}\big( \tau_{A}<\tau_{v_2}<\tau_{D} \big)\\
	 		 \le & \mathbf{R}_1 \cdot  \max_{w\in  B(r)} \sum\nolimits_{z\in \partial B(Cr)} \widetilde{\mathbb{P}}_{w} \big( \tau_{\partial B(Cr)} = \tau_{z} < \tau_{D}\big)\cdot  \widetilde{\mathbb{P}}_{z}\big( \tau_{\partial B(\frac{10R}{C})} <\tau_D \big)\\
	 		 &\ \ \ \ \ \ \ \ \ \ \ \ \ \ \ \ \ \ \ \ \ \ \ \  \cdot \max_{w'\in \partial B(\frac{10R}{C})}  \widetilde{\mathbb{P}}_{w'}\big( \tau_{A} <\tau_D \big) 
	 		    \cdot  \max_{w''\in A} \widetilde{\mathbb{P}}_{w''}\big( \tau_{v_2}<\tau_{D} \big).
	 	\end{split}
	 \end{equation}

	 Next, we estimate the probabilities on the right-hand side separately. Firstly, by Lemma \ref{lemma_BM_uniform} one has: for any $w\in  B(r)$ and $z\in \partial B(Cr)$, 
	\begin{equation}\label{newadd246}
 	\widetilde{\mathbb{P}}_{w} \big( \tau_{\partial B(Cr)} = \tau_{z} < \tau_{D}\big)  \le \widetilde{\mathbb{P}}_{w} \big( \tau_{\partial B(Cr)} = \tau_{z} < \infty \big)  \lesssim  r^{1-d}. 
	\end{equation}
  Secondly, applying Lemma \ref{lemma_new26} and Corollary \ref{coronew27}, we have  
	 \begin{equation}\label{newadd247}
	 	\begin{split}
	 		& \sum\nolimits_{z\in \partial B(Cr)} \widetilde{\mathbb{P}}_{z}\big( \tau_{\partial B(\frac{10R}{C})} <\tau_D \big)  \\ \overset{(\text{Corollary}\ \ref{coronew27})}{\lesssim}
	 		&\frac{1}{R}\sum_{z\in \partial B(Cr), z'\in \partial \mathcal{B}(\frac{R}{C})} \mathbb{P}\big(   z\xleftrightarrow{(D)} z' , \big\{z \xleftrightarrow{(D)} \partial B(\tfrac{R}{10})\big\}^c  \big)\\ 
	 	 	\overset{(\text{Lemma}\ \ref{lemma_new26})}{	\lesssim } &\frac{1}{R}\sum_{z\in \partial B(Cr), z'\in \partial \mathcal{B}(\frac{R}{C})} \mathbb{P}\big( z\xleftrightarrow{(D)} z', \big\{z \xleftrightarrow{(D)} \partial B(10r )\cup \partial B(\tfrac{R}{10}) \big\}^c \big).
	 	\end{split}
	 \end{equation}
Thirdly, for any $w''\in A$, the same argument as in the proof of (\ref{revisenew_222}) yields 
	 \begin{equation}\label{newadd252}
	 	\widetilde{\mathbb{P}}_{w''}\big( \tau_{v_2'}<\tau_{D} \big)\lesssim R^{2-d}\cdot  \mathbf{R}_2. 	
	 	 \end{equation} 
 Plugging (\ref{newadd246}), (\ref{newadd247}) and (\ref{newadd252}) into (\ref{newadd245}), and then using (\ref{addto246}), we obtain the desired bound (\ref{addto242}).
	 \end{proof}

\textbf{Switching identity.} In what follows, we review the powerful tool ``switching identity'', which was introduced in \cite{werner2025switching}. Roughly speaking, it provides an explicit description of the loop cluster connecting two given points. We present the following version tailored to our needs in this paper.

  \begin{lemma}[switching identity]\label{lemma_switching}
 	 For any $d\ge 3$, $D\subset  \widetilde{\mathbb{Z}}^d$, $v\neq w\in \widetilde{\mathbb{Z}}^d\setminus D$, and $a,b>0$, conditioned on the event $\big\{ v\xleftrightarrow{(D)} w,  \widehat{\mathcal{L}}_{1/2}^{D,v}=a, \widehat{\mathcal{L}}_{1/2}^{D,w}=b  \big\}$, the occupation field $\big\{ \widehat{\mathcal{L}}_{1/2}^{D,z} \big\}_{z\in \widetilde{\mathbb{Z}}^d}$ has the same distribution as the total local time of the following four independent components:
 	 \begin{itemize}
 	 	\item[(1)] loops in the loop soup $\widetilde{\mathcal{L}}_{1/2}^{D\cup \{v,w\}}$;

 	 	\item[(2)] a Poisson point process with intensity measure $a\cdot \mathbf{e}_{v,v}^{ D\cup \{w\}}$;

 	 	\item[(3)] a Poisson point process with intensity measure $b \cdot \mathbf{e}_{w,w}^{D\cup \{v\}}$;

 	 	\item[(4)] a Poisson point process with intensity measure $\sqrt{ab} \cdot \mathbf{e}_{v,w}^D$, where the number of excursions is conditioned to be odd.

 	 \end{itemize}

 \end{lemma}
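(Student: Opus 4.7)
The plan is to decompose the loop soup $\widetilde{\mathcal{L}}_{1/2}^D$ according to how each loop interacts with the pair $\{v,w\}$ and then to apply the Brownian excursion decomposition at each of these two points, following the overall strategy of Werner's original proof in \cite{werner2025switching}.

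First, I would split $\widetilde{\mathcal{L}}_{1/2}^D$ by Poisson thinning into four independent subfamilies according to whether $\mathrm{ran}(\widetilde{\ell})$ meets $v$, meets $w$, both, or neither. The ``neither'' family is exactly $\widetilde{\mathcal{L}}_{1/2}^{D\cup\{v,w\}}$ (by the restriction property recalled in Section \ref{section_intro}), yielding component (1) of the lemma. For loops hitting $v$ but not $w$, the excursion map $\mathscr{E}_v$ from Section \ref{subsection_BEM} sends each such loop to a finite collection of self-excursions at $v$ avoiding $D\cup\{w\}$; by the Poisson structure of loop soups combined with the Palm-type excursion theory already cited in the excerpt, and conditioned on the total local time at $v$ contributed by these loops, the resulting point measure is a Poisson point process with intensity proportional to $\mathbf{e}_{v,v}^{D\cup\{w\}}$. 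The analogous statement at $w$ yields a similar process with intensity proportional to $\mathbf{e}_{w,w}^{D\cup\{v\}}$. After superposing with the self-excursions harvested in the next step, these become components (2) and (3).

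Second, the decisive step is to handle the loops that hit both $v$ and $w$. Rooting such a loop at $v$ and cutting at every visit to $\{v,w\}$ expresses it as an alternating sequence of self-excursions (at $v$ or at $w$) and cross-excursions ($v\to w$ or $w\to v$ avoiding $D$ in their interiors). The self-excursions from this category merge, by Poisson superposition, with those extracted in the previous step to form the full intensities $a\,\mathbf{e}_{v,v}^{D\cup\{w\}}$ and $b\,\mathbf{e}_{w,w}^{D\cup\{v\}}$ once we condition on $\widehat{\mathcal{L}}_{1/2}^{D,v}=a$ and $\widehat{\mathcal{L}}_{1/2}^{D,w}=b$. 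By time-reversibility $\mathbf{e}_{w,v}^D$ coincides with $\mathbf{e}_{v,w}^D$, so the cross-excursions can be re-indexed as a single Poisson-type process of $v$-to-$w$ excursions, which will be the source of component (4). The appropriate intensity $\sqrt{ab}\,\mathbf{e}_{v,w}^D$ and the odd-parity conditioning should then be extracted by a generating-functional computation: the combinatorial weights counting how many loop structures can be assembled from a given multiset of self- and cross-excursions are symmetric functions of $(a,b)$ whose Poisson-exponential resummation yields $\sqrt{ab}$; the parity emerges because each loop hitting both endpoints contributes equal numbers of $v\to w$ and $w\to v$ crossings (so the per-loop cross-count is even), while the connection event $\{v\xleftrightarrow{(D)}w\}$ forbids the empty configuration, and under the reversibility pairing the nonempty case corresponds to an odd number of $v\to w$ excursions in the merged Poisson process.

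The main obstacle will be this last step. Components (1)--(3) individually cannot connect $v$ to $w$, so the connection event is entirely carried by the cross-excursion component, but translating ``nonempty collection of loops hitting both'' into ``odd number of $v\to w$ excursions in the pooled Poisson process'' and simultaneously producing the $\sqrt{ab}$ factor requires careful bookkeeping of the reversibility identification and of the combinatorial weight of each loop as a function of its cross-excursion count. The cleanest way I see to verify the final identity is to compare the Laplace transforms of both sides against a test functional $F(\widehat{\mathcal{L}}_{1/2}^D)$, invoking the Dynkin/Le Jan isomorphism for $\widetilde{\phi}_v\widetilde{\phi}_w\,F(\tfrac{1}{2}\widetilde{\phi}^2)$ together with the sign-cluster interpretation of $\{v\xleftrightarrow{(D)}w\}$ guaranteed by Property (a) of Section \ref{section_intro}; the appearance of $\widetilde{\phi}_v\widetilde{\phi}_w$ (rather than $\widetilde{\phi}_v^2\widetilde{\phi}_w^2$) is precisely what forces the odd parity on the cross-excursion side.
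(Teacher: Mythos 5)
The paper does not prove this lemma; it is stated as a citation to \cite{werner2025switching} (``We state the following version tailored to our needs''). There is therefore no internal proof to compare against. Your proposal is essentially an attempt to reconstruct Werner's argument, and the overall plan---decompose at $v$ and $w$ into self-excursions and cross-excursions, pool them as Poisson point processes, and verify the resulting identity of laws for the occupation field---is the right skeleton.

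That said, the reasoning in your ``main obstacle'' paragraph contains two concrete errors. First, it is not true that the connection event is ``entirely carried by the cross-excursion component.'' Components (1)--(3) cannot \emph{individually} connect $v$ to $w$, but their union can: a self-excursion at $v$ from $\mathcal{P}^{(2)}$ and a self-excursion at $w$ from $\mathcal{P}^{(3)}$ may overlap at some intermediate point of $\widetilde{\mathbb{Z}}^d$, producing a cluster $\widecheck{\mathcal{C}}$ containing both $v$ and $w$ with $\mathcal{P}^{(4)}$ empty. So the translation ``$\{v\xleftrightarrow{(D)}w\}\Longleftrightarrow$ cross-excursion component nonempty'' is false, and cannot be the source of the odd-parity conditioning; the conditioning is an output of the computation of the conditional law, not a logical consequence of the connection event in the decomposed picture. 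Second, the parity heuristic you give points the wrong way: a loop hitting both $v$ and $w$ necessarily makes an equal number of $v\to w$ and $w\to v$ crossings, so after identifying $w\to v$ with $v\to w$ by reversibility, a single such loop contributes an \emph{even} number of cross-excursions. The odd parity in component (4) therefore cannot be read off from per-loop crossing counts.

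Your closing remark is where the salvage lies, and it is essentially the correct route: pass to the GFF via the isomorphism, observe that $\{v\xleftrightarrow{(D)}w,\,\widehat{\mathcal{L}}_{1/2}^{D,v}=a,\,\widehat{\mathcal{L}}_{1/2}^{D,w}=b\}$ is (up to the global sign flip) the event $\{\widetilde{\phi}_v\widetilde{\phi}_w=2\sqrt{ab}\}$, and test against functionals $F(\tfrac12\widetilde{\phi}^2)$. The factor $\widetilde{\phi}_v\widetilde{\phi}_w$ (as opposed to $\widetilde{\phi}_v^2\widetilde{\phi}_w^2$, which would appear without the connection constraint) is indeed what produces both the $\sqrt{ab}$ intensity and the odd parity of the Poisson count in the expansion. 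If you pursue this, you should drop the loop-combinatorics heuristics in the second paragraph and organize the argument entirely around that generating-functional identity, as the direct bookkeeping of how excursions reassemble into loops is precisely what the occupation-field viewpoint is designed to avoid.
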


For ease of applying this lemma, we fix the following notation.
 \begin{itemize}
 
   \item  For each $i\in \{1,2,3,4\}$, we denote by $\mathcal{P}^{(i)}$ the Poisson point process in Component ($i$).

    \item We denote by $\widecheck{\mathbb{P}}_{v\leftrightarrow w,a,b}^D$ the probability measure generated by Components (1)-(4). The expectation under $\widecheck{\mathbb{P}}_{v\leftrightarrow w,a,b}^D$ is denoted by $\widecheck{\mathbb{E}}^D_{v\leftrightarrow w,a,b}$.

   \item We define $\widecheck{\mathcal{C}}:=\{z\in \widetilde{\mathbb{Z}}^d:
   z\xleftrightarrow{\cup (\sum_{1\le i\le 4}\mathcal{P}^{(i)})} v  \}$ as the cluster containing $v$ and consisting of all loops and excursions in $\sum_{1\le i\le 4}\mathcal{P}^{(i)}$.

  \item Let $\mathfrak{p}_{v\leftrightarrow w,a,b}^D$ denote the density of the event $\big\{v\xleftrightarrow{(D)} w,  \widehat{\mathcal{L}}_{1/2}^{D,v}=a, \widehat{\mathcal{L}}_{1/2}^{D,w}=b\big\}$.

 \end{itemize}

To facilitate the analysis of $\mathcal{P}^{(4)}$, we record the following fact. Let $ \mathbf{X}$ be a Poisson random variable with parameter $0<\lambda\lesssim 1$. In addition, assume that $\{\mathbf{q}_j\}_{j\in \mathbb{N}}$ is an independent  family of i.i.d. Bernoulli random variables with expectation $q$. Let $\mathbf{Y}:=\sum_{1 \le j\le \mathbf{X}}\mathbf{q}_j $. Then one has 
\begin{equation} \label{odd_formula}
\begin{split}
\mathbb{P}\big( \mathbf{Y}>0 \mid \mathbf{X}\ \text{is odd} \big) = & 1-\frac{\sum_{m\ge 1:m\ \text{odd}}e^{-\lambda}\cdot \frac{\lambda^m}{m!}\cdot (1-q)^{m}}{\sum_{m\ge 1:m\ \text{odd}}e^{-\lambda}\cdot \frac{\lambda^m}{m!}} \\
=  & \frac{\sinh(\lambda)-\sinh(\lambda(1-q))}{\sinh(\lambda)} \asymp q.
\end{split}
\end{equation}

\section{Proof of Theorem \ref{thm_minimal_distance}}\label{section_proof_thm1.1}

In this section, we present the proof of Theorem \ref{thm_minimal_distance} assuming Proposition \ref{lemma_four_arms}, which, as introduced later, serves as a crucial estimate for controlling the second moment of touching points between two large clusters.

We begin with the upper bound in (\ref{ineq_minimal_distance}). When $\chi^{\frac{1}{2}}N^{(\frac{d}{2}-1)\boxdot (d-4)}\ge 1$ (i.e., $\chi\ge N^{-[(d-2)\boxdot (2d-8)]}$), this upper bound holds trivially. Therefore, it suffices to consider the case when $\chi \le N^{-[(d-2)\boxdot (2d-8)]}$. Since $\widetilde{\mathbb{Z}}^d$ is locally one-dimensional, it follows from a standard covering argument that we may select subsets $A_1,...,A_K\subset \widetilde{B}(N)$ of diameter $2\chi$ (with $K\lesssim \chi^{-1} N^d$) such that any pair $v,w\in \widetilde{B}(N)$ with $|v-w|\le \chi$ is contained in some $A_i$. Thus, on $\{\mathcal{D}_{N,c}\le \chi \}$, some $A_i$ intersects two distinct sign clusters of diameter at least $cN$ and hence, there exist two distinct points $v,v'\in \widetilde{\partial }A_i$ (note that $|\widetilde{\partial }A_i|\le 2d$) that are connected to $\partial B_{v}(c'N)$ by two different sign clusters. By the isomorphism theorem and (\ref{thm1_small_n}), the probability of this event is $O(\chi ^{\frac{3}{2}}N^{-[(\frac{d}{2}+1)\boxdot 4]})$. In conclusion, we obtain the upper bound in (\ref{ineq_minimal_distance}): 
  \begin{equation}
  	\begin{split}
  		\mathbb{P}\big(\mathcal{D}_{N,c}\le \chi  \big)  \lesssim \sum\nolimits_{1\le i\le K}|\widetilde{\partial }A_i|^2 \cdot  \chi^{\frac{3}{2}}N^{-[(\frac{d}{2}+1)\boxdot 4]} \lesssim \chi^{\frac{1}{2}}N^{(\frac{d}{2}-1)\boxdot (d-4)}. 
  	\end{split}
  \end{equation}


%
%
%
%

It remains to prove the lower bound in (\ref{ineq_minimal_distance}). Since the right-hand side of (\ref{ineq_minimal_distance}) equals $1$ for all $\chi \ge N^{-[(d-2)\boxdot (2d-8)]}$, and $\mathbb{P}(\mathcal{D}_{N,c}\le \chi   )$ is increasing in $\chi$, it suffices to consider the case $\chi \le N^{-[(d-2)\boxdot (2d-8)]}$. We take a small constant $\cl\label{const_dagger_box}>0$, and denote the boxes 
 \begin{equation}\label{box_frak}
 	\mathfrak{B}_j:= B_{(2j\cdot \cref{const_dagger_box} N,0,...,0)}(\cref{const_dagger_box}^2 N),\ \ \forall j\in \{-1,0,1\}. 
 \end{equation}
  We define $\mathfrak{I}$ as the collection of edges $e= \{x,y\}\in \mathbb{L}^d$ with $x,y\in \mathfrak{B}_0$. Note that $|\mathfrak{I}|\asymp N^d$. For any edge $e\in \mathbb{L}^d$, we denote its two trisection points by $v_e^{-}$ and $v_e^{+}$, ordered such that the sum of the coordinates of $v_e^{-}$ is smaller than that of $v_e^{+}$. For each $e\in \mathfrak{I}$, we define the event 
     \begin{equation}\label{newadd321}
    \begin{split}
    		\mathsf{A}_{e}:=& \big\{ v_e^{+}\xleftrightarrow{\ge 0} \partial B(\cref{const_dagger_box} N), v_e^{-}\xleftrightarrow{\le 0} \partial B(\cref{const_dagger_box} N)  \big\}\cap  \big\{    |\widetilde{\phi}_{v_e^{+}}|, |\widetilde{\phi}_{v_e^{-}}|\in [\cref{const_lowd_four_point2},\Cref{const_lowd_four_point1} ]   \big\} \\
    		&\cap \big\{v_e^{+}  \xleftrightarrow{\ge 0 } \partial B(N) \big\}^c  \cap  \big\{v_e^{-}\xleftrightarrow{\le 0 } \partial B(N) \big\}^c. 
    \end{split}
    \end{equation}
    Let $\mathfrak{I}_{\mathsf{A}}$ denote the collection of edges $e\in \mathfrak{I}$ such that $\mathsf{A}_{e}$ occurs.

We claim that there exists $\cl\label{const_star}>0$ such that 
\begin{equation}\label{claim3.4}
		\mathbb{P}\big( |\mathfrak{I}_{\mathsf{A}}| \ge  \cref{const_star} N^{(\frac{d}{2}-1)\boxdot (d-4)}  \big)\gtrsim 1. 
\end{equation}
Before verifying this claim, we first prove Theorem \ref{thm_minimal_distance} using it. Precisely, on the event $\{\mathfrak{I}_{\mathsf{A}}=\omega\}$ (for any realization $\omega$ of $\mathfrak{I}_{\mathsf{A}}$), we explore the sign cluster $\mathcal{C}^{\pm}_{\partial B(\cref{const_dagger_box} N)}$ from $\partial B(\cref{const_dagger_box} N)$, locally pausing upon encountering any $v_e^{-}$ or $v_{e}^{+}$ for $e\in \omega$, and resuming the exploration elsewhere. After the exploration is finished, by the strong Markov property of $\widetilde{\phi}_\cdot$, the restrictions of the GFF to the subintervals $\{I_{[v_e^{-},v_e^{+}]}: e\in  \omega\}$ are independent for different $e\in \omega$. In addition, for each $e\in \omega$, the conditional probability of the event 
 \begin{equation}
 	\mathsf{G}_e:= \big\{   \mathrm{dist}(\mathcal{C}_{v_e^+}^{+}\cap I_{[v_e^{-},v_e^{+}]} , \mathcal{C}_{v_e^-}^{-}\cap I_{[v_e^{-},v_e^{+}]} )\le \chi\big\}
 \end{equation}
 is at least $c\chi^{\frac{1}{2}}$. To see this, we take $c'\chi^{-1}$ disjoint subintervals $I_{[z_1^-,z_1^+]},...,I_{[z_K^-,z_K^+]}\subset I_{[v_e^{-},v_e^{+}]}$ such that for all $1\le j\le K$, $\mathrm{dist}(I_{[z_j^-,z_j^+]}, \{v_e^{-},v_e^{+}\} )\ge \frac{d}{9}$, $|z_j^--v_e^-|<|z_j^--v_e^+|$ and $|z_j^--z_j^+|=\chi$. Note that the events 
 \begin{equation}
 	\mathsf{G}_e^j:= \big\{z_j^+ \xleftrightarrow{\ge 0} v_e^{+} , z_j^- \xleftrightarrow{\le 0} v_e^{-}\big\}  
 \end{equation}
 for $1\le j\le K$ are disjoint, and that $\mathsf{G}_e\supset \cup_{1\le j\le K} \mathsf{G}_e^j$. Moreover, by \cite[(6.27)]{inpreparation_twoarm}, the conditional probability (given the exploration) of each $\mathsf{G}_e^j$ is at least of order $\chi^{\frac{3}{2}}$. Thus, we obtain that $\mathsf{G}_e$ occurs with probability at least of order $\chi^{-1}\cdot \chi^{\frac{3}{2}}=\chi^{\frac{1}{2}}$, as claimed above. To sum up, we have 
\begin{equation}\label{613}
	\begin{split}
		 \mathbb{P}\big( \mathcal{D}_{N,\frac{1}{2}\cref{const_dagger_box}}\le \chi  \big)  
		\ge &\mathbb{P}\big(|\mathfrak{I}_{\mathsf{A}}|\ge \cref{const_star}  N^{(\frac{d}{2}-1)\boxdot (d-4)} , \cup_{e\in  \mathfrak{I}_{\mathsf{A}}} \mathsf{G}_e \big)\\
		\ge &\sum\nolimits_{\omega\subset \mathfrak{I}: |\omega|\ge \cref{const_star}N^{(\frac{d}{2}-1)\boxdot (d-4)}  }  \mathbb{P}\big(\mathfrak{I}_{\mathsf{A}} =\omega  \big) \cdot \mathbb{P}\big( \mathbf{Z}>0 \big),
	\end{split}
\end{equation}
  where $\mathbf{Z}\sim \mathrm{Binomial}(\lfloor \cref{const_star}N^{(\frac{d}{2}-1)\boxdot (d-4)}\rfloor, c\chi^{\frac{1}{2}})$, i.e., the sum of $\lfloor \cref{const_star}N^{(\frac{d}{2}-1)\boxdot (d-4)}\rfloor$ i.i.d. Bernoulli random variables with expectation $c\chi^{\frac{1}{2}}$. Note that a direct application of the second moment method yields $\mathbb{P}\big( \mathbf{Z}>0 \big)\gtrsim \chi^{\frac{1}{2}}N^{(\frac{d}{2}-1)\boxdot (d-4)}$. Plugging this into (\ref{613}), and using the claim (\ref{claim3.4}), we obtain the lower bound in (\ref{ineq_minimal_distance}): 
 \begin{equation}
 	\begin{split}
 		\mathbb{P}\big( \mathcal{D}_{N,\frac{1}{2}\cref{const_dagger_box}}\le \chi  \big)    \gtrsim \chi^{\frac{1}{2}}N^{(\frac{d}{2}-1)\boxdot (d-4)}\cdot \mathbb{P}\big(|\mathfrak{I}_{\mathsf{A}}|\ge \cref{const_star}  N^{\frac{d}{2}-1}\big) \overset{}{\gtrsim } \chi^{\frac{1}{2}}N^{(\frac{d}{2}-1)\boxdot (d-4)}. 
 	\end{split}
 \end{equation}

Next, we prove (\ref{claim3.4}) for the low- and high- dimensional cases separately.

\textbf{When $3\le d\le 5$.} According to \cite[Lemma 6.2]{inpreparation_twoarm}, there exist $\Cl\label{const_lowd_four_point1}>\cl\label{const_lowd_four_point2}>0$ such that for any $w_+\in \mathfrak{B}_1$, $w_-\in \mathfrak{B}_{-1}$ and $e\in \mathfrak{I}$, 
 \begin{equation}\label{revise_new_39}
 	\mathbb{P}\big( \mathsf{A}_e^{w_+,w_-} \big)\gtrsim  N^{-\frac{3d}{2}+1}, 
 \end{equation}
    where $\mathsf{A}_e^{w_+,w_-}$ is defined as $\mathsf{H}^{v_e^{+}, w_+ }_{v_e^{-}, w_-}\cap \{ |\widetilde{\phi}_{v_e^{+}}| , |\widetilde{\phi}_{v_e^{-}}| \in [\cref{const_lowd_four_point2},\Cref{const_lowd_four_point1}]\}\cap \big\{v_e^{+}  \xleftrightarrow{\ge 0 } \partial B(N) \big\}^c\cap  \big\{v_e^{-} \xleftrightarrow{\le 0 } \partial B(N) \big\}^c$. We denote by $\mathfrak{I}_{\mathsf{A}}^{w_+,w_-}$ the collection of edges $e\in \mathfrak{I}$ such that $\mathsf{A}_e^{w_+,w_-}$ occurs. As a result, we obtain that for any $w_+\in \mathfrak{B}_1$ and $w_-\in \mathfrak{B}_{-1}$, 
 \begin{equation}\label{add6.17}
 	\mathbb{E}\big[|\mathfrak{I}_{\mathsf{A}}^{w_+,w_-}| \big] = \sum\nolimits_{e\in \mathfrak{I}}  \mathbb{P}\big( \mathsf{A}_e^{w_+,w_-} \big) \overset{|\mathfrak{I}|\asymp N^{d},(\ref{revise_new_39})}{\gtrsim} N^{-\frac{d}{2}+1}.  
 \end{equation}

To bound the second moment of $|\mathfrak{I}_{\mathsf{A}}^{w_+,w_-}|$, we need the following proposition. For any events $\mathsf{A}_1$ and $\mathsf{A}_2$ measurable with respect to $\widetilde{\mathcal{L}}_{1/2}$, let $\mathsf{A}_1\Vert \mathsf{A}_2$ denote the event that $\mathsf{A}_1$ and $\mathsf{A}_2$ both occur and are certified by two disjoint collections of clusters in $\cup \widetilde{\mathcal{L}}_{1/2}$. For any $z_1,z_2,z_3\in \widetilde{\mathbb{Z}}^d$ and $D\subset \widetilde{\mathbb{Z}}^d$, we denote $\big\{z_1\xleftrightarrow{(D)} z_2,z_3\big\}:=\big\{z_1\xleftrightarrow{(D)} z_2\big\}\cap \big\{z_1\xleftrightarrow{(D)} z_3\big\}$. We then define the event (letting $\Cl\label{const_bar_lowd_four_point1}:=\frac{1}{2}(\Cref{const_lowd_four_point1})^2$)
\begin{equation}
\begin{split}
		\mathsf{F}(e,e';w_+,w_-):= & \big\{ w_+ \xleftrightarrow{} v_e^+,v_{e'}^+     \Vert  w_- \xleftrightarrow{} v_e^-,v_{e'}^- \big\} \\
		&\cap \big\{ \widehat{\mathcal{L}}_{1/2}^{v_e^{+}},\widehat{\mathcal{L}}_{1/2}^{v_{e'
  		 }^{+}} ,  \widehat{\mathcal{L}}_{1/2}^{v_e^{-}},\widehat{\mathcal{L}}_{1/2}^{v_{e'
  		 }^{-}} \in [0,\Cref{const_bar_lowd_four_point1}]   \big\}. 
\end{split}
\end{equation}

   \begin{proposition} \label{lemma_four_arms}
 	When $3\le d\le 5$, for any $w_+\in \mathfrak{B}_1$, $w_-\in \mathfrak{B}_{-1}$ and $e,e'\in \mathfrak{I}$, 
 	\begin{equation}\label{addto6.19}
 		\mathbb{P}\big(\mathsf{F}(e,e';w_+,w_-)   \big) \lesssim (|v_e^+-v_{e'}^+|+1)^{-\frac{d}{2}-1}N^{-\frac{3d}{2}+1}. 
 	\end{equation}
 \end{proposition}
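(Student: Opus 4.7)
The plan is to prove Proposition \ref{lemma_four_arms} by decomposing $\mathsf{F}(e,e';w_+,w_-)$ at the separation scale $r := |v_e^+ - v_{e'}^+| + 1$, using the isomorphism theorem to pass to the loop-cluster picture and the switching identity of Lemma \ref{lemma_switching} to expose the four-arm structure. When $r$ is bounded by an absolute constant, $\mathsf{F}$ is contained in the single-pair event $\mathsf{H}_{v_e^-,w_-}^{v_e^+,w_+} \cap \{|\widetilde{\phi}_{v_e^\pm}| \le \sqrt{2\Cref{const_bar_lowd_four_point1}}\}$, whose probability equals $\mathbb{P}(\mathsf{A}_e^{w_+,w_-}) \asymp N^{-3d/2+1}$ as in (\ref{add6.17}), and the claim then holds since $r^{-d/2-1} \asymp 1$.

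For large $r$, I would first apply the isomorphism theorem to rewrite $\mathsf{F}$ (up to the sign factor $\tfrac{1}{4}$) as the event that two disjoint loop clusters $\mathcal{C}_+, \mathcal{C}_-$ contain respectively $\{v_e^+, v_{e'}^+, w_+\}$ and $\{v_e^-, v_{e'}^-, w_-\}$, with occupation field at the four distinguished points bounded by $\Cref{const_bar_lowd_four_point1}$. I then apply the switching identity twice --- once to $v_e^+ \xleftrightarrow{} w_+$ inside $\mathcal{C}_+$, once to $v_e^- \xleftrightarrow{} w_-$ inside $\mathcal{C}_-$ --- after conditioning on the four local times $a,b,a',b'$. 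Integrating the product of densities $\mathfrak{p}_{v_e^+\leftrightarrow w_+,a,b}\,\mathfrak{p}_{v_e^-\leftrightarrow w_-,a',b'}$ over $a,b,a',b' \in [0,\Cref{const_bar_lowd_four_point1}]$ (and incorporating the disjointness of the two clusters, which in the unconditioned setting already produced the $N^{-3d/2+1}$ of the first-moment estimate (\ref{add6.17})) yields the macroscopic factor $\asymp N^{-3d/2+1}$.

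The crux is then to bound the conditional probability, given these connections and local times, that $v_{e'}^+ \in \widecheck{\mathcal{C}}_+$, $v_{e'}^- \in \widecheck{\mathcal{C}}_-$, and $\widecheck{\mathcal{C}}_+ \cap \widecheck{\mathcal{C}}_- = \emptyset$, by $r^{-d/2-1}$. Via the switching-identity decomposition, each $\widecheck{\mathcal{C}}_\pm$ is the cluster of a critical loop soup on $\widetilde{\mathbb{Z}}^d$ minus the two conditioned points augmented by three independent Poisson processes of excursions (from $v_e^\pm$, from $w_\pm$, and between them). The probability that such a cluster reaches a point at distance $r$ splits into a loop-soup two-point contribution $\lesssim r^{2-d}$ and excursion contributions controlled by Lemma \ref{lemma_for_new62} with $A = \{v_{e'}^\pm\}$ and $R \asymp r$, each also of order $r^{2-d}$. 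Their product $r^{4-2d}$ already improves on $r^{-d/2-1}$ when $d \in \{4,5\}$, delivering the proposition in those dimensions. In dimension $d=3$ the remaining gap $r^{-1/2}$ must be extracted from the disjointness constraint: conditioning on $\widecheck{\mathcal{C}}_+$ via the restriction property and then applying Corollary \ref{coro28} together with the one-arm estimate (\ref{one_arm_low}), the requirement that $\widecheck{\mathcal{C}}_-$ traverse distance $\asymp r$ while avoiding $\widecheck{\mathcal{C}}_+$ supplies the missing factor $r^{-(d/2-1)} = r^{-1/2}$.

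The main obstacle is the case $d = 3$, in which the naive BK-style product bound $(r^{2-d})^2 = r^{-2}$ is strictly weaker than the target $r^{-5/2}$, so the disjointness must be used quantitatively. Concretely, one must show that once $\widecheck{\mathcal{C}}_+$ is a random connected set of diameter $\asymp r$, the probability that an independent negative cluster traverses distance $\asymp r$ while remaining disjoint from $\widecheck{\mathcal{C}}_+$ is $\lesssim r^{-1/2}$ rather than the unconstrained $r^{-1}$. I expect the argument to proceed in close analogy with the derivation of (\ref{thm1_small_n}) in \cite{inpreparation_twoarm}, with an extra layer corresponding to the second edge $e'$ and the far anchors $w_\pm$, and to lean on Lemmas \ref{lemma24}, \ref{lemma_new26}, and \ref{lemma_stability_bd} to certify that imposing the positive cluster as an absorbing boundary distorts the pertinent connection probabilities by at most a constant factor.
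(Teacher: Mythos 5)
Your sketch correctly identifies the small-$r$ reduction and the overall plan (isomorphism theorem $\to$ switching identity $\to$ separate the scale $r$ from the scale $N$), but it has a genuine gap in the large-$r$ regime that is precisely what the paper's Lemma \ref{lemma41} is designed to fill, and your replacement reasoning does not close it.

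First, the factorization into ``macroscopic factor $N^{-3d/2+1}$'' times ``local factor $r^{4-2d}$'' is not a valid step as stated. Under the switching-identity conditioning, the cluster $\widecheck{\mathcal{C}}_+$ is \emph{forced} to traverse distance $\asymp N$ from $v_e^+$ to $w_+$; in particular, Component (4) of $\widecheck{\mathbb{P}}^D_{v_e^+\leftrightarrow w_+,a,b}$ is a Poisson process of excursions that connect $v_e^+$ to $w_+$, each of which sweeps out a full Brownian trajectory at every intermediate scale including scale $r$. So the statistics of $\widecheck{\mathcal{C}}_+$ near $v_e^+$ at scale $r$ are not those of an unconditioned loop cluster, and the bound $\lesssim r^{2-d}$ for hitting $v_{e'}^+$ requires a careful accounting of which component (loop-soup, one-sided excursion, or two-sided excursion) is responsible; moreover, once that probability is extracted, it cannot be simply multiplied against the unconditioned two-arm probability $\asymp N^{-3d/2+1}$, because the total contribution is a sum over the scale at which the excursion cloud separates from the scale-$r$ configuration. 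This is exactly the content of Lemma \ref{lemma41}: the right-hand side is a geometric sum over scales $r_k \in [r, N]$, with one factor $\mathbf{Q}_{\mathrm I}$ at scale $r$, one factor $\mathbf{Q}_{\mathrm{II}}$ in the intermediate annulus, and one factor $\mathbf{Q}_{\mathrm{III}}$ carrying the connection out to $w$. Your sketch has no analogue of this telescoping, and no mechanism to produce it.

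Second, and more importantly, the way you extract the missing factor in $d=3$ is not what the paper does and does not obviously work. Corollary \ref{coro28} is a \emph{decomposition} of a single two-point probability into a product of restricted two-point events in nested annuli; it does not encode a disjointness penalty, and pairing it with the one-arm estimate is not a recipe for showing that an independent negative cluster crossing the scale-$r$ region while avoiding $\widecheck{\mathcal{C}}_+$ costs exactly $r^{-(d/2-1)}$. The paper's actual mechanism is the heterochromatic four-point estimate (\ref{four_point_use}) imported from the companion paper, which is applied scale by scale to each of the independent events $\mathsf{C}_1,\dots,\mathsf{C}_5$ produced by the double application of Lemma \ref{lemma41} and Corollaries \ref{coro27}--\ref{coro28}. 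That estimate packages the disjointness penalty correctly for all $3 \le d \le 5$ simultaneously, which is why the paper treats the three dimensions uniformly rather than splitting into the $d \ge 4$ / $d=3$ dichotomy your sketch relies on. Invoking ``analogy with the derivation of (\ref{thm1_small_n})'' is somewhat circular here: (\ref{four_point_use}) \emph{is} that input, and the whole point of Lemma \ref{lemma41} and the event bookkeeping of Section \ref{subsection_proof_prop3.1} is to reduce $\mathsf{F}(e,e';w_+,w_-)$ to a form where that estimate applies. Without supplying an equivalent of Lemma \ref{lemma41}, your proposal does not reach a proof.
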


   The proof of Proposition \ref{lemma_four_arms} will be provided in Section \ref{section_proof_two_branch}. Under the coupling given by the isomorphism theorem \cite[Proposition 2.1]{lupu2016loop}, Property (a) (recall it below (\ref{def_mu})) implies that on the event $\mathsf{A}_e^{w_+,w_-}\cap \mathsf{A}_{e'}^{w_+,w_-} $, for each $\diamond\in \{+,-\}$, the loop cluster containing $w_\diamond$ intersects both $v_e^\diamond$ and $ v_{e'}^{\diamond}$, and satisfies $\widehat{\mathcal{L}}_{1/2}^{v_e^{+}},\widehat{\mathcal{L}}_{1/2}^{v_{e'}^{+}}\in [0,\Cref{const_bar_lowd_four_point1}]$. I.e., $\mathsf{F}(e,e';w_+,w_-)$ occurs. Thus, applying Proposition \ref{lemma_four_arms}, we have
   \begin{equation}
	\begin{split}
		 	\mathbb{P}\big(\mathsf{A}_e^{w_+,w_-}\cap \mathsf{A}_{e'}^{w_+,w_-} \big)  
		 \le  \mathbb{P}\big( \mathsf{F}(e,e';w_+,w_-)\big) \lesssim (|v_e^+-v_{e'}^+|+1)^{-\frac{d}{2}-1}N^{-\frac{3d}{2}+1}. 
	\end{split}
\end{equation}
This further yields that  
\begin{equation}\label{newaddto3.6}
 \begin{split}
 	\mathbb{E}\big[|\mathfrak{I}_{\mathsf{A}}^{w_+,w_-}|^2\big]=&  \sum\nolimits_{e,e'\in \mathfrak{I}} \mathbb{P}\big(\mathsf{A}_e^{w_+,w_-}\cap \mathsf{A}_{e'}^{w_+,w_-} \big)\\
 	\lesssim  &N^{-\frac{3d}{2}+1} \sum\nolimits_{w,w'\in \mathfrak{B}_0}  (|w-w'|+1)^{-\frac{d}{2}-1} \\
 	\overset{}{\lesssim }& N^{-\frac{3d}{2}+1} \cdot |\mathfrak{B}_0|\cdot N^{\frac{d}{2}-1}\asymp 1,   
 \end{split}
\end{equation}
where in the second inequality we used the following basic fact (see e.g., \cite[(4.5)]{cai2024high}): for any $d\ge 3$, $a\neq d$, $x\in \mathbb{Z}^d$ and $M\ge 1$,  
\begin{equation}\label{computation_d-a}
 \sum\nolimits_{y\in B(M)}(|x-y|+1)^{-a}\lesssim M^{(d-a)\vee 0}. 
\end{equation} 
By the Paley-Zygmund inequality, (\ref{add6.17}) and (\ref{newaddto3.6}), we have
\begin{equation}\label{add6.20}
	\mathbb{P}\big( \mathfrak{I}_{\mathsf{A}}^{w_+,w_-}\neq \emptyset \big)\gtrsim \frac{\big(	\mathbb{E}\big[|\mathfrak{I}_{\mathsf{A}}^{w_+,w_-} |\big] \big)^2 }{\mathbb{E}\big[ | \mathfrak{I}_{\mathsf{A}}^{w_+,w_-} |^2 \big]} \gtrsim  N^{2-d}	
	\end{equation}
 for all $w_+\in \mathfrak{B}_1$ and $w_-\in \mathfrak{B}_{-1}$. In fact, this estimate is sharp: since $\{ \mathfrak{I}_{\mathsf{A}}^{w_+,w_-}  \neq \emptyset\}$ implies both $\{w_+\xleftrightarrow{\ge 0} \partial B_{w_+}(\cref{const_dagger_box} N)\}$ and $\{w_{-}\xleftrightarrow{\le 0} \partial B_{w_{-}}(\cref{const_dagger_box} N)\}$, it follows from the FKG inequality and (\ref{one_arm_low}) that 
 \begin{equation}
 	\mathbb{P}\big(	 \mathfrak{I}_{\mathsf{A}}^{w_+,w_-} \neq \emptyset    \big) \le  \big[ \theta_d(\cref{const_dagger_box} N )\big]^2 \lesssim N^{2-d}. 
 \end{equation}
 Combining $\mathbb{P}\big(\mathfrak{I}_{\mathsf{A}}^{w_+,w_-}\neq \emptyset    \big)\asymp N^{2-d}$ with (\ref{add6.17}) and (\ref{newaddto3.6}) respectively, one has 
 \begin{equation}
 	\mathbb{E}\big[ |\mathfrak{I}_{\mathsf{A}}^{w_+,w_-}| \mid \mathfrak{I}_{\mathsf{A}}^{w_+,w_-} \neq \emptyset \big]\gtrsim N^{\frac{d}{2}-1}, 
 	 \end{equation}
\begin{equation}
	\mathbb{E}\big[ | \mathfrak{I}_{\mathsf{A}}^{w_+,w_-} |^2 \mid \mathfrak{I}_{\mathsf{A}}^{w_+,w_-} \neq \emptyset \big]\lesssim N^{d-2}.
\end{equation}
These two estimates together with the Paley-Zygmund inequality imply that for some constant $\cref{const_star}>0$, 
 \begin{equation}
 	\mathbb{P}\big( |\mathfrak{I}_{\mathsf{A}}^{w_+,w_-}| \ge  \cref{const_star} N^{\frac{d}{2}-1} \mid  \mathfrak{I}_{\mathsf{A}}^{w_+,w_-}\neq \emptyset \big) \gtrsim  \frac{\big(	\mathbb{E}\big[ |\mathfrak{I}_{\mathsf{A}}^{w_+,w_-}|\mid  \mathfrak{I}_{\mathsf{A}}^{w_+,w_-}\neq \emptyset\big] \big)^2 }{\mathbb{E}\big[ |\mathfrak{I}_{\mathsf{A}}^{w_+,w_-}|^2\mid  \mathfrak{I}_{\mathsf{A}}^{w_+,w_-}\neq \emptyset\big]} \gtrsim 1. 
 \end{equation}
Combined with (\ref{add6.20}), it yields that for any $w_+\in \mathfrak{B}_1$ and $w_-\in \mathfrak{B}_{-1}$,
\begin{equation}\label{add6.25}
	\mathbb{P}\big( |\mathfrak{I}_{\mathsf{A}}^{w_+,w_-}| \ge  \cref{const_star} N^{\frac{d}{2}-1}  \big)\gtrsim   N^{2-d}. 
\end{equation}

Next, we consider the quantity 
 \begin{equation}
 	\mathbf{X}:= \sum\nolimits_{w_+\in \mathfrak{B}_1, w_{-}\in \mathfrak{B}_{-1}} \mathbbm{1}_{ |\mathfrak{I}_{\mathsf{A}}^{w_+,w_-}|\ge \cref{const_star}  N^{\frac{d}{2}-1} }. 
 \end{equation}
 For the first moment of $\mathbf{X}$, it follows from (\ref{add6.25}) that
 \begin{equation}\label{618}
 	\mathbb{E}[\mathbf{X} ]\gtrsim |\mathfrak{B}_1|\cdot |\mathfrak{B}_{-1}| \cdot  N^{2-d} \asymp N^{d+2}. 
 \end{equation}
For the second moment of $\mathbf{X}$, since $|\mathfrak{I}_{\mathsf{A}}^{w_+,w_-}| \ge  \cref{const_star}N^{\frac{d}{2}-1}$ implies $w_+ \xleftrightarrow{\ge 0} \partial B_{w_{+}}(\cref{const_dagger_box} N)$ and $w_{-}\xleftrightarrow{\le 0} \partial B_{w_{-}}(\cref{const_dagger_box} N)$, we have 
 \begin{equation}\label{619}
 	\begin{split}
 		\mathbb{E}\big[\mathbf{X}^2\big] \le & \sum_{w_+,w_+'\in \mathfrak{B}_1,w_{-},w_{-}'\in \mathfrak{B}_{-1} }\mathbb{P}\big(w_+\xleftrightarrow{\ge 0} \partial B_{w_+}(\cref{const_dagger_box} N),w_+'\xleftrightarrow{\ge 0} \partial B_{w_+'}(\cref{const_dagger_box} N),\\
   &\ \ \ \ \ \ \ \ \ \ \ \ \ \ \ \ \ \ \ \ \ \ \ \ \ \ \ \ \	w_{-}\xleftrightarrow{\le 0} \partial B_{w_{-}}(\cref{const_dagger_box} N),w_{-}'\xleftrightarrow{\le 0} \partial B_{w_{-}'}(\cref{const_dagger_box} N) \big) \\
 \overset{}{\le } & \Big[  \sum\nolimits_{w,w'\in \mathfrak{B}_0} \mathbb{P}\big( w\xleftrightarrow{\ge 0} \partial B_{w}(\cref{const_dagger_box} N),w'\xleftrightarrow{\ge 0} \partial B_{w'}(\cref{const_dagger_box} N) \big)\Big]^2 ,
 	\end{split}
 \end{equation}
 where in the last line we used the FKG inequality, the symmetry of the GFF, and the translation invariance of $\mathbb{Z}^d$. In addition, when $ w\xleftrightarrow{\ge 0} \partial B_{w}(\cref{const_dagger_box} N)$ and $w'\xleftrightarrow{\ge 0} \partial B_{w'}(\cref{const_dagger_box} N)$ both occur, their occurrences are certified either by a single cluster or by two distinct clusters. We denote the corresponding events by $\mathsf{C}_{w,w'}^{(1),+}$ and $\mathsf{C}_{w,w'}^{(2),+}$ respectively. According to \cite[Theorem 1.4]{cai2024incipient}, we have 
 \begin{equation}\label{newadd3.17}
 	\mathbb{P}\big(\mathsf{C}_{w,w'}^{(1),+}  \big) \lesssim  (|w-w'|+1)^{-\frac{d}{2}+1}N^{-\frac{d}{2}+1}.  
 \end{equation}
For $\mathsf{C}_{w,w'}^{(2),+}$, by the symmetry of GFF and the FKG inequality, we have 
\begin{equation}\label{newadd3.18}
	\mathbb{P}\big(\mathsf{C}_{w,w'}^{(2),+}  \big) = \mathbb{P}\Big( \mathsf{H}_{w',\partial B_{w'}(\cref{const_dagger_box} N)}^{w,\partial B_{w}(\cref{const_dagger_box} N)} \Big) \overset{(\text{FKG})}{\le} \big[\theta_d(\cref{const_dagger_box} N)\big]^2 \overset{(\ref{one_arm_low})}{\lesssim} N^{2-d}. 
\end{equation}
 Combining (\ref{newadd3.17}) and (\ref{newadd3.18}), one has 
\begin{equation}\label{newadd3.25}
\begin{split}
		&\mathbb{P}\big( w\xleftrightarrow{\ge 0} \partial B_{w}(\cref{const_dagger_box} N),w'\xleftrightarrow{\ge 0} \partial B_{w'}(\cref{const_dagger_box} N) \big) \\
		\lesssim &  (|w-w'|+1)^{-\frac{d}{2}+1}N^{-\frac{d}{2}+1}+ N^{2-d}. 
\end{split}
\end{equation}
Plugging (\ref{newadd3.25}) into (\ref{619}), we obtain 
\begin{equation}\label{add3.19}
	\begin{split}
			\mathbb{E}\big[\mathbf{X}^2\big] \lesssim &  \Big(  \sum\nolimits_{w,w'\in \mathfrak{B}_0} \big[  (|w-w'|+1)^{-\frac{d}{2}+1}\cdot N^{-\frac{d}{2}+1}+ N^{2-d} \big]  \Big)^2 \\
  			\overset{(\ref{computation_d-a})}{	\lesssim } &    \big(   |\mathfrak{B}_0 |\cdot   N^{\frac{d}{2}+1}\cdot N^{-\frac{d}{2}+1} + |\mathfrak{B}_0|^2\cdot N^{2-d}\big)^2  \lesssim N^{2d+4}. 
	\end{split}
\end{equation}
By the Paley-Zygmund inequality, (\ref{618}) and (\ref{add3.19}), we get 
 \begin{equation}\label{revisenew_to_329}
  	\mathbb{P}\big(\mathbf{X} >0 \big)\gtrsim \big(	\mathbb{E}[\mathbf{X} ] \big)^2/  \mathbb{E}\big[\mathbf{X}^2\big] \gtrsim 1.  
 \end{equation}
    On the event $\{\mathbf{X} >0\}$, there exist $w_+\in \mathfrak{B}_1$ and $ w_{-}\in \mathfrak{B}_{-1}$ such that the event $\mathsf{A}_e^{w_+,w_-}$ occurs for at least $\cref{const_star}  N^{\frac{d}{2}-1}$ edges in $\mathfrak{I}$. Combined with the inclusion that $\mathsf{A}_e^{w_+,w_-}\subset \mathsf{A}_{e}$ (recall $\mathsf{A}_{e}$ in (\ref{newadd321})) for all $e\in \mathfrak{I}$, $w_+\in \mathfrak{B}_1$ and $ w_{-}\in \mathfrak{B}_{-1}$, it yields $\{\mathbf{X} >0\}\subset \{ |\mathfrak{I}_{\mathsf{A}}|\ge \cref{const_star}  N^{\frac{d}{2}-1}\}$. This together with (\ref{revisenew_to_329}) implies the claim (\ref{claim3.4}): 
        \begin{equation}\label{fixadd322}
    	 \mathbb{P}\big( |\mathfrak{I}_{\mathsf{A}}|\ge \cref{const_star}  N^{\frac{d}{2}-1} \big)\ge \mathbb{P}\big(\mathbf{X} >0 \big) \gtrsim 1. 
    \end{equation}

 \textbf{When $d\ge 7$.} The proof in this case is relatively straightforward. According to \cite[Lemma 6.1]{inpreparation_twoarm}, we have 
 \begin{equation}
 	\mathbb{P}\big( \mathsf{A}_{e} \big)\gtrsim N^{-4}, \ \ \forall e\in \mathfrak{I}, 
 \end{equation}
which implies that the first moment of $\mathfrak{I}_{\mathsf{A}}$ satisfies  
 \begin{equation}\label{fixnew330}
 	\mathbb{E}[|\mathfrak{I}_{\mathsf{A}}| ]\gtrsim |\mathfrak{I}|\cdot N^{-4} \asymp  N^{d-4}. 
 \end{equation}
For the second moment, as in the proof of (\ref{619}), the FKG inequality yields
 \begin{equation}
 	\begin{split}
 		\mathbb{E}\big[|\mathfrak{I}_{\mathsf{A}}|^2 \big]=&  \sum\nolimits_{e,e'\in \mathfrak{I} } \mathbb{P}\big( \mathsf{A}_{e}\cap \mathsf{A}_{e'} \big)\\
 		\overset{}{\le} & \sum\nolimits_{e,e'\in \mathfrak{I} } \mathbb{P}\big(v_e^{+}\xleftrightarrow{\ge 0} \partial B(\cref{const_dagger_box} N), v_{e'}^{+}\xleftrightarrow{\ge 0} \partial B(\cref{const_dagger_box} N) \big)\\
 		&\ \ \ \ \ \ \ \ \ \ \  \cdot  \mathbb{P}\big(v_e^{-}\xleftrightarrow{\le 0} \partial B(\cref{const_dagger_box} N), v_{e'}^{-}\xleftrightarrow{\le 0} \partial B(\cref{const_dagger_box} N) \big).
 	\end{split}
 \end{equation}  
 For the same reason as in (\ref{newadd3.25}), it follows from \cite[Theorem 1.4]{cai2024incipient} and (\ref{one_arm_high}) that 
  \begin{equation}
	\mathbb{P}\big(v_e^{+}\xleftrightarrow{\ge 0} \partial B(\cref{const_dagger_box} N), v_{e'}^{+}\xleftrightarrow{\ge 0} \partial B(\cref{const_dagger_box} N) \big)
	\lesssim (|v_e^{+}-v_{e'}^{+}|+1)^{4-d}N^{-2} +N^{-4}.   
\end{equation} 
The same bound also holds for $\{v_e^{-}\xleftrightarrow{\le 0} \partial B(\cref{const_dagger_box} N), v_{e'}^{-}\xleftrightarrow{\le 0} \partial B(\cref{const_dagger_box} N)\}$. Thus, 
 \begin{equation}\label{6.11}
 	\begin{split}
 		\mathbb{E}\big[|\mathfrak{I}_{\mathsf{A}}|^2 \big] \lesssim & \Big( \sum\nolimits_{e,e'\in \mathfrak{I} }  \big[(|v_e^+-v_{e'}^+|+1)^{4-d}N^{-2} +N^{-4} \big] \Big)^2 \\
 			\lesssim & N^{-4} \sum\nolimits_{x,x' \in \mathfrak{B}_0}  (|x-x'|+1)^{8-2d}
 			 + N^{-8}|\mathfrak{B}_0|^2\\
 			 &+ N^{-6}\cdot |\mathfrak{B}_0|\cdot \max_{x\in \mathfrak{B}_0}\sum\nolimits_{x'\in \mathfrak{B}_0}(|x-x'|+1)^{4-d} \\
 			 \overset{|x-x'|\lesssim N,|\mathfrak{B}_0|\asymp N^d}{\lesssim } & N^{d-2}\cdot \max_{x\in \mathfrak{B}_0}\sum\nolimits_{x'\in \mathfrak{B}_0}(|x-x'|+1)^{6-2d}+ N^{2d-8}\\
 			 &+ N^{d-6}\cdot \max_{x\in \mathfrak{B}_0}\sum\nolimits_{x'\in \mathfrak{B}_0}(|x-x'|+1)^{4-d}\\
 			 \overset{(\ref{computation_d-a})}{\lesssim } & N^{d-2}+ N^{2d-8} \overset{(d\ge 7)}{\lesssim}  N^{2d-8}. 
 	\end{split}
 \end{equation}
By the Paley-Zygmund inequality, (\ref{fixnew330}) and (\ref{6.11}), we obtain the claim (\ref{claim3.4}):  
\begin{equation}\label{612}
	\mathbb{P}\big( |\mathfrak{I}_{\mathsf{A}}|\ge \cref{const_star} N^{d-4} \big)\gtrsim \big(\mathbb{E}[|\mathfrak{I}_{\mathsf{A}}|] \big)^2 / \mathbb{E}\big[|\mathfrak{I}_{\mathsf{A}}|^2 \big]  \gtrsim  1. 
	\end{equation}


 In conclusion, we have established Theorem \ref{thm_minimal_distance} assuming Proposition \ref{lemma_four_arms}.  \qed

 \section{Proof of Proposition \ref{lemma_four_arms}}\label{section_proof_two_branch}

 The proof of Proposition \ref{lemma_four_arms} mainly relies on the following lemma. To state this lemma precisely, we first introduce some notations as follows: 
 \begin{itemize}

   \item  Let $\Cl\label{const_boundary1},\Cl\label{const_boundary2}>10^4$ be constants that will be determined later. Here $\Cref{const_boundary2}$ is much larger than $\Cref{const_boundary1}$.

  \item  For any $D\subset \widetilde{\mathbb{Z}}^d$, $r\ge 1$, $v,v'\in \widetilde{\mathbb{Z}}^d$ with $r\le |v-v'|\le 10r$, 
   \begin{equation}\label{notation41}
   	\mathbf{Q}^{D}_{\mathrm{I}}(v,v';r):=  \sum\nolimits_{z\in \partial \mathcal{B}_{v}(\frac{r}{\Cref{const_boundary1}}),z' \in \partial \mathcal{B}_{v'}(\frac{r}{\Cref{const_boundary1}})}  \mathbb{P}\big(\mathsf{A}_{\mathrm{I}}^D(v,z;r) \big)\cdot \mathbb{P}\big(\mathsf{A}_{\mathrm{I}}^D(v',z';r) \big),
   \end{equation}   
   where $\mathsf{A}_{\mathrm{I}}^D(v,z;r):=\big\{v\xleftrightarrow{(D)} z \big\}\cap  \big\{v\xleftrightarrow{(D)} \partial B_{v}(\tfrac{r}{100}) \big\}^c$.

   \item For any $D\subset \widetilde{\mathbb{Z}}^d$, $v\in \widetilde{\mathbb{Z}}^d$, and $r,r'\ge 1$, 
   \begin{equation}\label{notation42}
   	\mathbf{Q}_{\mathrm{II}}^D(v;r,r'):= \sum_{z\in \partial \mathcal{B}_v(r),z'\in \partial \mathcal{B}_v(\frac{r'}{\Cref{const_boundary1}})}  \mathbb{P}\big( \mathsf{A}_{\mathrm{II}}^D(v,z,z';r,r') \big),
   \end{equation}
   where $\mathsf{A}_{\mathrm{II}}^D(v,z,z';r,r'):=\big\{ z\xleftrightarrow{(D)} z'\big\}\cap  \big\{z \xleftrightarrow{(D)} \partial B_v(\frac{r}{\Cref{const_boundary1}} )\cup \partial B_v(10r') \big\}^c$.


            \item  For any $r\ge 1$ and $k\in \mathbb{N}$, we denote $r_k:=(\Cref{const_boundary2})^k\cdot r$. For any $R\ge r_5$, let 
      \begin{equation}
  	k_\star(r,R):= \min\{k\ge 1: r_{k+3}\ge R\}. 
  \end{equation}
   For any $D\subset \widetilde{\mathbb{Z}}^d$, $v\in \widetilde{\mathbb{Z}}^d$, $r'\in [r_1,2r_{k_{\star}}]$ and $w\in [\widetilde{B}_v(2R)]^c$,  
     \begin{equation}\label{notation43}
   	\mathbf{Q}_{\mathrm{III}}^D(v,w;r'):= \sum\nolimits_{z\in \partial \mathcal{B}_v(dr')} \mathbb{P}\big(\mathsf{A}_{\mathrm{III}}^D(v,w,z;r') \big),
   \end{equation} 
   where $\mathsf{A}_{\mathrm{III}}^D(v,w,z;r'):=\big\{w\xleftrightarrow{(D)} z\big\}\cap \big\{ w \xleftrightarrow{(D)} \partial B_v(\tfrac{r'}{\Cref{const_boundary1}}) \big\}^c$. In addition, we define $\mathsf{A}_{\mathrm{III}}^D(v,w,z;r'):=\{\mathbf{A}_\star^z=1\}$ for all $r'>2r_{k_\star}$, where $\{\mathbf{A}_\star^z\}_{z\in \mathbb{Z}^d}$ is a family of i.i.d. Bernoulli random variables independent of $\widetilde{\mathcal{L}}_{1/2}$ with expectation $R^{2-d}$.

 \end{itemize}

 \begin{lemma}\label{lemma41}
 	For any $3\le d\le 5$, there exist $\Cref{const_boundary1},\Cref{const_boundary2}>0$ such that the following holds. Suppose that $r\ge 1$, $R\ge r_5$, $\{\{x_i,y_i\}\}_{i\in \{1,2\}}\subset\mathbb{L}^d$ satisfying $r\le |x_1-x_2|\le 10r$, $v_i\in  I_{e_i}$ satisfying $|v_i-x_i|\land |v_i-y_i|\ge \frac{1}{10}$ for $i\in \{1,2\}$, $v_i'\in I_{[v_i,y_i]}$ satisfying $|v_i'-v_i|\land |v_i'-y_i|\ge \frac{1}{100}$ for $i\in \{1,2\}$, $D\subset \widetilde{\mathbb{Z}}^d$ satisfying $x_i\in D$ and $D\cap I_{[v_i,y_i]}=\emptyset$ for all $i\in \{1,2\}$, and $w\in  [\widetilde{B}_{v_1}(2R)]^c$. Then we have 
 	\begin{equation}\label{ineq_lemma41}
 	\begin{split}
 		&\mathbb{P}\big( \big\{ w\xleftrightarrow{(D)} v_1,v_2  \big\} \cap \big\{  \widehat{\mathcal{L}}_{1/2}^{D,v_1},\widehat{\mathcal{L}}_{1/2}^{D,v_2}  \in [0,\Cref{const_bar_lowd_four_point1}] \big\} \big) \\
 		 \lesssim &r^{-d-1} \mathbf{Q}^{D}_{\mathrm{I}}(v_1',v_2';r)\sum\nolimits_{2 \le k\le k_\star(r,R)}  r_k^{-\frac{3d}{2}+1}  	\mathbf{Q}_{\mathrm{II}}^D(v_1;r_1,r_k) 	\mathbf{Q}_{\mathrm{III}}^D(v_1,w;r_{k+1}). 
 	\end{split}
 	\end{equation}
 \end{lemma}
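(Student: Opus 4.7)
The strategy is to apply the switching identity (Lemma~\ref{lemma_switching}) to the ordered pair $(v_1, v_2)$ and then analyse, via the excursion-measure bound of Lemma~\ref{lemma_for_new62}, the random cluster $\widecheck{\mathcal{C}}$ produced by the identity. Writing
\begin{equation*}
\mathbb{P}\bigl(\{w\xleftrightarrow{(D)} v_1,v_2\}\cap\{\widehat{\mathcal{L}}_{1/2}^{D,v_1},\widehat{\mathcal{L}}_{1/2}^{D,v_2}\in[0,\Cref{const_bar_lowd_four_point1}]\}\bigr)=\int_{[0,\Cref{const_bar_lowd_four_point1}]^{2}}\mathfrak{p}^{D}_{v_1\leftrightarrow v_2,a,b}\cdot\widecheck{\mathbb{P}}^{D}_{v_1\leftrightarrow v_2,a,b}\bigl(w\in\widecheck{\mathcal{C}}\bigr)\,da\,db,
\end{equation*}
the task reduces to estimating the inner probability uniformly in $a,b$ and then integrating the density against the two-point function via (\ref{211}). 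The condition $x_i\in D$ with $D\cap I_{[v_i,y_i]}=\emptyset$ forces every excursion from $v_i$ that leaves a bounded neighbourhood to cross the marker $v_i'$ first, which is what makes the local factor $\mathbf{Q}^{D}_{\mathrm{I}}(v_1',v_2';r)$ natural.

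The plan is next to decompose the event $\{w\in\widecheck{\mathcal{C}}\}$ according to the smallest index $k\in\{0,\ldots,k_\star(r,R)\}$ such that $\widecheck{\mathcal{C}}$ carries a loop or excursion that is anchored in $B_{v_1}(r_k)$ and exits $B_{v_1}(r_{k+1})$. On that event, $\widecheck{\mathcal{C}}$ splits into three pieces: a local piece confined to $B_{v_1}(r_1)$ that witnesses $\{v_1,v_2\}\subset\widecheck{\mathcal{C}}$ and is encoded by $\mathbf{Q}^{D}_{\mathrm{I}}$; an annulus-crossing piece that reaches $\partial\mathcal{B}_{v_1}(r_k)$; and a far piece meeting $\partial\mathcal{B}_{v_1}(r_{k+1}/\Cref{const_boundary1})$ and containing $w$. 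The annulus-crossing mass is bounded by applying Lemma~\ref{lemma_for_new62} to the bridging measure $\mathbf{e}^{D}_{v_1,v_2}$ of Component~(4) of the switching identity (together with the Component~(1) loops that may stitch Components~(2) and~(3) excursions into the chain), which, after a last-exit decomposition on $\partial\mathcal{B}_{v_1}(r_k)$ and Corollary~\ref{coro27}, produces the scale factor $r_k^{-3d/2+1}$ multiplied by $\mathbf{Q}^{D}_{\mathrm{II}}(v_1;r_1,r_k)$. The far piece is controlled by another last-exit decomposition at $\partial\mathcal{B}_{v_1}(r_{k+1}/\Cref{const_boundary1})$ together with Corollary~\ref{coronew27} and the mean-value estimate of Lemma~\ref{lemma_new26}, rewritten via the confinement provided by Lemma~\ref{lemma24} to match the definition of $\mathbf{Q}^{D}_{\mathrm{III}}(v_1,w;r_{k+1})$.

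Integrating $\mathfrak{p}^{D}_{v_1\leftrightarrow v_2,a,b}$ over $[0,\Cref{const_bar_lowd_four_point1}]^{2}$ contributes the two-point weight $\mathbb{P}(v_1\xleftrightarrow{(D)} v_2)$; when combined with the Component~(4) prefactor $r^{1-d}r_{k+1}^{1-d}$ coming out of Lemma~\ref{lemma_for_new62} and the normalising Green's-function factors from (\ref{211}), this collapses to the declared prefactor $r^{-d-1}$. Summing the single-scale estimate over $k=0,\ldots,k_\star(r,R)$ then gives (\ref{ineq_lemma41}).

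The main obstacle is twofold: (i) making the \emph{first escape scale} $k$ well defined and the three pieces conditionally independent in an appropriate sense, which is handled by exploring $\widecheck{\mathcal{C}}$ from $v_1$ outward and invoking the restriction property of the loop soup/excursion measure at $\partial\mathcal{B}_{v_1}(r_k)$ and $\partial\mathcal{B}_{v_1}(r_{k+1})$; and (ii) the boundary case $k=k_\star$, where $r_{k+1}$ and $R$ are of the same order, so that the genuine confined event defining $\mathbf{Q}^{D}_{\mathrm{III}}$ has no room and must be replaced by the i.i.d.\ Bernoulli surrogate $\{\mathbf{A}_\star^z=1\}$ of mean $R^{2-d}$. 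This substitution is justified by one more application of the last-exit decomposition beyond scale $R$ (as in the proof of Corollary~\ref{coronew27}), which replaces the missing confinement by the hitting factor $R^{2-d}$ and closes the bound.
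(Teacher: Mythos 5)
Your proposal takes essentially the same route as the paper: apply the switching identity to $(v_1,v_2)$, decompose by a first-escape scale $k$, bound the annulus-crossing mass via Lemma~\ref{lemma_for_new62} and the corollaries, and deal with the terminal scale via the Bernoulli surrogate. The key tools and the architecture of the bound (the $r^{-d-1}$ prefactor coming from cancelling $\mathbb{K}_{D\cup\{v_1,v_2\}}(v_1,v_2)$ against the Poisson normalisation of Component~(4), the $\mathbf{Q}_{\mathrm{II}}$ factor from an application of Lemma~\ref{lemma_for_new62} at scale $r_k$, the $\mathbf{Q}_{\mathrm{III}}$ factor from the point-to-boundary decomposition on the remaining $\mathcal{P}^{(1)}$-connection to $w$) are all the right ones.

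Two imprecisions are worth flagging, both of which the paper handles more carefully. First, the decomposition you state --- ``the smallest index $k$ such that $\widecheck{\mathcal{C}}$ carries a loop or excursion anchored in $B_{v_1}(r_k)$ and exiting $B_{v_1}(r_{k+1})$'' --- is ill-defined and too coarse as written: the ambient loop soup $\mathcal{P}^{(1)}=\widetilde{\mathcal{L}}_{1/2}^{D\cup\{v_1,v_2\}}$ contributes loops that escape any annulus with non-negligible probability, so a decomposition ranging over all loops/excursions in $\widecheck{\mathcal{C}}$ would typically land on $k=0$ and carry no useful scaling information. The correct decomposition (the paper's $\mathsf{G}_k$) is on the spatial extent of the \emph{special} components $\mathcal{P}^{(2)}+\mathcal{P}^{(3)}+\mathcal{P}^{(4)}$ only --- the excursions emanating from $v_1$ and $v_2$ --- which form the skeleton; the remaining connection to $w$ then factors through $\mathbb{P}(w\xleftrightarrow{(D)}\widetilde{B}_{v_1}(r_{k+1}))$ since $\mathcal{P}^{(1)}\le\widetilde{\mathcal{L}}_{1/2}^D$. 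Second, your treatment of the boundary case $k=k_\star$ is too terse: it is not enough to ``apply one more last-exit decomposition beyond $R$.'' Because on $\mathsf{G}_{k_\star}$ the special excursions can reach arbitrarily close to $w$, the paper does a further dyadic decomposition in scale \emph{around $w$} (the events $\mathsf{G}_\star^{i,l}$ with $\mathfrak{B}_l=\widetilde{B}_w(2^l)$), pairs the event $\{(\cup\mathcal{P}^{(i)})\cap\mathfrak{B}_l\ne\emptyset\}$ (bounded via Lemma~\ref{lemma_for_new62}, giving a $2^{l(d-2)}$ factor) with the one-arm bound $\mathbb{P}(w\xleftrightarrow{(D)}\widetilde{\partial}\mathfrak{B}_{l-1})\lesssim 2^{l(1-d/2)}$, and sums the geometric series; only then does the $R^{2-d}$ surrogate emerge. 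Without this extra decomposition the $k=k_\star$ contribution is not controlled.
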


 The rest of this section is organized as follows: in Section \ref{subsection_proof_prop3.1}, we prove Proposition \ref{lemma_four_arms} assuming Lemma \ref{lemma41}; in Section \ref{subsection_proof_lemma4.1}, we prove Lemma \ref{lemma41}.


 \subsection{Proof of Proposition \ref{lemma_four_arms}}\label{subsection_proof_prop3.1}

 We arbitrarily take $w_+\in \mathfrak{B}_1$, $w_-\in \mathfrak{B}_{-1}$ and $e,e'\in \mathfrak{I}$. Since $\mathbb{P}(v_e^{+}\xleftrightarrow{\ge 0} w_+, v_e^{-}\xleftrightarrow{\le 0} w_-)\lesssim N^{-\frac{3d}{2}+1}$ for all $e\in \mathfrak{I}$ (by \cite[Theorem 1.4]{inpreparation_twoarm}), it suffices to consider the case when $r:= |v_e^{+}-v_{e'}^+|$ is sufficiently large. For each $\diamond \in \{+,-\}$, we define  
 \begin{equation}
 	\begin{split}
 		\mathsf{F}_\diamond^D:= \big\{  w_\diamond \xleftrightarrow{(D)} v_e^{\diamond}, v_{e'}^{\diamond}  \big\}\cap  \big\{  \widehat{\mathcal{L}}_{1/2}^{D,v_e^{\diamond}},\widehat{\mathcal{L}}_{1/2}^{D,v_{e'
  		 }^{\diamond}}   \in [0,\Cref{const_bar_lowd_four_point1}] \big\}. 
 	\end{split}
 \end{equation}
 When $D=\emptyset$, we abbreviate $\mathsf{F}_\diamond:=\mathsf{F}_\diamond^\emptyset$. Note that $\mathsf{F}(e,e';w_+,w_-)=\{\mathsf{F}_+\Vert \mathsf{F}_-\}$. Therefore, by the restriction property and Lemma \ref{lemma41}, one has  
 \begin{equation}\label{48}
 \begin{split}
 		 		& \mathbb{P}\big(\mathsf{F}(e,e';w_+,w_-)   \big)=   \mathbb{E}\big[ \mathbbm{1}_{\mathsf{F}_-} \cdot \mathbb{P}\big(\mathsf{F}_+^{\mathcal{C}_{w_-}}  \big) \big] \\
 	 		\overset{}{\lesssim} &r^{-d-1}  \mathbb{E}\Big[ \mathbbm{1}_{\mathsf{F}_-} \cdot \mathbb{E}\big[   \mathbf{Q}^{\mathcal{C}_{w_-}}_{\mathrm{I}}(\hat{v}_e^+,\hat{v}_{e'}^+;r)\sum\nolimits_{2\le k \le k_\star(r,N)}  r_k^{-\frac{d}{2}-1} 	\mathbf{Q}_{\mathrm{II}}^{\mathcal{C}_{w_-}}(v_e^+;r_1,r_k) \\
 	 		&\ \ \ \ \ \ \ \ \ \ \ \ \ \ \ \ \ \ \ \ \ \ \ \ \ \ \ \ \ \ \ \ \ \ \ \ \ \ \ \ \ \ \ \ \ \ \ \ \ \ \ \cdot	\mathbf{Q}_{\mathrm{III}}^{\mathcal{C}_{w_-}}(v_e^+,w_+;r_{k+1}) \mid \mathcal{F}_{\mathcal{C}_{w_-}}\big] \Big] \\ 
 	 		=&r^{-d-1} \sum\nolimits_{2\le k\le k_\star(r,N)} r_k^{-\frac{3d}{2}+1} \sum\nolimits_{\bar{z}\in \Upsilon_+(r,r_{k})} \mathbb{E}\big[ \mathbbm{1}_{\mathsf{F}_-} \cdot \mathfrak{U}_+^{\mathcal{C}_{w_-} }(\bar{z};r,r_k) \big],   
 \end{split}
 \end{equation}
 where $\hat{v}_{e}^{\diamond}$ denotes the midpoint between $v_{e}^{\diamond}$ and the endpoint of $e$ that is closer to $v_{e}^{\diamond}$ (for $\diamond\in \{+,-\}$), $\bar{z}:=(z_1,z_2,z_3,z_4,z_5)$, $\Upsilon_{\diamond}(R,R'):=\partial \mathcal{B}_{\hat{v}_e^\diamond}(\frac{R}{\Cref{const_boundary1}})\times \partial \mathcal{B}_{\hat{v}_{e'}^\diamond}(\frac{R}{\Cref{const_boundary1}})\times \partial \mathcal{B}_{v_e^\diamond}(\Cref{const_boundary2} R) \times \partial \mathcal{B}_{v_e^\diamond}(\frac{R'}{\Cref{const_boundary1}})\times \partial \mathcal{B}_{v_e^\diamond}(d\Cref{const_boundary2}R' )$, and $\mathfrak{U}_{\diamond}^D(\bar{z};R,R')$ represents the product of the probabilities of the events $\mathsf{A}_{\mathrm{I}}^D(\hat{v}_e^{\diamond},z_1;R)$, $\mathsf{A}_{\mathrm{I}}^D(\hat{v}_{e'}^{\diamond},z_2;R)$, $\mathsf{A}_{\mathrm{II}}^D(v_e^{\diamond},z_3,z_4;\Cref{const_boundary2}R,R')$ and $\mathsf{A}_{\mathrm{III}}^D(v_e^{\diamond},w_{\diamond},z_5;\Cref{const_boundary2}R')$. In addition, we denote the intersection of these four events by $\overline{\mathsf{A}}^D_{\diamond}(\bar{z};R,R')$, and we abbreviate $\overline{\mathsf{A}}_{\diamond}:=\overline{\mathsf{A}}^{\emptyset}_{\diamond}$. (\textbf{P.S.} Here, $\hat{v}_{e}^{\diamond}$ is not necessarily the midpoint; it can be any point satisfying the conditions for $v_i'$ in Lemma \ref{lemma41}, where $v_i=v_{e}^{\diamond}$ and $y_i$ is the endpoint of $e$ closer to $v_{e}^{\diamond}$.)

 By using Corollary \ref{coro27} repeatedly, we have 
 \begin{equation}\label{new48}
 	\begin{split}
 		\mathfrak{U}_+^{\mathcal{C}_{w_-} }(\bar{z};r,r_k) \lesssim  \mathbb{P}\big(\overline{\mathsf{A}}^{\mathcal{C}_{w_-}}_+(\bar{z};r,r_{k} ) \mid \mathcal{F}_{\mathcal{C}_{w_-}}  \big). 
 	\end{split}
 \end{equation}
Let $\overline{\mathcal{C}}_{\bar{z}}:=\cup_{j\in \{1,2,3,5\}}\mathcal{C}_{z_j}$ (note that $z_4\in \mathcal{C}_{z_3}$). See Figure \ref{fig1} for an illustration of the event $\overline{\mathsf{A}}^{\mathcal{C}_{w_-}}_+(\bar{z};r,r_{k} )$. Plugging (\ref{new48}) into (\ref{48}), and then applying Lemma \ref{lemma41} and Corollary \ref{coro27} as in (\ref{48}) and (\ref{new48}), we get 
\begin{equation}\label{newadd49}
	\begin{split}
		& \mathbb{P}\big(\mathsf{F}(e,e';w_+,w_-)   \big) \\
	\overset{}{ \lesssim} & r^{-d-1} \sum\nolimits_{2\le k\le k_\star(r,N)} r_k^{-\frac{3d}{2}+1} \sum\nolimits_{\bar{z}\in \Upsilon_+(r,r_{k})} \mathbb{E}\big[ \mathbbm{1}_{\overline{\mathsf{A}}_+(\bar{z};r,r_{k} )} \cdot \mathbb{P}\big( \mathsf{F}^{\overline{\mathcal{C}}_{\bar{z}}}_- \mid \mathcal{F}_{\overline{\mathcal{C}}_{\bar{z}}}\big) \big] \\
	\overset{}{\lesssim} &r^{-2d-2}  \sum\nolimits_{2\le k,k'\le k_\star(r,N)} r_k^{-\frac{3d}{2}+1}r_{k'}^{-\frac{3d}{2}+1}\sum\nolimits_{\bar{z}\in \Upsilon_+(r,r_{k'})}\\
 		  &\ \ \ \ \ \ \ \  \sum\nolimits_{\bar{z}':=(z_1',z_2',z_3',z_4',z_5')\in \Upsilon_-(2r,2r_{k'})}\mathbb{P}\big( \overline{\mathsf{A}}_+(\bar{z};r,r_{k} ) \Vert  \overline{\mathsf{A}}_-(\bar{z}';2r,2r_{k'} )  \big). 	\end{split}
\end{equation}
  See Figure \ref{fig2} for an illustration for the event $\overline{\mathsf{A}}_+(\bar{z};r,r_{k} ) \Vert  \overline{\mathsf{A}}_-(\bar{z}';2r,2r_{k'} )$.

\begin{figure}[h]
	\centering
	\includegraphics[width=0.8\textwidth]{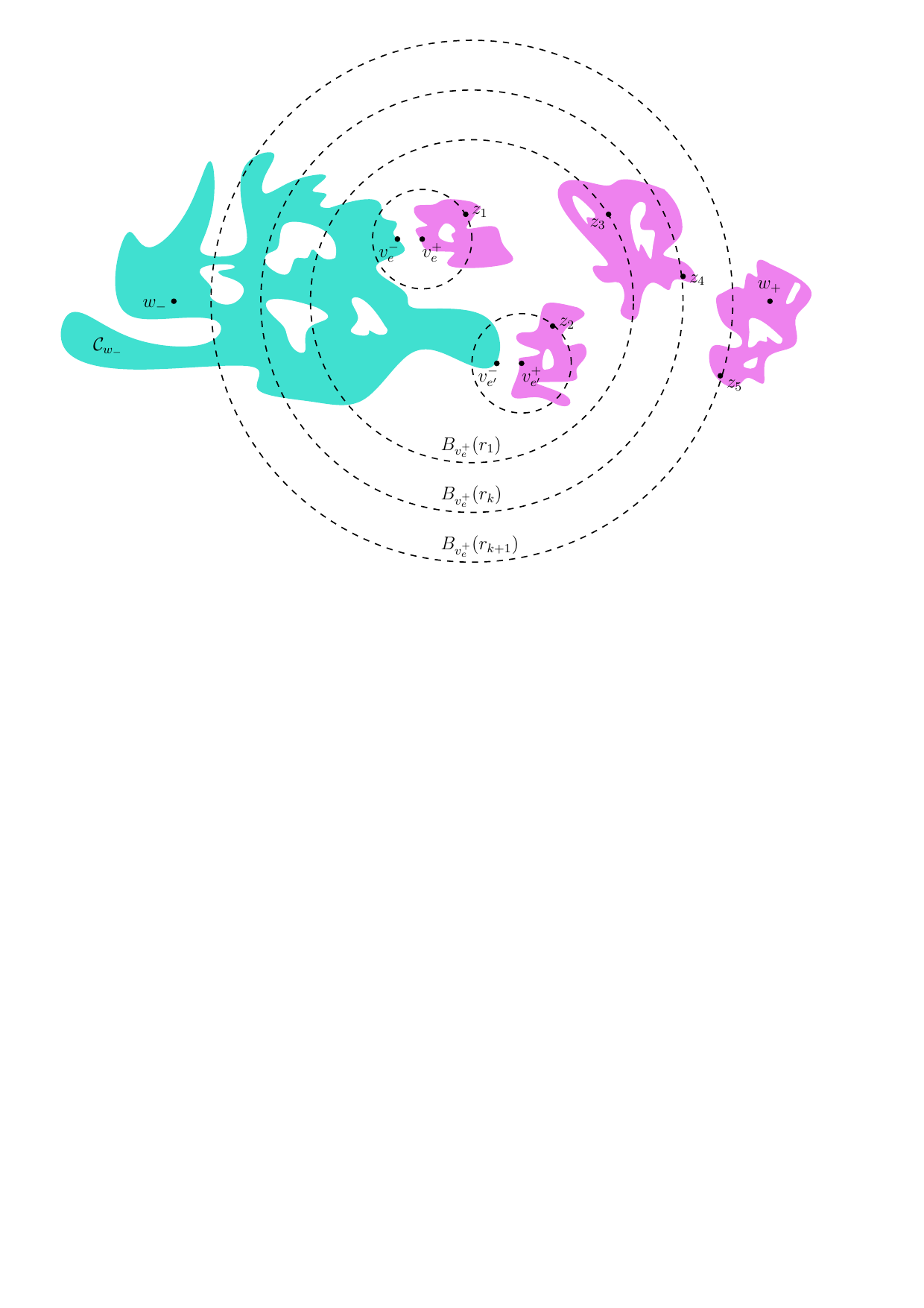}
	\caption{In this illustration, the cyan region is the cluster $\mathcal{C}_{w_-}$, and the pink regions represent the clusters $\{\mathcal{C}_{z_j}\}_{j\in \{1,2,3,5\}}$. Note that on $\overline{\mathsf{A}}^{\mathcal{C}_{w_-}}_+(\bar{z};r,r_{k} )$, the latter four clusters are restricted in different areas, and thus are constructed by disjoint collection of loops. This implies that given $\mathcal{C}_{w_1}$, the four sub-events involved in $\overline{\mathsf{A}}^{\mathcal{C}_{w_-}}_+(\bar{z};r,r_{k} )$ are conditionally independent. In the next step (\ref{newadd49}), we fix $\{\mathcal{C}_{z_j}\}_{j\in \{1,2,3,5\}}$ and then decompose $\mathcal{C}_{w_-}$ via Lemma \ref{lemma41}.  		 }\label{fig1}
\end{figure}

 \begin{figure}[h]
	\centering
	\includegraphics[width=0.75\textwidth]{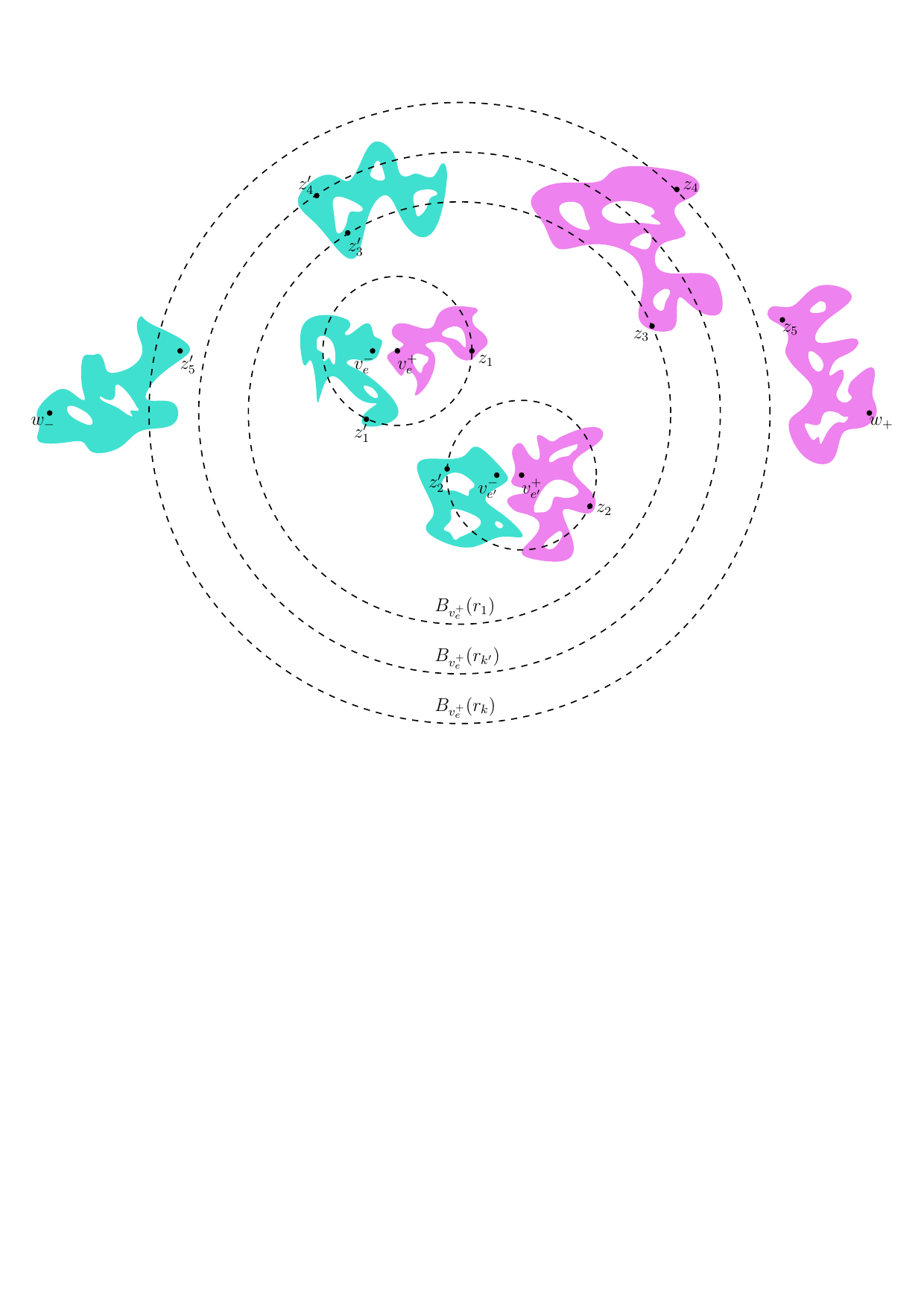}
	\caption{In this illustration, each colored region represents a loop cluster. The clusters $\mathcal{C}_{v_e^+}$ and $\mathcal{C}_{v_e^-}$ certify the four-point event $\{v_e^+ \xleftrightarrow{} z_1  \Vert v_e^-\xleftrightarrow{} z_1'\}$, whose probability has been estimated in the companion paper \cite{inpreparation_twoarm} (see (\ref{four_point_use})). The same applies to $\mathcal{C}_{v_{e'}^+}$ and $\mathcal{C}_{v_{e'}^-}$. To ensure that the remaining clusters certify four-point events restricted to disjoint annuli, we use Corollary \ref{coro28} to decompose $\mathcal{C}_{w_-}$ and $\mathcal{C}_{z_3}$ around $\partial B(r_k)$ and $\partial B(r_{k'})$ respectively (see (\ref{413}) and (\ref{414})). When $k=k'$, such a decomposition is not needed since the clusters $\mathcal{C}_{z_3}$, $\mathcal{C}_{z_3'}$, $\mathcal{C}_{w_+}$ and $\mathcal{C}_{w_-}$ already meet the requirements. }\label{fig2}
\end{figure}

 For any $\diamond \in \{+,-\}$ and $R,R'\ge 1$, we denote $\widehat{\Upsilon}_\diamond(R,R'):=\partial \mathcal{B}_{v_e^\diamond}(\Cref{const_boundary2} R) \times \partial \mathcal{B}_{v_e^\diamond}(\frac{R'}{\Cref{const_boundary1}})\times \partial \mathcal{B}_{v_e^\diamond}(d\Cref{const_boundary2}R')$. We claim that for any $2\le k,k'\le k_\star(r,N)$, $z_1\in \partial \mathcal{B}_{\hat{v}_e^+}(\frac{r}{\Cref{const_boundary1}})$, $z_2\in \partial \mathcal{B}_{\hat{v}_{e'}^+}(\frac{r}{\Cref{const_boundary1}}) $, $z_1'\in \partial \mathcal{B}_{\hat{v}_e^-}(\frac{2r}{\Cref{const_boundary1}})$ and $z_2'\in \partial \mathcal{B}_{\hat{v}_{e'}^-}(\frac{2r}{\Cref{const_boundary1}})$, 
 \begin{equation}\label{newadd410}
 	\begin{split}
 		&\sum\nolimits_{\hat{z}\in \widehat{\Upsilon}_+(r,r
 		_k),\hat{z}'\in \widehat{\Upsilon}_-(2r,2r
 		_{k'})}  \mathbb{P}\big(\overline{\mathsf{A}}_+(z_1,z_2,\hat{z};r,r_{k} ) \Vert  \overline{\mathsf{A}}_-(z_1',z_2',\hat{z}';2r,2r_{k'} ) \big)\\
 		\lesssim & r^{-\frac{3d}{2}+3}r_{k}^{d}r_{k'}^{d}N^{-\frac{3d}{2}+1}.
 	\end{split}
 \end{equation}
 Note that substituting (\ref{newadd410}) into (\ref{newadd49}) immediately yields Proposition \ref{lemma_four_arms}: 
 \begin{equation*}
 	\begin{split}
 	& 	\mathbb{P}\big(\mathsf{F}(e,e';w_+,w_-)   \big)\\
 		\overset{}{\lesssim} &  r^{-2d-2}\sum\nolimits_{2 \le k,k'\le k_\star(r,N)} r_k^{-\frac{3d}{2}+1}r_{k'}^{-\frac{3d}{2}+1} \cdot (r^{d-1})^4\cdot r^{-\frac{3d}{2}+3}r_{k}^{d}r_{k'}^{d}N^{-\frac{3d}{2}+1}\\
 		=& r^{\frac{d}{2}-3}N^{-\frac{3d}{2}+1} \sum\nolimits_{2 \le k,k'\le k_\star(r,N)} r_k^{-\frac{d}{2}+1}r_{k'}^{-\frac{d}{2}+1}\asymp r^{-\frac{d}{2}-1}N^{-\frac{3d}{2}+1}. 
 	\end{split}
 \end{equation*}
 Next, we verify the claim (\ref{newadd410}). Without loss of generality, we only consider the case $k\ge k'$. The proof is divided into the two subcases $k=k'$ and $k>k'$.

 \textbf{When $k=k'$.} In this case, it follows from (\ref{2.14}) that whenever the event on the left-hand side of (\ref{newadd410}) occurs (for $\hat{z}=(z_3,z_4,z_5)$ and $\hat{z}'=(z_3',z_4',z_5')$), the following events $\{	\mathsf{F}_{j}\}_{1\le j\le 3}$ occur as well. 
 \begin{itemize}

 	\item  $	\mathsf{F}_{1}:= \big\{ \hat{v}_e^+\xleftrightarrow{(\partial B_{v_e^+}(\frac{r}{100}))} z_1 \Vert \hat{v}_e^-\xleftrightarrow{(\partial B_{v_e^+}(\frac{r}{100}))} z_1' \big\}$;

     \item  $ \mathsf{F}_{2}:= \big\{ \hat{v}_{e'}^+\xleftrightarrow{(\partial B_{v_e^+}(\frac{r}{100}))} z_2 \Vert \hat{v}_e^-\xleftrightarrow{(\partial B_{v_{e'}^+}(\frac{r}{100}))} z_2' \big\}$;

 		\item  $ \mathsf{F}_{3}:=\big\{ z_3\xleftrightarrow{(\partial B_{v_e^+}(\frac{r_1}{\Cref{const_boundary1}})\cup \partial B_{v_e^+}(10 r_k))} z_4 \Vert z_3'\xleftrightarrow{(\partial B_{v_e^+}(\frac{r_1}{\Cref{const_boundary1}})\cup \partial B_{v_e^+}(10 r_k))} z_4' \big\}$;

 			\item  When $k<k_\star$, we define $	\mathsf{F}_{4}:=\big\{ z_5 \xleftrightarrow{(\partial B_{v_e^+}(\frac{r_{k+1}}{\Cref{const_boundary1}}))} w_+ \Vert z_5'\xleftrightarrow{(\partial B_{v_e^+}(\frac{r_{k+1}}{\Cref{const_boundary1}}))} w_- \big\}$, whereas for $k=k_\star$, we define $\mathsf{F}_{4}:= \big\{ \mathbf{A}_\star^{z_5}=\mathbf{A}_\star^{z_5'}= 1  \big\}$.

 \end{itemize}
By construction, $\mathsf{F}_{1}$, $\mathsf{F}_{2}$, $\mathsf{F}_{3}$ and $\mathsf{F}_{4}$ (for $k<k_\star$) depend on disjoint collections of loops in $\widetilde{\mathcal{L}}_{1/2}$. Together with the independence of $\{\mathbf{A}_\star^z\}_{z\in \mathbb{Z}^d}$ from $\widetilde{\mathcal{L}}_{1/2}$, this implies that $\{\mathsf{F}_{j}\}_{1\le j\le 4}$ are independent. Meanwhile, by the isomorphism theorem and the estimate on the heterochromatic four-point function in \cite[Lemma 6.4]{inpreparation_twoarm}, one has: for any $d\ge 3$ with $d\neq 6$, $m\ge C$, $M\ge Cm$, $v,v'\in  \widetilde{B}(10m)\setminus \widetilde{B}(m)$ with $|v-v'|\ge 1$, $w,w'\in \widetilde{B}(10M)\setminus \widetilde{B}(M)$ with $|w-w'|\ge M$ and $D\subset \widetilde{B}(\frac{m}{C'})\cup [\widetilde{B}(C'M)]^c$, 
 	\begin{equation}\label{four_point_use}
 		\mathbb{P}\big(v\xleftrightarrow{(D)}v' \Vert  w\xleftrightarrow{(D)}w'  \big) \asymp |v-v'|^{(3-\frac{d}{2})\boxdot 0} M^{ -[(\frac{3d}{2}-1)\boxdot (2d-4)]}. 
 	\end{equation}
 	This estimate together with the fact that $\mathbb{P}(\mathbf{A}_\star^{z})=N^{2-d}$ for all $z\in \mathbb{Z}^d$ yields 
  \begin{equation}
 	\mathbb{P}\big( \mathsf{F}_{1} \big) \vee \mathbb{P}\big( \mathsf{F}_{2} \big)  \lesssim r^{-\frac{3d}{2}+1}, \  	\mathbb{P}\big( \mathsf{F}_{3} \big) \lesssim r^{3-\frac{d}{2}}r_k^{-\frac{3d}{2}+1},
 \end{equation}
 \begin{equation}
 	\mathbb{P}\big( \mathsf{F}_{4} \big) \lesssim r_k^{3-\frac{d}{2}}N^{-\frac{3d}{2}+1}  \cdot \mathbbm{1}_{k<k_\star } + N^{4-2d} \cdot  \mathbbm{1}_{k=k_\star } \overset{(r_{k_\star}\asymp N)}{\asymp} r_k^{3-\frac{d}{2}}N^{-\frac{3d}{2}+1}. 
 \end{equation}
 Consequently, the left-hand side of (\ref{newadd410}) is bounded from above by 
 \begin{equation}
 	\begin{split}
 		   |\widehat{\Upsilon}_+(r,r_k)|\cdot | \widehat{\Upsilon}_-(2r,2r_{k'})|\cdot \prod\nolimits_{1\le j\le 4} \mathbb{P}\big( \mathsf{F}_{j} \big)  
 		 \lesssim  r^{-\frac{3d}{2}+3}r_{k}^{2d}N^{-\frac{3d}{2}+1}.
 	\end{split}
 \end{equation}

  \textbf{When $k>k'$.} For simplicity, we denote the event 
 \begin{equation}
 	\begin{split}
\widehat{\mathsf{A}}_{-}(\bar{z}';R,R' ) :=		\mathsf{A}_{\mathrm{I}}(\hat{v}_e^{-},z_1';R)\cap \mathsf{A}_{\mathrm{I}}(\hat{v}_{e'}^{-},z_2';R) \cap \mathsf{A}_{\mathrm{II}}(v_e^{-},z_3',z_4';\Cref{const_boundary2}R,R'), 
 	\end{split}
 \end{equation} 
 and the cluster $\widehat{\mathcal{C}}_-=\widehat{\mathcal{C}}_-(\bar{z},\bar{z}'):=(\cup_{i\in \{1,2,3,5\}}\mathcal{C}_{z_i})\cup (\cup_{1\le i\le 3}\mathcal{C}_{z_{i}'})$. We also denote $r_k^{\mathrm{out}}:=\Cref{const_boundary1}^{3/2}\Cref{const_boundary2}^{1/2}r_{k}$ and $r_k^{\mathrm{in}}:=\Cref{const_boundary1}^{-5/2}\Cref{const_boundary2}^{1/2}r_{k}$. For any $v\in \widetilde{\mathbb{Z}}^d$, $w\in [\widetilde{B}_{v_e^{-}}(\cref{const_dagger_box}N)]^c$, $w'\in \partial \mathcal{B}_{v_e^{-}}(\frac{2r_{k'+1}}{\Cref{const_boundary1}})$, $z \in \partial \mathcal{B}_{v_e^-}(r_k^{\mathrm{out}})$ and $z' \in \partial \mathcal{B}_{v_e^-}(r_k^{\mathrm{in}})$, we define the event 
  \begin{equation*}
\begin{split}
	\widehat{\mathsf{A}}_{\mathrm{III}}^{D }=\widehat{\mathsf{A}}_{\mathrm{III}}^{D }(v,w,w',z,z';k,k'):= &\mathsf{A}_{\mathrm{III}}^D(v,w,z;r_k^{\mathrm{out}})\cap \big\{w'\xleftrightarrow{(D)}z'\big\}\\
	& \cap    \big\{ w'\xleftrightarrow{(D)} \partial B_{v_e^{-}}(\tfrac{2dr_{k'+1}}{\Cref{const_boundary1}}) \cup \partial B_{v_e^{-}}(2\Cref{const_boundary1}r_k^{\mathrm{in}})  \big\}^c.
\end{split}
\end{equation*}
 Similarly, we denote $\widehat{\mathsf{A}}_{\mathrm{III}}^{}:=\widehat{\mathsf{A}}_{\mathrm{III}}^{\emptyset}$ when $D=\emptyset$.





 



 By the restriction property and Corollary \ref{coro28}, one has  \begin{equation}\label{413}
 	\begin{split}
 		&\sum\nolimits_{z_5'\in \partial \mathcal{B}_{v_e^{-}}(2dr_{k'+1})}	\mathbb{P}\big(\overline{\mathsf{A}}_+(\bar{z};r,r_{k} ) \Vert  \overline{\mathsf{A}}_-(\bar{z}';2r,2r_{k'} ) \big) \\ 
 		\overset{}{\le} & \sum_{z_5'\in \partial \mathcal{B}_{v_e^{-}}(2dr_{k'+1})} \mathbb{E}\big[ \mathbbm{1}_{\{ \overline{\mathsf{A}}_+(\bar{z};r,r_{k} ) \Vert  \widehat{\mathsf{A}}_{-}(\bar{z}';2r,2r_{k'} )     \}} \cdot \mathbb{P}\big( w_-\xleftrightarrow{(\widehat{\mathcal{C}}_-)} z_5'   \mid \mathcal{F}_{\widehat{\mathcal{C}}_-}\big) \big] \\
 	\overset{(\ref{ineq_coro_28})}{\lesssim }& r_k^{-d}\sum_{z_5'\in \partial \mathcal{B}_{v_e^{-}}(2dr_{k'+1}),z_6' \in \partial \mathcal{B}_{v_e^-}(2r_k^{\mathrm{out}}),z_7' \in \partial \mathcal{B}_{v_e^-}(2r_k^{\mathrm{in}})}\mathbb{E}\Big[ \mathbbm{1}_{\{ \overline{\mathsf{A}}_+(\bar{z};r,r_{k} ) \Vert  \widehat{\mathsf{A}}_{-}(\bar{z}';2r,2r_{k'} )     \}} \\
  &\ \ \ \ \ \ \ \ \ \ \ \ \ \ \ \ \ \ \ \ \ \ \ \ \ \ \ \ \ \ \ \ \ \ \  \ \ \cdot \mathbb{P}\big(\widehat{\mathsf{A}}_{\mathrm{III}}^{\widehat{\mathcal{C}}_- }(v_e^-,w_-,z_5',z_6',z_7';k,k')  \mid \mathcal{F}_{\widehat{\mathcal{C}}_-}\big) \Big]\\
\overset{}{=} &r_k^{-d} \sum_{z_5'\in \partial \mathcal{B}_{v_e^{-}}(2dr_{k'+1}),z_6' \in \partial \mathcal{B}_{v_e^-}(2r_k^{\mathrm{out}}),z_7' \in \partial \mathcal{B}_{v_e^-}(2r_k^{\mathrm{in}})}\mathbb{P}\big( \overline{\mathsf{A}}_+(\bar{z};r,r_{k} ) \Vert  \widetilde{\mathsf{A}}_{-} \big),
 	\end{split}
 \end{equation}
 where $\widetilde{\mathsf{A}}_{-}:=\widehat{\mathsf{A}}_{-}(\bar{z}';2r,2r_{k'} )\cap \widehat{\mathsf{A}}_{\mathrm{III}}(v_e^-,w_-,z_5',z_6',z_7';k,k')$. Following the same arguments as in (\ref{413}), we have
 \begin{equation}\label{414}
 	\begin{split}
 		&\sum\nolimits_{z_3\in \partial \mathcal{B}_{v_e^{+}}(r_1),z_4\in \partial \mathcal{B}_{v_e^{+}}(\frac{r_k}{\Cref{const_boundary1}}) }\mathbb{P}\big( \overline{\mathsf{A}}_+(\bar{z};r,r_{k} ) \Vert  \widetilde{\mathsf{A}}_{-} \big) \\
 		\lesssim & r_{k'}^{-d} \sum_{z_3\in \partial \mathcal{B}_{v_e^{+}}(r_1),z_4\in \partial \mathcal{B}_{v_e^{+}}(\frac{r_k}{\Cref{const_boundary1}}),z_6\in \partial \mathcal{B}_{v_e^+}(r_{k'}^{\mathrm{out}}),z_7 \in \partial \mathcal{B}_{v_e^+}(r_{k'}^{\mathrm{in}})}\mathbb{P}\big( \widetilde{\mathsf{A}}_{+} \Vert  \widetilde{\mathsf{A}}_{-} \big).
 	\end{split}
 \end{equation}
 Here the event $\widetilde{\mathsf{A}}_{+}$ is defined by 
 \begin{equation}
 	\begin{split}
 		\widetilde{\mathsf{A}}_{+}:= &\mathsf{A}_{\mathrm{I}}(\hat{v}_e^{+},z_1;r)\cap \mathsf{A}_{\mathrm{I}}(\hat{v}_{e'}^{+},z_2;r) \cap \mathsf{A}_{\mathrm{III}}(v_e^{+},w_+,z_5;r_{k+1}) 
 		   \cap \widehat{\mathsf{A}}_{\mathrm{II}},
 		\end{split}
 \end{equation}
 where $\widehat{\mathsf{A}}_{\mathrm{II}}:=\big\{z_3\xleftrightarrow{(D)} z_7 , z_4\xleftrightarrow{(D)}z_6\big\}\cap \big\{ z_3 \xleftrightarrow{(D)} \partial B_{v_e^{+}}( \frac{r_1}{\Cref{const_boundary1} } ) \cup \partial B_{v_e^{+}}(\Cref{const_boundary1}r_{k'}^{\mathrm{in}}  )  \big\}^c  
	 \cap \big\{ z_4 \xleftrightarrow{(D)} \partial B_{v_e^{+}}(r_k ) \cup \partial B_{v_e^{+}}(\tfrac{r_{k'}^{\mathrm{out}}}{\Cref{const_boundary1}})  \big\}^c$. Combining (\ref{413}) and (\ref{414}), we obtain  
 \begin{equation}\label{new4.17}
 	\begin{split}
 	&	\sum_{\hat{z}\in \widehat{\Upsilon}_+(r,r
 		_k),\hat{z}'\in \widehat{\Upsilon}_-(2r,2r
 		_{k'})}  \mathbb{P}\big(\overline{\mathsf{A}}_+(z_1,z_2,\hat{z};r,r_{k} ) \Vert  \overline{\mathsf{A}}_-(z_1',z_2',\hat{z}';2r,2r_{k'} ) \big) \\
 		\lesssim &r_k^{-d} r_{k'}^{-d} \sum_{\widetilde{z}:=(z_3,z_4,z_5,z_6,z_7)\in \widetilde{\Upsilon}_+(k,k'),\widetilde{z}':=(z_3',z_4',z_5',z_6',z_7')\in \widetilde{\Upsilon}_-(k,k')} \mathbb{P}\big( \widetilde{\mathsf{A}}_{+} \Vert  \widetilde{\mathsf{A}}_{-} \big),
 	\end{split}
 \end{equation}
 where we denote $\widetilde{\Upsilon}_+(k,k'):=\widehat{\Upsilon}_+(r,r_{k})\times \partial \mathcal{B}_{v_e^+}(r_{k'}^{\mathrm{out}})\times \partial \mathcal{B}_{v_e^+}(r_{k'}^{\mathrm{in}})$ and $\widetilde{\Upsilon}_-(2r,2r
 		_{k'}):=\widehat{\Upsilon}_-(2r,2r_{k'})\times \partial \mathcal{B}_{v_e^-}(2r_k^{\mathrm{out}}) \times \partial \mathcal{B}_{v_e^-}(2r_k^{\mathrm{in}})$. See Figure \ref{fig3} for an illustration of $\big\{\widetilde{\mathsf{A}}_{+} \Vert  \widetilde{\mathsf{A}}_{-}\big\}$.


 \begin{figure}[h]
	\centering
	\includegraphics[width=0.8\textwidth]{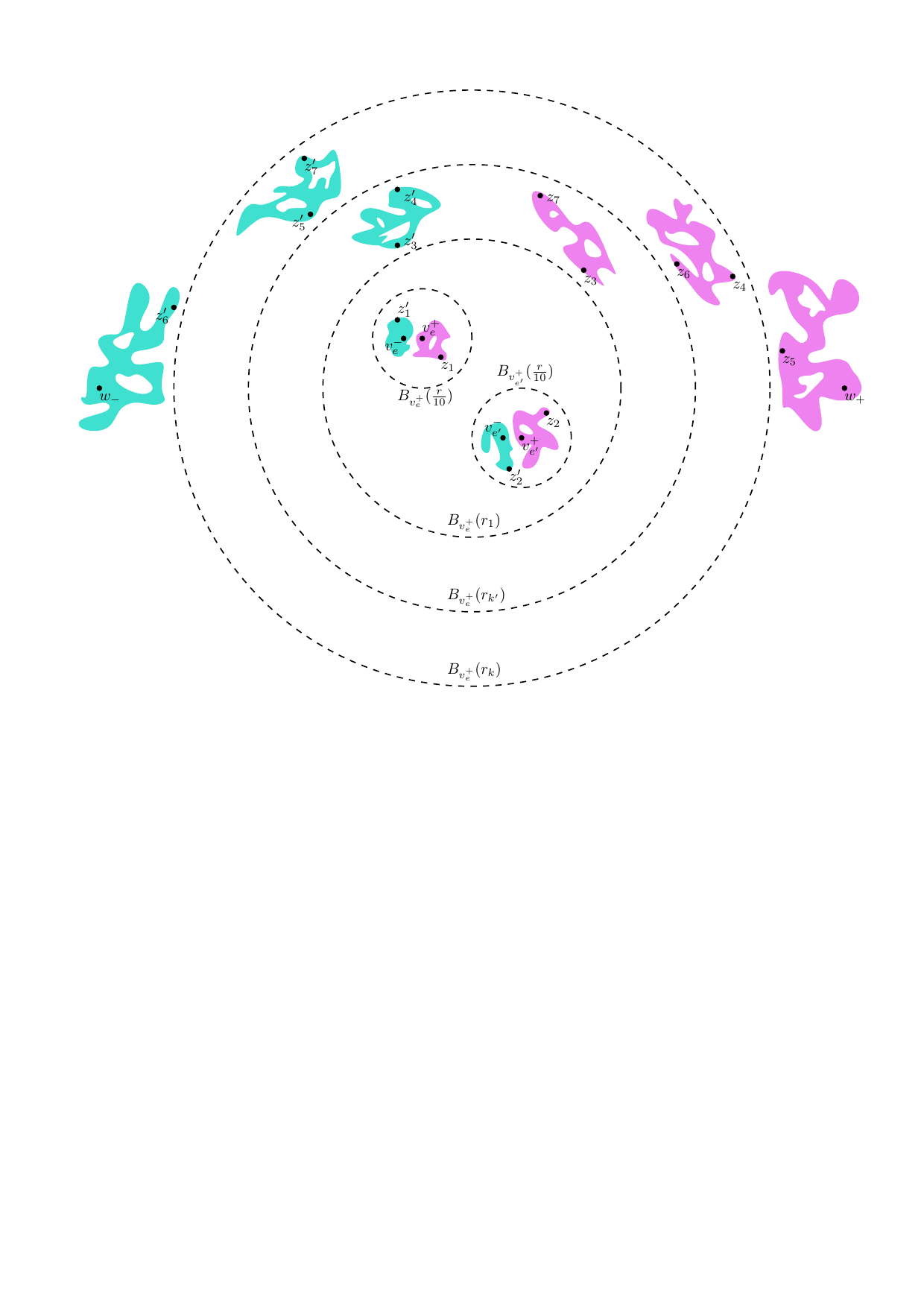}
	\caption{In this illustration, each colored region is a loop cluster. These clusters are arranged in disjoint balls and annuli in pairs such that each pair of clusters certifies a four-point event in $\{\mathsf{C}_j\}_{1\le j\le 5}$. In particular, when $k=k_\star$, as verified in the companion paper \cite{inpreparation_twoarm}, imposing $\mathcal{C}_{w_-}$ (resp. $\mathcal{C}_{w_+}$) as an absorbing boundary typically does not alter the order of the probability of $z_5 \xleftrightarrow{} w_{+}$ (resp. $z_6'\xleftrightarrow{} w_{-}$), i.e., $N^{2-d}$. In light of this, we substitute the four-point event $\{z_5\xleftrightarrow{} w_{+}  \Vert z_6'\xleftrightarrow{} w_{-}\}$ with the event $\{\mathbf{A}_\star^{z_5}=\mathbf{A}_\star^{z_6'}= 1\}$ to simplify the analysis, while changing the probability by only a constant factor. }\label{fig3}
\end{figure}

 		By (\ref{2.14}), on $\big\{\widetilde{\mathsf{A}}_{+} \Vert  \widetilde{\mathsf{A}}_{-}\big\}$, the events $\{	\mathsf{C}_{j}\}_{1\le j\le 5}$ (as defined below) occur.
 \begin{itemize}

 	\item  $	\mathsf{C}_{1}:= \big\{ \hat{v}_e^+\xleftrightarrow{(\partial B_{v_e^+}(\frac{r}{100}))} z_1 \Vert \hat{v}_e^-\xleftrightarrow{(\partial B_{v_e^+}(\frac{r}{100}))} z_1' \big\}$;

 	\item $\mathsf{C}_{2}:= \big\{ \hat{v}_{e'}^-\xleftrightarrow{(\partial B_{v_{e'}^+}(\frac{r}{100}))} z_2 \Vert \hat{v}_{e'}^-\xleftrightarrow{(\partial B_{v_{e'}^+}(\frac{r}{100}))} z_2' \big\}$;

 	\item $	\mathsf{C}_{3}:= \big\{ z_3\xleftrightarrow{(\partial B_{v_e^+}(\frac{r_1}{\Cref{const_boundary1}})\cup \partial B_{v_e^+}(10\Cref{const_boundary1}r_{k'}^{\mathrm{in}}))} z_7 \Vert z_3'\xleftrightarrow{(\partial B_{v_e^+}(\frac{r_1}{\Cref{const_boundary1}})\cup \partial B_{v_e^+}(\Cref{const_boundary1}r_{k'}^{\mathrm{in}}))} z_4' \big\}$;

 	\item $\mathsf{C}_{4}:= \big\{ z_6\xleftrightarrow{(\partial B_{v_e^+}(\frac{r_{k'}^{\mathrm{out}}}{\Cref{const_boundary1}})\cup \partial B_{v_e^+}(2\Cref{const_boundary1} r_k^{\mathrm{in}}))} z_4 \Vert z_5'\xleftrightarrow{(\partial B_{v_e^+}(\frac{r_{k'}^{\mathrm{out}}}{\Cref{const_boundary1}})\cup \partial B_{v_e^+}(2\Cref{const_boundary1} r_k^{\mathrm{in}}))} z_7' \big\}$;

 	\item When $k<k_{\star}$, we define $\mathsf{C}_{5}:= \big\{ z_5 \xleftrightarrow{(\partial B_{v_e^+}(\frac{r_{k}^{\mathrm{out}}}{\Cref{const_boundary1}}))} w_+ \Vert z_6'\xleftrightarrow{(\partial B_{v_e^+}(\frac{r_{k}^{\mathrm{out}}}{\Cref{const_boundary1}}))} w_- \big\}$, whereas for $k=k_{\star}$, we define $\mathsf{C}_{5}:= \big\{ \mathbf{A}_\star^{z_5}=\mathbf{A}_\star^{z_6'}= 1  \big\}$.

 \end{itemize} 


 	
 	Similar to $\{\mathsf{F}_{j}\}_{1\le j\le 4}$, the sub-events $\{\mathsf{C}_j\}_{1\le j\le 5}$ are independent. Moreover, the estimate (\ref{four_point_use}) and the fact that $\mathbb{P}(\mathbf{A}_\star^{z})=N^{2-d}$ for all $z\in \mathbb{Z}^d$ together imply  
 	\begin{equation}
 		\mathbb{P}(\mathsf{C}_1) \vee \mathbb{P}(\mathsf{C}_2) \lesssim r^{-\frac{3d}{2}+1}, 
 	\end{equation} 		
 	 \begin{equation}
 			\mathbb{P}(\mathsf{C}_3) \cdot  \mathbb{P}(\mathsf{C}_4) \lesssim r^{3-\frac{d}{2}} r_{k'}^{-\frac{3d}{2}+1}\cdot r_{k'}^{3-\frac{d}{2}} r_k^{-\frac{3d}{2}+1}= r^{3-\frac{d}{2}} r_{k'}^{-2d+4}  r_k^{-\frac{3d}{2}+1}, 
 		\end{equation}
 		\begin{equation}\label{addto427}
 			\mathbb{P}(\mathsf{C}_5)\lesssim r_k^{3-\frac{d}{2}}N^{-\frac{3d}{2}+1}  \cdot \mathbbm{1}_{k<k_\star } + N^{4-2d} \cdot  \mathbbm{1}_{k=k_\star } \overset{(r_{k_\star}\asymp N)}{\asymp} =r_k^{3-\frac{d}{2}} N^{-\frac{3d}{2}+1}. 
 		\end{equation}
 		 	Consequently, we obtain 
\begin{equation}\label{426}
	\begin{split}
		\mathbb{P}\big( \widetilde{\mathsf{A}}_{+} \Vert  \widetilde{\mathsf{A}}_{-} \big) \le \prod\nolimits_{1\le j\le 5} \mathbb{P}(\mathsf{C}_j)\lesssim r^{-\frac{7d}{2}+5} r_{k'}^{-2d+4} r_{k}^{-2d+4}N^{-\frac{3d}{2}+1}.
	\end{split}
\end{equation}
 	 Inserting (\ref{426}) into (\ref{new4.17}), we also derive the claim (\ref{newadd410}) for $k>k'$: 
 		\begin{equation}
 			\begin{split}
 			&	\sum_{\hat{z}\in \widehat{\Upsilon}_+(r,r
 		_k),\hat{z}'\in \widehat{\Upsilon}_-(2r,2r
 		_{k'})}  \mathbb{P}\big(\overline{\mathsf{A}}_+(z_1,z_2,\hat{z};r,r_{k} ) \Vert  \overline{\mathsf{A}}_-(z_1',z_2',\hat{z}';2r,2r_{k'} ) \big)\\
 		\lesssim &r_k^{-d} r_{k'}^{-d} \cdot  |\widehat{\Upsilon}_+(r,r
 		_k)| \cdot |\widehat{\Upsilon}_-(2r,2r
 		_{k'})|  \cdot r^{-\frac{7d}{2}+5} r_{k'}^{-2d+4} r_{k}^{-2d+4}N^{-\frac{3d}{2}+1}\\
 		\lesssim & r^{-\frac{3d}{2}+3}r_{k}^{d}r_{k'}^{d}N^{-\frac{3d}{2}+1}.
 			\end{split}
 		\end{equation}


 		In conclusion, we have confirmed Proposition \ref{lemma_four_arms} assuming Lemma \ref{lemma41}.  \qed

\subsection{Proof of Lemma \ref{lemma41}}\label{subsection_proof_lemma4.1}
 Using the switching identity, we have (recalling the notations $\widecheck{\mathbb{P}}^D_{\cdot}$, $\mathfrak{p}^D_{\cdot}$, $\mathcal{P}^{(i)}$ and $\widecheck{\mathcal{C}}$ below Lemma \ref{lemma_switching})
  \begin{equation}\label{428}
  \begin{split}
  	 &   	\mathbb{P}\big( \big\{ w\xleftrightarrow{(D)} v_1,v_2  \big\} \cap \big\{  \widehat{\mathcal{L}}_{1/2}^{D,v_1},\widehat{\mathcal{L}}_{1/2}^{D,v_2}  \in [0,\Cref{const_bar_lowd_four_point1}] \big\} \big)\\
  	  	=&  \int_{0 \le a\le b\le \Cref{const_bar_lowd_four_point1}}  \widecheck{\mathbb{P}}^D_{v_1 \leftrightarrow v_2 ,a,b}\big(   w  \in \widecheck{\mathcal{C}} \big)  \mathfrak{p}^D_{v_1 \leftrightarrow v_2 ,a,b} \mathrm{d}a  \mathrm{d}b. 
  \end{split}
  \end{equation}
  For each integer $k\in [1,k_\star-1]$, we define $\mathsf{G}_k$ as the event that $\cup (\sum_{i\in \{2,3,4\}}\mathcal{P}^{(i)})$ intersects $\partial B_{v_1}(r_{k})$ and is contained in $\widetilde{B}_{v_1}(r_{k+1})$. In addition, we denote the events 
  \begin{equation}
  	\mathsf{G}_0:= \big \{\cup \big(\sum\nolimits_{i\in \{2,3,4\}}\mathcal{P}^{(i)} \big)\subset \partial \widetilde{B}_{v_1}(r_1) \big\} \ \text{and}\ \mathsf{G}_{k_\star}:=\big( \cup_{0\le k\le k_\star-1} \mathsf{G}_k\big)^c.  
  \end{equation} 
  Therefore, it follows from (\ref{428}) that 
   \begin{equation}
   	\begin{split}
   		&	\mathbb{P}\big( \big\{ w\xleftrightarrow{(D)} v_1,v_2  \big\} \cap \big\{  \widehat{\mathcal{L}}_{1/2}^{D,v_1},\widehat{\mathcal{L}}_{1/2}^{D,v_2}  \in [0,\Cref{const_bar_lowd_four_point1}] \big\} \big) \\
   			 \lesssim & \mathbb{P}(v_1\xleftrightarrow{(D)} v_2 ) \cdot  \sum\nolimits_{0\le k\le k_\star} \sup_{0 \le a\le b\le \Cref{const_bar_lowd_four_point1}}  \widecheck{\mathbb{P}}^D_{v_1 \leftrightarrow v_2 ,a,b}( w  \in \widecheck{\mathcal{C}} , \mathsf{G}_k ). 
   	\end{split}
   \end{equation}
   For convenience, we denote the $k$-th term on the right-hand side of (\ref{ineq_lemma41}) by 
   \begin{equation}
   	\begin{split}
   		\mathbf{T}_k :=r^{-d-1} r_k^{-\frac{3d}{2}+1}\mathbf{Q}^{D}_{\mathrm{I}}(v_1',v_2';r)   	\mathbf{Q}_{\mathrm{II}}^D(v_1;r_1,r_k) 	\mathbf{Q}_{\mathrm{III}}^D(v_1,w;r_{k+1}). 
   	\end{split}
   \end{equation}
   Thus, it suffices to prove that for any $0\le k\le k_\star$ and $0 \le a\le b\le \Cref{const_bar_lowd_four_point1}$, 
 \begin{equation}\label{4.32}
 \begin{split}
 	 	\mathbb{P}(v_1\xleftrightarrow{(D)} v_2 ) \cdot    \widecheck{\mathbb{P}}^D_{v_1 \leftrightarrow v_2 ,a,b}( w  \in \widecheck{\mathcal{C}} , \mathsf{G}_k )\lesssim  \mathbf{T}_{k\vee 2}. 
 \end{split}
 \end{equation}


 \textbf{When $k\in \{0,1\}$.} Note that Corollary \ref{lemma_cut_two_points} directly implies    
  \begin{equation}\label{4.34}
  	\mathbb{P}(v_1\xleftrightarrow{(D)} v_2 )  \lesssim r^{-d}\mathbf{Q}_{\mathrm{I}}^{D}(v_1,v_2;r). 
  \end{equation} 
  Recall from the conditions that the endpoint $x_i$ of $e_i$ is contained in $D$, and that the points $x_i$ and $v_i'$ lie on opposite sides of $v_i$ along the interval $I_{e_i}$. As a result, any subset of $\widetilde{\mathbb{Z}}^d\setminus D$ connecting $v_i$ and $z\in \partial \mathcal{B}_{v_i}(\frac{r}{\Cref{const_boundary1}})$ must hit $v_i'$, which yields $\mathbf{Q}_{\mathrm{I}}^{D}(v_1,v_2;r)\le \mathbf{Q}_{\mathrm{I}}^{D}(v_1',v_2';r)$. Combined with (\ref{4.34}), it implies that  
   \begin{equation}\label{newv_4.34}
   	\mathbb{P}(v_1\xleftrightarrow{(D)} v_2 )  \lesssim r^{-d}\mathbf{Q}_{\mathrm{I}}^{D}(v_1',v_2';r).
   \end{equation}
 For $k\in \{0,1\}$, when $\mathsf{G}_k$ occurs, the union $\cup (\sum_{i\in \{2,3,4\}}\mathcal{P}^{(i)})$ is contained in $\widetilde{B}_{v_1}(r_2)$. Thus, if $w\in \widecheck{\mathcal{C}}$ also occurs, the cluster of $\cup \mathcal{P}^{(1)}$ containing $w$ must intersect $\widetilde{B}_{v_1}(r_3)$. Combined with $\mathcal{P}^{(1)}\le \widetilde{\mathcal{L}}_{1/2}^{D}$, it yields that      \begin{equation}
  	\begin{split}
  		 \widecheck{\mathbb{P}}^D_{v_1 \leftrightarrow v_2 ,a,b}( w  \in \widecheck{\mathcal{C}} , \mathsf{G}_k ) \le   \mathbb{P}\big(w\xleftrightarrow{(D)}  \widetilde{B}_{v_1}(r_{3}) \big).  
  	\end{split}
  \end{equation}
 In addition, Corollary \ref{lemma_point_to_boundary} implies that for any $0\le k\le k_\star$, 
  \begin{equation}\label{new4.36}
  	\begin{split}
  		 \mathbb{P}\big(w\xleftrightarrow{(D)}  \widetilde{B}_{v_1}(r_{k+1}) \big)   \lesssim r_k^{-\frac{d}{2}} \mathbf{Q}_{\mathrm{III}}^{D}(v_1,w;r_{k+1}).
  	\end{split}
  \end{equation}
  Meanwhile, it follows from (\ref{211}) and the definition of $\mathbf{Q}_{\mathrm{II}}^D$ that 
  \begin{equation}\label{revise_new_433}
  	\mathbf{Q}_{\mathrm{II}}^D(v_1;r_1,r_2)\asymp   r^{d}.  
  \end{equation}
  Combining (\ref{newv_4.34})-(\ref{revise_new_433}), we derive (\ref{4.32}) for $k\in \{0,1\}$.

   \textbf{When $2\le k\le k_\star-1$.}  Similarly, when $\{w \in \widecheck{\mathcal{C}} , \mathsf{G}_k\}$ occurs, one has $w\xleftrightarrow{\cup \mathcal{P}^{(1)}}  \widetilde{B}_{v_1}(r_{k+1})$, which implies that 
   \begin{equation}\label{new413}
    	\begin{split}
    		&\widecheck{\mathbb{P}}^D_{v_1 \leftrightarrow v_2 ,a,b}\big(   w \in \widecheck{\mathcal{C}},\mathsf{G}_k \big)\\
    		\le  &	\widecheck{\mathbb{P}}^D_{v_1 \leftrightarrow v_2 ,a,b}\Big( \bigcup_{i\in \{2,3,4\}} \big\{ (\cup \mathcal{P}^{(i)})\cap \partial B_{v_1}(r_k)\neq \emptyset \big\}  \Big) \cdot \mathbb{P}\big(w\xleftrightarrow{(D)}  \widetilde{B}_{v_1}(r_{k+1}) \big)\\
    		\le & \sum\nolimits_{i\in \{2,3,4\}}  \widecheck{\mathbb{P}}^D_{v_1 \leftrightarrow v_2 ,a,b}\big(  (\cup \mathcal{P}^{(i)})\cap \partial B_{v_1}(r_k)\neq \emptyset  \big) \cdot \mathbb{P}\big(w \xleftrightarrow{(D)}  \widetilde{B}_{v_1}(r_{k+1}) \big).
    	\end{split}
    \end{equation}
    For each $i\in \{2,3\}$, note that 
    \begin{equation}\label{pool442}
    	\begin{split}
    		   	&	\widecheck{\mathbb{P}}^D_{v_1 \leftrightarrow v_2 ,a,b}\big(  (\cup \mathcal{P}^{(i)})\cap \partial B_{v_1}(r_k)\neq \emptyset  \big) \\
   		\lesssim &	 \mathbf{e}^D_{v_{i-1} , v_{i-1}}\big( \big\{ \widetilde{\eta}: \mathrm{ran}(\widetilde{\eta} )\cap \partial B_{v_1}(r_k)\neq \emptyset \big\} \big). 
    	\end{split}
    \end{equation}
    Moreover, applying Lemma \ref{lemma_for_new62} (where we drop both two terms in the third line of (\ref{addto242})), we derive that  
    \begin{equation}
    	\begin{split}
    		 & \mathbf{e}^D_{v_{i-1} , v_{i-1}}\big( \big\{ \widetilde{\eta}: \mathrm{ran}(\widetilde{\eta} )\cap \partial B_{v_1}(r_k)\neq \emptyset \big\} \big) 
    		\overset{}{\lesssim}  
    		    r^{1-d}r_k^{1-d}  \mathbf{Q}_{\mathrm{II}}^D(v_1;r_1,r_k). 
    	\end{split}
    \end{equation}
    Combined with (\ref{pool442}), it implies that for $i\in \{2,3\}$, 
     \begin{equation}\label{new439}
   	\begin{split}
 \widecheck{\mathbb{P}}^D_{v_1 \leftrightarrow v_2 ,a,b}\big(  (\cup \mathcal{P}^{(i)})\cap \partial B_{v_1}(r_k)\neq \emptyset  \big) \lesssim  r^{1-d}r_k^{1-d}  \mathbf{Q}_{\mathrm{II}}^D(v_1;r_1,r_k). 
   		   	\end{split}
   \end{equation} 
   

   For $i=4$, according to (\ref{odd_formula}), $\widecheck{\mathbb{P}}^D_{v_1 \leftrightarrow v_2 ,a,b}\big(  (\cup \mathcal{P}^{(4)})\cap \partial B_{v_1}(r_k)\neq \emptyset  \big)$ is at most of the same order as the probability of an excursion hitting $\partial B_{v_1}(r_k)$ under the measure $\mathbf{e}^D_{v_1,v_2}$. Combined with the fact that the total mass of $\mathbf{e}^D_{v_1,v_2}$ equals $\mathbb{K}_{D\cup \{v_1,v_2\}}(v_1,v_2)$, it yields that 
   \begin{equation}\label{newfor439}
   	\begin{split}
   		&\widecheck{\mathbb{P}}^D_{v_1 \leftrightarrow v_2 ,a,b}\big(  (\cup \mathcal{P}^{(4)})\cap \partial B_{v_1}(r_k)\neq \emptyset  \big) \\
   		\lesssim& \big[ \mathbb{K}_{D\cup \{v_1,v_2\}}(v_1,v_2)\big]^{-1}\cdot   \mathbf{e}^D_{v_{1} , v_{2}}\big( \big\{ \widetilde{\eta}: \mathrm{ran}(\widetilde{\eta} )\cap \partial B_{v_1}(r_k)\neq \emptyset \big\} \big) \\
   		\overset{(\text{Lemma}\ \ref{lemma_for_new62})}{\lesssim } &\big[ \mathbb{K}_{D\cup \{v_1,v_2\}}(v_1,v_2)\big]^{-1}\cdot   r^{-1-d}r_k^{1-d}\mathbf{Q}_{\mathrm{I}}^{D}(v_1',v_2';r)  \mathbf{Q}_{\mathrm{II}}^D(v_1;r_1,r_k). 
   	\end{split}
   \end{equation}
Here in the application of Lemma \ref{lemma_for_new62}, we used the case $\chi>10c^{-1}$ and dropped the second term in the third line of (\ref{addto242}). Meanwhile, by (\ref{for2.16}) and (\ref{newv_4.34}), 
   \begin{equation}\label{newfor440}
   	\begin{split}
   		\mathbb{P}(v_1\xleftrightarrow{(D)} v_2 ) \lesssim  \big[\mathbb{K}_{D\cup \{v_1,v_2\}}(v_1,v_2)\big] \land  \big[r^{-d} \mathbf{Q}_{\mathrm{I}}^{D}(v_1',v_2';r) \big]. 
   	\end{split}
   \end{equation}
   Combining (\ref{new439}), (\ref{newfor439}) and (\ref{newfor440}), we obtain that 
   \begin{equation}\label{newfor441}
   	\begin{split}
   		&\sum\nolimits_{i\in \{2,3,4\}}\mathbb{P}(v_1\xleftrightarrow{(D)} v_2 ) \cdot \widecheck{\mathbb{P}}^D_{v_1 \leftrightarrow v_2 ,a,b}\big(  (\cup \mathcal{P}^{(i)})\cap \partial B_{v_1}(r_k)\neq \emptyset  \big) \\
   		\lesssim  &r^{-1-d}r_k^{1-d}\mathbf{Q}_{\mathrm{I}}^{D}(v_1',v_2';r)  \mathbf{Q}_{\mathrm{II}}^D(v_1;r_1,r_k). 
   	\end{split}
   \end{equation}
   Putting (\ref{new4.36}), (\ref{new413}) and (\ref{newfor441}) together, we derive (\ref{4.32}) for $2 \le k\le k_\star-1$.

  \textbf{When $k=k_\star$.} Since $\mathbf{Q}_{\mathrm{III}}^D(v_1,w;r_{k_\star+1})\asymp R$, it suffices to prove 
  \begin{equation}\label{good444}
  \begin{split}
  	 	&\mathbb{P}(v_1\xleftrightarrow{(D)} v_2 ) \cdot   \widecheck{\mathbb{P}}^D_{v_1 \leftrightarrow v_2 ,a,b}( w  \in \widecheck{\mathcal{C}} , \mathsf{G}_{k_\star} )\\
  	 	  	\lesssim & r^{-d-1} R^{-\frac{3d}{2}+2}\mathbf{Q}^{D}_{\mathrm{I}}(v_1',v_2';r)   	\mathbf{Q}_{\mathrm{II}}^D(v_1;r_1,r_{k_{\star}}).
  \end{split}
  	  \end{equation}
  We denote $l_\dagger:= \min\big\{ l\in \mathbb{N}:  2^{l+3}\ge R\big\}$. For each $0\le l\le l_\dagger-1$, we define the box $\mathfrak{B}_l:=\widetilde{B}_w(2^l)$. We also denote $\mathfrak{B}_{-1}:=\emptyset$ and $\mathfrak{B}_{l_\dagger}:=[\widetilde{B}_{v_1}(r_{k_\star})]^c$. For each $i\in \{2,3,4\}$ and $0\le l\le l_\dagger$, we define the event 
  \begin{equation}
  	\begin{split}
  		\mathsf{G}_\star^{i,l}:= \big\{ \big( \cup \mathcal{P}^{(i)} \big)\cap \mathfrak{B}_{l}\neq \emptyset, \big( \cup \mathcal{P}^{(i)} \big)\cap \mathfrak{B}_{l-1}= \emptyset \big\}.  
  	\end{split}
  \end{equation}
  Since $\mathsf{G}_{k_\star}\subset \cup_{i\in \{2,3,4\},0\le l\le l_\dagger}\mathsf{G}_\star^{i,l}$, it follows from the union bound that 
  \begin{equation}\label{443}
  	\begin{split}
    \widecheck{\mathbb{P}}^D_{v_1 \leftrightarrow v_2 ,a,b}\big(   w \in \widecheck{\mathcal{C}},\mathsf{G}_{k_\star}  \big)  
  		\lesssim \sum\nolimits_{i\in \{2,3,4\}} \sum\nolimits_{0\le l\le l_\dagger} \widecheck{\mathbb{P}}^D_{v_1 \leftrightarrow v_2 ,a,b}\big(   w \in \widecheck{\mathcal{C}},\mathsf{G}_\star^{i,l} \big). 
  	\end{split}
  \end{equation}
  Moreover, for $0\le l\le l_\dagger$, on the event $\{ w \in \widecheck{\mathcal{C}}\}\cap \mathsf{G}_{k_\star}$, the cluster of $\cup \mathcal{P}^{(1)}$ containing $w$ must intersect $\widetilde{\partial }\mathfrak{B}_{l-1}$ (where we set $\widetilde{\partial }\emptyset:= \widetilde{\mathbb{Z}}^d$ for completeness). Thus, 
  \begin{equation}\label{444}
  	\begin{split}
  		&\widecheck{\mathbb{P}}^D_{v_1 \leftrightarrow v_2 ,a,b}\big(   w \in \widecheck{\mathcal{C}},\mathsf{G}_\star^{i,l}   \big) \\
  		 \le&  \widecheck{\mathbb{P}}^D_{v_1 \leftrightarrow v_2 ,a,b}\big(  ( \cup \mathcal{P}^{(i)}  )\cap \mathfrak{B}_{l}\neq \emptyset \big)\cdot  \mathbb{P}\big( w\xleftrightarrow{(D)} \widetilde{\partial }\mathfrak{B}_{l-1}  \big). 
  	\end{split}
  \end{equation}
 Since $\widetilde{\mathcal{L}}_{1/2}^D\le \widetilde{\mathcal{L}}_{1/2}$, it follows from (\ref{one_arm_low}) that 
  \begin{equation}\label{addnew445}
  	\mathbb{P}\big( w\xleftrightarrow{(D)} \widetilde{\partial }\mathfrak{B}_{l-1}  \big) \lesssim \theta_d(2^l)\lesssim 2^{l(-\frac{d}{2}+1)}. 
  \end{equation}

  Next, we estimate the probability $\widecheck{\mathbb{P}}^D_{v_1 \leftrightarrow v_2 ,a,b}\big( ( \cup \mathcal{P}^{(i)} )\cap \mathfrak{B}_{l}\neq \emptyset \big)$ for $i\in \{2,3,4\}$. When $i\in \{2,3\}$, similar to (\ref{pool442}), one has 
  \begin{equation}\label{pool453}
  	\begin{split}
  		   \widecheck{\mathbb{P}}^D_{v_1 \leftrightarrow v_2 ,a,b}\big( ( \cup \mathcal{P}^{(i)} )\cap \mathfrak{B}_{l}\neq \emptyset \big)     
  			\lesssim   	 \mathbf{e}^D_{v_{i-1} , v_{i-1}}\big( \big\{ \widetilde{\eta}: \mathrm{ran}(\widetilde{\eta} )\cap \mathfrak{B}_{l} \neq \emptyset \big\} \big). 
  	\end{split}
  \end{equation}
  Moreover, by applying Lemma \ref{lemma_for_new62}, we have  
  \begin{equation}\label{pool454}
  	\begin{split}
  		\mathbf{e}^D_{v_{i-1} , v_{i-1}}\big( \big\{ \widetilde{\eta}: \mathrm{ran}(\widetilde{\eta} )\cap \mathfrak{B}_{l} \neq \emptyset \big\} \big) \lesssim  r^{1-d}R^{3-2d}   \mathbf{Q}_{\mathrm{II}}^D(v_1;r_1,r_{k_\star})\cdot 2^{l(d-2)}.  	\end{split}
  \end{equation}
  Here we dropped the first term in the third line of (\ref{addto242}), and we bound the second term using the fact that 
  \begin{equation}\label{newpool455}
  	\widetilde{\mathbb{P}}_z\big(\tau_{\mathfrak{B}_l} <\infty \big)  \lesssim \big(2^l/R  \big)^{d-2}, \ \ \forall z\in B(\tfrac{R}{2}).
  \end{equation}
Combining (\ref{pool453}) and (\ref{pool454}), we obtain that 
  \begin{equation}\label{addnew446}
  	\begin{split}
  		 \widecheck{\mathbb{P}}^D_{v_1 \leftrightarrow v_2 ,a,b}\big( ( \cup \mathcal{P}^{(i)} )\cap \mathfrak{B}_{l}\neq \emptyset \big)
  		\lesssim   r^{1-d}R^{3-2d}   \mathbf{Q}_{\mathrm{II}}^D(v_1;r_1,r_{k_\star})\cdot 2^{l(d-2)}.
  	\end{split}
  \end{equation}



   When $i=4$, as in (\ref{newfor439}), we have   
   \begin{equation}\label{pool457}
   	\begin{split}
   		&\widecheck{\mathbb{P}}^D_{v_1 \leftrightarrow v_2 ,a,b}\big( ( \cup \mathcal{P}^{(4)} )\cap \mathfrak{B}_{l}\neq \emptyset \big)\\
   	\lesssim &\big[ \mathbb{K}_{D\cup \{v_1,v_2\}}(v_1,v_2)\big]^{-1}\cdot   \mathbf{e}^D_{v_{1} , v_{2}}\big( \big\{ \widetilde{\eta}: \mathrm{ran}(\widetilde{\eta} )\cap \mathfrak{B}_{l}\neq \emptyset \big\} \big). 
   	\end{split}
   \end{equation}
   Moreover, Lemma \ref{lemma_for_new62} implies that 
   \begin{equation}\label{pool458}
   	\begin{split}
   		& \mathbf{e}^D_{v_{1} , v_{2}}\big( \big\{ \widetilde{\eta}: \mathrm{ran}(\widetilde{\eta} )\cap \mathfrak{B}_{l}\neq \emptyset \big\} \big) \\
   		 \lesssim & r^{-d-1}R^{3-2d}  \mathbf{Q}_{\mathrm{I}}^{D}(v_1',v_2';r)  \mathbf{Q}_{\mathrm{II}}^D(v_1;r_1,r_{k_\star})  \cdot 2^{l(d-2)}. 
   	\end{split}
   \end{equation}
   Here we used the case $\chi=r>10c^{-1}$, and bounded the second term in the third line of (\ref{addto242}) through (\ref{newpool455}). Plugging (\ref{pool458}) into (\ref{pool457}), one has  
   \begin{equation}\label{addnew448}
   	\begin{split}
   	&\widecheck{\mathbb{P}}^D_{v_1 \leftrightarrow v_2 ,a,b}\big( ( \cup \mathcal{P}^{(4)} )\cap \mathfrak{B}_{l}\neq \emptyset \big)\\ 
   		\lesssim &\big[ \mathbb{K}_{D\cup \{v_1,v_2\}}(v_1,v_2)\big]^{-1}r^{-d-1}R^{3-2d}  \mathbf{Q}_{\mathrm{I}}^{D}(v_1',v_2';r)  \mathbf{Q}_{\mathrm{II}}^D(v_1;r_1,r_{k_\star})  \cdot 2^{l(d-2)}.
   	\end{split}
   \end{equation}
   Putting (\ref{newfor440}), (\ref{addnew445}), (\ref{addnew446}) and (\ref{addnew448}) together, we obtain that 
   \begin{equation}
   \begin{split}
  &  	\mathbb{P}(v_1\xleftrightarrow{(D)} v_2 ) \cdot \sum_{i\in \{2,3,4\}} \sum_{0\le l\le l_\dagger} \widecheck{\mathbb{P}}^D_{v_1 \leftrightarrow v_2 ,a,b}\big( ( \cup \mathcal{P}^{(i)} )\cap \mathfrak{B}_{l}\neq \emptyset \big)\mathbb{P}\big( w\xleftrightarrow{(D)} \widetilde{\partial }\mathfrak{B}_{l-1}  \big) \\
   	\lesssim & r^{-d-1}R^{3-2d}  \mathbf{Q}_{\mathrm{I}}^{D}(v_1',v_2';r)  \mathbf{Q}_{\mathrm{II}}^D(v_1;r_1,r_{k_\star})\cdot \sum\nolimits_{0\le l\le l_\dagger}  2^{l(\frac{d}{2}-1)}\\
   	\lesssim & r^{-d-1}R^{-\frac{3d}{2}+2}  \mathbf{Q}_{\mathrm{I}}^{D}(v_1',v_2';r)  \mathbf{Q}_{\mathrm{II}}^D(v_1;r_1,r_{k_\star}). 
   	   \end{split}
   \end{equation}
 Combined with (\ref{443}) and (\ref{444}), it yields (\ref{good444}).

To sum up, we have verified Lemma \ref{lemma41}, thereby completing the proof of  Proposition \ref{lemma_four_arms}.  \qed

 \section{Typical number of pivotal loops at scale $1$}\label{section_pivotal}

 In this section, we aim to prove Theorem \ref{thm_pivotal_loop}.

 \subsection{Proof of Theorem \ref{thm_pivotal_loop}: upper bound}\label{subsection5.1}

 To achieve this, we first estimate the first moment of $\widetilde{\mathbf{Piv}}_N$ as follows. Recall that for each $e\in \mathbb{L}^d$, we denote the two trisection points of $I_e$ by $v_e^+$ and $v_e^-$. In addition, we denote the point measure 
 \begin{equation}\label{51}
 	\mathfrak{L}_e:=  \widetilde{\mathcal{L}}_{1/2}\cdot \mathbbm{1}_{\mathrm{ran}(\widetilde{\ell})\subset I_e,\{v_e^+,v_e^- \}\subset \mathrm{ran}(\widetilde{\ell}) }. 
 \end{equation}
 For any edge $e\in \mathbb{L}^d$ and event $\mathsf{A}$, we denote by $\mathsf{L}_e(\mathsf{A})$ the event that there exists a loop $\widetilde{\ell}\in 	\mathfrak{L}_e$ which is pivotal for $\mathsf{A}$. In addition, we define the set 
 \begin{equation}
 	\widetilde{\mathbf{Piv}}(\mathsf{A}):= \big\{ e\in \mathbb{L}^d: \mathsf{L}_e(\mathsf{A})\ \text{occurs} \big\}. 
 \end{equation}
 Recall that we denote $\widetilde{\mathbf{Piv}}_N=\widetilde{\mathbf{Piv}}(\bm{0}\xleftrightarrow{} \partial B(N))$.

 \begin{lemma}\label{lemma_first_moment}
 	For any $d\ge 3$ with $d\neq 6$, and any $N\ge 1$, 
 	 \begin{equation}\label{52}
 	\mathbb{E}\big[ \big|\widetilde{\mathbf{Piv}}_N \big| \mid \bm{0}\xleftrightarrow{} \partial B(N) \big]  \lesssim   N^{(\frac{d}{2}-1)\boxdot 2}. 
 \end{equation}
 \end{lemma}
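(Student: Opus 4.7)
The strategy is a standard first-moment computation: write $|\widetilde{\mathbf{Piv}}_N| = \sum_{e \in \mathbb{L}^d} \mathbbm{1}_{\mathsf{L}_e}$ and bound each summand by a two-arm probability, and then sum. The key input will be the sharp heterochromatic two-arm estimate from the companion paper \cite{inpreparation_twoarm}.

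First, I would handle the Poisson-process structure. By the thinning property, the sub-measure $\mathfrak{L}_e$ defined in (\ref{51}) is independent of $\widetilde{\mathcal{L}}_{1/2} \setminus \mathfrak{L}_e$, and $\mathbb{P}(\mathfrak{L}_e \ne \emptyset)$ is bounded uniformly away from $0$ and $1$ (this is a constant depending only on $d$, given by the total mass of $\widetilde{\mu}$ on loops inside $I_e$ touching both trisection points). Now, every loop in $\mathfrak{L}_e$ bridges $v_e^+$ to $v_e^-$; hence if some $\widetilde{\ell} \in \mathfrak{L}_e$ is pivotal for $\{\bm{0} \xleftrightarrow{} \partial B(N)\}$, then in the reduced soup $\widetilde{\mathcal{L}}_{1/2} \setminus \mathfrak{L}_e$ the clusters of $v_e^+$ and $v_e^-$ must be disjoint and contain $\bm{0}$ and $\partial B(N)$ respectively (or with the roles swapped). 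Call this two-arm event $\mathsf{F}_e^{\setminus \mathfrak{L}_e}$. Combined with the independence just noted,
$$\mathbb{P}\big(\mathsf{L}_e \cap \{\bm{0} \xleftrightarrow{} \partial B(N)\}\big) \le \mathbb{P}(\mathsf{F}_e^{\setminus \mathfrak{L}_e}) \cdot \mathbb{P}(\mathfrak{L}_e \ne \emptyset) \lesssim \mathbb{P}(\mathsf{F}_e),$$
where $\mathsf{F}_e$ denotes the analogous two-arm event in the full soup $\widetilde{\mathcal{L}}_{1/2}$ (passing from the restricted to the full soup costs only a constant factor, since loops in $\mathfrak{L}_e$ are confined to $I_e$ and affect only the bridging between $v_e^\pm$).

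Second, by the isomorphism theorem (Property (b) in Section \ref{section_intro}, via independent sign assignments to loop clusters), the two-arm event $\mathsf{F}_e$ is comparable (up to a universal constant from uniform signs) to the heterochromatic GFF event
$$\big\{v_e^+ \xleftrightarrow{\ge 0} \bm{0},\, v_e^- \xleftrightarrow{\le 0} \partial B(N)\big\} \cup \big\{v_e^+ \xleftrightarrow{\le 0} \partial B(N),\, v_e^- \xleftrightarrow{\ge 0} \bm{0}\big\}.$$
I would then invoke the sharp heterochromatic two-arm estimate of \cite[Theorem 1.4]{inpreparation_twoarm}, which extends (\ref{thm1_small_n})-(\ref{thm1_large_n}) to mixed-target configurations (one point-target $\bm{0}$ and one set-target $\partial B(N)$). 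Writing $r = |x_e|$ for $x_e$ the lattice endpoint of $e$ closer to $\bm{0}$, this estimate should take the form $\mathbb{P}(\mathsf{F}_e) \lesssim \Phi(r,N)$, where the dimensional behavior of $\Phi$ reflects both the local $\chi^{3/2}$ cost at scale $\chi = |v_e^+ - v_e^-| \asymp 1$ and the cost of maintaining disjoint clusters across the intermediate scales between $r$ and $N$.

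Finally, summing over edges using $|\{e \in \mathbb{L}^d : |x_e| \asymp r\}| \asymp r^{d-1}$ and the elementary identity (\ref{computation_d-a}), the estimate from the previous step is expected to yield
$$\sum_{e \in \mathbb{L}^d} \mathbb{P}(\mathsf{F}_e) \lesssim \theta_d(N) \cdot N^{(\frac{d}{2}-1) \boxdot 2},$$
from which (\ref{52}) follows upon dividing by $\mathbb{P}(\bm{0} \xleftrightarrow{} \partial B(N)) = \theta_d(N)$. The main obstacle is invoking the mixed-target heterochromatic two-arm estimate in a form that is sharp enough to close the summation at exactly the exponent $(\frac{d}{2}-1) \boxdot 2$ in both the low- and high-dimensional regimes. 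The companion paper \cite{inpreparation_twoarm} is asserted (in the remark following Theorem \ref{thm_pivotal_loop}) to furnish precisely this input; nevertheless, unpacking the effect of having one target at the single point $\bm{0}$ (rather than at a boundary at scale $N$ as in (\ref{thm1_small_n})-(\ref{thm1_large_n})), and verifying that the summation indeed produces the claimed exponent in both regimes, requires bookkeeping analogous to the decomposition argument carried out in Section \ref{section_proof_two_branch} for Proposition \ref{lemma_four_arms}.
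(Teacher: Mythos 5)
Your reduction to a disjoint two-arm event via the Poisson structure is correct and in essence matches the paper's opening move: the paper sets $\mathsf{F}_e := \{\mathfrak{L}_e \neq 0\}$, notes it is independent of the event built from $\widetilde{\mathcal{L}}_{1/2} - \mathfrak{L}_e$ and has probability bounded away from $0$ and $1$, and so writes $\mathbb{P}(\mathsf{L}_e) \lesssim \mathbb{P}(\mathsf{C}_e \cap \mathsf{F}_e^c) = \mathbb{P}(\overline{\mathsf{C}}_e)$, where $\overline{\mathsf{C}}_e$ is a disjoint two-arm event in the full soup anchored at the lattice endpoints $x_e^1, x_e^2$ (rather than your trisection points $v_e^\pm$ — a minor difference, since on $\{\mathfrak{L}_e = 0\}$ a cluster reaching into $I_e$ from outside must pass through a lattice endpoint). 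The overall skeleton (first moment $\to$ two-arm estimate $\to$ sum over edges, then divide by $\theta_d(N)$) also matches.

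The genuine gap is the two-arm estimate itself, which is the entire content of the proof. You posit $\mathbb{P}(\mathsf{F}_e) \lesssim \Phi(r,N)$ with $r = |x_e|$ and assert the companion paper's Theorem 1.4 supplies it; but the paper does not cite a black-box mixed-target two-arm bound. Instead it proves the required bound (display (5.7)) from scratch through a chain of Lemma 2.18 (converting the set-target $\partial B(N)$ to an averaged point-target), Corollary 2.10 (a mean-value inequality for two-point functions), and the four-point estimate (4.23), the latter coming from [Lemma 6.4, companion paper] via the isomorphism theorem. More importantly, the estimate has two regimes that cannot be collapsed into a single function of $r$: for $x \in B(N/2)$ the bound is $(|x|+1)^{-[(\frac{d}{2}+1)\boxdot(d-2)]}N^{-[(\frac{d}{2}-1)\boxdot 2]}$, while for $x \notin B(N/2)$ it is $N^{2-d}(\mathrm{dist}(x,\partial B(N))+1)^{-2}$, a function of the distance to the boundary rather than of $|x|$. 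The near-boundary regime is exactly where a naive BKR product overshoots and where the disjointness correction (the $k^{-2}$ in place of $k^{-[(\frac{d}{2}-1)\boxdot 2]}$) is needed; it requires running the Lemma 2.18 / Corollary 2.10 machinery in the \emph{opposite} direction, applying Corollary 2.10 to the long $\bm{0}$-arm rather than the short boundary-arm. Your parameterization does not see this, and you acknowledge as much — but without it the argument does not close, so this "bookkeeping" is the proof, not an afterthought.
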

 \begin{proof}
 In this proof, we abbreviate $\mathsf{L}_e\big(\bm{0}\xleftrightarrow{} \partial B(N)\big)$ as $\mathsf{L}_e$. For each $e=\{x_e^1,x_e^2\}\in  \mathbb{L}^d$, on the event $\mathsf{L}_e$, we know that $\mathsf{F}_e:=  \big\{ \mathfrak{L}_e \neq 0 \big\}$ and 
 \begin{equation}
 		 \mathsf{C}_e:= \cup_{i\in \{1,2\}} \big\{ x_e^i \xleftrightarrow{\cup (\widetilde{\mathcal{L}}_{1/2}-\mathfrak{L}_e)} \bm{0} \Vert  x_e^{3-i}\xleftrightarrow{\cup (\widetilde{\mathcal{L}}_{1/2}-\mathfrak{L}_e)} \partial B(N) \big\} 
 	\end{equation} 
 	both occur. Note that $\mathsf{F}_e$ and $\mathsf{C}_e$ are independent (since they rely on disjoint collections of loops in $\widetilde{\mathcal{L}}_{1/2}$), and that $\mathbb{P}(\mathsf{F}_e)$ is bounded away from $0$ and $1$. Therefore, 
 \begin{equation}\label{5.4}
 	\begin{split}
 		\mathbb{P}\big( \mathsf{L}_e \big) \le \mathbb{P}\big( \mathsf{C}_e\cap \mathsf{F}_e \big) = \mathbb{P}\big( \mathsf{C}_e\cap \mathsf{F}_e^c \big) \cdot \frac{\mathbb{P}(\mathsf{F}_e)}{\mathbb{P}(\mathsf{F}^c_e)} \asymp  \mathbb{P}\big( \mathsf{C}_e\cap \mathsf{F}_e^c \big). 
 	\end{split}
 \end{equation}
 	Moreover, on $\mathsf{F}_e^c=\{ \mathfrak{L}_e=0\}$, the event $\mathsf{C}_e$ is equivalent to 
 	 \begin{equation}\label{5.5}
 		\overline{\mathsf{C}}_e :=\cup_{i\in \{1,2\}} \overline{\mathsf{C}}_e^i:= \cup_{i\in \{1,2\}} \big\{ x_e^i \xleftrightarrow{} \bm{0} \Vert  x_e^{3-i}\xleftrightarrow{} \partial B(N) \big\}.  
 	\end{equation} 
 	Combining (\ref{5.4}) and (\ref{5.5}), and using the union bound, we have 
 	\begin{equation}\label{5.6}
 		\begin{split}
 		 	 \mathbb{E}\big[ \big|\widetilde{\mathbf{Piv}}_N \big| \cdot \mathbbm{1}_{ \bm{0}\xleftrightarrow{} \partial B(N)} \big]   
 			=&\sum\nolimits_{e\in \mathbb{L}^d: I_e\subset \widetilde{B}(N)} \mathbb{P}\big( \mathsf{L}_e \big) \\
 			\lesssim  & \sum\nolimits_{e\in \mathbb{L}^d: I_e\subset \widetilde{B}(N)} \sum\nolimits_{i\in \{1,2\}}\mathbb{P}\big(\overline{\mathsf{C}}_e^i  \big).  
 		\end{split}
 	\end{equation}

 	We claim that for any $e=\{x,y\}\in \mathbb{L}^d$ with $x,y\in B(N)$, 
 	\begin{equation}\label{5.7}
 		  \mathbb{P}\big(x \xleftrightarrow{} \bm{0} \Vert  y\xleftrightarrow{} \partial B(N)  \big) \lesssim \left\{
\begin{aligned}
&(|x|+1)^{-[(\frac{d}{2}+1)\boxdot (d-2)]}N^{-[(\frac{d}{2}-1)\boxdot 2]}      &  x\in B(\tfrac{N}{2}); \\ 
&\big(\mathrm{dist}(x,\partial B(N) )+1\big)^{-2}N^{2-d}    &   x\notin B(\tfrac{N}{2}).
\end{aligned}
\right.
 	\end{equation}
 	In fact, this claim together with (\ref{5.6}) implies the desired bound (\ref{52}). Specifically, by (\ref{5.6}) and (\ref{5.7}), we have 
 	\begin{equation}\label{new58}
 		\begin{split}
 		 \mathbb{E}\big[ \big|\widetilde{\mathbf{Piv}}_N \big| \cdot \mathbbm{1}_{ \bm{0}\xleftrightarrow{} \partial B(N)} \big]  \lesssim  \mathbb{I}^{\mathrm{in}}+ \mathbb{I}^{\mathrm{out}}, 
 		\end{split}
 	\end{equation}
 	where the quantities $\mathbb{I}^{\mathrm{in}}$ and $\mathbb{I}^{\mathrm{out}}$ are defined by 
 	\begin{equation}
 		\mathbb{I}^{\mathrm{in}}:=N^{-[(\frac{d}{2}-1)\boxdot 2]}\sum\nolimits_{x\in B(\frac{N}{2})} (|x|+1)^{-[(\frac{d}{2}+1)\boxdot (d-2)]}, 
 	\end{equation}
 	\begin{equation}
 		\mathbb{I}^{\mathrm{out}}:=N^{2-d}\sum\nolimits_{x\in B(N)\setminus B(\frac{N}{2})}\big(\mathrm{dist}(x,\partial B(N) )+1\big)^{-2}. 
 	\end{equation}
 	Using the fact that $|\partial B(k)|\asymp k^{d-1}$ for $k\ge 1$, we have 
 	\begin{equation}\label{511}
 			\mathbb{I}^{\mathrm{in}} \lesssim N^{-[(\frac{d}{2}-1)\boxdot 2]} \sum\nolimits_{0\le k\le \frac{N}{2}} (k+1)^{(\frac{d}{2}-2)\boxdot 1}\lesssim  1 , 
 	\end{equation}
 \begin{equation}\label{512}
 	\mathbb{I}^{\mathrm{out}} \lesssim N^{2-d}  \sum\nolimits_{ \frac{N}{2}\le k\le N} (N-k+1)^{d-3}\lesssim 1.  
 \end{equation}
 	Plugging (\ref{511}) and (\ref{512}) into (\ref{new58}), and using (\ref{one_arm_low}) and (\ref{one_arm_high}), we obtain (\ref{52}).

 	Next, we prove the claim (\ref{5.7}) for the cases $x\in B(\tfrac{N}{2})$ and $x\notin B(\tfrac{N}{2})$ separately.

  \textbf{Case 1: when $ x\in B(\tfrac{N}{2})$.} Applying Lemma \ref{lemma_relation}, one has 
  \begin{equation}\label{late514}
  	\mathbb{P}\big(y\xleftrightarrow{(D)} \partial B(N)  \big)  \overset{ }{\lesssim}    N^{-(\frac{d}{2}\boxdot 3)} \sum\nolimits_{z\in \partial \mathcal{B}(d^{-1}N)} \mathbb{P}\big(y\xleftrightarrow{(D)} z  \big).
  \end{equation}
  Combined with the restriction property, it yields that 
\begin{equation}\label{late515}
	\begin{split}
		  \mathbb{P}\big(x \xleftrightarrow{} \bm{0} \Vert  y\xleftrightarrow{} \partial B(N)  \big) 
	  \overset{ }{=} &  \mathbb{E}\big[ \mathbbm{1}_{x \xleftrightarrow{} \bm{0} } \cdot \mathbb{P}\big( y\xleftrightarrow{(\mathcal{C}_{\bm{0}})} \partial B(N) \mid \mathcal{F}_{\mathcal{C}_{\bm{0}}}\big) \big] \\
	  \overset{ }{\lesssim } & N^{-(\frac{d}{2}\boxdot 3)} \sum\nolimits_{z\in \partial \mathcal{B}(d^{-1}N)} \mathbb{P}\big(x \xleftrightarrow{} \bm{0} \Vert  y\xleftrightarrow{} z \big). 
	\end{split}
\end{equation}
  Moreover, for any $z\in \partial \mathcal{B}(d^{-1}N)$, by the restriction property and Corollary \ref{newcoro_29}, 
  \begin{equation}\label{late516}
  	\begin{split}
  		& \mathbb{P}\big(x \xleftrightarrow{} \bm{0} \Vert  y\xleftrightarrow{} z \big) =  \mathbb{E}\big[ \mathbbm{1}_{x \xleftrightarrow{} \bm{0} } \cdot \mathbb{P}\big( y\xleftrightarrow{(\mathcal{C}_{\bm{0}})} z \mid \mathcal{F}_{\mathcal{C}_{\bm{0}}}\big) \big]\\
  		\lesssim &N^{2-d}  (|x|+1)^{-1} \sum\nolimits_{z' \in \partial \mathcal{B}(d^{-1}|x|)}  \mathbb{E}\big[ \mathbbm{1}_{x \xleftrightarrow{} \bm{0} } \cdot \mathbb{P}\big( y\xleftrightarrow{(\mathcal{C}_{\bm{0}})} z' \mid \mathcal{F}_{\mathcal{C}_{\bm{0}}} \big) \big]\\
  	= &	N^{2-d}  (|x|+1)^{-1} \sum\nolimits_{z' \in \partial \mathcal{B}(d^{-1}|x|)}  \mathbb{P}\big(x \xleftrightarrow{} \bm{0} \Vert  y\xleftrightarrow{} z'  \big) \\
  		 \overset{(\ref{four_point_use})}{\lesssim } & (|x|+1)^{-[(\frac{d}{2}+1)\boxdot (2-d)]}N^{2-d}. 
  	\end{split}
  \end{equation}
  This together with (\ref{late515}) yields that 
  \begin{equation}
  	\mathbb{P}\big(x \xleftrightarrow{} \bm{0} \Vert  y\xleftrightarrow{} \partial B(N)  \big)  \lesssim (|x|+1)^{-[(\frac{d}{2}+1)\boxdot (2-d)]}N^{-[(\frac{d}{2}-1)\boxdot 2]}.
  \end{equation}

 \textbf{Case 2: when $ x\notin B(\tfrac{N}{2})$.} Assume that $x\in \partial B(N-k)$ with $0\le k\le \frac{N}{2}$. Thus, 
 \begin{equation}
 	  \mathbb{P}\big(x \xleftrightarrow{} \bm{0} \Vert  y\xleftrightarrow{} \partial B(N)  \big) \le  \mathbb{P}\big(x \xleftrightarrow{} \bm{0} \Vert  y\xleftrightarrow{} \partial B_y(k)  \big). 
 \end{equation}
 Moreover, similar to (\ref{late515}), by the restriction property and Lemma \ref{lemma_relation}, one has 
 \begin{equation}\label{late519}
 	\begin{split}
 			  \mathbb{P}\big(x \xleftrightarrow{} \bm{0} \Vert  y\xleftrightarrow{} \partial B_y(k)  \big) 
 			\lesssim  (k+1)^{-(\frac{d}{2}\boxdot 3)} \sum\nolimits_{z\in \partial \mathcal{B}_y(d^{-1}k)}  \mathbb{P}\big( x \xleftrightarrow{} \bm{0} \Vert  y\xleftrightarrow{} z \big). 
 	\end{split}
 \end{equation}
 In addition, as in (\ref{late516}), it follows from Corollary \ref{newcoro_29} that for any $z\in \partial \mathcal{B}_y(d^{-1}k)$, 
 \begin{equation}\label{late520}
 \begin{split}
 	 	\mathbb{P}\big( x \xleftrightarrow{} \bm{0} \Vert  y\xleftrightarrow{} z \big)  \lesssim & N^{2-d}(k+1)^{-1} \sum\nolimits_{z'\in \partial \mathcal{B}_y(k)}   \mathbb{P}\big( x \xleftrightarrow{} z' \Vert  y\xleftrightarrow{} z \big)\\
 	 	\overset{(\ref{four_point_use})}{\lesssim } & N^{2-d}(k+1)^{-[(\frac{d}{2}+1)\boxdot (d-2)]}.
 \end{split}
 \end{equation}
  Substituting (\ref{late520}) into (\ref{late519}), we get
  \begin{equation}
  	  \mathbb{P}\big(x \xleftrightarrow{} \bm{0} \Vert  y\xleftrightarrow{} \partial B_y(k)  \big) 
 			\lesssim N^{2-d}(k+1)^{-2}.  
  \end{equation}
  To sum up, we have verified the claim (\ref{5.7}), thereby completing the proof.
 \end{proof}

 By Lemma \ref{lemma_first_moment} and Markov's inequality, we have: for any $\epsilon>0$, there exists $\Cref{const_pivotal_3}(d,\epsilon)>0$ such that 
   \begin{equation}\label{ineq_before_54}
 	\begin{split}
 		\mathbb{P}\big( \big|\widetilde{\mathbf{Piv}}_N \big| \ge \Cref{const_pivotal_3} N^{(\frac{d}{2}-1)\boxdot 2} \mid \bm{0}\xleftrightarrow{} \partial B(N) \big)\le \tfrac{1}{2}\epsilon. 
 	\end{split}
 \end{equation}

 \subsection{Proof of Theorem \ref{thm_pivotal_loop}: lower bound}


 We now show that for any $\epsilon>0$, there exists $\cref{const_pivotal_1}(d,\epsilon)>0$ such that  
   \begin{equation}\label{54}
 	\begin{split}
 		\mathbb{P}\big( \big|\widetilde{\mathbf{Piv}}_N \big| \le \cref{const_pivotal_1} N^{(\frac{d}{2}-1)\boxdot 2} \mid \bm{0}\xleftrightarrow{} \partial B(N) \big)\le \tfrac{1}{2}\epsilon. 
 	\end{split}
 \end{equation}

 To prove (\ref{54}), we first introduce some notations as follows.
 
  \begin{itemize}

 	\item   Let $n_0,\lambda>1$ be two large numbers that will be determined later. For each $i\in \mathbb{N}^+$, we denote 
 	 \begin{equation}
 	 	n_i=n_i(n_0,\lambda):= \lambda^2i^4n_{i-1}. 
 	 \end{equation}

 	\item  For sufficiently large $N\ge 1$, we define 
 	\begin{equation}\label{cite525}
 		i_\star= 	i_\star(n_0,\lambda,N ):= \min\big\{ i\ge 1: n_{2i+3}\ge N \big\}. 
 	\end{equation}

 	\item For each $i\in \mathbb{N}$, we employ the partial cluster $\widehat{\mathfrak{C}}_i$ introduced in \cite[Section 3.2]{cai2024incipient}. Roughly speaking, $\widehat{\mathfrak{C}}_i$ is the loop cluster where only the intersections inside $\widetilde{B}(n_{2i})$ are taken into account. To be precise, one may define $\widehat{\mathfrak{C}}_i$ via the following inductive exploration process:  
 	\begin{itemize}

 	\item   Step $0$: We set $\mathcal{C}_0:=\{\bm{0}\}$;

 	\item   Step $j$ ($j\ge 1$): Given $\mathcal{C}_{j-1}$, we define $\mathcal{C}_{j}$ as the union of $\mathcal{C}_{j-1}$ and the ranges of all loops in $\widetilde{\mathcal{L}}_{1/2}$ that intersect $\mathcal{C}_{j-1}\cap \widetilde{B}(n_{2i})$. If $\mathcal{C}_{j}=\mathcal{C}_{j-1}$, we stop the construction and define $\widehat{\mathfrak{C}}_i:=\mathcal{C}_{j}$; otherwise (i.e., $\mathcal{C}_{j-1}\subsetneq \mathcal{C}_{j}$), we proceed to Step $(j+1)$.  	
 	
 	\end{itemize}

 \end{itemize}

According to the construction of $\widehat{\mathfrak{C}}_i$, we have the following observations: 
\begin{enumerate}

	\item[(i)]  Conditioned on $\{\widehat{\mathfrak{C}}_i=D\}$ (for any proper realization $D$ of $\widehat{\mathfrak{C}}_i$), the point measure composed of all remaining loops (i.e., loops that are not used in the construction of $\widehat{\mathfrak{C}}_i$) is distributed by $\widetilde{\mathcal{L}}_{1/2}^{D\cap \widetilde{B}(n_{2i})}$. Therefore, for any $A\subset [\widetilde{B}(n_{2i})]^c$, one has $\big\{ \bm{0}\xleftrightarrow{} A  \big\}=\big\{ \widehat{\mathfrak{C}}_i\xleftrightarrow{(\widehat{\mathfrak{C}}_i\cap \widetilde{B}(n_{2i}))} A  \big\}$.

	

	\item[(ii)]   For each $0\le i\le i_\star$, on the event $\{\bm{0}\xleftrightarrow{} \partial B(N)\}$, given $\widehat{\mathfrak{C}}_i$, for any $e\in \mathbb{L}^d$, if there exists $\widetilde{\ell}\in \mathfrak{L}_e$ with $\mathrm{ran}(\widetilde{\ell})\not\subset \widehat{\mathfrak{C}}_i$ and pivotal for the event $\{\widehat{\mathfrak{C}}_i \xleftrightarrow{} \partial B(n_{2i+2}) \}$ (we denote by $\widetilde{\mathbf{Piv}}_{N,i}$ the collection of all these edges), then $\widetilde{\ell}$ is also pivotal for $\{\bm{0}\xleftrightarrow{} \partial B(N)\}$. (\textbf{P.S.} To see this, if we remove the loop $\widetilde{\ell}$ from $\widetilde{\mathcal{L}}_{1/2}$, then $\widehat{\mathfrak{C}}_i$ and $\partial B(n_{2i+2})$ are not connected by $\cup \widetilde{\mathcal{L}}_{1/2}^{\widehat{\mathfrak{C}}_i\cap \widetilde{B}(n_{2i})}$. Combined with Observation (i), it yields that $\{\bm{0}\xleftrightarrow{} \partial B(N)\}$ does not occur.) Furthermore, since $\widetilde{\ell}$ is pivotal for $\{\widehat{\mathfrak{C}}_i \xleftrightarrow{} \partial B(n_{2i+2}) \}$, it must be connected to $\widehat{\mathfrak{C}}_i$ by $\cup \widetilde{\mathcal{L}}_{1/2}^{\partial B(n_{2i+2})}$, and thus is included in $\widehat{\mathfrak{C}}_{i+1}$. As a result, $\widetilde{\mathbf{Piv}}_{N,i}$ is measurable with respect to $\mathcal{F}_{\widehat{\mathfrak{C}}_{i+1}}$.


\end{enumerate}

 Next, we present two auxiliary lemmas, and establish (\ref{54}) using them. The first lemma states that conditioned on the event $\{\bm{0}\xleftrightarrow{} \partial B(N)\}$, it is unlikely to have a loop that crosses an annulus with a large ratio between the outer and inner radii. For any $M>m\ge 1$, we denote the point measure
 \begin{equation}\label{def_crossing}
 	\mathfrak{L}[m,M]:= \widetilde{\mathcal{L}}_{1/2}\cdot \mathbbm{l}_{\mathrm{ran}(\widetilde{\ell})\cap \partial B(m)\neq \emptyset,\mathrm{ran}(\widetilde{\ell})\cap \partial B(M)\neq \emptyset }. 
 \end{equation}
  \begin{lemma} \label{lemma_large_loop}
 	For any $d\ge 3$ with $d\neq 6$, there exist $C,c>0$ such that for any $n\ge 1$, $A,D\subset \widetilde{B}(n)$, $m\ge Cn$, $M\ge Cm$ and $N\ge (CM)\vee n^{1 \boxdot \frac{d-4}{2}}$,
 \begin{equation}\label{ineq_large_crossing_loop1}
 	\mathbb{P}\big(  	\mathfrak{L}[m,M]\neq 0  \mid A \xleftrightarrow{(D)} \partial B(N) \big) \lesssim  \big( \tfrac{m}{M} \big)^{(\frac{d}{2}-1)\boxdot (d-4)} . 
 \end{equation}
 \end{lemma}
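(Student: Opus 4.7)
The plan is to apply the Slivnyak--Mecke formula to the Poisson point process $\widetilde{\mathcal{L}}_{1/2}$ in order to reduce the conditional bound to estimates for the Brownian excursion measure and for the joint connection probability of two points in a crossing loop. I would begin with Markov's inequality
\begin{equation*}
\mathbb{P}\big(\mathfrak{L}[m,M]\neq 0,\, A\xleftrightarrow{(D)}\partial B(N)\big) \le \mathbb{E}\big[\mathfrak{L}[m,M]\cdot\mathbbm{1}_{A\xleftrightarrow{(D)}\partial B(N)}\big],
\end{equation*}
and then rewrite the right-hand side via Mecke's formula as $\tfrac{1}{2}\int \mathbbm{1}_{\widetilde{\ell}\in\mathfrak{L}[m,M]}\,\mathbb{P}\big(A\xleftrightarrow{(D)}\partial B(N)\text{ in }\widetilde{\mathcal{L}}_{1/2}+\delta_{\widetilde{\ell}}\big)\,\widetilde{\mu}(d\widetilde{\ell})$. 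Since adding a loop can only enhance connectivity, I would decompose the integrand into two scenarios: (a) the one-arm connection still holds in $\widetilde{\mathcal{L}}_{1/2}$ alone, so $\widetilde{\ell}$ is unused; or (b) $\widetilde{\ell}$ serves as a bridge, so both $A$ and $\partial B(N)$ must be connected to disjoint parts of $\mathrm{ran}(\widetilde{\ell})$ via the independent residual soup.

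For scenario (a), integrating yields $\mathbb{P}(A\xleftrightarrow{(D)}\partial B(N))\cdot\tfrac{1}{2}\widetilde{\mu}(\mathfrak{L}[m,M]\neq\emptyset)$. To estimate $\widetilde{\mu}$ of crossing loops, I would root each loop at a point $v\in\partial B(m)$, so that the rooted loop measure becomes the Brownian excursion measure $\mathbf{e}^{\emptyset}_{v,v}$ restricted to excursions visiting $\partial B(M)$. Applying Lemma \ref{lemma_for_new62} (with $A=\partial B(M)$) and summing over $v\in\partial B(m)$, the total mass is bounded by a power of $m/M$, in particular by $\sqrt{m/M}$ once $M\ge Cm$ for $C$ large. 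This handles scenario (a).

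For scenario (b), by the independence of the residual soup in disjoint regions (using restriction) the factor becomes
\begin{equation*}
\mathbb{P}\big(A\xleftrightarrow{(D)}\mathrm{ran}(\widetilde{\ell})\big)\cdot\mathbb{P}\big(\mathrm{ran}(\widetilde{\ell})\xleftrightarrow{(D)}\partial B(N)\big),
\end{equation*}
which after stability estimates (Lemmas \ref{lemma_stability_bd}, \ref{lemma_new_stable}) and point-to-set scaling (Lemmas \ref{lemma_point_to_set}, \ref{lemma_213}) can be replaced by $\mathbb{P}(A\xleftrightarrow{(D)}\partial B(m))$ times a factor from the excursion of $\widetilde{\ell}$ between $\partial B(M)$ and $\partial B(N)$. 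Integrating against $\widetilde{\mu}(d\widetilde{\ell})$ and applying quasi-multiplicativity (Lemma \ref{lemma_quasi}) to re-assemble $\mathbb{P}(A\xleftrightarrow{(D)}\partial B(N))$, I obtain a contribution of order $\sqrt{m/M}\cdot\mathbb{P}(A\xleftrightarrow{(D)}\partial B(N))$. The condition $N\ge n^{1\boxdot\frac{d-4}{2}}$ is exactly what is required for quasi-multiplicativity to apply at all relevant scales, while $M\ge Cm$ ensures that the excursion-measure estimate beats a small constant. Dividing both scenarios by $\mathbb{P}(A\xleftrightarrow{(D)}\partial B(N))$ yields the claimed bound.

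The main obstacle will be controlling the integrand of scenario (b) uniformly in the geometry of $\widetilde{\ell}$: a crossing loop may return many times between $\partial B(m)$ and $\partial B(M)$, and its range can be spread across the annulus, so the ``point-to-set'' replacement of $\mathrm{ran}(\widetilde{\ell})$ by $\partial B(m)$ and $\partial B(M)$ requires careful accounting. I expect to handle this by rooting the loop at an intermediate sphere (e.g., $\partial B(\sqrt{mM})$), decomposing its two resulting excursions via the Brownian excursion measure, and applying Lemma \ref{lemma_for_new62} separately to each arm so that the two point-to-set factors appear naturally; quasi-multiplicativity then glues the pieces together.
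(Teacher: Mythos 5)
Your high-level strategy — use Mecke's formula to integrate over the pivotal loop, split into the case where the loop is unused and the case where it serves as a bridge — is a reasonable plan, and scenario (a) is essentially fine. However, scenario (b) has a genuine gap in the high-dimensional case $d\ge 7$. You invoke quasi-multiplicativity (Lemma \ref{lemma_quasi}) to "re-assemble $\mathbb{P}(A\xleftrightarrow{(D)}\partial B(N))$", but for $d\ge 7$ that lemma requires the inner set to sit inside $\widetilde{B}(cR^{2/(d-4)})$ and the outer set outside $\widetilde{B}(CR^{(d-4)/2})$. The hypothesis of the lemma only gives $N\ge CM$, so the scales $m,M,N$ are typically all comparable up to constants and the separation $N\ge CM^{(d-4)/2}$ fails (e.g. $d=8$, $m\asymp M\asymp N$); the quasi-multiplicativity step simply does not apply there. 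What one must use instead is the weaker but always-available point-to-set scaling machinery (Lemma \ref{lemma_point_to_set}, Lemma \ref{lemma_213}, in the form packaged as Lemma \ref{lemma_goodc1} in Appendix C), which gives the boundary-to-set $\leftrightarrow$ point-to-set translations without requiring polynomial scale gaps.

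There is also a second, quantitative gap: you treat "the two point-to-set factors appear naturally" as if the geometry of the crossing loop $\widetilde{\ell}$ can be reduced to a single round trip rooted at an intermediate sphere. This is too optimistic. A crossing loop can make many excursions between $\partial B(m)$ and $\partial B(M)$, can have its range spread over the whole annulus (so the "point of attachment" $w_1$ for the $A$-side cluster can be at any radius between $m$ and $M$), and — crucially — can itself extend past $\partial B(N/2)$, in which case your factorization $\mathbb{P}(A\xleftrightarrow{(D)}\mathrm{ran}(\widetilde{\ell}))\cdot\mathbb{P}(\mathrm{ran}(\widetilde{\ell})\xleftrightarrow{(D)}\partial B(N))$ degenerates because the second factor is essentially $1$ and the first must then supply the entire smallness. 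The paper's proof of this lemma (Appendix C) cites a reference for $3\le d\le 5$ and then, for $d\ge 7$, replaces the Mecke integral by a BKR/tree-expansion argument with explicit case distinctions according to (i) whether $\widetilde{\ell}_\dagger$ crosses $\partial B(N/2)$, (ii) the radial scale of each attachment point $w_1,w_2$, and (iii) a crossing-path decomposition of the annulus $B(C_{\spadesuit}m)\setminus B(m)$ to handle deep excursions; each case is bounded separately and summed. Your proposal would need to carry out a comparable case analysis, and also to replace the quasi-multiplicativity step by the point-to-set scaling relations, before it becomes a proof.
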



 Since the proof of Lemma \ref{lemma_large_loop} is relatively standard, we postpone it to Appendix \ref{app_large_loop}. By the construction of $\widehat{\mathfrak{C}}_i$, we know that every loop included in $\widehat{\mathfrak{C}}_i$ must intersect $\widetilde{B}(n_{2i})$. As a result, on $\{ \widehat{\mathfrak{C}}_i \cap  \partial B(n_{2i+1}) \ne \emptyset\}$, there exists a loop $\widetilde{\ell}\in \widetilde{\mathcal{L}}_{1/2}$ intersecting both $\partial B(n_{2i})$ and $\partial B(n_{2i+1})$, i.e., $\mathfrak{L}[n_{2i},n_{2i+1}]\neq 0 $. Thus, for $0\le i\le i_\star$, 
 \begin{equation}\label{lemma52}
 	\begin{split}
 	&	\mathbb{P}\big( \widehat{\mathfrak{C}}_i \cap  \partial B(n_{2i+1}) \ne \emptyset \mid \bm{0}\xleftrightarrow{} \partial B(N) \big)  \\
 		\le & 	\mathbb{P}\big( \mathfrak{L}[n_{2i},n_{2i+1}]\neq 0  \mid \bm{0}\xleftrightarrow{} \partial B(N) \big)  \overset{(\text{Lemma}\ \ref{lemma_large_loop})}{\lesssim }\lambda^{-1}(i+1)^{-2}.   
 	\end{split}
 \end{equation}
By taking $\lambda=\epsilon^{-2}$, it follows from (\ref{lemma52}) that 
 \begin{equation}\label{510}
 	\mathbb{P}\big( \mathsf{F}_i^c \mid \bm{0}\xleftrightarrow{} \partial B(N)  \big)\le \tfrac{1}{4}\epsilon, \ \ \forall 0\le i\le i_\star.
 \end{equation}
 Here the event $\mathsf{F}_i$ is defined by $\cap_{0\le j \le i} \big\{  \widehat{\mathfrak{C}}_j \subset  \widetilde{B}(n_{2j+1})  \big\}$.

 (\textbf{P.S.} Although in (\ref{lemma52}) we only used Lemma \ref{lemma_large_loop} for the special case $A=\{\bm{0}\}$ and $D=\emptyset$, the lemma in its full generality will play a crucial role in the companion paper \cite{inpreparation_gap}.)

 {\color{orange}

}



The second lemma shows that conditioned on a general set being connected to a distant box boundary of size $N$, with a uniformly positive probability the number of pivotal loops at scale $1$ for this event is at least of order $N^{(\frac{d}{2}-1)\boxdot 2}$. 

  \begin{lemma}\label{lemma53}
 	For any $d\ge 3$ with $d\neq 6$, there exist $C>0$ and $c,c',c''>0$ such that for any $M\ge 1$, $N\ge CM$, and $A,D\subset \widetilde{B}(cM)$,
 	\begin{equation}
 			\mathbb{P}\big( \big|\widetilde{\mathbf{Piv}} \big( A  \xleftrightarrow{(D)} \partial B(M)\big) \big|\ge  c' M^{(\frac{d}{2}-1)\boxdot 2}    \mid A  \xleftrightarrow{(D)} \partial B(N) \big)\ge  c''.
 	\end{equation}
 \end{lemma}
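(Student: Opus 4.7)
The plan is to apply a second moment method to the number of pivotal edges restricted to a fixed annular shell, and then invoke conditional Paley-Zygmund. Fix a small parameter $a \in (0,1)$ (to be chosen once the implicit constants are determined), set $\mathsf{E} := \{A \xleftrightarrow{(D)} \partial B(N)\}$, and let $\mathfrak{I}_a$ denote the collection of edges $e \in \mathbb{L}^d$ with $I_e \subset \widetilde{B}(CaM) \setminus \widetilde{B}(aM)$. For each $e \in \mathfrak{I}_a$, set
\begin{equation*}
\mathsf{Piv}_e := \bigl\{e \in \widetilde{\mathbf{Piv}}\bigl(A \xleftrightarrow{(D)} \partial B(M)\bigr)\bigr\} \cap \mathsf{E},
\end{equation*}
and define the count $\mathbf{X} := \sum_{e \in \mathfrak{I}_a} \mathbbm{1}_{\mathsf{Piv}_e}$. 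The task reduces to establishing the two moment bounds $\mathbb{E}[\mathbf{X}] \gtrsim a^{(\frac{d}{2}-1)\boxdot 2} M^{(\frac{d}{2}-1)\boxdot 2} \mathbb{P}(\mathsf{E})$ and $\mathbb{E}[\mathbf{X}^2] \lesssim C(a) M^{2[(\frac{d}{2}-1)\boxdot 2]} \mathbb{P}(\mathsf{E})$.

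For the first moment, running the decomposition (\ref{5.4})--(\ref{5.5}) in the proof of Lemma \ref{lemma_first_moment} in reverse, $\mathbb{P}(\mathsf{Piv}_e)$ is comparable (up to the constant Poisson probability of having a loop in $\mathfrak{L}_e$) to the probability of the disjoint two-arm event
\begin{equation*}
\overline{\mathsf{C}}_e := \bigcup_{i \in \{+,-\}} \bigl\{A \xleftrightarrow{(D)} v_e^i \,\Vert\, v_e^{-i} \xleftrightarrow{(D)} \partial B(N)\bigr\}.
\end{equation*}
Applying quasi-multiplicativity (Lemma \ref{lemma_quasi}) to decouple the two arms at scale $\sim aM$ around $e$, combined with the point-to-set scaling (Lemmas \ref{lemma_point_to_set}, \ref{lemma_relation}, \ref{lemma_213}) and the boundary-excursion decomposition (Corollary \ref{coronew27}), this two-arm probability is bounded from below by $(aM)^{-[(\frac{d}{2}+1)\boxdot (d-2)]} \mathbb{P}(\mathsf{E})$. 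Summing over $|\mathfrak{I}_a| \asymp (aM)^d$ edges yields the first-moment bound.

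For the second moment, if two distinct edges $e, e' \in \mathfrak{I}_a$ both lie in $\widetilde{\mathbf{Piv}}(A \xleftrightarrow{(D)} \partial B(M))$ and $\mathsf{E}$ holds, the cluster of $A$ must traverse both $e$ and $e'$ in series before reaching $\partial B(N)$. Assuming without loss of generality $|x_e| \le |x_{e'}|$, a chain of quasi-multiplicativity estimates together with the cut decomposition (Corollary \ref{lemma_cut_two_points}) yields
\begin{equation*}
\mathbb{P}(\mathsf{Piv}_e \cap \mathsf{Piv}_{e'}) \lesssim \mathbb{P}\bigl(A \xleftrightarrow{(D)} x_e\bigr)\cdot \mathbb{P}\bigl(y_e \xleftrightarrow{} x_{e'}\bigr)\cdot \mathbb{P}\bigl(y_{e'} \xleftrightarrow{} \partial B(N)\bigr).
\end{equation*}
Summing over pairs, partitioning by the separation $s = |x_e - x_{e'}|$, and applying (\ref{computation_d-a}) to dispatch the resulting volume factors yields $\mathbb{E}[\mathbf{X}^2] \lesssim C(a) M^{2[(\frac{d}{2}-1)\boxdot 2]} \mathbb{P}(\mathsf{E})$. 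Conditional Paley-Zygmund then gives $\mathbb{P}(\mathbf{X} \ge \tfrac{1}{2}\mathbb{E}[\mathbf{X}\mid\mathsf{E}] \mid \mathsf{E}) \gtrsim (\mathbb{E}[\mathbf{X}])^2 /(\mathbb{P}(\mathsf{E})\,\mathbb{E}[\mathbf{X}^2]) \gtrsim 1$ uniformly for any fixed $a$, and the conclusion follows upon taking $c' := \tfrac{1}{2}a^{(\frac{d}{2}-1)\boxdot 2}\cdot(\text{constant})$.

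The main obstacle will be controlling the second moment for pairs $(e,e')$ at small mesoscopic separation $s \ll aM$. The intermediate probability $\mathbb{P}(y_e \xleftrightarrow{} x_{e'})$, taken while the two arms emanating from each of $e, e'$ remain disjoint from one another, requires a careful scale-by-scale chaining analogous in spirit to the inner-scale analysis performed in Section \ref{section_proof_two_branch} for Proposition \ref{lemma_four_arms}. A further complication absent from the $A = \{\bm{0}\}, D = \emptyset$ sketch in the introduction is the presence of general $A$ and $D$: one must verify, via the stability lemmas (Lemmas \ref{lemma_stability_bd}, \ref{lemma_new_stable}, \ref{lemma_new26}) and the edge-decomposition corollaries (Corollaries \ref{coro27}, \ref{coro28}), that imposing $D$ as an absorbing boundary neither reduces the two-arm events driving the first moment nor inflates the chained multi-scale bounds driving the second, and moreover that the quasi-multiplicativity of Lemma \ref{lemma_quasi} can be applied at the relevant intermediate scale despite the dimension-dependent inner-size restriction on $A$ and $D$.
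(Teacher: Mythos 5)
Your overall architecture matches the paper's: a second-moment method on the number of pivotal edges located in a single mesoscopic shell, followed by conditional Paley-Zygmund. The paper works with edges in a box $\widetilde{B}_{w_\dagger}(M/100)$ around a point $w_\dagger\in\partial B(M/2)$ rather than an annulus, but this is a cosmetic difference. However, both moment bounds you state as intermediate conclusions are incorrect as written, and each gap requires substantial machinery that you correctly sense lurking but do not supply.

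For the first moment, you propose to decouple the disjoint two-arm event via Lemma~\ref{lemma_quasi} at scale $\sim aM$. For $d\ge 7$, Lemma~\ref{lemma_quasi} requires $A_1,D_1\subset\widetilde{B}(cR^{2/(d-4)})$, i.e.\ $A,D\subset\widetilde{B}(cM^{2/(d-4)})$, which is far more restrictive than the hypothesis $A,D\subset\widetilde{B}(cM)$ since $M^{2/(d-4)}\ll M$ for $d\ge 7$. Your closing paragraph acknowledges this, but it is not a technical detail one can wave through: the paper circumvents it by introducing the ``good cluster'' notion with the volume constraint (\ref{add_require_540}), invoking Lemma~\ref{lemma_highd_volume}, the rigidity lemma from the companion paper, and the error-tolerant decomposition Lemma~\ref{lemma_new_decompose_point_to_set}, then running a \emph{second} Paley-Zygmund argument on an auxiliary count $\mathbf{Y}$ to verify the resulting lower bound. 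Without this entire apparatus, (\ref{535}) is not established.

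For the second moment, the stated intermediate bound
\begin{equation*}
\mathbb{P}(\mathsf{Piv}_e\cap\mathsf{Piv}_{e'})\lesssim\mathbb{P}\bigl(A\xleftrightarrow{(D)}x_e\bigr)\,\mathbb{P}\bigl(y_e\xleftrightarrow{}x_{e'}\bigr)\,\mathbb{P}\bigl(y_{e'}\xleftrightarrow{}\partial B(N)\bigr)
\end{equation*}
is what BKR gives, and it is \emph{quantitatively too weak} for $3\le d\le 5$. With $s:=\mathrm{dist}(e,e')$, this product scales as $(aM)^{2-d}s^{2-d}N^{\frac{d}{2}-1}\mathbb{P}(A\xleftrightarrow{(D)}x_\star)$, whereas the bound actually required (and proved in the paper as (\ref{536})) is $(Ms)^{-\frac{d}{2}-1}N^{\frac{d}{2}-1}\mathbb{P}(A\xleftrightarrow{(D)}x_\star)$; since $2-d>-\frac{d}{2}-1$ precisely when $d<6$, your bound is larger by a factor of $(Ms)^{3-\frac{d}{2}}$. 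Summing your bound over the $\asymp(aM)^d$ edges in the shell yields $\mathbb{E}[\mathbf{X}^2]\lesssim(aM)^4\mathbb{P}(\mathsf{E})$, not $\lesssim M^{d-2}\mathbb{P}(\mathsf{E})$; for $d=3,4,5$ these differ by a divergent power of $M$ and the second-moment ratio collapses. You flag the need for a ``scale-by-scale chaining analogous to Section~\ref{section_proof_two_branch}'' to repair the middle term, and that is indeed exactly what the paper does in (\ref{addto570})--(\ref{good578}) using Corollaries~\ref{coro27}, \ref{coro28}, \ref{coronew27}, \ref{lemma_cut_two_points} and the heterochromatic four-point estimate (\ref{four_point_use}). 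But absent that analysis the second-moment bound you wrote is simply false in the low-dimensional regime, not merely unverified, so the proposal as stated does not close for $3\le d\le 5$.
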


{\color{red}  
 
 
}

 The proof of Lemma \ref{lemma53} will be provided in Section \ref{subsection_lemma53}. 
 

 \begin{proof}[Proof of (\ref{54})]
 Without loss of generality, we assume that $\epsilon>0$ is sufficiently small. We set $\lambda=\epsilon^{-2}$, $n_0=\mathrm{exp}(-\epsilon^{-10})N$ and $\cref{const_pivotal_1}=\mathrm{exp}(-\epsilon^{-100})$. Note that $i_\star\ge \epsilon^{-1}$. For each $0\le i\le i_\star$, we define the event 
 	  	 \begin{equation}
 	  	 	\mathsf{G}_i:= \cap_{0\le j \le i  }  \big\{ 	 \big|\widetilde{\mathbf{Piv}}_{N,j} \big| \le \cref{const_pivotal_1} N^{(\frac{d}{2}-1)\boxdot 2}  \big\}. 
 	  	 \end{equation}
 	  	  Using Observation (ii), we have $\cup_{0\le i\le i_\star}	\widetilde{\mathbf{Piv}}_{N,i}  \subset  	\widetilde{\mathbf{Piv}}_{N}$, which implies that 
 	\begin{equation}\label{late512}
 	 \big\{ 	 \big|\widetilde{\mathbf{Piv}}_N \big| \le \cref{const_pivotal_1} N^{(\frac{d}{2}-1)\boxdot 2}  \big\} \subset \mathsf{G}_{i_{\star}}. 
 	\end{equation}

For each $1\le i\le i_\star$, recall from Observation (i) below (\ref{cite525}) that conditioned on $\mathcal{F}_{\widehat{\mathfrak{C}}_i}$ (note that $\widehat{\mathfrak{C}}_i$ is now determined), the point measure consisting of the remaining loops has the same distribution as $\widetilde{\mathcal{L}}_{1/2}^{\widehat{\mathfrak{C}}_i\cap \widetilde{B}(n_{2i})}$. As a result, we have 
\begin{equation}\label{nice533}
	\begin{split}
		&\mathbb{P}\big( \bm{0 }\xleftrightarrow{} \partial B(N) , \mathsf{F}_{i}, \mathsf{G}_{i}  \big)\\
 	  	 	=&\mathbb{E}\big[  \mathbbm{1}_{ \mathsf{F}_i\cap  \mathsf{G}_{i-1}   } \cdot   \mathbb{P}\big( \widehat{\mathfrak{C}}_i \xleftrightarrow{(\widehat{\mathfrak{C}}_i\cap \widetilde{B}(n_{2i})) } \partial B(N), \big|\widetilde{\mathbf{Piv}}_{N,i}\big| \le \cref{const_pivotal_1} N^{(\frac{d}{2}-1)\boxdot 2}   \mid \mathcal{F}_{\widehat{\mathfrak{C}}_i} \big) \big]. 
	\end{split}
\end{equation}
In addition, since $\mathsf{F}_i\subset \big\{\widehat{\mathfrak{C}}_i\subset \widetilde{B}(n_{2i+1})\big\}$, it follows from Lemma \ref{lemma53} (with $A=\widehat{\mathfrak{C}}_i$ and $D=\widehat{\mathfrak{C}}_i\cap \widetilde{B}(n_{2i})$) that there exists $c_\dagger\in (0,1)$ such that on $\mathsf{F}_i$, 
\begin{equation}\label{nice534}
	\begin{split}
	&	\mathbb{P}\big( \widehat{\mathfrak{C}}_i \xleftrightarrow{(\widehat{\mathfrak{C}}_i\cap \widetilde{B}(n_{2i})) } \partial B(N), \big|\widetilde{\mathbf{Piv}}_{N,i}\big| \le \cref{const_pivotal_1} N^{(\frac{d}{2}-1)\boxdot 2}   \mid \mathcal{F}_{\widehat{\mathfrak{C}}_i} \big)  \\
		\le & (1-c_\dagger) \cdot \mathbb{P}\big( \widehat{\mathfrak{C}}_i\xleftrightarrow{(\widehat{\mathfrak{C}}_i\cap \widetilde{B}(n_{2i}))} \partial B(N)  \mid \mathcal{F}_{\widehat{\mathfrak{C}}_i} \big). 
	\end{split}
\end{equation}
Plugging (\ref{nice534}) into (\ref{nice533}), and using the inclusion $\mathsf{F}_i\subset \mathsf{F}_{i-1}$, we get 
\begin{equation}
	\begin{split}
			&\mathbb{P}\big( \bm{0 }\xleftrightarrow{} \partial B(N) , \mathsf{F}_{i}, \mathsf{G}_{i}  \big)\\
\le  & (1-c_\dagger) \cdot \mathbb{E}\big[  \mathbbm{1}_{ \mathsf{F}_{i-1}\cap  \mathsf{G}_{i-1}   } \cdot   \mathbb{P}\big( \widehat{\mathfrak{C}}_i \xleftrightarrow{(\widehat{\mathfrak{C}}_i\cap \widetilde{B}(n_{2i})) } \partial B(N)   \mid \mathcal{F}_{\widehat{\mathfrak{C}}_i} \big) \big] \\
=& (1-c_\dagger) \cdot \mathbb{P}\big( \bm{0 }\xleftrightarrow{} \partial B(N) , \mathsf{F}_{i-1}, \mathsf{G}_{i-1}  \big),
	\end{split}
\end{equation}
where the transition in the last line follows from Observation (i). 
 	 By repeating this step, we obtain that 
 	  	 \begin{equation}\label{513}
 	  	 	\begin{split}
 	  	 			&\mathbb{P}\big( \bm{0 }\xleftrightarrow{} \partial B(N) , \mathsf{F}_{i_{\star}}, \mathsf{G}_{i_{\star}}  \big) \\
 	  	 			\le & (1-c_\dagger)^{i_{\star}}\cdot \mathbb{P}\big( \bm{0 }\xleftrightarrow{} \partial B(N) , \mathsf{F}_{0}, \mathsf{G}_{0}  \big) \\
 	  	 			\le & (1-c_\dagger)^{i_{\star}}\cdot \mathbb{P}\big( \bm{0 }\xleftrightarrow{} \partial B(N)  \big)\le \tfrac{1}{4}\epsilon\cdot \mathbb{P}\big( \bm{0 }\xleftrightarrow{} \partial B(N)  \big). 
 	  	 	\end{split}
 	  	 \end{equation}
 	  	 Combined with (\ref{late512}), it implies that 
 	  	 \begin{equation}\label{514}
 	  	 	\begin{split}
 	  	 		& 	\mathbb{P}\big( \bm{0 }\xleftrightarrow{} \partial B(N) , \mathsf{F}_{i_\star}, 	  	 \big|\widetilde{\mathbf{Piv}}_N \big| \le \cref{const_pivotal_1} N^{(\frac{d}{2}-1)\boxdot 2}    \big)  \\
 	  	 			\overset{ (\ref{late512})}{\le} & \mathbb{P}\big( \bm{0 }\xleftrightarrow{} \partial B(N) , \mathsf{F}_{i_{\star}}, \mathsf{G}_{i_{\star}}  \big) \overset{ (\ref{513})}{\le}\tfrac{1}{4}\epsilon\cdot \mathbb{P}\big( \bm{0 }\xleftrightarrow{} \partial B(N)  \big). 
 	  	 	\end{split}
 	  	 \end{equation}
 	   Putting (\ref{510}) and (\ref{514}) together, we derive (\ref{54}). 	 \end{proof}

  Combining (\ref{ineq_before_54}) and (\ref{54}) completes the proof of Theorem \ref{thm_pivotal_loop}. \qed

 \subsection{Proof of Lemma \ref{lemma53}} \label{subsection_lemma53}

We arbitrarily take  a point $w_\dagger\in \partial B(\frac{M}{2})$, and denote by $\mathfrak{D}$ the collection of edges $e\in \mathbb{L}^d$ with $I_e\subset \widetilde{B}_{w_\dagger}(\frac{M}{\Cref{const_boundary1}^2})$. Recall the notations $\mathfrak{L}_e$ and $\mathsf{L}_e(\cdot)$ at the beginning of Section \ref{subsection5.1}. In this proof, we write $\mathsf{L}_e(A  \xleftrightarrow{(D)} \partial B(M))$ as $\mathsf{L}_e$ for brevity. On the event $\mathsf{L}_e$, after removing all loops in $\mathfrak{L}_e$, the endpoints of $e$ belong to two distinct loop clusters that intersect $A$ and $\partial B(M)$ respectively (we denote these two clusters by $\mathcal{C}_{\mathrm{in}}$ and $\mathcal{C}_{\mathrm{out}}$). In addition, if $\mathcal{C}_{\mathrm{out}}$ reaches $\partial B(N)$, then $A\xleftrightarrow{(D)} \partial B(N)$ also occurs. Let $c_\dagger,C_{\ddagger}>0$ be constants that will be determined later. We say a cluster $\mathcal{C}$ is good if it satisfies the following conditions: 
 \begin{itemize}

 	\item  When $3\le d\le 5$, $\mathcal{C}\cap B(c_\dagger M)=\emptyset$;
 

 	\item  When $d\ge 7$, $\mathcal{C} \cap B(c_\dagger M)=\emptyset$ and 
 	\begin{equation}\label{add_require_540} 
 		\sum\nolimits_{z\in \mathbb{Z}^d:\mathcal{C}\cap \widetilde{B}_z(1)\neq \emptyset}|z|^{2-d}\le C_{\ddagger}N^{6-d}.
 	\end{equation}

 \end{itemize}
 Recall that $A,D\subset \widetilde{B}(cM)$ and $N\ge CM$. After $c_\dagger$ and $C_{\ddagger}$ are fixed, we choose $c\ll c_\dagger$ and take $C>0$ sufficiently large so that $C_{\ddagger}N^{6-d}\ll M^{6-d}$ for $d\ge 7$.


   We define $\widehat{\mathsf{L}}_e$ as the subevent of $\mathsf{L}_e$ with the additional requirements that the cluster $\mathcal{C}_{\mathrm{out}}$ reaches $\partial B(N)$ and is good. 
  Let $\mathbf{X}:= \sum_{e\in \mathfrak{D}} \mathbbm{1}_{\widehat{\mathsf{L}}_e}$. Since $\mathbf{X}\le \big|\widetilde{\mathbf{Piv}} \big( A  \xleftrightarrow{(D)} \partial B(M)\big) \big|$, it is sufficient to prove that for some constants $c',c''>0$, 
\begin{equation}\label{534}
 		\mathbb{P}\big( \mathbf{X} \ge  c' M^{(\frac{d}{2}-1)\boxdot 2}    \mid A  \xleftrightarrow{(D)} \partial B(N) \big)\ge  c''.
 \end{equation}
 We derive (\ref{534}) through the following estimates. Arbitrarily take $x_\star\in \partial B(N)$.  
 \begin{enumerate}

 	\item For any $e\in \mathfrak{D}$, 
 	 \begin{equation}\label{535}
 		\mathbb{P}\big( \widehat{\mathsf{L}}_e \big)  \gtrsim  M^{-[(\frac{d}{2}+1)\boxdot (d-2)] }N^{(\frac{d}{2}-1)\boxdot (d-4)}\cdot \mathbb{P}\big(A\xleftrightarrow{(D)} x_\star \big);
 \end{equation}

 	\item  For any $e=\{x,y\},e'=\{x',y'\}\in \mathfrak{D}$, 
 	\begin{equation}\label{536}
		\mathbb{P}\big( \widehat{\mathsf{L}}_e\cap \widehat{\mathsf{L}}_{e'} \big)  
 		\lesssim     [M\cdot (|x-y|+1)]^{-[(\frac{d}{2}+1)\boxdot (d-2)] }  N^{(\frac{d}{2}-1)\boxdot (d-4)}\cdot  \mathbb{P}\big(A\xleftrightarrow{(D)} x_\star\big). 
 	\end{equation}

 \end{enumerate}
We first present the proof of (\ref{534}) assuming (\ref{535}) and (\ref{536}). By (\ref{535}) one has 
  \begin{equation}\label{537}
  	\mathbb{E}\big[\mathbf{X}  \big] \gtrsim   M^{(\frac{d}{2}-1)\boxdot 2 } N^{(\frac{d}{2}-1)\boxdot (d-4)}\cdot \mathbb{P}\big(A\xleftrightarrow{(D)} x_\star \big).
  \end{equation}
  Meanwhile, it follows from (\ref{536}) that 
  \begin{equation}\label{538}
  	\begin{split}
  			\mathbb{E}\big[\mathbf{X}^2  \big]\lesssim & M^{-[(\frac{d}{2}+1)\boxdot (d-2)] }N^{(\frac{d}{2}-1)\boxdot (d-4)}\cdot  \mathbb{P}\big(A\xleftrightarrow{(D)} x_\star\big)\\
  		& 	\cdot  \sum\nolimits_{x,y\in B_{w_\dagger}(\frac{M}{5})}   (|x-y|+1)^{-[(\frac{d}{2}+1)\boxdot (d-2)] }\\
  		\overset{(\ref{computation_d-a})}{\lesssim} & M^{(d-2)\boxdot 4 }N^{(\frac{d}{2}-1)\boxdot (d-4)}\cdot  \mathbb{P}\big(A\xleftrightarrow{(D)} x_\star\big).
  	\end{split}
  \end{equation}
  Applying the Paley-Zygmund inequality, (\ref{537}) and (\ref{538}), we have 
  \begin{equation}\label{539}
  	\mathbb{P}\big(  \mathbf{X} >0\big)\gtrsim \frac{	\big( \mathbb{E}\big[\mathbf{X}  \big]\big)^2}{\mathbb{E}\big[\mathbf{X}^2  \big]} \gtrsim N^{(\frac{d}{2}-1)\boxdot (d-4)}\cdot  \mathbb{P}\big(A\xleftrightarrow{(D)} x_\star\big). \end{equation}
  Combined with (\ref{538}), it implies that 
  \begin{equation}\label{540}
  	\mathbb{E}\big[\mathbf{X}^2  \mid \mathbf{X}>0 \big] \lesssim M^{(d-2)\boxdot 4 }. 
  \end{equation}
   Meanwhile, since $\widehat{\mathsf{L}}_e\subset \{ A\xleftrightarrow{(D)} \partial B(N)\}$ holds for all $e\in \mathfrak{D}$, one has 
  \begin{equation}
  	\mathbb{P}\big(  \mathbf{X} >0\big) \le \mathbb{P}\big(A\xleftrightarrow{(D)} \partial B(N) \big) \overset{(\text{Lemma}\ \ref{lemma_213})}{\lesssim }  N^{(\frac{d}{2}-1)\boxdot (d-4)}\cdot \mathbb{P}\big(A\xleftrightarrow{(D)} x_\star \big). 
  \end{equation}
  This together with (\ref{537}) yields that 
 \begin{equation}\label{542}
 	\mathbb{E}\big[\mathbf{X}   \mid \mathbf{X}>0 \big]\gtrsim M^{(\frac{d}{2}-1)\boxdot 2 }. 
 \end{equation}
     By the Paley-Zygmund inequality, (\ref{540}) and (\ref{542}), we have 
     \begin{equation}\label{543}
     	\mathbb{P}\big(\mathbf{X} \ge c' M^{(\frac{d}{2}-1)\boxdot 2 }  \mid  \mathbf{X}>0 \big) \gtrsim \frac{\big( \mathbb{E}\big[\mathbf{X}   \mid \mathbf{X}>0 \big]\big)^2}{	\mathbb{E}\big[\mathbf{X}^2  \mid \mathbf{X}>0 \big] } \gtrsim 1. 
     \end{equation}
     Combining (\ref{539}) and (\ref{543}), and using Lemma \ref{lemma_213}, we obtain (\ref{534}): 
     \begin{equation}
     	\mathbb{P}\big( \mathbf{X} \ge  c' M^{(\frac{d}{2}-1)\boxdot 2}  \big)  \gtrsim N^{(\frac{d}{2}-1)\boxdot (d-4)}\cdot  \mathbb{P}\big(A\xleftrightarrow{(D)} x_\star\big)\gtrsim  \mathbb{P}\big(  A  \xleftrightarrow{(D)} \partial B(N) \big).  
     \end{equation}


  In what follows, we establish (\ref{535}) and (\ref{536}).

 \textbf{Proof of (\ref{535}).} Recall that $v_e^+$ and $v_e^-$ denote the two trisection points of $I_e$. For any $e\in \mathfrak{D}$, we define the event 
 \begin{equation}
 	\widehat{\mathsf{C}}_{e}:= \big\{ v_e^+\xleftrightarrow{(D)} A \Vert v_e^- \xleftrightarrow{(D)} \partial B(N)   \big\}\cap \big\{ \mathcal{C}_{v_e^-}^{D}\ \text{is good} \big\} . 
 \end{equation}
 Note that on the event $\widehat{\mathsf{C}}_{e}$, one has $\mathfrak{L}_e=0$; in addition, if adding a single loop into $\mathfrak{L}_e$, then that loop becomes pivotal and hence $\widehat{\mathsf{L}}_e$ occurs. Consequently, 
 \begin{equation}\label{new545}
 	\begin{split}
 		\mathbb{P}\big(\widehat{\mathsf{L}}_e \big)  \ge  \mathbb{P}\big(\widehat{\mathsf{C}}_e \big)\cdot \frac{ \mathbb{P}\big( |\mathfrak{L}_e| =1\big)}{ \mathbb{P}\big(\mathfrak{L}_e =0  \big)}\gtrsim  \mathbb{P}\big(\widehat{\mathsf{C}}_e \big). 
 	\end{split}
 \end{equation}  
 By the restriction property, and Lemmas \ref{lemma_new_stable} and \ref{lemma_new_decompose_point_to_set}, we have   \begin{equation}\label{new546}
 	\begin{split}
 		\mathbb{P}\big(\widehat{\mathsf{C}}_e \big) \overset{}{=} & \mathbb{E}\Big[ \mathbbm{1}_{v_e^- \xleftrightarrow{(D)} \partial B(N), \mathcal{C}_{v_e^-}^D\ \text{is good} }\cdot \mathbb{P}\big( v_e^+\xleftrightarrow{(D\cup \mathcal{C}_{v_e^- })} A \big)  \Big]\\
 		\overset{(\text{Lemma}\ \ref{lemma_new_decompose_point_to_set})}{\gtrsim } &M^{d-2} \cdot \mathbb{E}\Big[ \mathbbm{1}_{v_e^- \xleftrightarrow{(D)} \partial B(N), \mathcal{C}_{v_e^-}^D\ \text{is good} } \cdot \mathbb{P}\big( A \xleftrightarrow{(D\cup \mathcal{C}_{v_e^- })}x_{ \spadesuit} \big)  \\
 	&\ \ \ \ \ \ \ \ \ \ \ \ \ 	\cdot \Big( \mathbb{P}\big( v_e^+\xleftrightarrow{(D\cup \mathcal{C}_{v_e^- })} x_{ \spadesuit} \big)- \tfrac{C'[\mathrm{diam}(A)]^{d-4}}{M^{d-4}|v_e^+|^{d-2}}\cdot \mathbbm{1}_{d\ge 7} \Big)\Big] \\
	\overset{(\text{Lemma}\ \ref{lemma_new_stable}),(\ref{one_arm_high})}{\gtrsim }  & M^{d-2}\cdot  \mathbb{P}\big( A \xleftrightarrow{(D)}x_{ \spadesuit} \big)\cdot \Big[  \mathbb{P}\big(  \widehat{\mathsf{C}}_e' \big)- \tfrac{C''[\mathrm{diam}(A)]^{d-4}}{N^2M^{2d-6}}\cdot \mathbbm{1}_{d\ge 7}\Big],  
 	\end{split}
 \end{equation}  
   where $c_{ \spadesuit}\in (c,c_\dagger)$ (recall that $c$ comes from the assumption $A,D\subset \widetilde{B}(cM)$), $x_{ \spadesuit} $ is an arbitrary point in $\partial B(c_{ \spadesuit}M)$, and $\widehat{\mathsf{C}}_e'$ is defined by 
 \begin{equation}
 	\widehat{\mathsf{C}}_e':= \big\{  v_e^+\xleftrightarrow{(D)}  x_{ \spadesuit}   \Vert v_e^- \xleftrightarrow{(D)} \partial B(N) \big\} \cap  \big\{ \mathcal{C}_{v_e^-}^D\ \text{is good} \big\}. 
 \end{equation}  
 Note that in (\ref{new546}), the occurrence of $\{ \mathcal{C}_{v_e^-}^D\ \text{is good}\}$ is necessary for the application of Lemmas \ref{lemma_new_stable} and \ref{lemma_new_decompose_point_to_set}.

 Next, we estimate the probability of $\widehat{\mathsf{C}}_e'$ through the second moment method. Consider the quantity 
 \begin{equation}
 	\mathbf{Y}:= \sum\nolimits_{y\in \partial B(N)} \mathbbm{1}_{ \widetilde{\mathsf{C}}''_{e,y}}, 
 \end{equation}
 where $\widetilde{\mathsf{C}}''_{e,y}:=\big\{  v_e^+\xleftrightarrow{(D)}  x_{ \spadesuit}   \Vert v_e^- \xleftrightarrow{(D)} y \big\} \cap  \big\{ \mathcal{C}_{v_e^-}^D\ \text{is good} \big\}$. In fact, conditioned on $\big\{v_e^- \xleftrightarrow{(D)} y\big\}$, the event $\big\{ \mathcal{C}_{v_e^-}^D\ \text{is good} \big\}$ occurs with high probability. Precisely, since $w_\dagger\in \partial B(\frac{M}{2})$ and $v_e^-\in \widetilde{B}_{w_\dagger}(\frac{M}{\Cref{const_boundary1}^2})$, one has $v_e^-\in [\widetilde{B}(\frac{M}{3})]^c$. Thus, by \cite[Lemma 4.6]{inpreparation_twoarm}, under this conditioning, the probability of $\big\{v_e^- \xleftrightarrow{(D)} B(c_\dagger M)\big\}$ uniformly converges to zero as $c_\dagger\to 0$. Meanwhile, the probability of the absence of the additional condition (\ref{add_require_540}) for $d\ge 7$ also uniformly converges to zero as $C_{\ddagger}\to \infty$, following from Markov's inequality and the lemma below:
  \begin{lemma}\label{lemma_highd_volume}
 	For any $d\ge 7$, for any $y\in \partial B(N)$, 
 	\begin{equation}
 		\mathbb{E}\Big[ \sum\nolimits_{z\in \mathbb{Z}^d} \mathbbm{1}_{z\xleftrightarrow{(D )} v_e^- } (|z|+1)^{2-d} \mid v_e^- \xleftrightarrow{(D)} y  \Big] \lesssim N^{6-d}. 
 	\end{equation}
 \end{lemma}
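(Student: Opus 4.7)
The plan is to express the conditional expectation as a ratio: writing
\begin{equation*}
\mathbb{E}\Big[\sum_{z\in\mathbb{Z}^d}\mathbbm{1}_{z\xleftrightarrow{(D)}v_e^-}(|z|+1)^{2-d}\ \Big|\ v_e^-\xleftrightarrow{(D)}y\Big]=\frac{\sum_{z}(|z|+1)^{2-d}\,\mathbb{P}\bigl(z\xleftrightarrow{(D)}v_e^-,\ v_e^-\xleftrightarrow{(D)}y\bigr)}{\mathbb{P}\bigl(v_e^-\xleftrightarrow{(D)}y\bigr)},
\end{equation*}
and noting that $|v_e^-{-}y|\asymp N$ while $D\subset\widetilde{B}(cM)$ sits far from $y$, combining (\ref{211}), (\ref{for2.16}) and Lemma \ref{lemma_new_stable} yields $\mathbb{P}(v_e^-\xleftrightarrow{(D)}y)\asymp N^{2-d}$. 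It therefore suffices to show the numerator is $\lesssim N^{4-2d}$.

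To bound the joint probability I would apply the switching identity (Lemma \ref{lemma_switching}) to $\{v_e^-\xleftrightarrow{(D)}y\}$. Conditional on the local times $a,b$ at $v_e^-,y$, which integrate to bounded factors in view of $\widetilde{G}_D(v_e^-,v_e^-),\widetilde{G}_D(y,y)\asymp 1$, the cluster $\widecheck{\mathcal{C}}$ containing $v_e^-$ decomposes as the union of the four independent components $\mathcal{P}^{(1)},\dots,\mathcal{P}^{(4)}$. The event $\{z\in\widecheck{\mathcal{C}}\}$ is then bounded by a union bound over the four components reaching $z$; for each, Lemma \ref{lemma_for_new62} together with the strong Markov property of the Brownian excursion measures $\mathbf{e}^D_{v_e^-,v_e^-}$, $\mathbf{e}^D_{y,y}$ and $\mathbf{e}^D_{v_e^-,y}$ produces a bound in terms of Green's functions. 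Assembled, this gives the tree-graph-type estimate
\begin{equation*}
\mathbb{P}\bigl(z\xleftrightarrow{(D)}v_e^-,\ v_e^-\xleftrightarrow{(D)}y\bigr)\lesssim \widetilde{G}_D(v_e^-,z)\,\widetilde{G}_D(z,y)+\widetilde{G}_D(v_e^-,y)\,\widetilde{G}_D(y,z).
\end{equation*}

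Substituting and using $\widetilde{G}_D(a,b)\asymp(|a-b|+1)^{2-d}$, (\ref{computation_d-a}) and the convolution asymptotic $\sum_{w}(|w|+1)^{2-d}(|w-x|+1)^{2-d}\asymp(|x|+1)^{4-d}$ (valid for $d>4$), the second term of the triangle bound contributes $G(v_e^-,y)\cdot(G\ast G)(0,y)\asymp N^{2-d}\cdot N^{4-d}=N^{6-2d}$, within target. For the first term, I would partition $z$ into the four regions $|z|\le M/10$, $M/10\le|z|\le N/10$, $|z|\asymp N$ and $|z|\gg N$; the contributions away from the origin each evaluate to $\lesssim N^{6-2d}$ by direct radial integration of the three-factor Green's function.

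The main obstacle is the near-origin region $|z|\le M/10$: the plain tree-graph bound contributes $\asymp M^{4-d}N^{2-d}$ to the numerator, i.e.\ $\asymp M^{4-d}$ after dividing by the denominator, which exceeds the target $N^{6-d}$ when $M\ll N^{(d-6)/(d-4)}$. To close this gap, I would refine the analysis of the components $\mathcal{P}^{(2)}$ and $\mathcal{P}^{(4)}$ whose paths must visit both the small ball $B(M/10)$ near the origin and the far boundary $\partial B(N)$: applying Lemma \ref{lemma_for_new62} at two nested scales (first with $r\asymp M$, $R=N$, then splitting the excursion at $\partial B(M)$ via the strong Markov property) and invoking the high-dimensional crossing estimate (\ref{crossing_high}) together with $\mathrm{cap}(\widetilde{B}(M))\asymp M^{d-2}$ from (\ref{cap2}), one gains an extra factor of order $(M/N)^{d-6}$ arising from the two-arm-type suppression for the excursion to traverse from the origin region out to $\partial B(N)$. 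Multiplied into $M^{4-d}$, this produces $M^{4-d}\cdot (M/N)^{d-6}=M^{-2}N^{6-d}\lesssim N^{6-d}$ uniformly in $M\ge 1$, giving the desired bound.
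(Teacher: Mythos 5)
The paper's proof first drops $D$ via $\widetilde{\mathcal{L}}_{1/2}^D \le \widetilde{\mathcal{L}}_{1/2}$, reducing the target to $\sum_z |z|^{2-d}\,\mathbb{P}(y \xleftrightarrow{} z,\, y \xleftrightarrow{} v_e^-) \lesssim N^{8-2d}$, and then applies a tree-expansion argument: on the event there exists a branching loop meeting three lattice balls $\widetilde{B}_{w_1}(1),\widetilde{B}_{w_2}(1),\widetilde{B}_{w_3}(1)$ with disjoint branches to $y$, $z$, $v_e^-$, and the BKR inequality plus a three-ball loop-intersection estimate yield the six-factor ``triangle'' bound $\sum_{z,w_1,w_2,w_3}|w_1-w_2|^{2-d}|w_2-w_3|^{2-d}|w_3-w_1|^{2-d}|z|^{2-d}|w_1-y|^{2-d}|w_2-z|^{2-d}|w_3-v_e^-|^{2-d}$, which is then closed by convolution identities. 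Your route via the switching identity is genuinely different.

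The central gap in your argument is the intermediate bound
\begin{equation*}
\mathbb{P}\big(z\xleftrightarrow{(D)}v_e^-,\ v_e^-\xleftrightarrow{(D)}y\big)\lesssim \widetilde{G}_D(v_e^-,z)\widetilde{G}_D(z,y)+\widetilde{G}_D(v_e^-,y)\widetilde{G}_D(y,z).
\end{equation*}
This is a two-chain bound and is not a valid upper bound for a three-point function in high dimensions: the correct shape, which the paper's branching-loop argument in fact produces, is a triangle bound of the form $\sum_w G(v_e^-,w)G(w,z)G(w,y)$ with a sum over the branching vertex $w$. Your display only keeps $w\in\{z,y\}$ and discards all other branching locations, in particular $w$ near $v_e^-$. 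Concretely, taking $r:=|z-v_e^-|\ll N$, the contribution from $w$ near $v_e^-$ is $\asymp r^{4-d}N^{2-d}$ while your expression gives only $\asymp r^{2-d}N^{2-d}$, and the true three-point function is indeed of order $r^{4-d}N^{2-d}$ (given $\{z\leftrightarrow v_e^-\}$ the cluster has diameter $\gtrsim r$, hence capacity $\asymp r^2$ for $d>6$, so it reaches $y$ with probability $\asymp r^2N^{2-d}$). Thus the sum you compute afterwards does not control the numerator. Two secondary problems also afflict the sketch that supposedly derives this inequality. First, the event $\{z\in\widecheck{\mathcal{C}}\}$ is not contained in the union over $i\in\{1,\dots,4\}$ of ``$z$ reached by $\mathcal{P}^{(i)}$ alone'': $z$ may be reached by a chain crossing several of the four components, so a union bound does not apply. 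Second, Lemma \ref{lemma_for_new62}, which you invoke, is stated only for $3\le d\le 5$ and cannot be used here, where $d\ge 7$.

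In view of this, the ``near-origin obstacle'' and the proposed two-nested-scale fix are hard to assess: they are built around the incorrect chain bound, and the claimed extra factor $(M/N)^{d-6}$ is asserted rather than derived --- the event in question does not have a two-arm structure at scale $(M,N)$, since the cluster of $v_e^-$ may visit $z$ and $y$ along two separate branches without incurring any such suppression.
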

 The proof of Lemma \ref{lemma_highd_volume} will be provided in Appendix \ref{app_lemma_highd_volume}.

Combining the aforementioned observation (i.e., $\big\{ \mathcal{C}_{v_e^-}^D\ \text{is good} \big\}$ occurs with high probability given $\big\{v_e^- \xleftrightarrow{(D)} y\big\}$) with the rigidity lemma (see \cite[Lemma 4.1]{inpreparation_twoarm}), we obtain that conditioned on $\big\{  v_e^+\xleftrightarrow{(D)}  x_{ \spadesuit}   \Vert v_e^- \xleftrightarrow{(D)} y \big\} $, the probability of $\big\{ \mathcal{C}_{v_e^-}^D\ \text{is good} \big\}$ is bounded away from zero (note that this application of the rigidity lemma relies on the fact that $\big\{ \mathcal{C}_{v_e^-}^D\ \text{is good} \big\}$ is decreasing with respect to the range of $\mathcal{C}_{v_e^-}^D$). I.e., 
\begin{equation}\label{553}
\begin{split}
		\mathbb{P}\big(\widetilde{\mathsf{C}}''_{e,y}  \big) \asymp  \mathbb{P}\big( v_e^+\xleftrightarrow{(D)}  x_{ \spadesuit}   \Vert v_e^- \xleftrightarrow{(D)} y \big) . 
\end{split}
\end{equation}
We take a large constant $C_{\clubsuit}>0$.
 By the restriction property, the right-hand side of (\ref{553}) can be bounded from below by
 \begin{equation}\label{554}
 	\begin{split}
 	& \mathbb{E}\Big[ \mathbbm{1}_{  v_e^+\xleftrightarrow{(D)}  x_{ \spadesuit} ,  \{ v_e^+\xleftrightarrow{(D)} \partial B(C_{ \clubsuit}M)   \}^c  } \cdot \mathbb{P}\big(v_e^- \xleftrightarrow{(D\cup \mathcal{C}_{v_e^+})} y   \big)  \Big]\\
 	\overset{(\text{Lemma}\ \ref{lemma_point_to_set})}{\asymp} &\big( \tfrac{M}{N} \big)^{d-2}\cdot  \mathbb{E}\Big[ \mathbbm{1}_{  v_e^+\xleftrightarrow{(D)}  x_{ \spadesuit} ,  \{ v_e^+\xleftrightarrow{(D)} \partial B(C_{ \clubsuit}M)   \}^c  } \cdot \mathbb{P}\big(v_e^- \xleftrightarrow{(D\cup \mathcal{C}_{v_e^+})} z_{\clubsuit}   \big)  \Big]\\
 	\asymp & \big( \tfrac{M}{N} \big)^{d-2}\cdot  \mathbb{P}\big( \big\{v_e^+\xleftrightarrow{(D)}  x_{ \spadesuit}\Vert v_e^- \xleftrightarrow{(D )} z_{\clubsuit}   \big\} \cap \{ v_e^+\xleftrightarrow{(D)} \partial B(C_{ \clubsuit}M)   \}^c \big). 
 	\end{split}
 \end{equation}
 Here $z_{\clubsuit}$ is an arbitrarily point in $\partial B(C_{\clubsuit}^2M)$. Employing the same argument as in (\ref{553}), and using (\ref{four_point_use}), one has 
 \begin{equation}\label{555}
 	\begin{split}
 & 	\mathbb{P}\big( \big\{v_e^+\xleftrightarrow{(D)}  x_{ \spadesuit}\Vert v_e^- \xleftrightarrow{(D )} z_{\clubsuit}   \big\} \cap \{ v_e^+\xleftrightarrow{(D)} \partial B(C_{ \clubsuit}M)   \}^c \big) \\
 	\asymp &	\mathbb{P}\big( \big\{v_e^+\xleftrightarrow{(D)}  x_{ \spadesuit}\Vert v_e^- \xleftrightarrow{(D )} z_{\clubsuit}   \big\}   \big) \asymp    M^{-[ (\frac{3d}{2}-1) \boxdot (2d-4)] }. 
 	\end{split}
 \end{equation}
 Combining (\ref{553}),  (\ref{554}) and (\ref{555}), we obtain 
 \begin{equation}\label{556}
 	\mathbb{E}\big[\mathbf{Y} \big] = \sum\nolimits_{y\in \partial B(N) }\mathbb{P}\big( \widetilde{\mathsf{C}}''_{e,y} \big)  \asymp  \ M^{-[ (\frac{d}{2}+1) \boxdot (d-2)] }N. 
 \end{equation}

 Next, we estimate the second moment of $\mathbf{Y}$ for the cases $3\le d\le 5$ and $d\ge 7$ separately. Note that $\mathbb{E}[\mathbf{Y}^2]=\sum\nolimits_{y_1,y_2\in \partial B(N)}\mathbb{P}\big( 	\widetilde{\mathsf{C}}''_{e,y_1}\cap  	\widetilde{\mathsf{C}}''_{e,y_2}\big)$, and that 
 \begin{equation}\label{revise_new_560}
 	\mathbb{P}\big(  	\widetilde{\mathsf{C}}''_{e,y_1}\cap  	\widetilde{\mathsf{C}}''_{e,y_2} \big) \lesssim  \mathbb{I}_{y_1,y_2}:= \mathbb{P}\big( \big\{  v_e^+\xleftrightarrow{(D)}  x_{ \spadesuit} \big\}   \Vert \big\{ v_e^- \xleftrightarrow{(D)} y_1,v_e^- \xleftrightarrow{(D)} y_2 \big\} \big). 
 \end{equation}

%

 \textbf{When $3\le d\le 5$.} For any $y_1,y_2\in \partial B(N)$, for the same reason as in \cite[(4.104)]{inpreparation_twoarm}, 
 \begin{equation}\label{later558}
 	\begin{split}
 		\mathbb{I}_{y_1,y_2} \lesssim &  N^{d-2}\cdot \mathbb{P}\big(v_e^- \xleftrightarrow{} y_1,v_e^- \xleftrightarrow{ } y_2  \big) \cdot \mathbb{P}\big(   v_e^+\xleftrightarrow{(D)}  x_{ \spadesuit}    \Vert   v_e^- \xleftrightarrow{(D)} y_1  \big)\\
 		\lesssim &  (|y_1-y_2|+1)^{-\frac{d}{2}+1}\cdot \mathbb{P}\big(   v_e^+\xleftrightarrow{(D)}  x_{ \spadesuit}    \Vert   v_e^- \xleftrightarrow{(D)} y_1  \big),
 		 	\end{split}
 \end{equation}
 where in the last inequality we used \cite[Lemma 5.3]{cai2024quasi}. Meanwhile, by the restriction property, Corollary \ref{newcoro_29} and (\ref{four_point_use}), we have 
 \begin{equation}\label{later559}
 	\begin{split}
 	& 	 \mathbb{P}\big(   v_e^+\xleftrightarrow{(D)}  x_{ \spadesuit}    \Vert   v_e^- \xleftrightarrow{(D)} y_1  \big) 
 		\overset{}{ =}  \mathbb{E}\Big[\mathbbm{1}_{v_e^+\xleftrightarrow{(D)}  x_{ \spadesuit} } \cdot \mathbb{P}\big(v_e^- \xleftrightarrow{(D\cup \mathcal{C}_{v_e^+ })} y_1   \big)  \Big]\\
 		\overset{(\text{Corollary}\ \ref{newcoro_29})}{ \lesssim} & N^{2-d}M^{-1} \sum_{z\in \partial \mathcal{B}(M)} \mathbb{P}\big(   v_e^+\xleftrightarrow{(D)}  x_{ \spadesuit}    \Vert   v_e^- \xleftrightarrow{(D)} z  \big)  
 	\overset{(\ref{four_point_use})}{  \lesssim}   M^{-\frac{d}{2}-1}N^{2-d}. 
 		 \end{split}
 \end{equation}
Combining (\ref{revise_new_560}), (\ref{later558}) and (\ref{later559}), we obtain 
\begin{equation}\label{newrevise_563}
	\begin{split}
			\mathbb{E}[\mathbf{Y}^2] \lesssim  M^{-\frac{d}{2}-1}N^{2-d} \sum\nolimits_{y_1,y_2\in \partial B(N) }  (|y_1-y_2|+1)^{-\frac{d}{2}+1} \overset{(\ref{computation_d-a})}{\lesssim } M^{-\frac{d}{2}-1}N^{\frac{d}{2}+1}, 
	\end{split}
\end{equation}
 where the last inequality follows from a direct computation (see e.g., \cite[(4.4)]{cai2024high}).

 \textbf{When $d\ge 7$.} By the BKR inequality (see e.g., \cite[Lemma 3.3]{cai2024high}) and (\ref{two-point1}),  
 \begin{equation}
 	\begin{split}
 		\mathbb{I}_{y_1,y_2} \lesssim  M^{2-d}\cdot  \mathbb{P}\big(  v_e^- \xleftrightarrow{ } y_1,v_e^- \xleftrightarrow{ } y_2  \big).
 	\end{split}
 \end{equation}
 Summing over all $y_1,y_2\in \partial B(N)$, and using \cite[Lemma 4.1]{cai2024high}, we get 
 \begin{equation}\label{newrevise_565}
 	\mathbb{E}[\mathbf{Y}^2] \lesssim M^{2-d} N^4.
 \end{equation}

 We are now ready to apply the Paley-Zygmund inequality in both regimes $3\le d\le 5$ and $d\ge 7$. Since $\{\mathbf{Y}>0\}\subset \widehat{\mathsf{C}}_e'$, we have 
\begin{equation}\label{new548}
	\begin{split}
		\mathbb{P}\big(\widehat{\mathsf{C}}_e' \big) \ge \mathbb{P}\big(\mathbf{Y}>0 \big) \gtrsim  \frac{\big( \mathbb{E}[ \mathbf{Y} ]\big)^2 }{\mathbb{E}\big[ \mathbf{Y}^2\big]}  \overset{(\ref{556}),(\ref{newrevise_563}),(\ref{newrevise_565})}{\gtrsim} M^{-[(\frac{d}{2}+1)\boxdot (d-2)]}N^{-[(\frac{d}{2}-1)\boxdot 2]}. 
	\end{split}
\end{equation}
Consequently, when $d\ge 7$, since $A\subset \widetilde{B}(cN)$, one may choose a sufficiently small $c>0$ such that 
\begin{equation}\label{newnew548}
	\tfrac{C''[\mathrm{diam}(A)]^{d-4}}{N^2M^{2d-6}} \le \tfrac{1}{2} \cdot  \mathbb{P}\big(\widehat{\mathsf{C}}_e' \big). 
\end{equation}
By plugging (\ref{new548}) and (\ref{newnew548}) into (\ref{new546}), we have 
 \begin{equation}
 \begin{split}
 	 	\mathbb{P}\big(\widehat{\mathsf{C}}_e \big) \gtrsim &  M^{-[(3-\frac{d}{2})\boxdot 0] }N^{-[(\frac{d}{2}-1)\boxdot 2]}\cdot \mathbb{P}\big(A\xleftrightarrow{(D)} x_{ \spadesuit} \big)\\
 	 	\overset{(\text{Lemma}\ \ref{lemma_point_to_set})}{\asymp } & M^{-[(\frac{d}{2}+1)\boxdot (d-2)] }N^{(\frac{d}{2}-1)\boxdot (d-4)}\cdot \mathbb{P}\big(A\xleftrightarrow{(D)} x_\star \big).
 \end{split}	
 \end{equation}
   This together with (\ref{new545}) implies the estimate (\ref{535}).


%
%
%
%
%
%
%
%

  \textbf{Proof of (\ref{536}).} We denote by $\Psi_{e,e'}$ the collection of all quadruples $(x_1,y_1,x_2,y_2)$ such that $x_1\neq y_1$ are the endpoints of $e$ and $x_2\neq y_2$ are the endpoints of $e'$. On $\widehat{\mathsf{L}}_e\cap \widehat{\mathsf{L}}_{e'}$, both $\mathfrak{L}_e$ and $\mathfrak{L}_{e'}$ are non-zero. In addition, after removing all loops in $\mathfrak{L}_e+\mathfrak{L}_{e'}$, there exists $\psi=(x_1,y_1,x_2,y_2)\in \Psi_{e,e'}$ such that the event  
    \begin{equation}
  	\begin{split}
  		\overline{\mathsf{C}}
  		_{\psi}:= \big\{ x_1 \xleftrightarrow{(D)} A \Vert x_2\xleftrightarrow{(D)} \partial B(N)  \Vert y_1\xleftrightarrow{(D)} y_2  \big\}\cap \big\{ \{x_2,y_2\} \xleftrightarrow{(D)} B(c_\dagger M) \big\}^c
  	\end{split}
  \end{equation}
   occurs. Therefore, applying the union bound and following the same reasoning as in (\ref{5.4}), one obtains
 \begin{equation}\label{use566}
 	\begin{split}
 		\mathbb{P}\big( \widehat{\mathsf{L}}_e\cap \widehat{\mathsf{L}}_{e'}\big) \lesssim \sum\nolimits_{ \psi\in \Psi_{e,e'}} \mathbb{P}\big( \overline{\mathsf{C}}
  		_{\psi} \big)\cdot \frac{\mathbb{P}(\mathfrak{L}_e\neq 0, \mathfrak{L}_{e'}\neq 0)}{\mathbb{P}(\mathfrak{L}_e=  \mathfrak{L}_{e'}= 0)} \asymp  \sum\nolimits_{ \psi\in \Psi_{e,e'}} \mathbb{P}\big( \overline{\mathsf{C}}
  		_{\psi} \big). 
 	\end{split}
 \end{equation} 
 Subsequently, we estimate the probability of $\overline{\mathsf{C}}
  		_{\psi}$ for $3\le d\le 5$ and $d\ge 7$ separately.


 \textbf{When $3\le d \le 5$.} For any subset $D'\subset \widetilde{\mathbb{Z}}^d$ satisfying $D\subset D'$ and $(D'\setminus D)\cap B(c_\dagger M)=\emptyset$, by Lemmas \ref{lemma_stability_bd}, \ref{lemma_213} and \ref{lemma_quasi}, we have (recalling that $x_\star\in \partial B(N)$)
 \begin{equation}\label{567}
 	\begin{split}
 			\mathbb{P}\big(  x_1 \xleftrightarrow{(D')} A \big) \overset{(\text{Lemma}\ \ref{lemma_quasi})}{\asymp} &\mathbb{P}\big( A \xleftrightarrow{(D)} \partial B(c_{ \spadesuit}M)  \big) \cdot  \mathbb{P}\big(  x_1\xleftrightarrow{(D'\setminus D)} \partial B(c_{ \spadesuit}M) \big) \\
 			\overset{(\text{Lemmas}\ \ref{lemma_stability_bd}\ \text{and}\  \ref{lemma_213})}{\lesssim }  & N^{d-2} \cdot  \mathbb{P}\big( A \xleftrightarrow{(D)} x_\star  \big) \cdot  \mathbb{P}\big(  x_1 \xleftrightarrow{(D')} x_{\spadesuit}   \big). 
 	 	\end{split}
 \end{equation}
By the restriction property and (\ref{567}), one has  
 \begin{equation}\label{good568}
 	\begin{split}
 		\mathbb{P}\big(\overline{\mathsf{C}}_{\psi} \big) =&  \mathbb{E}\Big[ \mathbbm{1}_{ \{ x_2\xleftrightarrow{(D)} \partial B(N)  \Vert y_1\xleftrightarrow{(D)} y_2 \}\cap   \{ \{x_2,y_2\} \xleftrightarrow{(D)} B(c_\dagger M)  \}^c}   \mathbb{P}\big(  x_1 \xleftrightarrow{(D\cup \mathcal{C}_{\{x_2,y_2\}})} A \big)  \Big]  \\
 \overset{(\ref{567})}{	\lesssim  } & N^{d-2} \cdot  \mathbb{P}\big( A \xleftrightarrow{(D)} x_\star  \big) \cdot    \mathbb{P}\big( \overline{\mathsf{C}}
  		_{\psi}'\big),
 	\end{split}
 \end{equation}   
 where the event $\overline{\mathsf{C}}_{\psi}'$ is defined by $\{ x_2\xleftrightarrow{(D)} \partial B(N)\Vert    x_1 \xleftrightarrow{(D)} x_{\spadesuit} \Vert y_1\xleftrightarrow{(D)} y_2 \}$. Moreover, by applying Corollary \ref{newcoro_29} and Lemma \ref{lemma_relation}, we have 
 \begin{equation}\label{good569}
 	\begin{split}
 		\mathbb{P}\big( \overline{\mathsf{C}}_{\psi}'\big)  =&  \mathbb{E}\Big[  \mathbbm{1}_{x_1 \xleftrightarrow{(D)} x_{\spadesuit} \Vert y_1\xleftrightarrow{(D)} y_2} \cdot \mathbb{P}\big(x_2\xleftrightarrow{(D\cup \mathcal{C}_{\{x_1,y_1\}})} \partial B(N)  \big) \Big]\\
 		\overset{ (\text{Lemma}\ \ref{lemma_relation})}{\lesssim } & N^{-\frac{d}{2}} \sum\nolimits_{v\in \partial \mathcal{B}(d^{-1}N)}\mathbb{E}\Big[  \mathbbm{1}_{x_1 \xleftrightarrow{(D)} x_{\spadesuit} \Vert y_1\xleftrightarrow{(D)} y_2} \cdot \mathbb{P}\big(x_2\xleftrightarrow{(D\cup \mathcal{C}_{\{x_1,y_1\}})} v \big) \Big] \\
 			\overset{(\text{Corollary}\ \ref{newcoro_29} )}{\lesssim } &M^{-1}  N^{-\frac{d}{2}+1 } \sum\nolimits_{w\in \partial \mathcal{B}(M)} \mathbb{P}\big(  \overline{\mathsf{C}}_{\psi,w}''  \big), 
 	\end{split}
 \end{equation}
 where we denote $\overline{\mathsf{C}}_{\psi,w}'':=\big\{x_1 \xleftrightarrow{(D)} x_{\spadesuit} \Vert x_2\xleftrightarrow{(D)} w   \Vert y_1\xleftrightarrow{(D)} y_2\big\}$.

 When $r:=|y_1-y_2|\lesssim 1$, it follows from (\ref{four_point_use}) that 
 \begin{equation}
 	\mathbb{P}\big(  \overline{\mathsf{C}}_{\psi,w}''  \big) \le \mathbb{P}\big(x_1 \xleftrightarrow{(D)} x_{\spadesuit} \Vert x_2\xleftrightarrow{(D)} w     \big)  \lesssim M^{-\frac{3d}{2}+1}. 
 \end{equation}
 Next, we estimate the probability of $\overline{\mathsf{C}}_{\psi,w}''$ under the assumption that $r$ is sufficiently large. Recall the events $\mathsf{A}_\diamond^{\cdot}(\cdot)$ for $\diamond \in \{\mathrm{I},\mathrm{II},\mathrm{III} \}$ at the beginning of Section \ref{section_proof_two_branch}. For any $D'\subset \widetilde{\mathbb{Z}}^d$, Corollary \ref{lemma_cut_two_points} implies that  
\begin{equation}\label{addto570}
\begin{split}
		& \mathbb{P}\big( y_1\xleftrightarrow{(D')} y_2\big) \\
		 \lesssim &r^{-d}   \sum_{z_1\in \partial \mathcal{B}_{x_1}(\frac{r}{\Cref{const_boundary1}}),z_2 \in \partial \mathcal{B}_{x_2}(\frac{r}{\Cref{const_boundary1}})}   \mathbb{P}\big(\mathsf{A}_{\mathrm{I}}^{D'}(x_1,z_1; r), \mathsf{A}_{\mathrm{I}}^{D'}(x_2,z_2; r) \big).  
\end{split}
\end{equation}
 Meanwhile, it follows from Corollary \ref{coro28} (with $R_1=0$) that 
 \begin{equation}\label{addto571}
 	\begin{split}
 			\mathbb{P}\big( &		x_1\xleftrightarrow{(D')}x_{\spadesuit} \big) \lesssim  r^{-d}\sum\nolimits_{z_3\in  \partial \mathcal{B}_{x_1}(\frac{2r}{\Cref{const_boundary1}}) ,z_4 \in \partial \mathcal{B}_{x_1}(\Cref{const_boundary1}r) } \\
 			&\ \ \ \ \ \ \ \ \ \ \ \ \ \ \ \ \ \ \ \ \ \ \ \ \  \mathbb{P}\big( \mathsf{A}_{\mathrm{I}}^{D'}(x_1,z_3; r), \mathsf{A}_{\mathrm{III}}^{D'}(x_1,x_{\spadesuit}, z_4 ;  100\Cref{const_boundary1}r ) \big), 
 	\end{split}
 \end{equation}
  \begin{equation}\label{addto572}
 	\begin{split}
 			\mathbb{P}\big( 	&	x_2\xleftrightarrow{(D')} w \big)  \lesssim r^{-d}  \sum\nolimits_{z_5\in  \partial \mathcal{B}_{x_2}(\frac{2r}{\Cref{const_boundary1}})  ,z_6 \in \partial \mathcal{B}_{x_2}(2\Cref{const_boundary1}r)  } \\
 			&\ \ \ \ \ \ \ \ \ \ \ \ \ \ \ \ \ \ \ \ \ \ \ \ \mathbb{P}\big( \mathsf{A}_{\mathrm{I}}^{D'}(x_2,z_5; r) , \mathsf{A}_{\mathrm{III}}^{D'}(x_1,w, z_6 ;  100\Cref{const_boundary1}r ) \big).  
 	\end{split}
 \end{equation}
 We write $\bar{z}:= (z_1,z_2,z_3,z_4,z_5,z_6)$, and let $\overline{\mathsf{A}}^{D'}_{\bar{z}}$ denote the intersection of all six events on the right-hand sides of (\ref{addto570}), (\ref{addto571}) and (\ref{addto572}). Combining these three inequalities with the restriction property, we have  
 \begin{equation}
 	\begin{split}
 		\mathbb{P}\big( \overline{\mathsf{C}}_{\psi,w}'' \big) \lesssim r^{-3d}\sum\nolimits_{\bar{z}:=(z_1,z_2,z_3,z_4,z_5,z_6)\in \Upsilon  } \mathbb{P}\big( \overline{\mathsf{A}}_{\bar{z}}^D \big), 
 	\end{split}
 \end{equation}
where the set $\Upsilon$ is defined by 
\begin{equation}
	\begin{split}
		\Upsilon:= & \partial \mathcal{B}_{y_1}(\tfrac{r}{\Cref{const_boundary1}})\times \partial \mathcal{B}_{y_2}(\tfrac{r}{\Cref{const_boundary1}}) \times\partial \mathcal{B}_{x_1}(\tfrac{2r}{\Cref{const_boundary1}})\\
		&\times \partial \mathcal{B}_{x_1}(\Cref{const_boundary1}r) \times  \partial \mathcal{B}_{x_2}(\tfrac{2r}{\Cref{const_boundary1}}) \times \partial \mathcal{B}_{x_2}(2\Cref{const_boundary1}r).  
	\end{split}
\end{equation}

 For each $\bar{z}\in \Upsilon$, on $\overline{\mathsf{A}}_{\bar{z}}^D$, the following three independent events occur: 
 \begin{equation}
 	\begin{split}
 		 \mathsf{C}_1:=   \big\{ y_1\xleftrightarrow{(  \partial B_{x_1}(\frac{r}{100}))} z_1 \Vert x_1  \xleftrightarrow{(  \partial B_{x_1}(\frac{r}{100}))} z_3  \big\}, 
 	\end{split}
 \end{equation}
  \begin{equation}
 	\begin{split}
 		 \mathsf{C}_2:=   \big\{   y_2\xleftrightarrow{(  \partial B_{x_2}(\frac{r}{100}))} z_2  \Vert x_2 \xleftrightarrow{(  \partial B_{x_2}(\frac{r}{100}))} z_5   \big\}, 
 	\end{split}
 \end{equation}
  \begin{equation}
 	\begin{split}
 		 \mathsf{C}_3:=   \big\{  x_{\spadesuit}  \xleftrightarrow{(D\cup \partial B_{x_1}(100r ) )} z_4   \Vert w  \xleftrightarrow{(D\cup \partial B_{x_1}(100r ))} z_6 \big\}.  
 	\end{split}
 \end{equation}
 Combined with (\ref{four_point_use}) and $|\Upsilon|\asymp r^{6d-6}$, it yields that 
 \begin{equation}\label{good578}
 	\begin{split}
 		\mathbb{P}\big( \overline{\mathsf{C}}_{\psi,z}'' \big) \lesssim r^{-\frac{d}{2}-1}M^{-\frac{3d}{2}+1}.  
 	\end{split}
 \end{equation}
Putting (\ref{good568}), (\ref{good569}) and (\ref{good578}) together, we obtain 
\begin{equation}\label{use579}
	\mathbb{P}\big(\overline{\mathsf{C}}_{\psi} \big)   \lesssim (rM)^{-\frac{d}{2}-1}N^{\frac{d}{2}-1}\cdot \mathbb{P}\big(A\xleftrightarrow{(D)} x_\star \big). 
\end{equation}

  \textbf{When $d\ge 7$.} Using the BKR inequality, we have 
  \begin{equation}\label{late554}
  	\begin{split}
  		\mathbb{P}\big(\overline{\mathsf{C}}_{\psi} \big) \le & \mathbb{P}\big(x_1 \xleftrightarrow{(D)} A \big) \cdot \mathbb{P}\big(x_2\xleftrightarrow{(D)} \partial B(N)  \big) \cdot \mathbb{P}\big(y_1\xleftrightarrow{(D)} y_2   \big).
  	\end{split}
  \end{equation}
 Note that Lemma \ref{lemma_213} implies  
  \begin{equation}\label{late555}
  		\mathbb{P}\big(x_1 \xleftrightarrow{(D)} A \big) \lesssim M^{2-d}N^{d-2} \cdot \mathbb{P}\big(A\xleftrightarrow{(D)} x_\star \big).
  \end{equation}  
 Meanwhile, using (\ref{one_arm_high}) and (\ref{two-point1}) respectively, one has 
 \begin{equation}\label{late556}
 	\mathbb{P}\big(x_2\xleftrightarrow{(D)} \partial B(N)  \big) \lesssim N^{-2}\ \ \text{and}\ \ \mathbb{P}\big(y_1 \xleftrightarrow{(D)} y_2    \big) \lesssim r^{2-d}. 
 \end{equation}
 Plugging (\ref{late554}) and (\ref{late555}) into (\ref{late556}), we get 
 \begin{equation}\label{use583}
 		\mathbb{P}\big(\overline{\mathsf{C}}_{\psi} \big)   \lesssim (rM)^{2-d} N^{d-4}\cdot \mathbb{P}\big(A\xleftrightarrow{(D)} x_\star \big). 
 \end{equation}
  Combining (\ref{use566}), $|\Psi_{e,e'}|\asymp 1$, (\ref{use579}) and (\ref{use583}), we obtain (\ref{536}).

To sum up, we have completed the proof of Lemma \ref{lemma53}.   \qed

  \section{Typical number of pivotal edges}\label{section_pivotal_dege}

 This section is devoted to proving Theorem \ref{thm_1.5}. We denote by $\mathbf{Piv}_N$ the analogue of $\mathbf{Piv}_N^+$ obtained by replacing $\widetilde{E}^{\ge 0}$ with $\cup \widetilde{\mathcal{L}}_{1/2}$ (i.e., replacing connectivity via GFF level sets to connectivity via loop clusters). Applying the isomorphism theorem, we have 
 \begin{equation}
 	\mathbb{P}\big( \bm{0}\xleftrightarrow{} \partial B(N) \big)=  2  \mathbb{P}\big( \bm{0}\xleftrightarrow{\ge 0} \partial B(N) \big), 
 \end{equation}
 \begin{equation}
 	\mathbb{P}\big(  | \mathbf{Piv}_N   |  \in F \big) = 2\mathbb{P}\big( | \mathbf{Piv}_N^+   |  \in F \big) 
 \end{equation}
 for any non-empty subset $F\subset \mathbb{N}$. Thus, it suffices to show that 
 \begin{equation}
 		\mathbb{P}\big(  | \mathbf{Piv}_N   | \in \big[\cref{const_GFF_pivotal1} N^{(\frac{d}{2}-1)\boxdot 2} ,\Cref{const_GFF_pivotal4} N^{(\frac{d}{2}-1)\boxdot 2} \big] \mid \bm{0}\xleftrightarrow{ } \partial B(N) \big)\ge 1-\epsilon.
 \end{equation}
 To see this, for any $e\in \mathbb{L}^d$, when $e\in \widetilde{\mathbf{Piv}}_N$ occurs, removing all loops in $\mathfrak{L}_e$ from $\widetilde{\mathcal{L}}_{1/2}$ violates the event $\{\bm{0}\xleftrightarrow{} \partial B(N)\}$ (where $\widetilde{\mathbf{Piv}}_N$ and $\mathfrak{L}_e$ are defined in (\ref{close_def_Piv_N}) and (\ref{51}) respectively). As a result, removing the edge $e$ also has the same effect. Thus, one has $\widetilde{\mathbf{Piv}}_N \subset \mathbf{Piv}_N$. Combined with (\ref{54}), it implies that for $\cref{const_GFF_pivotal1}:= \cref{const_pivotal_1}$, 
  \begin{equation}\label{64}
  \begin{split}
  		&  \mathbb{P}\big(  | \mathbf{Piv}_N   | \le \cref{const_GFF_pivotal1} N^{(\frac{d}{2}-1)\boxdot 2}   \mid \bm{0}\xleftrightarrow{ } \partial B(N) \big) \\
  		 \le & \mathbb{P}\big(  |\widetilde{\mathbf{Piv}}_N    | \le \cref{const_GFF_pivotal1} N^{(\frac{d}{2}-1)\boxdot 2}   \mid \bm{0}\xleftrightarrow{ } \partial B(N) \big) \le \tfrac{1}{2}\epsilon. 
  \end{split}
 \end{equation}
 As in the proof of Theorem \ref{thm_pivotal_loop}, we need to bound the first moment of $|\mathbf{Piv}_N |$:

 \begin{lemma}\label{lemma61}
 	 	For any $d\ge 3$ with $d\neq 6$, and any $N\ge 1$, 
 	 \begin{equation}
 	\mathbb{E}\big[ \big|\mathbf{Piv}_N \big|  \mid \bm{0}\xleftrightarrow{ } \partial B(N) \big]  \lesssim   N^{(\frac{d}{2}-1)\boxdot 2}. 
 \end{equation}
 \end{lemma}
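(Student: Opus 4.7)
The plan is to mirror the proof of Lemma~\ref{lemma_first_moment} after splitting
\begin{equation*}
|\mathbf{Piv}_N| \le |\widetilde{\mathbf{Piv}}_N| + |\mathbf{Piv}_N \setminus \widetilde{\mathbf{Piv}}_N|.
\end{equation*}
The first term is bounded directly by Lemma~\ref{lemma_first_moment}, so the remaining task is to show that the contribution of $\mathbf{Piv}_N \setminus \widetilde{\mathbf{Piv}}_N$ to $\mathbb{E}[\,|\mathbf{Piv}_N|\cdot \mathbbm{1}_{\bm{0}\xleftrightarrow{}\partial B(N)}]$ is also $\lesssim N^{(\frac{d}{2}-1)\boxdot 2}\cdot \theta_d(N)$.

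For each $e=\{x,y\}$ with $I_e\subset \widetilde{B}(N)$, the event $\{e\in \mathbf{Piv}_N\setminus \widetilde{\mathbf{Piv}}_N\}$ requires that removing $I_e$ disconnects $\bm{0}$ from $\partial B(N)$ even though no loop in $\mathfrak{L}_e$ is individually pivotal. Exposing the loops intersecting $I_e$ and applying the restriction property, this forces the existence of two distinct loops $\widetilde{\ell}_1,\widetilde{\ell}_2$ of $\widetilde{\mathcal{L}}_{1/2}$ hitting $I_e$ at points $z_1,z_2$ and lying on two distinct loop clusters $\mathcal{C}_1\ni \bm{0}$ and $\mathcal{C}_2\ni w$ for some $w\in \partial B(N)$, where both clusters are built from loops whose ranges avoid the open middle subinterval of $I_e$. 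This is precisely the ``two neighboring large loops in distinct clusters'' scenario flagged below Theorem~\ref{thm_1.5}, and I would bound its probability by $\sum_i \mathbb{P}(\overline{\mathsf{C}}_e^i)$, where $\overline{\mathsf{C}}_e^i$ denotes the full-soup two-arm event from (\ref{5.5}). Once this reduction is in place, $\sum_e\sum_i \mathbb{P}(\overline{\mathsf{C}}_e^i)$ is exactly what the computation (\ref{new58})--(\ref{512}) controls via (\ref{5.7}), producing the desired bound.

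The reduction itself proceeds in the spirit of the decomposition for two coexisting clusters developed in Section~\ref{section_proof_two_branch} for Proposition~\ref{lemma_four_arms}: I would expose $\mathcal{C}_1$, use the restriction property together with the stability estimates of Lemmas~\ref{lemma_stability_bd} and~\ref{lemma_new_stable}, and treat the second arm to $\partial B(N)$ via Corollaries~\ref{newcoro_29} and~\ref{lemma_point_to_boundary}. The main obstacle is exactly this reduction step: one must verify that the loops intersecting the short interval $I_e$ contribute only a bounded multiplicative cost when comparing the restricted two-cluster configuration to the full-soup event $\overline{\mathsf{C}}_e^i$. This relies on the same two-cluster decomposition machinery that drives the proof of Theorem~\ref{thm_minimal_distance}, now applied to the simpler three-point geometry $(\bm{0},I_e,\partial B(N))$ rather than the four-point geometry of Proposition~\ref{lemma_four_arms}.
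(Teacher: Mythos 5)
Your opening split $|\mathbf{Piv}_N|\le|\widetilde{\mathbf{Piv}}_N|+|\mathbf{Piv}_N\setminus\widetilde{\mathbf{Piv}}_N|$ is natural, but the proposed reduction of the remainder to the full-soup two-arm event $\overline{\mathsf{C}}_e^i$ contains a concrete error and, as you yourself note, the reduction is exactly the missing step. The problem is this: the event $\{e\in\mathbf{Piv}_N\}$ already implies $\{\bm{0}\xleftrightarrow{}\partial B(N)\}$, so $\bm{0}$, $x_e^1$, $x_e^2$, and $\partial B(N)$ all lie in a single loop cluster of $\widetilde{\mathcal{L}}_{1/2}$; meanwhile $\overline{\mathsf{C}}_e^i=\big\{x_e^i\xleftrightarrow{}\bm{0}\,\Vert\,x_e^{3-i}\xleftrightarrow{}\partial B(N)\big\}$ demands that the two connections be certified by \emph{disjoint} collections of clusters, which is impossible when one cluster contains all four points. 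So $\{e\in\mathbf{Piv}_N\}\cap\overline{\mathsf{C}}_e^i=\emptyset$, and the bound $\mathbb{P}(e\in\mathbf{Piv}_N\setminus\widetilde{\mathbf{Piv}}_N)\lesssim\sum_i\mathbb{P}(\overline{\mathsf{C}}_e^i)$ can only come from a non-trivial replacement mechanism, not from a pathwise inclusion. In the proof of Lemma~\ref{lemma_first_moment} that replacement worked because $\mathsf{F}_e=\{\mathfrak{L}_e\neq 0\}$ is independent of the remaining configuration; for edge-pivotality the relevant obstruction is a loop visiting \emph{both} endpoints $x_1,x_2$ (passing through $I_e$), which is not in $\mathfrak{L}_e$, is not independent of the rest, and glues the $\bm{0}$-side to the $\partial B(N)$-side in the full soup, defeating the proposed comparison. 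Your sketch (expose one cluster, restriction property, Lemmas~\ref{lemma_stability_bd}, \ref{lemma_new_stable} and Corollaries~\ref{newcoro_29}, \ref{lemma_point_to_boundary}) does not handle these crossing loops at all.

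The paper's actual proof avoids the split altogether. It decomposes each loop hitting $\{x_1,x_2\}$ into excursions of three types at those two lattice points: $\mathfrak{E}^{\mathrm{i}}_j$ (excursions from $x_j$ back to $x_j$ avoiding the interior of $I_e$), $\mathfrak{E}^{\mathrm{ii}}_j$ (excursions from $x_j$ to $x_{3-j}$ avoiding the interior of $I_e$), and $\mathfrak{E}^{\mathrm{iii}}_j$ (excursions inside $I_e$). On the edge-pivotal event $\mathsf{L}_e$ one has $\mathfrak{E}^{\mathrm{ii}}_1=\mathfrak{E}^{\mathrm{ii}}_2=\emptyset$, $\mathsf{L}_e$ is independent of $\mathfrak{E}^{\mathrm{iii}}$, and there is a $j$ for which $\mathfrak{E}^{\mathrm{i}}_j$ hooks to the $\bm{0}$-cluster, $\mathfrak{E}^{\mathrm{i}}_{3-j}$ hooks to the $\partial B(N)$-cluster, and $\mathfrak{E}^{\mathrm{i}}_j\cup\mathcal{C}_1$ is disjoint from $\mathfrak{E}^{\mathrm{i}}_{3-j}\cup\mathcal{C}_2$. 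This is the event $\mathsf{F}_j$ that the proof then estimates, via a multi-scale decomposition of the excursion reaches. Crucially, for $d=3$ one needs the escape estimate of Lemma~\ref{lemma_escape_3d}: conditionally on $\mathfrak{E}^{\mathrm{i}}_1$ reaching scale $R$, the Brownian motion from $x_2$ escapes $\mathfrak{E}^{\mathrm{i}}_1$ before scale $R$ with probability only $O(R^{-1+\delta})$, which is what supplies the missing polynomial factor in the exponent $-\tfrac{d}{2}-1$. This excursion decomposition at $\{x_1,x_2\}$, the vanishing of $\mathfrak{E}^{\mathrm{ii}}$, and the escape estimate are the genuinely new ingredients of the argument, and none of them appear in your sketch.
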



  By Markov's inequality and Lemma \ref{lemma61}, there exists $\Cref{const_GFF_pivotal4}(d,\epsilon)>0$ such that 
  \begin{equation}\label{67}
 	\mathbb{P}\big(  | \mathbf{Piv}_N   | \ge \Cref{const_GFF_pivotal4} N^{(\frac{d}{2}-1)\boxdot 2}   \mid \bm{0}\xleftrightarrow{ } \partial B(N) \big) \le \tfrac{1}{2}\epsilon.
 \end{equation}
 Combining (\ref{64}) and (\ref{67}), we obtain Theorem \ref{thm_pivotal_loop} (assuming Lemma \ref{lemma61}).           \qed



 \vspace{0.2cm}

 The rest of this section is devoted to the proof of Lemma 6.1. We denote by $\mathsf{L}_e$ the event that the edge $e$ is pivotal for $\{\bm{0}\xleftrightarrow{} \partial B(N)\}$. Like in the proof of Lemma \ref{lemma_first_moment}, it is sufficient to prove that for any $e=\{x_1,x_2\} \in \mathbb{L}^d$ with $I_e\subset \widetilde{B}(N)$, 
	\begin{equation} \label{68}
 		  \mathbb{P}\big(\mathsf{L}_e  \big) \lesssim \mathbb{I}_e:= \left\{
\begin{aligned}
&(|x_1|+1)^{-[(\frac{d}{2}+1)\boxdot (d-2)]}N^{-[(\frac{d}{2}-1)\boxdot 2]}      &  x_1\in B(\tfrac{N}{2}); \\ 
&\big(\mathrm{dist}(x_1,\partial B(N) )+1\big)^{-2}N^{2-d}    &   x_1\notin B(\tfrac{N}{2}).
\end{aligned}
\right.
 	\end{equation}
 	In fact, when $|x_1| \land  \mathrm{dist}(x_1,\partial B(N) )\lesssim 1$, the bound (\ref{68}) is straightforward. To be precise, on the event $\mathsf{L}_e$, the set $\{x_1,x_2\}$ are connected to both $\bm{0}$ and $\partial B(N)$ by $\cup \widetilde{\mathcal{L}}_{1/2}$. Therefore, when $|x_1|\lesssim 1$, it follows from (\ref{one_arm_low}) and (\ref{one_arm_high}) that 
 	\begin{equation}
 		  \mathbb{P}\big(\mathsf{L}_e  \big) \le \sum\nolimits_{j\in \{1,2\}} \mathbb{P}\big(x_j \xleftrightarrow{} \partial B(N)  \big) \asymp  N^{-[(\frac{d}{2}-1)\boxdot 2]} \asymp \mathbb{I}_e. 
 	\end{equation}
   Meanwhile, when $\mathrm{dist}(x_1,\partial B(N) )\lesssim 1$, by (\ref{two-point1}) one has 
   \begin{equation}
   	  \mathbb{P}\big(\mathsf{L}_e  \big) \le \sum\nolimits_{j\in \{1,2\}} \mathbb{P}\big(x_j \xleftrightarrow{} \bm{0}  \big) \asymp N^{2-d}  \asymp \mathbb{I}_e. 
   \end{equation}
 	Thus, it remains to establish (\ref{68}) under the assumption that $R_1:=|x_1|$ and $R_2:=\mathrm{dist}(x_1,\partial B(N) )$ are both sufficiently large.

We arbitrarily fix an edge $e$. For each loop $\widetilde{\ell}\in \widetilde{\mathcal{L}}_{1/2}$, it can be decomposed into the following parts: for each $j\in \{1,2\}$, 
\begin{enumerate}

	\item[(i)] the excursions that start from $x_j$, end at $x_j$, and do not touch $I_e$;

	\item[(ii)] the excursions that start from $x_j$, end at $x_{3-j}$, and do not touch $I_e$;

	\item[(iii)] the excursions that start from $x_j$, end at $\{x_1,x_2\}$, and are contained in $I_e$.

\end{enumerate} 
For $\diamond\in \{\mathrm{i},\mathrm{ii},\mathrm{iii} \}$, we define $\mathfrak{E}^{\diamond}_j$ as the union of ranges of excursions in Part ($\diamond$) for all loops $\widetilde{\ell}\in \widetilde{\mathcal{L}}_{1/2}$. In addition, let $\mathcal{C}_1$ (resp. $\mathcal{C}_2$) denote the cluster of $\cup \widetilde{\mathcal{L}}_{1/2}^{\{x_1,x_2\}}$ intersecting $\bm{0}$ (resp. $\partial B(N)$). Observe that on the event $\mathsf{L}_e$, one has $\mathfrak{E}^{\mathrm{ii}}_1=\mathfrak{E}^{\mathrm{ii}}_2=\emptyset$; otherwise, removing the edge $e$ will not change the connectivity between $x_1$ and $x_2$ in the cluster $\mathcal{C}_{\bm{0}}$, and thus $e$ is not pivotal. Moreover, the event $\mathsf{L}_e$ is independent of the excursions in $\mathfrak{E}^{\mathrm{iii}}_1$ and $\mathfrak{E}^{\mathrm{iii}}_2$. Therefore, when $\mathsf{L}_e$ occurs, there exists $j\in \{1,2\}$ such that $\mathfrak{E}^{\mathrm{i}}_j$ intersects $\mathcal{C}_1$, $\mathfrak{E}^{\mathrm{i}}_{3-j}$ intersects $\mathcal{C}_2$, and that $\mathfrak{E}^{\mathrm{i}}_j\cup \mathcal{C}_1$ is disjoint from $\mathfrak{E}^{\mathrm{i}}_{3-j}\cup \mathcal{C}_2$ (we denote this event by $\mathsf{F}_j$). Consequently, in order to verify (\ref{68}), it suffices to show that 
 \begin{equation}\label{69}
 	\mathbb{P}\big(\mathsf{F}_j \big)\lesssim \mathbb{I}_e, \ \ \forall  j\in \{1,2\}. 
 \end{equation}
 Without loss of generality, we only consider the case $j=1$, and write $\mathsf{F}_1$ as $\mathsf{F}$.





 
 We first present the proof of (\ref{69}) for $d\ge 7$, since it is relatively simpler than that of the low-dimensional case $3\le d\le 5$. On the event $\mathsf{F}$, there exist $z_1,z_2\in B(N)$ such that $\widetilde{B}_{z_j}(1) \cap  \mathfrak{E}^{\mathrm{i}}_j\cap \mathcal{C}_j \neq \emptyset$ holds for $j\in \{1,2\}$; moreover, $\mathcal{C}_1$ is disjoint from $\mathcal{C}_2$. In addition, note that $\mathfrak{E}^{\mathrm{i}}_1$ and $\mathfrak{E}^{\mathrm{i}}_2$ are independent of $\mathcal{C}_1$ and $\mathcal{C}_2$. Thus, by the union bound, we have 
 \begin{equation}\label{add610}
 	\begin{split}
 		\mathbb{P}\big( \mathsf{F}\big) \lesssim & \sum\nolimits_{z_1,z_2\in B(N)} \mathbb{P}\big(\widetilde{B}_{z_j}(1)\cap \mathfrak{E}^{\mathrm{i}}_j \neq \emptyset, \forall j\in \{1,2\}   \big)  \\
 		&\cdot \mathbb{P}\big(\widetilde{B}_{z_1}(1)  \xleftrightarrow{(\{x,y\})}   \bm{0}  \Vert  \widetilde{B}_{z_2}(1)  \xleftrightarrow{(\{x,y\})}   \partial B(N)  \big). 
 	\end{split}
 \end{equation}
 By the strong Markov property of Brownian excursion measure, we have  \begin{equation}\label{add611}
 \begin{split}
 	&\mathbb{P}\big(\widetilde{B}_{z_j}(1)\cap \mathfrak{E}^{\mathrm{i}}_j \neq \emptyset, \forall j\in \{1,2\}   \big) \\
 	\lesssim &   \prod\nolimits_{j\in \{1,2\}} \max_{w_j\in \widetilde{\mathbb{Z}}^d: |w_j-x_j|=1} \widetilde{\mathbb{P}}_{w_j}\big( \tau_{\widetilde{B}_{z_j}(1)} < \tau_{\{x_1,x_2\}}= \tau_{x_j} \big)    \\
 	 \lesssim &\prod\nolimits_{j\in \{1,2\}}  (|x_j-z_j|+1)^{4-2d}. 
 \end{split} 	 	 
 \end{equation}
Meanwhile, by the BKR inequality, (\ref{two-point1}) and (\ref{one_arm_high}), we have 
\begin{equation}\label{add612}
	\begin{split}
	&	\mathbb{P}\big(\widetilde{B}_{z_1}(1)  \xleftrightarrow{(\{x,y\})}   \bm{0}  \Vert  \widetilde{B}_{z_2}(1)  \xleftrightarrow{(\{x,y\})}   \partial B(N)  \big) \\
		\overset{(\text{BKR})}{\lesssim } & \mathbb{P}\big(\widetilde{B}_{z_1}(1)  \xleftrightarrow{(\{x,y\})}   \bm{0}    \big)\cdot \mathbb{P}\big(   \widetilde{B}_{z_2}(1)  \xleftrightarrow{(\{x,y\})}   \partial B(N)  \big) \\
		\overset{(\ref{two-point1}),(\ref{one_arm_high})}{\lesssim} &( |z_1|+1)^{2-d} \big(\mathrm{dist}(z_2,\partial B(N))+1\big)^{-2}. 
	\end{split}
\end{equation}
Plugging (\ref{add611}) and (\ref{add612}) into (\ref{add610}), we get 
\begin{equation}\label{to613}
			\mathbb{P}\big( \mathsf{F}\big) \lesssim \mathbb{U}_1\cdot  \mathbb{U}_2,
\end{equation}
where $\mathbb{U}_1$ and $\mathbb{U}_2$ are defined by 
\begin{equation}
	\mathbb{U}_1:= \sum\nolimits_{z_1\in B(N)} (|x_1-z_1|+1)^{4-2d} ( |z_1|+1)^{2-d} ,
\end{equation}
\begin{equation}
	\mathbb{U}_2:= \sum\nolimits_{z_2\in B(N)}(|x_2-z_2|+1)^{4-2d} \big(\mathrm{dist}(z_2,\partial B(N))+1\big)^{-2}. 
\end{equation}
For $\mathbb{U}_1$, recall the basic fact that (see e.g., \cite[(4.9)]{cai2024high}) 
\begin{equation}\label{6-2d_2-d}
	\sum\nolimits_{z\in \mathbb{Z}^d} (|z-v_1|+1)^{6-2d} (|z-v_2|+1)^{2-d}\lesssim  (|v_1-v_2|+1)^{2-d}, \ \forall v_1,v_2\in \widetilde{\mathbb{Z}}^d. 
\end{equation}
This immediately implies that  \begin{equation}\label{616}
	\mathbb{U}_1\le   \sum\nolimits_{z\in B(N)} (|x_1-z|+1)^{6-2d} ( |z|+1)^{2-d} \lesssim  R_1^{2-d}. 
\end{equation} 
For $\mathbb{U}_2$, since $\mathrm{dist}(z_2,\partial B(N))\gtrsim R_2$ for $z\in B_{x_1}(\frac{R_2}{10})$, one has 
 \begin{equation}\label{to617}
 	\begin{split}
 		\mathbb{U}_2 \lesssim &   R_2^{-2}\sum\nolimits_{z_2\in B_{x_1}(\frac{R_2}{10})}(|x_2-z_2|+1)^{4-2d}  \\
 		&  +  \sum_{z_2\in B(N)\setminus B_{x_1}(\frac{R_{2}}{10})}(|x_2-z_2|+1)^{4-2d}\big(\mathrm{dist}(z_2,\partial B(N))+1\big)^{-2}\\
 	\overset{(\ref{computation_d-a})  }{ \lesssim } & R_2^{-2} +  \sum\nolimits_{\frac{R_{2}}{10}  \le k\le N} k^{d-1}\cdot k^{4-2d}\cdot (N-k+1)^{-2} \lesssim R_2^{-2}+R_2^{3-d}\asymp R_2^{-2}.  
 	\end{split}
 \end{equation}
 Inserting (\ref{616}) and (\ref{to617}) into (\ref{to613}), we derive the bound (\ref{69}) for $d\ge 7$.

  We now turn to the case when $3\le d \le 5$. Take a large constant $\Cl\label{const_last_section}>0$, and then for each $j \in \{1,2\}$, we define   
 \begin{equation}
 	k_{j}:= \min\{k\ge 1: r_{k+3}^{(j)} \ge R_j\},
 \end{equation}
 where $r_{i}^{(j)}:=(\Cref{const_last_section})^{i+\frac{j}{2}}$ for $i\in \mathbb{N}$. We may assume that $k_1\land k_2\ge 10$, as $R_1$ and $R_2$ are both sufficiently large. Moreover, for any $1\le k  \le k_{j}-1$, we define   
 \begin{equation}\label{verynew621}
 	\mathsf{G}_{k}^{(j)}:= \big\{  \mathcal{C}_j \cap B_{x_1}(r_{k+1}^{(j)}) \neq \emptyset, \mathcal{C}_j \cap B_{x_1}(r_{k}^{(j)}) = \emptyset \big\}.  
 \end{equation}
For completeness, we also define 
 \begin{equation}\label{verynew622}
 	\mathsf{G}_{0}^{(j)}:= \big\{\mathcal{C}_j \cap B_{x_1}(r_1^{(j)})  \neq \emptyset \big\} \ \ \text{and}\ \  \mathsf{G}_{k_j}^{(j)}:= \big\{ \mathcal{C}_j  \cap B_{x_1}(r^{(j)}_{ k_{j}})  =\emptyset   \big\}. 
 \end{equation}
 For any $0 \le m_1\le k_1$ and $0 \le m_2\le k_2$, we define the event  
 \begin{equation}
 	\mathsf{F}_{m_1,m_2}: = \mathsf{F}\cap \mathsf{G}_{m_1}^{(1)}\cap \mathsf{G}_{m_2}^{(2)}. 
 \end{equation}
 Since the union of $\mathsf{G}_{m}^{(j)}$ for $0\le m\le k_j$ is the sure event, one has 
 \begin{equation}
 	\mathsf{F}= \cup_{ 0 \le m_1\le k_1,0 \le m_2\le k_2 } \mathsf{F}_{m_1,m_2}. 
 \end{equation}
  Thus, to obtain (\ref{68}), it is sufficient to establish that for any $\delta\in (0,\frac{1}{2})$, 
 	\begin{equation}\label{615}
 		  \mathbb{P}\big(\mathsf{F}_{m_1,m_2}  \big) \lesssim \mathbb{J}_{m_1,m_2}:= \left\{
\begin{aligned}
&(r_{m_1\vee m_2}^{(1)})^{-\frac{d}{2}+1+\delta}R_1^{-\frac{d}{2}-1}N^{-\frac{d}{2}+1 }      &  x_1\in B(\tfrac{N}{2}); \\ 
&(r_{m_1\vee m_2}^{(1)})^{-\frac{d}{2}+1+\delta} R_2^{-2}N^{2-d}    &   x_1\notin B(\tfrac{N}{2}).
\end{aligned}
\right.
 	\end{equation}
 We only provide its proof for the case $m_1\le m_2$, since that of $m_1\ge m_2$ is  similar.

  \textbf{Case 1: When $m_1=m_2=0$.} By definition of $\mathsf{F}_{0,0}$, we have 
  \begin{equation}\label{verynew626}
  \begin{split}
  		\mathsf{F}_{0,0} \subset & \big\{ B_{x_1}(r_1^{(1)}) \xleftrightarrow{\{x_1,x_2\}} \bm{0} \Vert B_{x_1}(r_1^{(2)}) \xleftrightarrow{\{x_1,x_2\}} \partial B(N)   \big\}\\
   \subset & 	\bigcup_{y_1\in \partial B_{x_1}(r_1^{(1)}), y_2\in B_{x_1}(r_1^{(2)})}   \big\{y_1 \xleftrightarrow{\{x_1,x_2\}} \bm{0} \Vert y_2\xleftrightarrow{\{x_1,x_2\}} \partial B_{y_2}(R_2)   \big\}. 
  \end{split}
  \end{equation}
  Moreover, by the restriction property and Lemma \ref{lemma_relation}, we have 
  \begin{equation}\label{verynew627}
  \begin{split}
  	  	& \mathbb{P}\big(y_1 \xleftrightarrow{\{x_1,x_2\}} \bm{0} \Vert y_2\xleftrightarrow{\{x_1,x_2\}} \partial B_{y_2}(R_2)  \big) \\
  	  	 \lesssim  & R_2^{-\frac{d}{2}}  \sum\nolimits_{z \in \partial \mathcal{B}_{y_2}(d^{-1}R_2)} 	\mathbb{P}\big(y_1 \xleftrightarrow{\{x_1,x_2\}} \bm{0} \Vert y_2\xleftrightarrow{\{x_1,x_2\}}  z \big).
  \end{split}
  \end{equation}
In addition, it has been verified in (\ref{late516}) and (\ref{late520}) that
  \begin{equation}\label{verynew628}
  	\mathbb{P}\big(y_1 \xleftrightarrow{\{x_1,x_2\}} \bm{0} \Vert y_2\xleftrightarrow{\{x_1,x_2\}}  z \big) \lesssim (R_1R_2)^{2-d}\cdot \big(R_1\land R_2 \big)^{\frac{d}{2}-3}.
  \end{equation}
  Combining (\ref{verynew626}), (\ref{verynew627}) and (\ref{verynew628}), we derive (\ref{615}) for $m_1=m_2=0$: 
  \begin{equation}\label{verynew_629}
  	\begin{split}
  		  \mathbb{P}\big(\mathsf{F}_{0,0}  \big) \lesssim R_1^{2-d}R_2^{-\frac{d}{2}+1}(R_1\land R_2)^{\frac{d}{2}-3} \asymp  \mathbb{J}_{0,0}. 
  	\end{split}
  \end{equation}

 \textbf{Case 2: When $0\le m_1\le k_1-1$ and $m_2\ge m_1+1$.} For  $m_1,m_2\ge 1$, when $\mathsf{F}_{m_1,m_2}$ occurs, we know that $\mathfrak{E}^{\mathrm{i}}_1$ must intersect $\partial B_{x_{1}}(r_{m_1}^{(1)})$, and that $\mathfrak{E}^{\mathrm{i}}_2$ must intersect $\partial B_{x_{1}}(r_{m_2}^{(2)})$ and is disjoint from both $\mathfrak{E}^{\mathrm{i}}_1$ and $\mathcal{C}^{\mathrm{in}}$. Similar to \cite[Lemma 3.17]{inpreparation_twoarm}, the following lemma provides the order of the probability of $\{ \mathfrak{E}^{\mathrm{i}}_1 \cap \partial B_{x_{1}}(r_{m_1}^{(1)})\neq \emptyset\}$:

\begin{lemma}\label{lemma_pool_62}
	For any $d\ge 3$ and $R\ge 1$, 
	\begin{equation}\label{pool627}
		\mathbb{P}\big( \mathfrak{E}^{\mathrm{i}}_1\cap \partial B_{x_{1}}(R)\neq \emptyset \big)  \asymp R^{2-d}. 
	\end{equation}
\end{lemma}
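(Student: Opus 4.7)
The plan is to condition on the local time at $x_1$ and reduce the event to a Poisson moment calculation. Set $L := \widehat{\mathcal{L}}_{1/2}^{x_1}$ and let $D_\star$ denote the subset $I_e \setminus \{x_1\}$ (the part of the edge that a Type (i) excursion is forbidden to touch). By the excursion description recalled in Section \ref{subsection_BEM}, the point measure of excursions at $x_1$ arising from loops in $\widetilde{\mathcal{L}}_{1/2}$ is, conditionally on $L = a$, a Poisson point process with intensity measure $a\cdot \mathbf{e}^{\emptyset}_{x_1,x_1}$; restricting to those excursions whose range avoids $D_\star$ (by Poisson thinning) produces a Poisson point process with intensity $a\cdot \mathbf{e}^{D_\star}_{x_1,x_1}$, which is precisely the contribution to $\mathfrak{E}^{\mathrm{i}}_1$. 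Setting
\begin{equation*}
M(R) := \mathbf{e}^{D_\star}_{x_1,x_1}\bigl(\{\widetilde{\eta} : \mathrm{ran}(\widetilde{\eta}) \cap \partial B_{x_1}(R) \neq \emptyset\}\bigr),
\end{equation*}
I then obtain the identity
\begin{equation*}
\mathbb{P}\bigl(\mathfrak{E}^{\mathrm{i}}_1 \cap \partial B_{x_1}(R) \neq \emptyset\bigr) = \mathbb{E}\bigl[1 - e^{-L\cdot M(R)}\bigr].
\end{equation*}

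The core of the argument is then the two-sided estimate $M(R) \asymp R^{2-d}$ for all $R \ge 1$. For the upper bound, I would adapt the proof of Lemma \ref{lemma_for_new62} to the degenerate case $v_1 = v_2 = x_1$: applying the strong Markov property of $\mathbf{e}^{D_\star}_{x_1,x_1}$ at the first exit from $\widetilde{B}_{x_1}(\tfrac{1}{4})$ and combining with the standard hitting bound $\widetilde{\mathbb{P}}_z(\tau_{\partial B_{x_1}(R)} < \infty) \lesssim R^{2-d}$ for $z$ near $x_1$ yields $M(R) \lesssim R^{2-d}$. For the matching lower bound I would restrict the mass to excursions that leave $x_1$ along a fixed edge $e' \neq e$, reach $\partial B_{x_1}(R)$, and return to $x_1$ along $e'$; a last-exit decomposition followed by Brownian time-reversal, as in the proofs of Lemma \ref{lemma_new_decom_green} and Corollary \ref{newcoro_29}, delivers $M(R) \gtrsim R^{2-d}$. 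This lower bound is the main obstacle, since the excursion measure is $\sigma$-finite and one must verify that the return leg to $x_1$, conditional on a long detour, still carries a proportion of order one of the mass.

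Once $M(R) \asymp R^{2-d}$ is established, the conclusion is an elementary computation. By the isomorphism theorem, $L = \tfrac{1}{2}\widetilde{\phi}_{x_1}^2$ where $\widetilde{\phi}_{x_1}$ is centered Gaussian of variance $\widetilde{G}(x_1,x_1) \asymp 1$, so $\mathbb{E}[L] \asymp 1$ and $\mathbb{P}(L \ge 1) \gtrsim 1$. The upper bound follows from $1 - e^{-x} \le x$:
\begin{equation*}
\mathbb{E}\bigl[1 - e^{-L\cdot M(R)}\bigr] \le M(R)\cdot \mathbb{E}[L] \lesssim R^{2-d}.
\end{equation*}
For the lower bound, if $R$ is large enough that $M(R) \le 1$ then $1 - e^{-x} \ge x/2$ on $[0,1]$ together with conditioning on $\{L \ge 1\}$ gives $\mathbb{E}[1 - e^{-L\cdot M(R)}] \gtrsim M(R) \asymp R^{2-d}$; if instead $R$ is of order one, the probability is bounded below by a positive constant, which is again comparable to $R^{2-d}$. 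This completes the plan.
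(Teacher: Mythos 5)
Your proposal takes essentially the same route as the paper's proof: reduce the event to the total mass $M(R)$ of the excursion measure $\mathbf{e}^{D_\star}_{x_1,x_1}$ on excursions reaching $\partial B_{x_1}(R)$, and then estimate $M(R) \asymp R^{2-d}$ via Brownian potential theory. The paper compresses the first reduction into a single $\asymp$ (pool628), asserting directly by the strong Markov property that the probability is comparable to $\sum_{z\in F}\widetilde{\mathbb{P}}_z(\tau_{\partial B_{x_1}(R)}<\tau_{\{x_1,x_2\}}=\tau_{x_1}<\infty)$; your explicit Poisson/local-time identity $\mathbb{P} = \mathbb{E}[1-e^{-LM(R)}]$ is a more transparent rendering of what that $\asymp$ is really doing, and the closing elementary computation with $\mathbb{E}[L]\asymp 1$, $\mathbb{P}(L\ge 1)\gtrsim 1$ is fine.

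There is, however, a concrete error in the stated upper-bound mechanism. The inequality $\widetilde{\mathbb{P}}_z(\tau_{\partial B_{x_1}(R)} < \infty) \lesssim R^{2-d}$ for $z$ near $x_1$ is false: for any $z\in B_{x_1}(R)$ that probability equals $1$, since Brownian motion on $\widetilde{\mathbb{Z}}^d$ is transient and must eventually cross the sphere of radius $R$. The $R^{2-d}$ in $M(R)$ comes from the \emph{return} leg of the excursion, not the outgoing leg: after the excursion reaches $\partial B_{x_1}(R)$ at some $w$, the remaining Markov-conditioned path must come back to $x_1$ before touching $D_\star$, and it is $\widetilde{\mathbb{P}}_w(\tau_{x_1}<\infty)\lesssim R^{2-d}$ (by the capacity estimate (\ref{cap1})) that produces the decay. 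The paper's proof of this lemma makes exactly this split: it factors $\widetilde{\mathbb{P}}_z(\tau_{\partial B_{x_1}(R)}<\tau_{\{x_1,x_2\}}=\tau_{x_1}<\infty)$ into an outgoing factor $\widetilde{\mathbb{P}}_z(\tau_{\partial B_{x_1}(R)}<\tau_{\{x_1,x_2\}})\asymp 1$ (transience, (pool630)) and a return factor $\widetilde{\mathbb{P}}_w(\tau_{\{x_1,x_2\}}=\tau_{x_1}<\infty)\asymp R^{2-d}$, obtained by combining (\ref{cap1}) with the observation that the harmonic measure of $\{x_1,x_2\}$ splits evenly between $x_1$ and $x_2$ by the symmetry of the edge $e$. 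That symmetry gives both the upper and lower bounds on the return factor at once, which is cleaner than the separate last-exit/time-reversal argument you propose for the lower bound (though the latter would also work once you fix which leg carries the $R^{2-d}$). You should also be a little careful that your Poisson thinning claim, namely that $\mathbf{e}^{\emptyset}_{x_1,x_1}$ restricted to excursions avoiding $D_\star$ coincides with $\mathbf{e}^{D_\star}_{x_1,x_1}$, is a statement about the excursion measure itself and needs to be recorded as a lemma rather than invoked as obvious.
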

 \begin{proof}
 	By the strong Markov property of Brownian excursion measure, one has 
 	\begin{equation}\label{pool628}
 		\begin{split}
 				 \mathbb{P}\big( \mathfrak{E}^{\mathrm{i}}_1\cap \partial B_{x_{1}}(R)\neq \emptyset \big)   
 				\asymp    \sum\nolimits_{z\in F} \widetilde{\mathbb{P}}_{z}\big( \tau_{ \partial B_{x_{1}}(R)}  < \tau_{\{x_1,x_2\}}= \tau_{x_1}<\infty  \big),
 		\end{split}
 	\end{equation}
 	where $F:=\{z\in \widetilde{\mathbb{Z}}^d: |z-x_1|=1,z\notin I_{\{x_1,x_2\}}\}$. Note that $|F|=2d-1$. Moreover, by the strong Markov property of Brownian motion, we have 
 	\begin{equation}\label{pool629}
 		\begin{split}
 		& \frac{	\widetilde{\mathbb{P}}_{z}\big( \tau_{ \partial B_{x_{1}}(R)}  < \tau_{\{x_1,x_2\}}= \tau_{x_1}<\infty  \big)}{ 	\widetilde{\mathbb{P}}_{z}\big( \tau_{ \partial B_{x_{1}}(R)}  < \tau_{\{x_1,x_2\}}   \big)}\\
 			\in &  \Big[ \min_{w\in \partial B_{x_{1}}(R) } 	\widetilde{\mathbb{P}}_{w}\big( \tau_{\{x_1,x_2\}}= \tau_{x_1}<\infty  \big) , \max_{w\in \partial B_{x_{1}}(R) } 	\widetilde{\mathbb{P}}_{w}\big( \tau_{\{x_1,x_2\}}= \tau_{x_1}<\infty  \big) \Big].
 		\end{split}
 	\end{equation}

 	For each  $z\in F$, it follows from the transience of $\widetilde{\mathbb{Z}}^d$ that  	\begin{equation}\label{pool630}
 			\widetilde{\mathbb{P}}_{z}\big( \tau_{ \partial B_{x_{1}}(R)}  < \tau_{\{x_1,x_2\}}   \big) \asymp 1. 
 	\end{equation}
 	Meanwhile, for any $w\in \partial B_{x_{1}}(R)$, by (\ref{cap1}) one has 
 	\begin{equation}\label{pool631}
 		\widetilde{\mathbb{P}}_{w}\big( \tau_{ \{x_1,x_2\}}  < \infty  \big)  \asymp R^{2-d}. 
 	\end{equation} 	
 	Moreover, given that a Brownian motion starting from $w\in \partial B_{x_{1}}(R)$ hits $\{x_1,x_2\}$, \cite[Proposition 6.5.4]{lawler2010random} implies that the conditional probability of reaching $x_1$ before $x_2$ is proportional to the harmonic measure of $\{x_1,x_2\}$ at $x_1$, which exactly equals $\frac{1}{2}$ by the symmetry. Combined with (\ref{pool631}), it yields that 
 	 	\begin{equation}\label{pool632}
 			\widetilde{\mathbb{P}}_{w}\big( \tau_{ \{x_1,x_2\}}  = \tau_{  x_1 }   < \infty \big)  \asymp R^{2-d}, \ \ \forall w\in \partial B_{x_{1}}(R). 
 	\end{equation}
 	Putting (\ref{pool629}), (\ref{pool630}) and (\ref{pool632}) together, we get 
 	\begin{equation}
 		\widetilde{\mathbb{P}}_{z}\big( \tau_{ \partial B_{x_{1}}(R)}  < \tau_{\{x_1,x_2\}}= \tau_{x_1}<\infty  \big) \asymp R^{2-d}, \ \ \forall z\in F. 
 	\end{equation}
 	This together with (\ref{pool628}) and $|F|=2d-1$ yields (\ref{pool627}). 
 \end{proof}


 Additionally, given that $\{ \mathfrak{E}^{\mathrm{i}}_1\cap \partial B_{x_{1}}(r_{m_1}^{(1)})\neq \emptyset\}$ occurs, we have the following stochastic domination for $\mathfrak{E}^{\mathrm{i}}_1$. For any path $\widetilde{\eta}$ on $\widetilde{\mathbb{Z}}^d$, let $\mathrm{proj}(\widetilde{\eta})$ denote its projection onto the lattice $\mathbb{Z}^d$. For any path $\overline{\eta}$ on $\mathbb{Z}^d$, we denote its length (i.e., the number of jumps in $\overline{\eta}$) by $\mathrm{len}(\overline{\eta})$. Recall that $\widetilde{\partial}\widetilde{B}_{x_1}(1)=\{z\in \mathbb{Z}^d: \{z,x_1\}\in \mathbb{L}^d \}$.




   \begin{lemma}\label{lemma_pure62}
For any $m_1\ge 1$, conditioned on the event $\big\{ \mathfrak{E}^{\mathrm{i}}_1\cap \partial B_{x_{1}}(r_{m_1}^{(1)})\neq \emptyset\big\}$, there exist two (random) paths $\widehat{\eta}_1$ and $\widehat{\eta}_2$ on $\widetilde{\mathbb{Z}}^d$ satisfying the following properties: 
  \begin{enumerate}

  	\item[(i)]  $\cup_{j \in \{1,2\}}\mathrm{ran}(\widehat{\eta}_i) \subset \mathfrak{E}^{\mathrm{i}}_1$;

  	\item[(ii)]   For each $j\in \{1,2\}$, $\widehat{\eta}_j$ starts from $\widetilde{\partial} \widetilde{B}_{x_1}(1)\setminus \{x_2\}$, stops when reaching $\partial B_{x_1}(r_{m_1-1}^{(1)})$, and does not hit $\{x_1,x_2\}$. We denote by $\Gamma$ the collection of all possible projections of such paths onto $\mathbb{Z}^d$.

  	\item[(iii)] For any $\overline{\eta}_1,\overline{\eta}_2\in \Gamma$, 
  	     \begin{equation}
  	\mathbb{P}\big( \mathrm{proj}(\widehat{\eta}_j) =\overline{\eta}_j, \forall j\in \{1,2\}  \big) \lesssim  (2d)^{-\mathrm{len}(\overline{\eta}_1)-\mathrm{len}(\overline{\eta}_2)}. 
  \end{equation}

  \end{enumerate}
  
%
%
%
%
%
%
%
%
%
%

 \end{lemma}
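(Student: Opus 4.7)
The plan is to construct $\widehat{\eta}_1$ and $\widehat{\eta}_2$ as the ``first'' and ``last'' segments of an excursion in $\mathfrak{E}^{\mathrm{i}}_1$ which realizes the event $\{\mathfrak{E}^{\mathrm{i}}_1\cap \partial B_{x_1}(r_{m_1}^{(1)})\neq \emptyset\}$. Concretely, under this conditioning pick (by some deterministic tie-breaking rule) a rooted excursion $\widetilde{\varrho}:[0,T]\to \widetilde{\mathbb{Z}}^d$ supported in $\widetilde{\mathbb{Z}}^d \setminus I_{e}$ with $\widetilde{\varrho}(0)=\widetilde{\varrho}(T)=x_1$ whose range reaches $\partial B_{x_1}(r_{m_1}^{(1)})$. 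Let $\sigma_1:=\inf\{t\ge 0: \widetilde{\varrho}(t)\in \partial B_{x_1}(r_{m_1-1}^{(1)})\}$ and $\sigma_2:=\sup\{t\le T: \widetilde{\varrho}(t)\in \partial B_{x_1}(r_{m_1-1}^{(1)})\}$. Define $\widehat{\eta}_1$ as $\widetilde{\varrho}$ restricted to $[0,\sigma_1]$ but with its initial trivial sojourn at $x_1$ removed (so that it starts at the first neighbor of $x_1$ that $\widetilde{\varrho}$ visits), and define $\widehat{\eta}_2$ as the time-reversal of the analogous portion of $\widetilde{\varrho}\restriction_{[\sigma_2,T]}$. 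Properties (i) and (ii) are then immediate from the construction and from the fact that $I_e\cap \widetilde{\varrho}=\emptyset$, so that the neighbor of $x_1$ first visited (resp.~last visited) is not $x_2$.

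To prove (iii), the strategy is to read off the distribution of $(\widehat{\eta}_1,\widehat{\eta}_2)$ from the Brownian excursion measure $\mathbf{e}^{\emptyset}_{x_1,x_1}$ restricted to loops avoiding $I_e$. The key observation is that, because the Brownian motion on $\widetilde{\mathbb{Z}}^d$ chooses its next adjacent interval uniformly at each visit to a lattice point, the $\mathbb{Z}^d$-valued process obtained by recording successive lattice points visited by a path $\widehat{\eta}_j$ before hitting $\partial B_{x_1}(r_{m_1-1}^{(1)})$ is a simple random walk stopped upon exiting $B_{x_1}(r_{m_1-1}^{(1)})$. This yields exactly the factor $(2d)^{-\mathrm{len}(\overline{\eta}_j)}$ for each prescribed projection. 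To obtain the product bound for the joint law, decompose the excursion at $\sigma_1$ and $\sigma_2$ and apply the strong Markov property of the Brownian excursion measure at $\sigma_1$, and the time-reversal identity combined with the strong Markov property at $\sigma_2$, so that the ``beginning'' and ``end'' segments are conditionally independent given the middle segment realizing the outward excursion beyond $\partial B_{x_1}(r_{m_1}^{(1)})$.

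The main obstacle is to push this product structure through the conditioning on $\{\mathfrak{E}^{\mathrm{i}}_1\cap \partial B_{x_1}(r_{m_1}^{(1)})\neq \emptyset\}$ with the correct constant. The conditioning is the full event that \emph{some} excursion in $\mathfrak{E}^{\mathrm{i}}_1$ reaches $\partial B_{x_1}(r_{m_1}^{(1)})$, but by Lemma~\ref{lemma_pool_62} this event has probability of order $(r_{m_1}^{(1)})^{2-d}$ and is, up to a constant factor, realized by a single excursion. Using the Palm-type formula for the loop soup together with the strong Markov property of $\mathbf{e}^{\emptyset}_{x_1,x_1}$ at $\sigma_1$, the numerator in the conditional probability $\mathbb{P}(\mathrm{proj}(\widehat{\eta}_j)=\overline{\eta}_j,\ \forall j)$ factorizes into: the Brownian mass of paths from $\widetilde{\partial}\widetilde{B}_{x_1}(1)\setminus\{x_2\}$ that trace $\overline{\eta}_1$ and stop on $\partial B_{x_1}(r_{m_1-1}^{(1)})$; a middle factor accounting for paths from $\partial B_{x_1}(r_{m_1-1}^{(1)})$ that reach $\partial B_{x_1}(r_{m_1}^{(1)})$ and return to $\partial B_{x_1}(r_{m_1-1}^{(1)})$; and the time-reversed analogue for $\overline{\eta}_2$. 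The first and third factors supply the $(2d)^{-\mathrm{len}}$ weights (since the $\mathbb{Z}^d$-projection is a simple random walk at lattice points), while the middle factor is bounded above (after summing over the intermediate boundary points) by the total mass of excursions reaching $\partial B_{x_1}(r_{m_1}^{(1)})$, which exactly matches the denominator up to a constant via Lemma~\ref{lemma_pool_62}. Putting these estimates together yields the claimed product bound.
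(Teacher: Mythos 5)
Your construction and the argument for (iii) follow the same route as the paper: cut a single excursion of $\mathfrak{E}^{\mathrm{i}}_1$ reaching $\partial B_{x_1}(r_{m_1}^{(1)})$ into an initial segment and a (time-reversed) terminal segment, use the fact that the Brownian motion on $\widetilde{\mathbb{Z}}^d$ projects to a simple random walk at lattice points to produce the $(2d)^{-\mathrm{len}}$ factors, and absorb the ``middle'' mass against the denominator supplied by Lemma~\ref{lemma_pool_62}. The only cosmetic difference is the bookkeeping for the middle piece: you bound it by summing over the intermediate boundary points and comparing to the total excursion mass, whereas the paper carries out an explicit strong-Markov-plus-last-exit decomposition at the last visit to a neighbor of $x_1$, producing the Green's function factor $\max_{w\in\partial B_{x_1}(r_{m_1}^{(1)})}G(w,z_2)\asymp(r_{m_1}^{(1)})^{2-d}$; both give the same order and cancel against $\mathbb{P}(\mathfrak{E}^{\mathrm{i}}_1\cap\partial B_{x_1}(r_{m_1}^{(1)})\neq\emptyset)\asymp(r_{m_1}^{(1)})^{2-d}$. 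One point you should make explicit: for property (ii) (``never hits $\{x_1,x_2\}$'') it matters that $\widetilde{\varrho}$ is a genuine excursion in Part (i) of the decomposition — it starts and ends at $x_1$, avoids $I_e$ (hence $x_2$), and does not revisit $x_1$ in the interior — so that removing only the initial sojourn does suffice; if $\widetilde{\varrho}$ were allowed to revisit $x_1$ in $(0,T)$, your $\widehat{\eta}_1$ could still pass through $x_1$.
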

  \begin{proof}
  	Let $\eta$ be an excursion in $\mathfrak{E}^{\mathrm{i}}_1$ that reaches $\partial B_{x_1}(r_{m_1}^{(1)})$. We define $\widehat{\eta}_1$ as its subpath from the first time it hits $\widetilde{\partial}\widetilde{B}_{x_1}(1)$ until it reaches $\partial B_{x_1}(r_{m_1}^{(1)})$, and define $\widehat{\eta}_2$ as the reversed path of its subpath from the last time it stays outside $\widetilde{B}_{x_1}(r_{m_1-1}^{(1)})$ until the last time it visits $\widetilde{\partial}\widetilde{B}_{x_1}(1)$. By the construction, $\widehat{\eta}_1$ and $\widehat{\eta}_2$ satisfy both Properties (i) and (ii).

  	For Property (iii), we denote the starting and ending point of the path $\overline{\eta}_j$ by $y_j$ and $z_j$ respectively. Let $\overline{\eta}_2'$ be the path on $\mathbb{Z}^d$ that starts from $x_1$, jumps to $y_1$ in the first step, and then moves along $\overline{\eta}_2$. Note that $\mathrm{len}(\overline{\eta}_2')=\mathrm{len}(\overline{\eta}_2)+1$. By the strong Markov property of Brownian excursion measure and the last-exit decomposition, we have 
  	 \begin{equation}\label{pool635}
  	 	\begin{split}
  	 		& \mathbb{P}\big( \mathfrak{E}^{\mathrm{i}}_1\cap \partial B_{x_{1}}(r_{m_1}^{(1)})\neq \emptyset,  \mathrm{proj}(\widehat{\eta}_j) =\overline{\eta}_j, \forall j\in \{1,2\}   \big) \\
  	 		\lesssim &  \widetilde{\mathbb{P}}_{y_1}\big( \mathrm{proj}\big(\big\{\widetilde{S}_t :0\le t\le \tau_{\partial B_{x_1}(r_{m_1-1}^{(1)})} \big\}\big) = \overline{\eta}_1\big) \cdot  \max_{w\in \partial B_{x_{1}}(r_{m_1}^{(1)})}  G(w,z_2)  \\
  	 		& \cdot \widetilde{\mathbb{P}}_{z_2}\big( \mathrm{proj}\big(\big\{\widetilde{S}_{\tau_{\{x_1,x_2\}}-s}:0\le s\le \tau_{\{x_1,x_2\}} \big\}\big) = \overline{\eta}_2'\big).
  	 	\end{split}
  	 \end{equation}
  Recalling from Section \ref{section_brownian_motion} that the projection of $\widetilde{S}_\cdot\sim \widetilde{\mathbb{P}}_{y_1}$ is a simple random walk starting from $y_1$, we have 
  \begin{equation}\label{pool636}
  	 \widetilde{\mathbb{P}}_{y_1}\big( \mathrm{proj}\big(\big\{\widetilde{S}_t :0\le t\le \tau_{\partial B_{x_1}(r_{m_1-1}^{(1)})} \big\}\big) = \overline{\eta}_1\big) = (2d)^{-\mathrm{len}(\overline{\eta}_1)}, 
  \end{equation}  
   \begin{equation}\label{pool637}
  	\widetilde{\mathbb{P}}_{z_2}\big( \mathrm{proj}\big(\big\{\widetilde{S}_{\tau_{\{x_1,x_2\}}-s}:0\le s\le \tau_{\{x_1,x_2\}} \big\}\big) = \overline{\eta}_2'\big) = (2d)^{-\mathrm{len}(\overline{\eta}_2)+1}.  
  \end{equation} 
  	Plugging (\ref{pool636}), (\ref{pool637}) and $\max\limits_{w\in \partial B_{x_{1}}(r_{m_1}^{(1)})}G(w,z_2)\asymp (r_{m_1}^{(1)})^{2-d}$ into (\ref{pool635}), we get 
  	  	  \begin{equation}\label{newpool635}
  	  \begin{split}
  	  	  	 	 & \mathbb{P}\big( \mathfrak{E}^{\mathrm{i}}_1\cap \partial B_{x_{1}}(r_{m_1}^{(1)})\neq \emptyset,  \mathrm{proj}(\widehat{\eta}_j) =\overline{\eta}_j, \forall j\in \{1,2\}   \big) \\
  	  	  	 	 \lesssim &  (r_{m_1}^{(1)})^{2-d} \cdot (2d)^{-\mathrm{len}(\overline{\eta}_1)-\mathrm{len}(\overline{\eta}_2)}. 
  	  \end{split}
  	 \end{equation}
Combined with $\mathbb{P}(\mathfrak{E}^{\mathrm{i}}_1\cap \partial B_{x_{1}}(r_{m_1}^{(1)})\neq \emptyset)\asymp (r_{m_1}^{(1)})^{2-d}$ (by Lemma \ref{lemma_pool_62}), it completes the proof of this lemma. 
  	 \end{proof}

 Recall that $\mathsf{F}_{m_1,m_2}$ implies the following sub-events: 
 \begin{equation}
 	\mathsf{D}_{m_1}':= \left\{
\begin{aligned}
 &\big\{ \mathcal{C}_1 \cap B_{x_1}(r_{m_1+1}^{(1)}) \neq \emptyset , \mathfrak{E}^{\mathrm{i}}_1\cap \partial B_{x_{1}}(r_{m_1}^{(1)})\neq \emptyset \big\} &  \ \  &   m_1\ge 1; \\ 
&\big\{\mathcal{C}_1 \cap B_{x_1}(r_{1}^{(1)}) \neq \emptyset   \big\} &   &  m_1=0,
\end{aligned}
\right.
 \end{equation} 
  \begin{equation}
 		\mathsf{D}_{m_2}'' := \big\{  \mathfrak{E}^{\mathrm{i}}_2\cap \partial B_{x_{1}}(r_{m_2}^{(2)})\neq\emptyset, \mathfrak{E}^{\mathrm{i}}_2\cap (\mathfrak{E}^{\mathrm{i}}_1\cup \mathcal{C}_1) =\emptyset   \big\}, 
 \end{equation}
 \begin{equation}\label{verynew_D'''}
 	\mathsf{D}_{m_2}''':= \left\{
\begin{aligned}
 &\big\{ B_{x_{1}}(r_{m_2+1}^{(2)})\xleftrightarrow{(\{x_1,x_2\})} B_{x_{1}}(R_2) \big\} & \ \ \   &   m_2\le k_2-1; \\ 
&\text{the sure event} &   &  m_2=k_2.
\end{aligned}
\right.
 \end{equation}
 Given $\mathfrak{E}^{\mathrm{i}}_1$ and $\mathcal{C}^{\mathrm{in}}$, by applying Lemma \ref{lemma_for_new62} (where we use the case $\chi\le 10c^{-1}$ and drop the second term in the third line of (\ref{addto242})), we derive that the conditional probability of $\mathsf{D}_{m_2}''$ is upper-bounded by
  \begin{equation}\label{pure628}
 	\begin{split}
 		& C(r_{m_1}^{(1)}  r_{m_2}^{(2)})^{1-d} \mathbf{K}_{m_1}       \\
 		&\cdot \sum\nolimits_{z_1\in \partial \mathcal{B}_{x_1}(  r_{m_1+1}^{(1)}),z_2\in \partial \mathcal{B}_{x_1}(\frac{r_{m_2}^{(2)}}{\Cref{const_final_section}})}\mathbb{P}\big(\mathsf{A}^{\mathcal{C}_1\cup \{x_1,x_2\}}(z_1,z_2; \tfrac{ r_{m_1+1}^{(1)} }{ \Cref{const_final_section} }  ,r_{m_2}^{(2)}) \big), 
 	\end{split}
 \end{equation}
where $\mathbf{K}_{m_1}:=\mathbbm{1}_{m_1\ge 1}\cdot \widetilde{\mathbb{P}}_{x_2}\big(\tau_{\partial B_{x_1}(r_{m_1-1}^{(1)}) }<\tau_{\mathfrak{E}^{\mathrm{i}}_1} \big)+ \mathbbm{1}_{m_1=0} $, $\Cl\label{const_final_section}:=\Cref{const_last_section}^{\frac{1}{100}} $, and the event $\mathsf{A}^D(z,z';r,r')$ is defined by $\big\{z\xleftrightarrow{(D)} z'\big\}\cap  \big\{z \xleftrightarrow{(D)} \partial B_{x_1}(r)\cup \partial B_{x_1}(r')\big\}^c$. Taking expectation over the event $\mathsf{D}_{m_1}'\cap \mathsf{D}_{m_2}'''$, we obtain that 
  \begin{equation}\label{newpool640}
 	\begin{split}
 				\mathbb{P}\big(\mathsf{F}_{m_1,m_2}  \big) \lesssim & (r_{m_1}^{(1)} r_{m_2}^{(2)})^{1-d} \mathbf{U}_{m_1}  \cdot  \sum_{z_1\in \partial \mathcal{B}_{x_1}(  r_{m_1+1}^{(1)}),z_2\in \partial \mathcal{B}_{x_1}(\frac{r_{m_2}^{(2)}}{\Cref{const_final_section}})}  \mathbb{P}\big( 	\overline{\mathsf{C}}_{z_1,z_2}^{m_1,m_2}\big). 
 	\end{split}
 \end{equation}
 Here $\mathbf{U}_{m_1}:=\mathbbm{1}_{m_1\ge 1}\cdot \mathbb{E}_{\mathfrak{E}^{\mathrm{i}}_1}\big[ \mathbbm{1}_{\mathfrak{E}^{\mathrm{i}}_1\cap \partial B_{x_{1}}(r_{m_1}^{(1)})\neq \emptyset} \cdot  \widetilde{\mathbb{P}}_{x_2} \big(  \tau_{\partial B_{x_1}(r_{m_1-1}^{(1)}) }<\tau_{\mathfrak{E}^{\mathrm{i}}_1} \big) \big]+\mathbbm{1}_{m_1=0}$, and the event $\overline{\mathsf{C}}_{z_1,z_2}^{m_1,m_2}$ is defined by 
 \begin{equation}\label{forD}
\overline{\mathsf{C}}_{z_1,z_2}^{m_1,m_2}:=   \big\{ \bm{0}\xleftrightarrow{(\{x_1,x_2\})}  B_{x_1}(r_{m_1+1}^{(1)}) \Vert \mathsf{A}^{ \{x_1,x_2\}}(z_1,z_2;  \tfrac{ r_{m_1+1}^{(1)} }{ \Cref{const_final_section} }  ,r_{m_2}^{(2)})\cap \mathsf{D}_{m_2}''' \big\}.  
 \end{equation}

 The following lemma provides an upper bound for the expectation on the right-hand side of (\ref{newpool640}). With a slight abuse of notation, for any path $\overline{\eta}$ on $\mathbb{Z}^d$, we denote its range (i.e., the set of lattice points visited by $\overline{\eta}$) by $\mathrm{ran}(\overline{\eta})$. For $x\in \mathbb{Z}^d$, we denote by $\mathbb{P}_x$ the law of the simple random walk on $\mathbb{Z}^d$ starting from $x$.

  \begin{lemma}\label{lemma_escape_3d}
 	 For any $d\ge 3$, $\delta\in (0,\frac{1}{2})$ and $m_1\ge 1$, 
 	\begin{equation}\label{pool641}
 \mathbb{E}_{\mathfrak{E}^{\mathrm{i}}_1}\Big[ \mathbbm{1}_{ \mathfrak{E}^{\mathrm{i}}_1\cap \partial B_{x_{1}}(r_{m_1}^{(1)})\neq \emptyset}  \cdot \widetilde{\mathbb{P}}_{x_2} \big( \tau_{\partial B_{x_1}(r_{m_1-1}^{(1)}) }<\tau_{\mathfrak{E}^{\mathrm{i}}_1} \big) \Big]  \lesssim (r_{m_1}^{(1)})^{2-d-(1-\delta)\cdot \mathbbm{1}_{d=3}}. 
 	\end{equation}
 \end{lemma}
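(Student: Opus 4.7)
The plan is to handle the cases $d\ge 4$ and $d=3$ separately. For $d\ge 4$, the inner probability is bounded trivially by $1$, so the expectation is at most $\mathbb{P}\big(\mathfrak{E}^{\mathrm{i}}_1\cap \partial B_{x_{1}}(r_{m_1}^{(1)})\neq \emptyset\big) \asymp (r_{m_1}^{(1)})^{2-d}$ by Lemma \ref{lemma_pool_62}, which matches the claim exactly since $\mathbbm{1}_{d=3}$ vanishes.

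The substantive content lies in $d=3$, where an additional factor $(r_{m_1}^{(1)})^{-(1-\delta)}$ must be extracted. I would first apply Lemma \ref{lemma_pure62} to couple $\mathfrak{E}^{\mathrm{i}}_1$, on the event $\{\mathfrak{E}^{\mathrm{i}}_1\cap \partial B_{x_1}(r_{m_1}^{(1)})\ne\emptyset\}$, with two random paths $\widehat{\eta}_1, \widehat{\eta}_2$ whose $\mathbb{Z}^3$-projections are dominated, up to the universal constants in Property (iii), by independent simple random walks from $\widetilde{\partial}\widetilde{B}_{x_1}(1)\setminus\{x_2\}$ stopped upon reaching $\partial B_{x_1}(r_{m_1-1}^{(1)})$. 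Since $\widetilde{\mathbb{P}}_{x_2}(\tau_{\partial B_{x_1}(r_{m_1-1}^{(1)})} < \tau_{\mathfrak{E}^{\mathrm{i}}_1})$ is monotone in $\mathfrak{E}^{\mathrm{i}}_1$, summing over lattice projections $\overline{\eta}_1, \overline{\eta}_2 \in \Gamma$ and writing $r := r_{m_1-1}^{(1)}$, the quantity in \eqref{pool641} is bounded above by
\[
\mathbb{E}_{\overline{X}^1, \overline{X}^2}\Big[\widetilde{\mathbb{P}}_{x_2}\big(\tau_{\partial B_{x_1}(r)} < \tau_{\mathrm{ran}(\overline{X}^1)\cup \mathrm{ran}(\overline{X}^2)}\big)\Big],
\]
where $\overline{X}^1, \overline{X}^2$ are independent simple random walks from neighbors of $x_1$ stopped at $\partial B_{x_1}(r)$. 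This is a three-walk non-intersection problem in $\mathbb{Z}^3$.

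To bound this expectation by $r^{-(1-\delta)}$, I would use a capacity argument. With probability bounded away from zero, the union $\mathrm{ran}(\overline{X}^1)\cup \mathrm{ran}(\overline{X}^2)$ has capacity at least $r^{1-\delta}$ within $\widetilde{B}_{x_1}(r/2)$; this can be established via a second-moment computation on the number of lattice points of the union lying in a shell $\partial\mathcal{B}_{x_1}(\sqrt{r})$, together with the variational characterization of capacity via Green's function duality. Combined with the capacity--escape relation \eqref{cap1} (or, equivalently, a last-exit decomposition of $\widetilde{S}$ at the first entry into $\overline{X}^1\cup \overline{X}^2$ in the spirit of Corollaries \ref{lemma_cut_two_points} and \ref{coronew27}), this produces the bound $r^{-(1-\delta)}$ on the inner probability, yielding the target estimate $(r_{m_1}^{(1)})^{2-d-(1-\delta)\mathbbm{1}_{d=3}}$ after using $\mathbb{P}(E)\asymp r^{-1}$. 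The principal obstacle is obtaining the capacity lower bound with the correct exponent: in three dimensions simple random walk ranges self-intersect heavily, so the second-moment computation must track intersection-local-time contributions carefully to avoid logarithmic losses in the exponent. An alternative route is to import intersection-exponent estimates for SRW in $\mathbb{Z}^3$ from Lawler's work, which provide precisely the $r^{-(1-\delta)}$ decay for any $\delta>0$.
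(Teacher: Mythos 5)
Your reduction is exactly the paper's: for $d\ge 4$ the trivial bound via Lemma \ref{lemma_pool_62} suffices, and for $d=3$ passing through Lemma \ref{lemma_pure62} reduces matters to bounding the non-intersection probability of three independent simple random walks started from neighbors of $x_1$ and stopped at $\partial B_{x_1}(r_{m_1-1}^{(1)})$. Your ``alternative route''---importing the non-intersection estimate from Lawler---is what the paper actually does: it truncates each of the three walks to a \emph{fixed} duration $(r_{m_1}^{(1)})^{2-2\delta}$, uses the Gaussian tail bound of \cite[Proposition 2.1.2]{lawler2010random} to show that the probability any stopping time falls short of this is $\exp(-c(r_{m_1}^{(1)})^{2\delta})$, and then applies \cite[Corollary 10.2.2]{lawler2010random} to the fixed-time walks. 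The truncation step, which you omit, is not cosmetic: the fixed-time non-intersection estimates are not stated for walks with random stopping times, and the exponentially small error term is what bridges the gap.

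Your ``primary route'' via capacity, however, has two genuine problems. First, the quantifier runs in the wrong direction: to bound $\mathbb{E}[Y]$ for $Y\in[0,1]$ by $r^{-(1-\delta)}$ you must show $Y\lesssim r^{-(1-\delta)}$ with probability $\ge 1 - r^{-(1-\delta)}$, not merely ``with probability bounded away from zero''---on the complementary event $Y$ may be of order one, and that contribution would dominate the expectation. Second, and more fundamentally, the capacity of the obstacle does not control the escape probability from a starting point adjacent to $x_1$. The relation (\ref{cap1}) concerns hitting probabilities from distance $\asymp r$, where capacity $\asymp r^{1-\delta}$ yields a hitting probability $\asymp r^{-\delta}$ and hence an escape bound of only $1 - c r^{-\delta}$. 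From an adjacent starting point, sets of comparable capacity behave completely differently: a solid ball $B_{x_1}(r)$ and the thin shell $\partial B_{x_1}(r)$ both have capacity $\asymp r$ in $d=3$ and both give escape probability zero, whereas a segment of length $r$ has capacity $\asymp r/\log r$ but escape probability bounded away from zero. The decay you want is governed by the scale-by-scale geometry of the random-walk range (precisely what the intersection exponent encodes), not by total capacity; the second-moment computation you sketch accesses only the latter and would therefore leave the exponent undetermined.
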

 \begin{proof}
 	  Note that Lemma \ref{lemma_pool_62} implies 
 	\begin{equation}\label{pool642}
 		 \mathbb{P}\big( \mathfrak{E}^{\mathrm{i}}_1\cap \partial B_{x_{1}}(r_{m_1}^{(1)})\neq \emptyset  \big) \asymp   (r_{m_1}^{(1)})^{2-d}.  
 	\end{equation}
 	This already gives the bound (\ref{pool641}) for $d\ge 4$. Hence, we focus on the case $d=3$ for the remainder of the proof.

 	Moreover, by Lemma \ref{lemma_pure62}, conditioned on $\big\{ \mathfrak{E}^{\mathrm{i}}_1\cap \partial B_{x_{1}}(r_{m_1}^{(1)})\neq \emptyset\big\} $, the expectation of the probability of $\widetilde{\mathbb{P}}_{x_2}\big(\tau_{\partial B_{x_1}(r_{m_1-1}^{(1)}) }<\tau_{\mathfrak{E}^{\mathrm{i}}_1} \big)$ is upper-bounded by 
 	\begin{equation}\label{pool643}
 		\begin{split}
 	& 	\mathbb{E}_{\widehat{\eta}_1,\widehat{\eta}_2}\Big[ \widetilde{\mathbb{P}}_{x_2}\big(\tau_{\partial B_{x_1}(r_{m_1-1}^{(1)}) }<\tau_{\cup_{j\in \{1,2\}}\mathrm{ran}(\widehat{\eta}_j)} \big)  \Big] \\
 		=&  \sum\nolimits_{ \overline{\eta}_1,\overline{\eta}_2\in \Gamma}  	\mathbb{P}\big( \mathrm{proj}(\widehat{\eta}_j) =\overline{\eta}_j, \forall j\in \{1,2\}  \big)    \widetilde{\mathbb{P}}_{x_2}\big(\tau_{\partial B_{x_1}(r_{m_1-1}^{(1)}) }<\tau_{\cup_{j\in \{1,2\}}\mathrm{ran}(\overline{\eta}_j)} \big) \\
 	\lesssim  & \sum\nolimits_{ \overline{\eta}_1,\overline{\eta}_2\in \Gamma}  (2d)^{-\mathrm{len}(\overline{\eta}_1)-\mathrm{len}(\overline{\eta}_2)} \cdot \widetilde{\mathbb{P}}_{x_2}\big(\tau_{\partial B_{x_1}(r_{m_1-1}^{(1)}) }<\tau_{\cup_{j\in \{1,2\}}\mathrm{ran}(\overline{\eta}_j)} \big). 
 		\end{split}
 	\end{equation}
 	For any $y \in \widetilde{B}_{x_1}(1)\setminus \{x_2\}$, we denote by $\Gamma_y$ the subset of $\Gamma$ restricted to the paths that start from $y$. We arbitrarily take $y_1,y_2\in \widetilde{B}_{x_1}(1)\setminus \{x_2\}$, and take three independent random walks $S^1_\cdot \sim  \mathbb{P}_{y_1}$, $S^2_\cdot \sim \mathbb{P}_{y_2}$ and $S^3_\cdot \sim  \mathbb{P}_{x_2}$. For each $1\le i\le 3$, we denote by $\mathring{\tau}_{i}$ the first time when $S^i_\cdot$ hits $\partial B_{x_1}(r_{m_1-1}^{(1)})$, and we define the path $\mathring{\eta}_i$ (resp. $\mathring{\eta}_i^*$) as $\big\{S^i_n: 0\le n\le \mathring{\tau}_{i}\big\}$ (resp. $\big\{S^i_n: 0\le n\le (r_{m_1}^{(1)})^{2-2\delta}\big\}$). Note that 
 	\begin{equation}\label{pool644}
 		\begin{split}
 			& \sum\nolimits_{ \overline{\eta}_1\in \Gamma_{y_1},\overline{\eta}_2\in \Gamma_{y_2}}  (2d)^{-\mathrm{len}(\overline{\eta}_1)-\mathrm{len}(\overline{\eta}_2)} \cdot \widetilde{\mathbb{P}}_{x_2}\big(\tau_{\partial B_{x_1}(r_{m_1-1}^{(1)}) }<\tau_{\cup_{j\in \{1,2\}}\mathrm{ran}(\overline{\eta}_j)} \big) \\
 			\le &  \mathbb{P}\big( \big[\cup_{i=1,2}\mathrm{ran}(\mathring{\eta}_i) \big]\cap \mathrm{ran}(\mathring{\eta}_3) =\emptyset   \big). 
 		\end{split}
 	\end{equation}
 	Moreover, since $\{\mathring{\tau}_{i}\ge(r_{m_1}^{(1)})^{2-2\delta}\}$ implies $\mathrm{ran}(\mathring{\eta}_i^*)\subset \mathrm{ran}(\mathring{\eta}_i)$, we have 
 	\begin{equation}\label{pool645}
 		 	\begin{split}
 		& \mathbb{P}\big(\big[\cup_{i=1,2}\mathrm{ran}(\mathring{\eta}_i) \big]\cap \mathrm{ran}(\mathring{\eta}_3) =\emptyset   \big)\\
 		\le &  \mathbb{P}\big(\big[\cup_{i=1,2}\mathrm{ran}(\mathring{\eta}_i^*) \big]\cap \mathrm{ran}(\mathring{\eta}_3^*) =\emptyset   \big) + \sum\nolimits_{1\le i\le 3} \mathbb{P}\big( \mathring{\tau}_{i}\le(r_{m_1}^{(1)})^{2-2\delta}  \big)\\
 		\lesssim & (r_{m_1}^{(1)})^{-1+\delta} + \mathrm{exp}(-c(r_{m_1}^{(1)})^{2\delta}), 
 	\end{split} 
 	\end{equation} 	
 	 	where in the last inequality we used \cite[Proposition 2.1.2 and Corollary 10.2.2]{lawler2010random}.

 	 	Inserting (\ref{pool645}) into (\ref{pool644}), and then summing over all $y_1,y_2\in \widetilde{B}_{x_1}(1)\setminus \{x_2\}$, we obtain that the right-hand side of (\ref{pool643}) is at most 
 	 	\begin{equation}\label{pool647}
 	 		C\big[ (r_{m_1}^{(1)})^{-1+\delta} + \mathrm{exp}(-c(r_{m_1}^{(1)})^{2\delta})\big] \lesssim (r_{m_1}^{(1)})^{-1+\delta}. 
 	 	\end{equation}
 	Combined with (\ref{pool642}) and (\ref{pool643}), it yields the desired bound (\ref{pool641}) for $d=3$, and thus completes the proof.
 	\end{proof}

 \begin{remark}
 	 Following the same argument as in the proof of Lemma \ref{lemma_escape_3d}, one may derive the following result for the discrete loop soup on $\mathbb{Z}^3$ (denoted by $\mathcal{L}_{\alpha}$, where $\alpha$ is the intensity). For any $N> n\ge 1$, let $\widehat{\mathfrak{L}}[n,N]$ denote the collection of loops in $\mathcal{L}_{1/2}$ that cross the annulus $B(N)\setminus B(n)$. Suppose that $\ell$ is a loop in $\widehat{\mathfrak{L}}[n,N]$, sampled from the loop measure. Then for any $\delta\in (0,\frac{1}{2})$ and $x\in \partial B(n)$,  
 	 \begin{equation}\label{nicenice648}
 	 	\mathbb{E}_{\ell}\big[ \mathbbm{1}_{ \widehat{\mathfrak{L}}[n,N] \neq \emptyset}\cdot  \mathbb{P}_{x} \big(  \tau_{ \partial B(cN) } < \tau_{\mathrm{ran}(\ell) } \big) \big]  \lesssim  \big(n/N \big)^{2-\delta}. 
 	 \end{equation}
 	First of all, the probability of $\{ \widehat{\mathfrak{L}}[n,N] \neq \emptyset\}$ is $O(n/N)$. In addition, as shown in (\ref{pool644}), for the simple random walk starting from $x$, the probability of escaping $\ell$ can be upper-bounded, up to a constant factor, by that of escaping two independent random walks starting from $B(n)$ and stopped upon hitting $\partial B(cN)$. Parallel to (\ref{pool645}), the latter probability is $O\big( (n/N)^{1-\delta} \big)$. These observations together yield the bound (\ref{nicenice648}).


 \end{remark}

 We now estimate the probability of $\overline{\mathsf{C}}_{z_1,z_2}^{m_1,m_2}$. For any $D'\subset \widetilde{\mathbb{Z}}^d$, by applying Lemma \ref{lemma_relation} and Corollary \ref{coro27} successively, we have  
\begin{equation}\label{newlate629}
	\begin{split}	
	&\mathbb{P}\big(B_{x_{1}}(r_{m_2+1}^{(2)})\xleftrightarrow{(D')} B_{x_{1}}(R_2) \big) \\
		\lesssim  & (r_{m_2}^{(2)} R_2)^{-\frac{d}{2}}  \sum_{z_3\in \partial \mathcal{B}_{x_1}(dr_{m_2+1}^{(2)}),z_4\in \partial \mathcal{B}_{x_1}(d^{-1}R_2)}  	\mathbb{P}\big( \mathsf{A}^{D'}(z_3,z_4; \tfrac{r_{m_2+1}^{(2)} }{\Cref{const_final_section}},\Cref{const_final_section}R_2) \big).  
	\end{split}
\end{equation}  
Meanwhile, it follows from Corollary \ref{lemma_cut_two_points} and Lemma \ref{lemma_relation} that  
\begin{equation}\label{late629}
	\begin{split}
		&  \mathbb{P}\big( \bm{0}\xleftrightarrow{(D')}  B_{x_1}(r_{m_1+1}^{(1)})\big)  \\\lesssim & (r_{m_1}^{(1)})^{-\frac{d}{2}}(r_{m_2}^{(2)})^{-\frac{d}{2}-1}   \sum\nolimits_{z_5\in \partial \mathcal{B}_{x_1}(  d r_{m_1+1}^{(1)}),z_6\in \partial \mathcal{B}_{x_1}(\frac{dr_{m_2}^{(2)}}{\Cref{const_final_section}}),z_7\in \partial \mathcal{B}_{x_1}( r_{m_2+1}^{(2)})}  \\
		 &\ \ \ \ \ \ \ \ \ \ \ \ \ \ \ \ \ \ \ \ \ \ \ \ \  \mathbb{P}\big(  \mathsf{A}^{D'}(z_5,z_6;   \tfrac{ r_{m_1+1}^{(1)} }{ \Cref{const_final_section} }  ,r_{m_2}^{(2)}) \cap  \mathsf{A}^{D'}(z_7,\bm{0}; \tfrac{r_{m_2
		 +1}^{(2)}}{\Cref{const_final_section} } ,\Cref{const_final_section} R_1)  \big). 
	\end{split}
\end{equation} 
 Using the restriction property, (\ref{newlate629}) and (\ref{late629}), we get 
\begin{equation}\label{need631}
	\begin{split}
		\mathbb{P}\big( \overline{\mathsf{C}}_{\bar{z}}^{m_1,m_2} \big) \lesssim   (r_{m_1}^{(1)})^{-\frac{d}{2}}(r_{m_2}^{(2)})^{-d-1} R_2^{-\frac{d}{2}}     \sum_{\bar{z}:=(z_3,z_4,z_5,z_6,z_7)\in \Psi_{m_1,m_2}}   \mathbb{P}\big( 	\widetilde{\mathsf{C}}_{\bar{z}}^{m_1,m_2}\big), 
	\end{split} 
\end{equation} 
 where we denote $\Psi_{m_1,m_2}:=\partial \mathcal{B}_{x_1}(dr_{m_2+1}^{(2)})\times \partial \mathcal{B}_{x_1}(d^{-1}R_2) \times \partial \mathcal{B}_{x_1}(  d r_{m_1+1}^{(1)}) \times \partial \mathcal{B}_{x_1}(\frac{dr_{m_2}^{(2)}}{\Cref{const_final_section}}) \times \partial \mathcal{B}_{x_1}( r_{m_2+1}^{(2)})$, and define $\widetilde{\mathsf{C}}_{\bar{z}}^{m_1,m_2}$ as the intersection of $\mathsf{A}^{ \{x_1,x_2\}}(z_1,z_2;  \tfrac{ r_{m_1+1}^{(1)} }{ \Cref{const_final_section} }  ,r_{m_2}^{(2)})$, $\mathsf{A}^{ \{x_1,x_2\}}(z_3,z_4; \tfrac{r_{m_2+1}^{(2)} }{\Cref{const_final_section}},\Cref{const_final_section}R_2)$, $ \mathsf{A}^{ \{x_1,x_2\}}(z_5,z_6;   \tfrac{ r_{m_1+1}^{(1)} }{ \Cref{const_final_section} }  ,r_{m_2}^{(2)})$ and $ \mathsf{A}^{ \{x_1,x_2\}}(z_7,\bm{0}; \tfrac{r_{m_2
		 +1}^{(2)}}{\Cref{const_final_section} } ,\Cref{const_final_section} R_1)$. Moreover, on the event $\widetilde{\mathsf{A}}_{\bar{z}}^{m_1,m_2}$, the following independent events occur: 
 \begin{equation*}
 	\mathsf{C}_1:= \big\{ z_1\xleftrightarrow{(\{x,y\}\cup \partial B_{x_1}(\frac{ r_{m_1+1}^{(1)} }{ \Cref{const_final_section} }  )\cup \partial B_{x_1}(r_{m_2}^{(2)}))} z_2  \Vert z_5\xleftrightarrow{(\{x,y\}\cup \partial B_{x_1}(\frac{ r_{m_1+1}^{(1)} }{ \Cref{const_final_section} }  )\cup \partial B_{x_1}(r_{m_2}^{(2)}))} z_6  \big\}, 
 \end{equation*} 
 \begin{equation*}
 	\mathsf{C}_2:=  \big\{ z_3\xleftrightarrow{(\{x,y\}\cup \partial B_{x_1}(\frac{r_{m_2
		 +1}^{(2)}}{\Cref{const_final_section} }) )} z_4  \Vert z_7 \xleftrightarrow{(\{x,y\}\cup \partial B_{x_1}(\frac{r_{m_2
		 +1}^{(2)}}{\Cref{const_final_section} }  ))} \bm{0}  \big\}.
 \end{equation*} 
By the estimate (\ref{four_point_use}), one has 
\begin{equation}\label{need632}
	\mathbb{P}\big(\mathsf{C}_1 \big) \lesssim  (r_{m_1}^{(1)})^{ 3-\frac{d}{2}}(r_{m_2}^{(2)})^{-\frac{3d}{2}+1}.
	\end{equation} 
	Moreover, referring to (\ref{late516}) and (\ref{late520}), we have  
	\begin{equation}\label{need633}
		\begin{split}
				\mathbb{P}\big(\mathsf{C}_2 \big) \lesssim  (R_1R_2)^{2-d}\cdot \big(\tfrac{R_1\land R_2}{r_{m_2}^{(2)}}\big)^{\frac{d}{2}-3}. 
		\end{split}
	\end{equation}
	 Putting (\ref{need631}), (\ref{need632}) and (\ref{need633}) together, we get 
	 \begin{equation}\label{pure637}
	 	\begin{split}
	 		\mathbb{P}\big( \overline{\mathsf{C}}_{\bar{z}}^{m_1,m_2} \big) \lesssim & |\Psi_{m_1,m_2}|\cdot (r_{m_1}^{(1)})^{3-d}(r_{m_2}^{(2)})^{-3d+3} R_1^{2-d}R_2^{-\frac{3d}{2}+2}(R_1\land R_2)^{\frac{d}{2}-3}\\
	 		\lesssim & (r_{m_1}^{(1)})^{2}R_1^{2-d} R_2^{-\frac{d}{2}+1}(R_1\land R_2)^{\frac{d}{2}-3}. 
	 	\end{split}
	 \end{equation} 
 	Plugging (\ref{pool641}) and (\ref{pure637}) into (\ref{newpool640}), we obtain (\ref{615}): 
	\begin{equation}\label{nice635}
		\begin{split}
			\mathbb{P}\big(\mathsf{F}_{m_1,m_2}  \big) \lesssim  (r_{m_1}^{(1)})^{4-d-(1-\delta)\cdot \mathbbm{1}_{d=3}} R_1^{2-d} R_2^{-\frac{d}{2}+1}(R_1\land R_2)^{\frac{d}{2}-3}\lesssim \mathbb{J}_{m_1,m_2}. 
		\end{split}
	\end{equation}

  \textbf{When $m_1=k_1$.} We first decompose the event $\mathsf{F}_{k_1,m_2}$ as follows. We define 
  \begin{equation}
  	l_\star:= \min\{ l\ge 1: 2^{l+3}\ge R_1 \}. 
  \end{equation}
  For $1\le l \le l_\star-1$, we denote the event 
  \begin{equation}
  \mathsf{U}_l:= \big\{ \mathfrak{E}^{\mathrm{i}}_1\cap \partial B(2^l)\neq \emptyset,\mathfrak{E}^{\mathrm{i}}_1\cap \partial B(2^{l-1})= \emptyset  \big\}. 
  \end{equation}
  In addition, we also denote $\mathsf{U}_0:=\big\{ \mathfrak{E}^{\mathrm{i}}_1\cap \partial B(1)\neq \emptyset  \big\}$ and 
  \begin{equation}
   \mathsf{U}_{l_\star}:=  \big\{ \mathfrak{E}^{\mathrm{i}}_1\cap \partial B(2^{l_\star-1})=  \emptyset,\mathfrak{E}^{\mathrm{i}}_1\cap \partial B_{x_{1}}(r_{k_1}^{(1)})\neq \emptyset \big\}. 
  \end{equation}
  Clearly, one has $\{\mathfrak{E}^{\mathrm{i}}_1\cap \partial B_{x_{1}}(r_{k_1}^{(1)})\neq \emptyset \}\subset \cup_{0\le l\le l_\star} \mathsf{U}_l$. Moreover, for each $1\le l\le l_\star$, on the event $\mathsf{U}_l$, if $\mathfrak{E}^{\mathrm{i}}_1$ intersects $\mathcal{C}_1$, then $\mathsf{V}_l:=\big\{\bm{0}\xleftrightarrow{\{x_1,x_2\}}  \partial B( 2^{l-1}) \big\}$ must occur. For convenience, we let $\mathsf{V}_0$ be the sure event.

   Recall that on $\mathsf{F}_{k_1,m_2}$, the events $\mathfrak{E}^{\mathrm{i}}_1\cap \partial B_{x_{1}}(r_{k_1}^{(1)})\neq \emptyset$, $\mathfrak{E}^{\mathrm{i}}_1\cap \mathcal{C}_1\neq \emptyset$ and $\mathsf{D}_{m_2}'''$ (defined in (\ref{verynew_D'''})) all occur; in addition, $\mathfrak{E}^{\mathrm{i}}_2$ intersects $\partial B_{x_{1}}(r_{m_2}^{(2)})$ and is disjoint from $\mathfrak{E}^{\mathrm{i}}_1$. By the observation in the last paragraph, there exists $0\le l\le l_\star$ such that $\mathsf{U}_{l}\cap \mathsf{V}_{l}$ occurs. Note that the estimate (\ref{one_arm_low}) implies  
   \begin{equation}
   	\mathbb{P}\big(\mathsf{V}_{l} \big)\lesssim  2^{(-\frac{d}{2}+1)l}, \ \ \forall 0\le l\le l_\star. 
   \end{equation}
Moreover, on the event $\mathsf{U}_{l}$, there exists an excursion in $\mathfrak{E}^{\mathrm{i}}_1$ that contains a trajectory of Brownian motion from $\partial B_{x_{1}}(r_{k_1}^{(1)})$ to $B(2^l)$, and one from $\partial B(2^l)$ to $x_1$. Thus, by the strong Markov property of Brownian excursion measure, we have 
   \begin{equation}
   	\mathbb{P}\big(\mathsf{U}_{l} \big)\lesssim \big(\tfrac{R_1}{2^{l}} \big)^{2-d} \cdot R_1^{2-d} =2^{(d-2)l} R_1^{4-2d}, \ \ \forall 0\le l\le l_\star. 
   \end{equation}
   Moreover, conditioned on $\mathsf{U}_{l}$, when $\{\mathfrak{E}^{\mathrm{i}}_2\cap B_{x_{1}}(r_{m_2}^{(2)})\neq \emptyset,\mathfrak{E}^{\mathrm{i}}_2\cap \mathfrak{E}^{\mathrm{i}}_1=\emptyset \}$ occurs, there exists an excursion in $\mathfrak{E}^{\mathrm{i}}_2$ that contains a trajectory from $x_2$ to $\partial B_{x_{1}}(r_{k_1}^{(1)})$ without hitting $\mathfrak{E}^{\mathrm{i}}_1$ (whose probability, for the same reason as in proving (\ref{pool647}), is at most $CR_1^{-(1-\delta)\cdot \mathbbm{1}_{d=3}}$), and one from $B_{x_{1}}(r_{m_2}^{(2)})$ to $x_2$ (which happens with probability $O((r_{m_2}^{(2)})^{2-d})$). Furthermore, it follows from (\ref{crossing_low}) that arbitrarily given $\mathfrak{E}^{\mathrm{i}}_1$ and $\mathcal{C}_1$, the conditional probability of $\mathsf{D}_{m_2}'''$ is at most $C(R_2/r_{m_2}^{(2)})^{-\frac{d}{2}+1}$. To sum up, 
   \begin{equation}\label{nice641}
   	\begin{split}
   			\mathsf{F}_{k_1,m_2}\lesssim  & \sum\nolimits_{0\le l\le l_\star } 2^{(-\frac{d}{2}+1)l}\cdot  2^{(d-2)l} R_1^{4-2d}  \cdot   R_1^{-(1-\delta)\cdot\mathbbm{1}_{d=3}}(r_{m_2}^{(2)})^{2-d} \cdot   \big( \tfrac{R_2}{r_{m_2}^{(2)}} \big)^{-\frac{d}{2}+1}\\
   			=&  (r_{m_2}^{(2)})^{-\frac{d}{2}+1}R_1^{4-2d-(1-\delta)\cdot \mathbbm{1}_{d=3}}R_2^{-\frac{d}{2}+1} \sum\nolimits_{0\le l\le l_\star } 2^{(\frac{d}{2}-1)l} \\
   			\lesssim &(r_{m_2}^{(2)})^{-\frac{d}{2}+1}R_1^{-\frac{3}{2}d+3-(1-\delta)\cdot\mathbbm{1}_{d=3}}R_2^{-\frac{d}{2}+1} \lesssim  \mathbb{J}_{k_1,m_2}. 
   	\end{split}
   \end{equation}


 By (\ref{verynew_629}), (\ref{nice635}) and (\ref{nice641}), we have confirmed the desired bound (\ref{615}), thereby completing the proof of Lemma \ref{lemma61}. \qed

  \section*{Acknowledgments}

 We warmly thank Wendelin Werner for his valuable suggestions, which greatly enhanced the clarity of the manuscript. J. Ding is supported by the National Natural Science Foundation of China (Grant No. 12231002, 12595284, 12595280), and by the New Cornerstone Science Foundation through the New Cornerstone Investigator Program and XPLORER PRIZE.


  \appendix

%
%
%
%
%
%

 \section{Proof of Lemma \ref{lemma_new_stable}} \label{app0_stable}

 When $3\le d\le 5$, the desired bound (\ref{ineq_new_stable}) can be derived as follows: 
 \begin{equation}
 \begin{split}
 	\mathbb{P}\big( A\xleftrightarrow{(D)} x \big) \asymp & N^{-\frac{d}{2}+1} 	\cdot \mathbb{P}\big( A\xleftrightarrow{(D)} \partial B(N) \big) \\
 \overset{(\text{Lemma}\ \ref{lemma_stability_bd})}{\asymp }& N^{-\frac{d}{2}+1} 	\cdot  	\mathbb{P}\big( A\xleftrightarrow{(D\cup D')} \partial B(N) \big)   
 	\asymp   \mathbb{P}\big( A\xleftrightarrow{(D\cup D')} x  \big). 
 \end{split}
 \end{equation}
 Here the first and last inequalities both follow from \cite[Proposition 1.9]{cai2024quasi}.

 Now we focus on the case when $d\ge 7$. Note that 
 \begin{equation}\label{findA2}
 	\begin{split}
0\le  &	\mathbb{P}\big( A\xleftrightarrow{(D)} x  \big) -	\mathbb{P}\big( A\xleftrightarrow{(D\cup D')} x  \big) \\
=&\mathbb{P}(\mathsf{F}) :=	\mathbb{P}\big( A\xleftrightarrow{(D)} x  , \big\{ \mathbb{P}\big( A\xleftrightarrow{(D\cup D')} x  \big\}^c \big). 
 	\end{split}
 \end{equation}
 On $\mathsf{F}$, the event $\{A\xleftrightarrow{(D)} x \}$ occurs, and removing all loops that intersect $D'$ must violate it. As a result, there exists a loop $\widetilde{\ell}_\dagger\in \widetilde{\mathcal{L}}_{1/2}$ with $\mathrm{ran}(\widetilde{\ell}_\dagger)\cap D'\neq \emptyset$ such that $\mathrm{ran}(\widetilde{\ell}_\dagger) \xleftrightarrow{(D)} A$ and $\mathrm{ran}(\widetilde{\ell}_\dagger) \xleftrightarrow{(D)} x$ occur disjointly (to see this, one may remove the loops intersecting 
 $D'$ one by one, stopping upon the absence of $\{A\xleftrightarrow{(D)} x \}$; then the last removed loop meets the requirements). Here ``$\mathsf{A}$ and $\mathsf{A}'$ occur disjointly'' (denoted by $\mathsf{A}\circ \mathsf{A}'$) means that $\mathsf{A}$ and $\mathsf{A}'$ are certified by two disjoint collections of loops (see \cite[Section 3.3]{cai2024high} for further details).

 Recall that $D'\subset [\widetilde{B}(c^{-1}N)]^c$. In what follows, we estimate the probability of $\mathsf{F}$ separately in two cases, according to whether $\widetilde{\ell}_\dagger$ intersects $\widetilde{B}(c^{-1/3}N)$.

  \textbf{Case 1: $\mathrm{ran}(\widetilde{\ell}_\dagger)\subset [\widetilde{B}(c^{-\frac{1}{3}}N)]^c$.} By the construction of $\widetilde{\ell}_\dagger$, there exist $w_1,w_2,w_3\in \mathbb{Z}^d$ such that $\widetilde{B}_{w_3}(1)\cap D'\neq \emptyset$, $\widetilde{\ell}_\dagger$ intersects $\widetilde{B}_{w_j}(1)$ for all $j\in \{1,2,3\}$, and that $w_1\xleftrightarrow{(D)} A$ and $w_2\xleftrightarrow{(D)} x$ occur disjointly. By \cite[Lemma 2.3]{cai2024high}, the probability of having a loop in $\widetilde{\mathcal{L}}_{1/2}^D$ intersecting $\widetilde{B}_{w_j}(1)$ for all $j\in \{1,2,3\}$ is at most 
 	\begin{equation}\label{A3_loop}
 		C|w_1-w_2|^{2-d}|w_2-w_3|^{2-d}|w_3-w_1|^{2-d}. 
 	\end{equation}
 (\textbf{P.S.} In this proof and the next three sections, we denote $0^{-a}:=1$ for $a>0$.) In addition, it follows from Lemma \ref{lemma_point_to_set} and (\ref{two-point1}) that 
 \begin{equation}\label{verynewA4}
 	\mathbb{P}\big( w_1\xleftrightarrow{(D)} A \big) \lesssim 	c|w_1|^{2-d}N^{d-2}\cdot \mathbb{P}\big(A\xleftrightarrow{(D)} x\big),
 \end{equation}
 \begin{equation}\label{verynewA5}
 	\mathbb{P}\big(w_2\xleftrightarrow{(D)} x  \big)\lesssim |w_2-x|^{2-d}. 
 \end{equation}

 Note that the assumption $\mathrm{ran}(\widetilde{\ell}_\dagger)\subset [\widetilde{B}(c^{-\frac{1}{3}}N)]^c$ implies $w_1\in [B(N)]^c$. Thus, applying the BKR inequality, and using the estimates (\ref{A3_loop}), (\ref{verynewA4}) and (\ref{verynewA5}), the probability of this case is bounded from above by  
   \begin{equation}\label{A9}
   	\begin{split}
   		&CN^{d-2}  \mathbb{P}\big(A\xleftrightarrow{(D)} x\big) \sum\nolimits_{w_1,w_2,w_3\in \mathbb{Z}^d:w_1\in [B(N)]^c, \widetilde{B}_{w_3}(1)\cap D'\neq \emptyset }|w_1-w_2|^{2-d}\\
   		&\ \ \ \ \ \ \ \ \ \ \ \ \ \ \ \ \ \ \ \ \ \ \ \ \ \ \ \  \cdot |w_2-w_3|^{2-d}|w_3-w_1|^{2-d}|w_1|^{2-d} |w_2-x|^{2-d}.
   	\end{split}
   \end{equation}
   By $|w_1|\gtrsim N$ and \cite[(4.11)]{cai2024high}, the sum on the right-hand side is at most 
   \begin{equation}
   \begin{split}
   	   	CN^{2-d}\sum\nolimits_{w_3\in \mathbb{Z}^d:\widetilde{B}_{w_3}(1)\cap D'\neq \emptyset } |w_3-x|^{2-d} \overset{|w_3-x|\asymp |w_3|,(\ref{condition_new_stable})}{\lesssim } N^{8-2d}. 
   \end{split}
   \end{equation}
   Combined with (\ref{A9}), it implies that the probability of this case is at most of order $N^{6-d}\cdot \mathbb{P}\big(A\xleftrightarrow{(D)} x\big)$.


 Before proceeding to the next case, we first review the decomposition argument of loops introduced in \cite[Section 2.6.3]{cai2024high}. Precisely, for $M>m\ge 1$, each loop in $\mathfrak{L}[m,M]$ (i.e., the point measure consisting of loops in $\widetilde{\mathcal{L}}_{1/2}^D$ that crosses the annulus $B(M)\setminus B(m)$) can be divided into forward and backward crossing paths, where each forward (resp. backward) crossing path is a Brownian motion starting from $\partial B(m)$ (resp. $\partial B(M)$) and stopping upon hitting $\partial B(M)$ (resp. $\partial B(m)$). Note that the starting point of a forward crossing path is the ending point of a backward crossing path, and vice versa. We enumerate the forward (resp. backward) crossing paths of all loops in $\widetilde{\mathcal{L}}_{1/2}^D$ crossing $B(M)\setminus B(m)$ as $\{\widetilde{\eta}^{\mathrm{F}}_i\}_{1\le i\le \kappa}$ (resp. $\{\widetilde{\eta}^{\mathrm{B}}_i\}_{1\le i\le \kappa}$). By \cite[Lemma 2.11]{cai2024quasi}, the total number $\kappa$ of crossings decays exponentially. I.e.,  
   \begin{equation}\label{decay_A10}
   	\mathbb{P}\big(\kappa \ge l \big)\le \big( \tfrac{Cm}{M} \big)^{(d-2)l}, \ \ \forall l\ge 1.  
   \end{equation}
   We denote the starting and ending points of $\widetilde{\eta}^{\mathrm{F}}_i$ by $\mathbf{y}_i$ and $\mathbf{z}_i$ respectively. Let $\mathcal{F}_{\mathbf{yz}}$ denote the $\sigma$-field generated by $\{(\mathbf{y}_i,\mathbf{z}_i)\}_{1\le i\le \kappa}$. Referring to \cite[Lemma 2.4]{cai2024high}, conditioned on $\mathcal{F}_{\mathbf{yz}}$, the crossing paths $\{\widetilde{\eta}^{\mathrm{F}}_i\}_{1\le i\le \kappa}\cup \{\widetilde{\eta}^{\mathrm{B}}_i\}_{1\le i\le \kappa}$ are independent. In addition, for any $1\le i\le \kappa$, $y\in \partial B(m)$ and $z\in \partial B(M)$, if $\widetilde{\eta}^{\mathrm{F}}_i$ starts from $y$ and ends at $z$, then it is distributed as
      \begin{equation}
   	\widetilde{\eta}^{\mathrm{F}}_i \sim \widetilde{\mathbb{P}}_{y}\big( \big\{\widetilde{S}_t\big\}_{0\le t\le \tau_{\partial B(M)}} \in \cdot   \mid \tau_{\partial B(M) }=\tau_{z}<\tau_{D} \big). 
   \end{equation} 
   Similarly, if $\widetilde{\eta}^{\mathrm{B}}_i$ starts from $z$ and ends at $y$, then it has distribution
       \begin{equation}
   	\widetilde{\eta}^{\mathrm{B}}_i \sim \widetilde{\mathbb{P}}_{z}\big( \big\{\widetilde{S}_t\big\}_{0\le t\le \tau_{\partial B(m)}} \in \cdot   \mid \tau_{\partial B(m) }=\tau_{y}<\tau_{D} \big).  
   \end{equation}

  \textbf{Case 2: $\mathrm{ran}(\widetilde{\ell}_\dagger) \cap \widetilde{B}(c^{-\frac{1}{3}}N)\neq \emptyset$.} We consider the crossing paths for $m=c^{-\frac{1}{3}}N$ and $M=c^{-\frac{2}{3}}N$. Since $\mathrm{ran}(\widetilde{\ell}_\dagger) \cap \widetilde{B}(c^{-\frac{1}{3}}N)\neq \emptyset$ and $D'\subset [\widetilde{B}(c^{-1}N)]^c$, the loop must cross the annulus $B(c^{-\frac{2}{3}}N)\setminus B(c^{-\frac{1}{3}}N)$. Therefore, in this case, the following events all occur: 
 \begin{itemize}
 	
 	\item  $\mathsf{F}_1$: $\exists$ a crossing path $\eta_1$ that is connected to $A$ by a loop cluster $\mathcal{C}_1$;

 	\item $\mathsf{F}_2$: $\exists$ a crossing path $\eta_2$ that is connected to $x$ by a loop cluster $\mathcal{C}_2$;

 	\item $\mathsf{F}_3$: $\exists$ a backward crossing path $\eta_3$ that intersects $D'$.  	
 	
 \end{itemize}
 Here $\mathcal{C}_1$ and $\mathcal{C}_2$ are constructed by two disjoint collections of loops in $\widetilde{\mathcal{L}}_{1/2}^D-\mathfrak{L}[c^{-\frac{1}{3}}N,c^{-\frac{2}{3}}N]$. Conditioned on $\mathcal{F}_{\mathbf{yz}}$, we say that events $\mathsf{A}$ and $\mathsf{A}'$ occur disjointly if they are certified by two disjoint collections of loops and crossing paths. In this setting, the BKR inequality remains valid (see \cite[Remark 2.15]{cai2024quasi}). Next, we estimate the probability of $\cap_{i\in\{1,2,3\}} \mathsf{F}_i$ under different restrictions on $\{\eta_i\}_{i\in\{1,2,3\}}$.

  
 
  \textbf{Case 2.1: $\eta_1$, $\eta_2$ and $\eta_3$ are distinct.} In this case, the events $\mathsf{F}_1$, $\mathsf{F}_2$ and $\mathsf{F}_3$ occur disjointly. In addition, they imply $\{A\xleftrightarrow{(D)} \partial B(c^{-\frac{1}{3}}N)\}$, $\{x\xleftrightarrow{(D)} \partial B(c^{-\frac{1}{3}}N)\}$ and $\cup_{1\le i\le \kappa}\{\mathrm{ran}(\widetilde{\eta}^{\mathrm{B}}_i)\cap D'\neq \emptyset \}$ respectively. Therefore, by the BKR inequality, the conditional probability of this case given $\mathcal{F}_{\mathbf{yz}}$ is bounded from above by 
   \begin{equation}\label{veryniceA10}
 	\begin{split}
 		 & \mathbb{P}\big(A\xleftrightarrow{(D)} \partial B(c^{-\frac{1}{3}}N) \mid \mathcal{F}_{\mathbf{yz}}  \big) \cdot  \mathbb{P}\big(x\xleftrightarrow{(D)} \partial B(c^{-\frac{1}{3}}N) \mid \mathcal{F}_{\mathbf{yz}} \big) \\
 		& \cdot  \sum\nolimits_{1\le i\le \kappa }  \mathbb{P}\big(\mathrm{ran}(\widetilde{\eta}^{\mathrm{B}}_i)\cap D'\neq \emptyset \mid \mathcal{F}_{\mathbf{yz}}  \big).
 	\end{split}
 \end{equation}
 Moreover, it follows from \cite[Lemma 4.3]{cai2024quasi} that 
 \begin{equation}\label{veryniceA11}
 	\begin{split}
 		&\mathbb{P}\big(A\xleftrightarrow{(D)} \partial B(c^{-\frac{1}{3}}N) \mid \mathcal{F}_{\mathbf{yz}}  \big) \\
 		 \lesssim &(\kappa+1) \cdot  \mathbb{P}\big(A\xleftrightarrow{(D)} \partial B(10N)   \big) \overset{(\text{Lemma}\ \ref{lemma_213})}{ \lesssim}  (\kappa+1)N^{d-4}\cdot \mathbb{P}\big(A\xleftrightarrow{(D)} x  \big), 
 	\end{split}
 \end{equation}
  \begin{equation}\label{veryniceA12}
 	\begin{split}
 		&  \mathbb{P}\big(x\xleftrightarrow{(D)} \partial B(c^{-\frac{1}{3}}N) \mid \mathcal{F}_{\mathbf{yz}} \big)  \\
 		\lesssim & (\kappa+1) \cdot \mathbb{P}\big(x\xleftrightarrow{(D)} \partial B(10N)  \big) \overset{(\ref{one_arm_high})}{\lesssim }(\kappa+1)\cdot N^{-2}. 
 	\end{split}
 \end{equation}
 Meanwhile, for each $1\le i\le \kappa$, the union bound yields that 
 \begin{equation}\label{veryniceA13}
 \begin{split}
 	 & 	\mathbb{P}\big(\mathrm{ran}(\widetilde{\eta}^{\mathrm{B}}_i)\cap D'\neq \emptyset \mid \mathcal{F}_{\mathbf{yz}}  \big) \\
 	 	\le &\sum\nolimits_{w_3\in \mathbb{Z}^d:\widetilde{B}_{w_3}(1)\cap D'\neq \emptyset } \mathbb{P}\big( w_3 \in \mathrm{ran}(\widetilde{\eta}^{\mathrm{B}}_i) \mid \mathcal{F}_{\mathbf{yz}}  \big)\\
 	 	\lesssim & \sum\nolimits_{w_3\in \mathbb{Z}^d:\widetilde{B}_{w_3}(1)\cap D'\neq \emptyset } |w_3|^{4-2d}\cdot N^{d-2} \\
 	 \overset{(|w_3|\gtrsim N)}{\lesssim} & \sum\nolimits_{w_3\in \mathbb{Z}^d:\widetilde{B}_{w_3}(1)\cap D'\neq \emptyset }   |w_3|^{2-d} \overset{(\ref{condition_new_stable})}{\le} c'N^{6-d}, 
 \end{split}
 \end{equation}
 where the second inequality follows from \cite[(2.63)]{cai2024quasi}. Plugging (\ref{veryniceA11}), (\ref{veryniceA12}) and (\ref{veryniceA13}) into (\ref{veryniceA10}), and then combining with (\ref{decay_A10}), we obtain that the probability this case is at most of order $c' \cdot \mathbb{P}\big(A\xleftrightarrow{(D)} x  \big)$. 

 \textbf{Case 2.2: $\eta_1=\eta_2\neq \eta_3$.} In this case, the events $\{x\xleftrightarrow{(D)} A,x\xleftrightarrow{(D)}\partial B(c^{-\frac{1}{3}}N) \}$ and $\cup_{1\le i\le \kappa}\{\mathrm{ran}(\widetilde{\eta}^{\mathrm{B}}_i)\cap D'\neq \emptyset \}$ occur disjointly, as they are certified by $\{\eta_1,\mathcal{C}_1,\mathcal{C}_2\}$ and $\eta_3$ respectively. Therefore, the conditional probability of this case given $\mathcal{F}_{\mathbf{yz}}$ is bounded from above by  
  \begin{equation}\label{verynice_A14}
 	\begin{split}
 & 	 \mathbb{P}\big(  A\xleftrightarrow{(D)} x, A\xleftrightarrow{(D)}\partial B(c^{-\frac{1}{3}}N)  \mid \mathcal{F}_{\mathbf{yz}} \big)   \sum\nolimits_{1\le i\le \kappa }  \mathbb{P}\big(\mathrm{ran}(\widetilde{\eta}^{\mathrm{B}}_i)\cap D'\neq \emptyset \mid \mathcal{F}_{\mathbf{yz}}  \big) .
 	\end{split}
 \end{equation}
 Moreover, by \cite[Lemma 4.3]{cai2024quasi} one has 
 \begin{equation}\label{verynice_A15}
 	\begin{split}
 		 \mathbb{P}\big(  A\xleftrightarrow{(D)} x, A\xleftrightarrow{(D)}\partial B(c^{-\frac{1}{3}}N)  \mid \mathcal{F}_{\mathbf{yz}} \big)  \lesssim (\kappa+1)\cdot  \mathbb{P}\big(  A\xleftrightarrow{(D)} x \big). 
 	\end{split}
 \end{equation}
 Inserting (\ref{veryniceA13}) and (\ref{verynice_A15}) into (\ref{verynice_A14}), and then combining with (\ref{decay_A10}), we derive that the probability of this case is at most of order $N^{6-d} \cdot  \mathbb{P}\big(  A\xleftrightarrow{(D)} x \big) $.

  \textbf{Case 2.3: $\eta_1=\eta_3$.} Recall that $\eta_3$ is a backward crossing path, and hence is contained in $[\widetilde{B}(c^{-\frac{1}{3}}N)]^c$. Therefore, since $\mathcal{C}_1$ connects $A$ and $\eta_1$($=\eta_3$), it certifies the event $\{A\xleftrightarrow{(D)} \partial B(c^{-\frac{1}{3}}N)\}$. Meanwhile, all loops crossing $B(c^{-\frac{2}{3}}N)\setminus B(c^{-\frac{1}{3}}N)$ together with the cluster $\mathcal{C}_2$ certify $\{x\xleftrightarrow{} D'\}$. Thus, by the BKR inequality, the probability of this case is upper-bounded by 
  \begin{equation}
  	\begin{split}
  	 & \mathbb{P}\big(A\xleftrightarrow{(D)} \partial B(c^{-\frac{1}{3}}N) \big)\cdot \mathbb{P}\big( x\xleftrightarrow{} D' \big)  \\
  	\overset{(\text{Lemma}\ \ref{lemma_213})}{ \lesssim}  &\mathbb{P}\big(A\xleftrightarrow{(D)} x \big) \cdot \sum\nolimits_{w_3\in \mathbb{Z}^d:\widetilde{B}_{w_3}(1)\cap D'\neq \emptyset }  |x-w_3|^{2-d} \\
  	 \overset{|x-w_3|\asymp |w_3|,(\ref{condition_new_stable})}{\asymp} &  N^{6-d}\cdot \mathbb{P}\big(A\xleftrightarrow{(D)} x \big). 
  	\end{split}
  \end{equation}

 \textbf{Case 2.4: $\eta_2=\eta_3\neq \eta_1$.} Similarly, since $\eta_2$($=\eta_3$) is a backward crossing path, the cluster $\mathcal{C}_2$ certifies the event $\{x\xleftrightarrow{(D)} \partial   B(c^{-\frac{1}{3}}N) \} $. Meanwhile, the events $\{A\xleftrightarrow{(D)} \partial   B(c^{-\frac{1}{3}}N) \}$ and $\cup_{1\le i\le \kappa}\{\mathrm{ran}(\widetilde{\eta}^{\mathrm{B}}_i)\cap D'\neq \emptyset \}$ are certified by $\{\eta_1,\mathcal{C}_1\}$ and $\eta_3$ respectively. Hence, these three events occur disjointly. Thus, for the same reason as in \textbf{Case 2.1}, the probability of this case is at most of order $c' \cdot \mathbb{P}\big(A\xleftrightarrow{(D)} x  \big)$.

  \textbf{Case 2.5: $\eta_1=\eta_2= \eta_3$.} In this case, the clusters $\mathcal{C}_1$ and $\mathcal{C}_2$ certify the events $\{x\xleftrightarrow{(D)} \partial   B(c^{-\frac{1}{3}}N) \} $ and $\{A\xleftrightarrow{(D)} \partial   B(c^{-\frac{1}{3}}N) \}$ respectively. In addition, the loop containing $\eta_3$ must cross $B(c^{-\frac{2}{3}}N)\setminus B(c^{-\frac{1}{3}}N)$ and intersect $D'$. Thus, by the BKR inequality, the probability of this case is at most 
  \begin{equation*}
  	\begin{split}
  	&	\mathbb{P}\big( x\xleftrightarrow{(D)} \partial   B(c^{-\frac{1}{3}}N) \big) \cdot \mathbb{P}\big( A\xleftrightarrow{(D)} \partial   B(c^{-\frac{1}{3}}N) \big)  \\
  	\cdot & \mathbb{P}\big( \exists \ell \in \widetilde{\mathcal{L}}_{1/2}
  	^D\ \text{crossing}\  B(c^{-\frac{2}{3}}N)\setminus B(c^{-\frac{1}{3}}N)\ \text{and intersecting}\  D' \big)  \\
 \overset{(\ref{one_arm_high}),(\ref{ineqforlemma216})}{\lesssim}  & N^{d-6}\cdot \mathbb{P}\big( A\xleftrightarrow{(D)}x\big)  \sum_{w_3\in \mathbb{Z}^d:\widetilde{B}_{w_3}(1)\cap D'\neq \emptyset } \mathbb{P}\big( \exists \ell \in \widetilde{\mathcal{L}}_{1/2}
  	^D\ \text{intersecting}\  \partial B(c^{-\frac{2}{3}}N)\ \text{and}\  w_3 \big) \\  \lesssim &N^{d-6}\cdot \mathbb{P}\big( A\xleftrightarrow{(D)}x\big)   \sum_{w_3\in \mathbb{Z}^d:\widetilde{B}_{w_3}(1)\cap D'\neq \emptyset }  |w_3|^{2-d} \overset{(\ref{condition_new_stable})}{\lesssim }  c'\cdot \mathbb{P}\big( A\xleftrightarrow{(D)}x\big). 
  	\end{split}
  \end{equation*}

  Combining the estimates in \textbf{Case 1} and \textbf{Cases 2.1-2.5}, we obtain 
    \begin{equation}
    	\mathbb{P}(\mathsf{F})\lesssim (N^{6-d}+c')\cdot \mathbb{P}\big(A\xleftrightarrow{(D)}x \big). 
    \end{equation}
    By choosing $c'>0$ sufficiently small and assuming $N$ large enough, this  implies $\mathbb{P}(\mathsf{F})\le \tfrac{1}{2}\cdot \mathbb{P}\big(A\xleftrightarrow{(D)}x \big)$. Together with (\ref{findA2}), it yields the desired bound (\ref{ineq_new_stable}).  \qed

 \section{Proof of Lemma \ref{lemma_new_decompose_point_to_set}} \label{app_new_decompose_point_to_set}

As a preparation, we present the following extension of \cite[Lemma 2.1]{cai2024incipient}.

\begin{lemma}\label{lemmaB1}
	For any $d\ge 3$, $D\subset \widetilde{\mathbb{Z}}^d$ and $v \in \widetilde{\mathbb{Z}}^d\setminus D$, suppose that all GFF values on $D$ are given and are all non-negative. Then the probability of $\big\{v\xleftrightarrow{\ge 0}D\big\}$ is of the same order as $([\widetilde{G}_D(v,v)]^{-\frac{1}{2}}  \mathcal{H}_v)\land 1$, where 
	\begin{equation}
		\mathcal{H}_v:= \sum\nolimits_{w\in \widetilde{\partial}D} \widetilde{\mathbb{P}}_v(\tau_{D}=\tau_{w}<\infty)\widetilde{\phi}_w. 
	\end{equation} 
\end{lemma}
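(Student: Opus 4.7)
The statement extends \cite[Lemma 2.1]{cai2024incipient}, which—as used in the main text (e.g.\ in deriving \eqref{newA9})—furnishes the same asymptotic without the cap $\wedge 1$ in the regime where the right-hand side is small. The new content is the uniform upper bound of order $1$ and a matching constant lower bound in the opposite regime.

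The plan is to use the Markov decomposition $\widetilde{\phi}_u = \mathcal{H}_u + \widetilde{\psi}_u$ on $\widetilde{\mathbb{Z}}^d\setminus D$, where $\mathcal{H}$ is the (deterministic given $\widetilde{\phi}|_D$) harmonic extension and $\widetilde{\psi}\sim \mathbb{P}^D$ is an independent zero-boundary GFF on $\widetilde{\mathbb{Z}}^d\setminus D$ with $\mathrm{Var}(\widetilde{\psi}_v)=\widetilde{G}_D(v,v)$. Since $\widetilde{\phi}|_D \ge 0$, the maximum principle yields $\mathcal{H}\ge 0$, so $\mathcal{H}_v\ge 0$ and the target quantity is non-negative. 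I then split into two regimes separated by a small threshold $c_0>0$.

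In the sub-critical regime $\mathcal{H}_v \le c_0\sqrt{\widetilde{G}_D(v,v)}$ I invoke \cite[Lemma 2.1]{cai2024incipient} directly, which already gives the desired asymptotic of order $\mathcal{H}_v/\sqrt{\widetilde{G}_D(v,v)}$. In the super-critical regime $\mathcal{H}_v > c_0\sqrt{\widetilde{G}_D(v,v)}$, where we need $\mathbb{P}(v\xleftrightarrow{\ge 0}D\mid \widetilde{\phi}|_D)\gtrsim 1$, I truncate the boundary data by setting $\bar{\phi}_w := \widetilde{\phi}_w \wedge (c_1\sqrt{\widetilde{G}_D(v,v)})$ for a large constant $c_1$ to be determined, and let $\bar{\mathcal{H}}_v$ denote the harmonic extension of $\bar{\phi}$ at $v$. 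Either (a) $\bar{\mathcal{H}}_v \gtrsim \sqrt{\widetilde{G}_D(v,v)}$, in which case the sub-critical regime applied to $\bar{\phi}$ (whose values are all bounded by a constant multiple of $\sqrt{\widetilde{G}_D(v,v)}$) produces a constant lower bound for the connection probability under $\bar{\phi}$, and monotone coupling—raising boundary values can only enlarge the non-negative-level set, hence enlarge sign clusters—transfers this bound to the original $\widetilde{\phi}$; or (b) $\bar{\mathcal{H}}_v \ll \sqrt{\widetilde{G}_D(v,v)}$, which forces the bulk of $\mathcal{H}_v$ to come from the subset $D^\ast := \{w\in\widetilde{\partial}D:\widetilde{\phi}_w \ge c_1\sqrt{\widetilde{G}_D(v,v)}\}$. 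Resetting the values on $D^\ast$ to exactly $c_1\sqrt{\widetilde{G}_D(v,v)}$ (and all others to $0$) yields an auxiliary configuration whose harmonic average at $v$ is still $\gtrsim \sqrt{\widetilde{G}_D(v,v)}$; applying the sub-critical case to this auxiliary configuration and invoking monotone coupling once more gives the required constant lower bound.

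The main obstacle I expect is the bookkeeping in the super-critical regime: the truncation threshold $c_1$ must be chosen so that sub-cases (a) and (b) yield matching constant lower bounds and the constants produced by \cite[Lemma 2.1]{cai2024incipient} are compatible with both. A minor subtlety is to verify that \cite[Lemma 2.1]{cai2024incipient} indeed applies to arbitrary (non-constant but bounded) non-negative boundary data rather than only to the specific situations in which it was used there; if it is stated only for constant boundary values, a preliminary dyadic decomposition of $\widetilde{\partial}D$ according to the size of $\widetilde{\phi}_w$ combined with a harmonic-measure union bound will reduce the general case to the constant one.
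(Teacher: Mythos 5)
The paper proves this lemma by a direct calculation: it invokes Lupu's explicit identity (18) of \cite{lupu2018random},
\[
\mathbb{P}\big(v\xleftrightarrow{\ge 0}D\big) \;=\; \mathbb{E}\Big[\mathbbm{1}_{\widetilde{\phi}_v\ge 0}\big(1-e^{-2\widetilde{\phi}_v\sum_{w}\mathbb{K}_{D\cup\{v\}}(v,w)\widetilde{\phi}_w}\big)\Big],
\]
replaces $\mathbb{K}_{D\cup\{v\}}(v,w)$ by $[\widetilde{G}_D(v,v)]^{-1}\widetilde{\mathbb{P}}_v(\tau_D=\tau_w<\infty)$ up to two-sided constants, and then evaluates the resulting one-dimensional Gaussian integral $\mathbb{J}(a)$ separately in the regimes $\sigma^{-1}\mu\le\frac{1}{10}$ and $\sigma^{-1}\mu\ge\frac{1}{10}$, where $\mu=\mathcal{H}_v,\ \sigma^2=\widetilde{G}_D(v,v)$. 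Your route is genuinely different: invoke a known sub-critical asymptotic and extend to the super-critical regime by truncation and monotone coupling. That strategy is not obviously doomed, but as written it has a concrete gap.

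The gap is in case (b). You define $D^\ast=\{w:\widetilde{\phi}_w\ge c_1\sqrt{\widetilde{G}_D(v,v)}\}$ and claim that resetting the boundary values on $D^\ast$ to exactly $c_1\sqrt{\widetilde{G}_D(v,v)}$ (and zeroing the rest) yields a configuration whose harmonic average at $v$ is $\gtrsim\sqrt{\widetilde{G}_D(v,v)}$. This is false. Writing $p_w=\widetilde{\mathbb{P}}_v(\tau_D=\tau_w<\infty)$ and $P^\ast=\sum_{w\in D^\ast}p_w$, the harmonic average of that auxiliary configuration is exactly $c_1\sqrt{\widetilde{G}_D(v,v)}\,P^\ast$, and case (b) in fact \emph{forces} $P^\ast\ll c_1^{-1}$: indeed $\bar{\mathcal{H}}_v$ dominates $c_1\sqrt{\widetilde{G}_D(v,v)}\,P^\ast$, so $\bar{\mathcal{H}}_v\ll\sqrt{\widetilde{G}_D(v,v)}$ implies $P^\ast\ll c_1^{-1}$. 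Concretely: take $D$ with one boundary point $w$ of tiny harmonic measure $p_w$ carrying a huge value $\widetilde{\phi}_w=K$ so that $p_wK\asymp\sqrt{\widetilde{G}_D(v,v)}$; after resetting, the harmonic average is $c_1\sqrt{\widetilde{G}_D(v,v)}p_w$, tiny, and your sub-critical input then yields a lower bound of order $c_1 p_w\ll 1$, not a constant. A second, smaller problem is case (a): the sub-critical asymptotic you invoke cannot literally read "$\asymp [\widetilde{G}_D(v,v)]^{-1/2}\bar{\mathcal{H}}_v$" when $\bar{\mathcal{H}}_v\gtrsim\sqrt{\widetilde{G}_D(v,v)}$ (the left side is a probability), so you must mean to first scale $\bar{\phi}$ down to the sub-critical threshold before applying it — which is fine, but then the truncation step is doing no work and you might as well scale the original $\widetilde{\phi}$ directly. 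In short, the super-critical lower bound you need is exactly what the Lupu-formula computation delivers painlessly (the Gaussian tail $\mathbb{P}(\widetilde{\phi}_v\ge 0)\gtrsim 1$ when $\mu\ge\frac{\sigma}{10}$, and the exponent is then bounded away from $0$), whereas your truncation route breaks down precisely when a thin boundary set carries most of the harmonic mass through very large boundary values.
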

\begin{proof}
	Without loss of generality, we assume $\mathcal{H}_v>0$ (otherwise, for each $w\in \widetilde{\partial}D$, either $\widetilde{\phi}_w=0$ or there is no path starting from $v$ and first hitting $D$ at $w$; hence, the event $\big\{v\xleftrightarrow{\ge 0}D\big\}$ does not occur.) By \cite[(18)]{lupu2018random}, the probability of $\big\{v\xleftrightarrow{\ge 0}D\big\}$ can be equivalently written as  
\begin{equation}\label{use255}
	\begin{split}
	\mathbb{I}:=   \mathbb{E} \Big[ \mathbbm{1}_{\widetilde{\phi}_v\ge 0}\cdot \big(  1- e^{-2\widetilde{\phi}_v\sum_{w\in \widetilde{\partial}D} \mathbb{K}_{  D \cup \{v\} }(v,w)\widetilde{\phi}_w}  \big) \Big],
	\end{split}
\end{equation} 
	where the values of $\{\widetilde{\phi}_w\}_{w\in \widetilde{\partial}D}$ are given by the boundary condition on $D$, and $\widetilde{\phi}_v$ is distributed by $N(\mu,\sigma^2)$ with $\mu=\mathcal{H}_v$ and $\sigma^2=\widetilde{G}_D(v,v)$. Moreover, referring to \cite[Lemma 3.2]{inpreparation_twoarm}, there exist $C_\star>c_\star>0$ such that 
	\begin{equation}\label{use257}
	\begin{split}
c_\star \le 	\tfrac{\mathbb{K}_{  D \cup \{v\} }(v,w) }{  [\widetilde{G}_D(v,v) ]^{-1}\cdot \widetilde{\mathbb{P}}_v(\tau_{D}=\tau_{w}<\infty) }	 \le C_\star. 
	\end{split}
\end{equation}
 Plugging (\ref{use257}) into (\ref{use255}), we obtain that  
 \begin{equation}\label{greatB4}
\mathbb{J}(c_\star) \le  	\mathbb{I}  \le \mathbb{J}(C_\star),
 \end{equation}
 where the quantity $\mathbb{J}(a)$ for $a>0$ is defined by 
 \begin{equation}\label{niceB3}
 \begin{split}
  	\mathbb{J}(a):= & 	\int_{0}^{\infty} \frac{1}{ \sqrt{2\pi \sigma^2 }} e^{-\frac{(t-\mu)^2}{2\sigma^2}}\big(1-  e^{-a\sigma^{-2}  \mu   t   } \big)\mathrm{d}t \\
  	\overset{(t=\sigma s)}{=}&   \int_{0}^{\infty} \frac{1}{ \sqrt{2\pi   }} e^{-\frac{(s-\frac{\mu}{\sigma})^2}{2 }}\big(1-  e^{-a\sigma^{-1}  \mu  s   } \big)\mathrm{d}s. 
 \end{split}
 \end{equation}


	Next, we estimate $\mathbb{J}(c_\star)$ and $\mathbb{J}(C_\star)$ separately in two cases.

	 \textbf{When $0< \sigma^{-1}\mu \le \frac{1}{10}$.} On the one hand, one has 
\begin{equation}\label{niceB4}
	\begin{split}
		\mathbb{J}(c_\star) \gtrsim &    \int_{0}^{\frac{\sigma}{\mu}} \frac{1}{ \sqrt{2\pi   }} e^{-\frac{(s-\frac{\mu}{\sigma})^2}{2 }}\big(1-  e^{-c_{\star}\sigma^{-1}  \mu  s   } \big)\mathrm{d}s \\
		\gtrsim & \sigma^{-1}\mu   \int_{0}^{\frac{\sigma}{\mu}} \frac{s}{ \sqrt{2\pi   }} e^{-\frac{(s-\frac{\mu}{\sigma})^2}{2 }} \mathrm{d}s\\
		\overset{(s=r+\frac{\mu}{\sigma})}{\ge} &  \sigma^{-1}\mu   \int_{-\frac{\mu}{\sigma}}^{\frac{\sigma}{\mu}-\frac{\mu}{\sigma}} \frac{r}{ \sqrt{2\pi   }} e^{-\frac{r^2}{2 }} \mathrm{d}s \gtrsim  \sigma^{-1}\mu, 
	\end{split}
\end{equation}
where in the last inequality we used the fact that $\frac{\sigma}{\mu}\ge \frac{3\mu}{\sigma}\vee 10$ for all $0\le \mu\le \frac{\sigma}{10}$. On the other hand, since $1-e^{-b}\le b$ for $b\ge 0$, one has 
\begin{equation}\label{greatB7}
	\begin{split}
		\mathbb{J}(C_\star)\lesssim & \sigma^{-1}  \mu \int_{0}^{\infty} \frac{s}{ \sqrt{2\pi   }} e^{-\frac{(s-\frac{\mu}{\sigma})^2}{2 }} \mathrm{d}s \\
		\overset{(r=s-\frac{\mu}{\sigma})}{=}& \sigma^{-1}  \mu \Big( \int_{0}^{\infty} \frac{r}{ \sqrt{2\pi   }} e^{-\frac{r^2}{2 }} \mathrm{d}r + \sigma^{-1}  \mu  \int_{0}^{\infty} \frac{1}{ \sqrt{2\pi   }} e^{-\frac{r^2}{2 }} \mathrm{d}r \Big)\\
		\lesssim & \sigma^{-1}  \mu  \big(1+ \sigma^{-1}  \mu  \big) 	\lesssim   \sigma^{-1}  \mu . 
	\end{split}
\end{equation}
Combining (\ref{greatB4}), (\ref{niceB4}) and (\ref{greatB7}), we derive $\mathbb{I}\asymp \sigma^{-1}  \mu$ in this case.

 \textbf{When $ \sigma^{-1}\mu \ge  \frac{1}{10}$.} Note that $\mathbb{J}(C_\star)\le 1$ holds trivially. Moreover, since $1-  e^{-c_{\star}\sigma^{-1}  \mu  s }\gtrsim 1$ for all $s\ge 1$ in this case, we have 
 \begin{equation}\label{niceB5}
 	\begin{split}
 		\mathbb{J}(c_\star ) \gtrsim \int_{1}^{\infty} \frac{1}{ \sqrt{2\pi   }} e^{-\frac{(s-\frac{\mu}{\sigma})^2}{2 }} \mathrm{d}s \gtrsim 1. 
 	\end{split}
 \end{equation}
 Combining the bounds on $\mathbb{J}(C_\star)$ and $\mathbb{J}(c_\star )$ with (\ref{greatB4}), we conclude that $\mathbb{I}\asymp 1$, which completes the proof of this lemma.
  \end{proof}


Next, we establish Lemma \ref{lemma_new_decompose_point_to_set} for $3\le d\le 5$ and $d\ge 7$ separately.

\textbf{When $3\le d\le 5$.} By the isomorphism theorem, it suffices to prove that 
\begin{equation}\label{newgreatB9}
		\mathbb{P}^D\big( A \xleftrightarrow{\ge 0} y\big) \gtrsim R^{d-2}\cdot \mathbb{P}^D\big( A \xleftrightarrow{\ge 0 } x \big) \cdot  \mathbb{P}^D\big( x\xleftrightarrow{\ge 0} y\big). 
\end{equation}
Since $A\subset \mathfrak{B}^1_{R,c}$, the isomorphism theorem together with (\ref{crossing_low}) implies that 
\begin{equation}
\begin{split}
		  \mathbb{P}^D\big(A\xleftrightarrow{\le 0}  \widetilde{\partial} \mathfrak{B}^1_{R,(10d)^{-1}}\big)  
		 \le  & 	\mathbb{P}\big(\mathfrak{B}^1_{R,c} \xleftrightarrow{}  \widetilde{\partial} \mathfrak{B}^1_{R,(10d)^{-1}}\big) \\
		  \lesssim &  \rho_d(cR,(10d)^{-1}R) \lesssim c^{\frac{d}{2}-1}. 
\end{split} 
\end{equation}
Therefore, for all sufficiently small $c>0$, one has 
\begin{equation}\label{niceB8}
	\mathbb{P}^D\big(\mathsf{F}\big):= \mathbb{P}^D\big(\{A\xleftrightarrow{\le 0} \widetilde{\partial} \mathfrak{B}^1_{R,(10d)^{-1}}\}^c\big)\le \tfrac{1}{2}.  
\end{equation}
 (\textbf{P.S.} When $d\ge 7$, the estimate (\ref{niceB8}) fails for $A=\mathfrak{B}^1_{R,c}$ and $D= \emptyset$; see (\ref{crossing_high}). This is why we need to treat the high-dimensional case individually.)

By the FKG inequality and (\ref{niceB8}), we have 
  	 \begin{equation} 
 	\begin{split}
 		  \mathbb{P}^{D}\big( \mathsf{F}   \mid  A \xleftrightarrow{\ge 0} x  \big)  
 	\overset{(\text{FKG})}{\le}  \mathbb{P}^{D}\big( \mathsf{F}  \big) \overset{(\mathrm{\ref{niceB8}}) }{\le } \tfrac{1}{2}, 
 	\end{split}
 \end{equation}
which further implies that 
\begin{equation}\label{newA7}
	 \mathbb{P}^{D}(A  \xleftrightarrow{\ge 0} x, \mathsf{F}) \asymp 	\mathbb{P}^D \big(  A  \xleftrightarrow{\ge 0 } x \big).  
\end{equation}

 
 Recall that $\mathcal{C}_A^-:=\{v\in \widetilde{\mathbb{Z}}^d: v\xleftrightarrow{\le 0} A\}$. For each $v\in \widetilde{\mathbb{Z}}^d$, we define  
 \begin{equation}\label{greatB14}
 		\mathcal{H}_v:= \left\{
\begin{aligned}
&\sum\nolimits_{w\in \widetilde{\partial} \mathcal{C}_A^- } \widetilde{\mathbb{P}}_v\big( \tau_{\widetilde{\partial} \mathcal{C}_A^-} = \tau_w <\tau_D \big)  \widetilde{\phi}_w \ \  &      v\notin \mathcal{C}_A^-; \\ 
&0 & v\in \mathcal{C}_A^-.
\end{aligned}
\right.
 \end{equation} 
Conditioned on $\mathcal{F}_{\mathcal{C}_A^-}$, the boundary condition is non-negative, and only takes positive values on $A$. In addition, the event $\mathsf{F}$ implies that $\mathcal{C}_A^- \subset \widetilde{B}(\frac{N}{10d})$, and hence,  
  \begin{equation}\label{greatB16}
 	\widetilde{G}_{D\cup \mathcal{C}_A^-}(y,y)  \overset{(\ref{order_green_function})}{\asymp}     	\widetilde{G}_{D}(y,y),
 \end{equation} 
 \begin{equation}\label{newgreatB16}
 	\widetilde{G}_{D\cup \mathcal{C}_A^-}(x,x)\asymp 1. 
 \end{equation}
 Moreover, by the strong Markov property and Harnack's inequality, it follows that on the event $\mathsf{F}$, 
  \begin{equation}\label{greatB15}
 	\begin{split}
 		\mathcal{H}_y \gtrsim  \widetilde{\mathbb{P}}_y\big( \tau_{\partial B(10R)}<\tau_{D} \big) \cdot   \mathcal{H}_x \overset{}{\gtrsim} R^{d-2}  \widetilde{G}_D(y,x) \cdot   \mathcal{H}_x. 
 	\end{split}
 \end{equation} 
 Thus, by applying Lemma \ref{lemmaB1}, we have 
 \begin{equation}\label{greatB18}
 	\begin{split}
 		\mathbb{P}^D\big( A  \xleftrightarrow{\ge 0} y  \big)  \gtrsim &  \mathbb{E}^D\big[ \mathbbm{1}_{\mathsf{F}}\cdot  \big( [\widetilde{G}_{D\cup \mathcal{C}_A^-}(y,y)]^{-\frac{1}{2}} \mathcal{H}_y  \big) \land 1  \big]\\
 		\overset{(\mathrm{\ref{greatB16}}),(\mathrm{\ref{greatB15}}),\widetilde{G}_D(x,x)\asymp 1}{\gtrsim } &  \mathbb{E}^D\big[ \mathbbm{1}_{\mathsf{F}}\cdot  \big( R^{d-2}\cdot  \tfrac{\widetilde{G}_D(y,x)}{\sqrt{\widetilde{G}_D(x,x)\widetilde{G}_D(y,y)}} \cdot  \mathcal{H}_x  \big) \land 1  \big]  \\
 		\overset{  }{\gtrsim }  &  R^{d-2}\cdot \mathbb{P} (x\xleftrightarrow{(D)} y) \cdot \mathbb{E}\big[  \mathbbm{1}_{\mathsf{F}}\cdot  \big(   \mathcal{H}_x\land 1  \big)  \big],
 	\end{split}
 \end{equation}
 where in the last inequality we used the fact that 
 \begin{equation}
 	R^{d-2}\cdot \tfrac{\widetilde{G}_D(y,x)}{\sqrt{\widetilde{G}_D(x,x)\widetilde{G}_D(y,y)}} \overset{(\ref{211})}{\asymp } R^{d-2}\cdot  \mathbb{P} (x\xleftrightarrow{(D)} y) \overset{(\ref{two-point1})}{\lesssim }  1. 
 \end{equation}
 Meanwhile, using (\ref{newA7}), (\ref{newgreatB16}) and Lemma \ref{lemmaB1}, one has 
 \begin{equation}\label{niceB20}
 	\begin{split}
 		\mathbb{E}^D\big[  \mathbbm{1}_{\mathsf{F}}\cdot  \big(   \mathcal{H}_x\land 1  \big)  \big]  \overset{ ( \mathrm{\ref{newgreatB16}}) }{ \asymp } & \mathbb{E}^D\big[  \mathbbm{1}_{\mathsf{F}}\cdot  \big(  [\widetilde{G}_{D\cup \mathcal{C}_A^-}(x,x)]^{-\frac{1}{2}} \mathcal{H}_x  \big) \land 1  \big]   \\
 		\overset{ ( \text{Lemma}\ \mathrm{\ref{lemmaB1}}) }{ \asymp }   & \mathbb{P}^D\big(A \xleftrightarrow{\ge 0} x, \mathsf{F}  \big)    \overset{ ( \mathrm{\ref{newA7}}) }{ \asymp }    \mathbb{P}^D\big(A \xleftrightarrow{\ge 0} x   \big). 
 	\end{split}
 \end{equation}
 Combined with (\ref{greatB18}), it implies the desired bound (\ref{newgreatB9}) for $3\le d\le 5$.

\textbf{When $d\ge 7$.} In this case, by the isomorphism theorem, it suffices to show that 
\begin{equation}\label{weB21}
	\mathbb{P}^D\big( A \xleftrightarrow{\ge 0} y\big) \gtrsim R^{d-2}\cdot \mathbb{P}^D\big( A \xleftrightarrow{\ge 0} x \big) \cdot \big[  \mathbb{P}^D\big( x\xleftrightarrow{\ge 0} y\big)-  \tfrac{C[\mathrm{diam}(A)]^{d-4}}{R^{d-4}|y|^{d-2}}   \big].
\end{equation}
We denote $\chi:= \mathrm{dist}(y,D)\land 1$, and define $\mathsf{G}:= \big\{ \mathrm{dist}( \mathcal{C}_A^-,y)\ge  \tfrac{1}{2}\chi \big\}$. Note that on the event $\mathsf{G}$, it follows from (\ref{order_green_function}) that 
\begin{equation}\label{greatB21}
	\widetilde{G}_{D\cup \mathcal{C}_A^-}(y,y)\asymp \chi \asymp \widetilde{G}_{D}(y,y)
\end{equation}
Using Lemma \ref{lemmaB1}, we have  
\begin{equation}\label{coolB21}
\mathbb{P}^D\big( A \xleftrightarrow{\ge 0} y\big) \gtrsim  \mathbb{E}^D\big[\mathbbm{1}_{\mathsf{G}} \cdot  \big(  [\widetilde{G}_{D\cup \mathcal{C}_A^-}(y,y)]^{-\frac{1}{2}} \mathcal{H}_y  \big) \land 1  \big]. 
\end{equation}

Next, we aim to give a lower bound for $\mathcal{H}_y$. To this end, we need a lower bound on the probability $\widetilde{\mathbb{P}}_y(\tau_{\mathcal{C}_A^- }=\tau_{w}<\tau_{D} ) $ for $w\in \widetilde{\partial}\mathcal{C}_A^-$. Recall that $A\subset \mathfrak{B}^1_{R,c}$. For simplicity, we denote $\mathcal{C}:=\mathcal{C}_A^-$ and $\widehat{\mathcal{C}}:=\mathcal{C}  \cap  \widetilde{B}(\frac{R}{10})$. For any $w\in A$, one has 
\begin{equation}\label{greatB23}
	\begin{split}
		\widetilde{\mathbb{P}}_y\big( \tau_{\mathcal{C}} =\tau_{w}<\tau_{D} \big)= \widetilde{\mathbb{P}}_y\big( \tau_{\widehat{\mathcal{C}}} =\tau_{w}<\tau_{D} \big)  -  \widetilde{\mathbb{P}}_y\big( \tau_{\mathcal{C}\setminus \widetilde{B}(\frac{R}{10})} <\tau_{\widehat{\mathcal{C}}} =\tau_{w}<\tau_{D} \big). 
	\end{split}
\end{equation}
 The first probability on the right-hand side is bounded from below by 
 \begin{equation}\label{greatB24}
 	\begin{split}
   \widetilde{\mathbb{P}}_y\big( \tau_{\partial B(10dR)}  <\tau_{D} \big)  \cdot \min_{z\in \partial B(10dR)} \sum\nolimits_{z'\in \partial \mathcal{B}(R)} & \widetilde{\mathbb{P}}_z \big( \tau_{\partial \mathcal{B}(R)} =\tau_{z'}<\tau_{D} \big) \\
&   \cdot  \widetilde{\mathbb{P}}_{z'}\big( \tau_{\widehat{\mathcal{C}}} =\tau_{w}<\tau_{D} \big). 
 	\end{split}
 \end{equation}
  By the strong Markov property of Brownian motion, one has 
 \begin{equation}\label{greatB25}
 	\begin{split}
 \widetilde{\mathbb{P}}_y\big( \tau_{x}<\tau_{D}  \big) 	 \le & \widetilde{\mathbb{P}}_y\big( \tau_{\partial B(10dR)}  <\tau_{D} \big) \cdot \max_{z\in \partial B(10dR)}  \widetilde{\mathbb{P}}_z\big( \tau_{x}<\tau_{D} \big) \\
 		\lesssim &R^{2-d}\cdot \widetilde{\mathbb{P}}_y\big( \tau_{\partial B(10dR)}  <\tau_{D} \big). 
 	\end{split}
 \end{equation}
  Meanwhile, for any $z\in \partial B(10dR)$ and $z'\in \partial \mathcal{B}(R)$, we have 
 \begin{equation}\label{greatB26}
	\begin{split}
 	 \widetilde{\mathbb{P}}_z \big( \tau_{\partial \mathcal{B}(R)} =\tau_{z'}<\tau_{D} \big) \overset{(\text{Lemma}\ \ref{lemma_BM_stable})}{\asymp}  \widetilde{\mathbb{P}}_z \big( \tau_{\partial \mathcal{B}(R)} =\tau_{z'}<\infty \big) \overset{(\text{Lemma}\ \ref{lemma_BM_uniform})}{\asymp}  R^{1-d}. 
	\end{split}
\end{equation}
 In addition, using Harnack's inequality, we have 
\begin{equation}\label{greatB27}
	 \begin{split}
	 	 \widetilde{\mathbb{P}}_{z'}\big( \tau_{\widehat{\mathcal{C}}} =\tau_{w}<\tau_{D} \big)  \asymp \widetilde{\mathbb{P}}_{x}\big( \tau_{\widehat{\mathcal{C}}} =\tau_{w}<\tau_{D} \big) \ge \widetilde{\mathbb{P}}_{x}\big( \tau_{\mathcal{C}} =\tau_{w}<\tau_{D} \big). 
	 \end{split}
\end{equation}
 Substituting (\ref{greatB25}), (\ref{greatB26}) and (\ref{greatB27}) into (\ref{greatB24}), we get 
\begin{equation}\label{greatB28}
	\begin{split}
		\widetilde{\mathbb{P}}_y\big( \tau_{\widehat{\mathcal{C}}} =\tau_{w}<\tau_{D} \big)  \gtrsim R^{d-2}\cdot  \widetilde{\mathbb{P}}_y\big( \tau_{x}<\tau_{D} \big)  \cdot \widetilde{\mathbb{P}}_{x}\big( \tau_{\mathcal{C}} =\tau_{w}<\tau_{D} \big).
	\end{split}
\end{equation}

For the second probability on the right-hand side of (\ref{greatB23}), by the union bound, 
\begin{equation}\label{greatB29}
\begin{split}
		&	\widetilde{\mathbb{P}}_y\big( \tau_{\mathcal{C}\setminus \widetilde{B}(\frac{R}{10})} <\tau_{\widehat{\mathcal{C}}} =\tau_{w}<\tau_{D} \big) \\
		 \le  &  \sum\nolimits_{z\in  [B(\frac{R}{10})]^c} \mathbbm{1}_{A\xleftrightarrow{\le 0} \widetilde{B}_{z}(1)}   \cdot  \widetilde{\mathbb{P}}_y\big( \tau_{z} <\tau_{\widehat{\mathcal{C}}} =\tau_{w}<\tau_{D} \big). 
\end{split}
\end{equation}
Moreover, for each $z\in [B(\frac{R}{10})]^c$, one has  
\begin{equation}\label{greatB30}
	\begin{split}
		\widetilde{\mathbb{P}}_y\big( \tau_{z} <\tau_{\widehat{\mathcal{C}}} =\tau_{w}<\tau_{D} \big) \le &  \widetilde{\mathbb{P}}_y\big( \tau_{z} <\tau_D \big)\cdot \widetilde{\mathbb{P}}_z\big(\tau_{\widehat{\mathcal{C}}} =\tau_{w}<\tau_{D} \big).
	\end{split}
\end{equation}
In addition, by the strong Markov property of Brownian motion, we have 
\begin{equation}\label{newweB32}
	\begin{split}
		\widetilde{\mathbb{P}}_y\big( \tau_{z} <\tau_D   \big)\le & \widetilde{\mathbb{P}}_y\big( \tau_{\widetilde{\partial}\widetilde{B}_y(1)} <\tau_D   \big) \cdot \max_{y'\in \widetilde{\partial}\widetilde{B}_y(1)} \widetilde{\mathbb{P}}_{y'}\big( \tau_{z} <\tau_D   \big)\\
		\lesssim &\big( \mathrm{dist}(y,D)\land 1 \big)\cdot |y-z|^{2-d} \overset{(\ref{order_green_function})}{\asymp }\widetilde{G}_D(y,y)\cdot  |y-z|^{2-d},
	\end{split}
\end{equation}
\begin{equation}\label{weB32}
	\begin{split}
		\widetilde{\mathbb{P}}_z\big(\tau_{\widehat{\mathcal{C}}} =\tau_{w}<\tau_{D} \big)\lesssim &	\widetilde{\mathbb{P}}_z \big( \tau_{ \partial B(\frac{R}{100})  }<\infty \big)\cdot \max_{z'\in \partial B(\frac{R}{100})} \sum\nolimits_{z''\in \partial \mathcal{B}( \sqrt{c}R)} \\
		&\ \ \ \ \ \ \ \ \  \widetilde{\mathbb{P}}_{z'} \big( \tau_{ \partial \mathcal{B}(\sqrt{c}R) } = \tau_{z''}<\infty \big) \cdot  \widetilde{\mathbb{P}}_{z''} \big(\tau_{\widehat{\mathcal{C}}} =\tau_{w}<\tau_{D} \big), 
	\end{split}
\end{equation}
where the constant $c>0$ is the same as in the condition $A\subset \widetilde{B}(cR)$. Here one may derive from (\ref{cap1}) and (\ref{cap2}) that 
\begin{equation}\label{weB33}
		\widetilde{\mathbb{P}}_z \big( \tau_{ \partial B(\frac{R}{100})  }<\infty \big) \lesssim |z|^{2-d}R^{d-2}. 
\end{equation}
Meanwhile, Lemma \ref{lemma_BM_uniform} implies that 
\begin{equation}\label{weB34}
	\widetilde{\mathbb{P}}_{z'} \big( \tau_{ \partial \mathcal{B}(\sqrt{c}R) } = \tau_{z''}<\infty \big) \lesssim R^{1-d}. 
\end{equation}
 Applying Lemma \ref{lemma_new_ BM_stable}, one has 
 \begin{equation}\label{weB35}
 	  \sum\nolimits_{z''\in \partial \mathcal{B}(\sqrt{c}R)}  \widetilde{\mathbb{P}}_{z''} \big(\tau_{\widehat{\mathcal{C}}} =\tau_{w}<\tau_{D} \big) \lesssim 	  \sum\nolimits_{z''\in \partial \mathcal{B}(\sqrt{c}R)}  \widetilde{\mathbb{P}}_{z''} \big(\tau_{\mathcal{C}} =\tau_{w}<\tau_{D} \big). 
 \end{equation}
 Inserting (\ref{weB33}), (\ref{weB34}) and (\ref{weB35}) into (\ref{weB32}), we get 
 \begin{equation}
 	\widetilde{\mathbb{P}}_z\big(\tau_{\widehat{\mathcal{C}}} =\tau_{w}<\tau_{D} \big) \lesssim  |z|^{2-d}R^{-1} \sum\nolimits_{z''\in \partial \mathcal{B}(\sqrt{c}R)}  \widetilde{\mathbb{P}}_{z''} \big(\tau_{\mathcal{C}} =\tau_{w}<\tau_{D} \big).  
 \end{equation}
 This together with (\ref{greatB29}), (\ref{greatB30}) and (\ref{newweB32}) yields that 
 \begin{equation}\label{greatB32}
 	\begin{split}
 			  \widetilde{\mathbb{P}}_y\big( \tau_{\mathcal{C}\setminus \widetilde{B}(\frac{R}{10})} <\tau_{\widehat{\mathcal{C}}} =\tau_{w}<\tau_{D} \big)  
 			\lesssim &  R^{-1}\widetilde{G}_D(y,y)  \sum_{z\in  [B(\frac{R}{10})]^c,z''\in \partial \mathcal{B}(\sqrt{c}R)} \mathbbm{1}_{A\xleftrightarrow{\le 0} \widetilde{B}_{z}(1)}  \\
 		 &\ \ \ \ \ \ \ \	\cdot    |y-z|^{2-d}|z|^{2-d} \widetilde{\mathbb{P}}_{z''} \big(\tau_{\mathcal{C}} =\tau_{w}<\tau_{D} \big).
 	\end{split}
 \end{equation}

Recall $\mathcal{H}_\cdot$ in (\ref{greatB14}). Plugging (\ref{greatB28}) and (\ref{greatB32}) into (\ref{greatB23}), and then using the definition of $\mathcal{H}_\cdot$ in (\ref{greatB14}), we obtain that for some constants $C',c'>0$,
\begin{equation}
	\begin{split}
		\mathcal{H}_y \ge & c'R^{d-2}  \widetilde{\mathbb{P}}_y\big( \tau_{x}<\tau_{D} \big)   \mathcal{H}_x \\
		& - C' R^{-1} \widetilde{G}_D(y,y)  \sum_{z\in  [B(\frac{R}{10})]^c,z''\in \partial \mathcal{B}(\sqrt{c}R)}   \mathbbm{1}_{A\xleftrightarrow{\le 0} \widetilde{B}_{z}(1)}    |y-z|^{2-d}|z|^{2-d}  \mathcal{H}_{z''}.
	\end{split}
\end{equation}
Combined with (\ref{coolB21}), it implies that  
\begin{equation}\label{greatB33}
	\begin{split}
		\mathbb{P}^D\big( A \xleftrightarrow{\ge 0} y\big) \ge \mathbb{V}_1-\mathbb{V}_2, 
	\end{split}
\end{equation}
 where the quantities $\mathbb{V}_1$ and $\mathbb{V}_2$ satisfy that 
 \begin{equation*}
 	\mathbb{V}_1\gtrsim  \mathbb{E}^D\big[ \mathbbm{1}_{\mathsf{G}} \cdot  \big(  [\widetilde{G}_{D\cup \mathcal{C}}(y,y)]^{-\frac{1}{2}}R^{d-2}  \widetilde{\mathbb{P}}_y\big( \tau_{x}<\tau_{D} \big)   \mathcal{H}_x  \big) \land 1 \big], 
 \end{equation*}
 \begin{equation*}
 	\begin{split}
 		\mathbb{V}_2\lesssim & \mathbb{E}^D\big[  \mathbbm{1}_{\mathsf{G}} \cdot  \big(  [\widetilde{G}_{D\cup \mathcal{C}}(y,y)]^{-\frac{1}{2}}R^{-1}   \widetilde{G}_D(y,y)\sum\nolimits_{z\in  [B(\frac{R}{10})]^c,z''\in \partial \mathcal{B}(\frac{R}{100d})}   \mathbbm{1}_{A\xleftrightarrow{\le 0} \widetilde{B}_{z}(1)} \\
 	 &\ \ \ \ \ \ \ \ \ \ \ \ \ \ \ \ \ \ \ \ \ \ \ \ \ \ \ \ \ \ \ \ \ \ \ \ \ \ \ \ \ \ \ \ \ \ \ \ \ \ \ \ \ \ \ \ \ \ 	\cdot  |y-z|^{2-d}|z|^{2-d} \mathcal{H}_z \big) \land 1 \big]. 
 	\end{split}
 \end{equation*}

 For $\mathbb{V}_1$, by $\widetilde{G}_{D\cup \mathcal{C}}(y,y)\le \widetilde{G}_{D}(y,y)$, $\widetilde{G}_{D}(x,x)\asymp 1$ and (\ref{211}), we have 
 \begin{equation}\label{weB40}
 \begin{split}
 	 		\mathbb{V}_1 \gtrsim &  \mathbb{E}^D\big[ \mathbbm{1}_{\mathsf{G}} \cdot  \big(   R^{d-2} \mathbb{P}^{D}\big( x\xleftrightarrow{\ge 0} y \big) \cdot    \mathcal{H}_x  \big) \land 1 \big] \\
 		\gtrsim & R^{d-2} \mathbb{P}^{D}\big( x\xleftrightarrow{\ge 0} y \big)  \cdot  \mathbb{E}^D\big[ \mathbbm{1}_{\mathsf{G}} \cdot     \mathcal{H}_x  \land 1   \big], 
 \end{split}
 \end{equation}
 where the second inequality relies on the fact that $R^{d-2} \mathbb{P}^{D}\big( y\xleftrightarrow{\ge 0} x \big)\lesssim 1$ (by (\ref{two-point1})). Note that the event $\mathsf{G'}:=\big\{ \mathrm{dist}( \mathcal{C}, x)\ge  1 \big\}$ implies $\widetilde{G}_{D\cup \mathcal{C}}(x,x)\asymp 1$, and satisfies 
 \begin{equation}\label{verynewB42}
 	\mathbb{P}^D\big( \mathsf{G}'\big) \ge \mathbb{P}^D\big( \cap_{z\in \widetilde{\partial} \widetilde{B}_x(1)}\big\{x\xleftrightarrow{\ge 0}z \big\} \big) \gtrsim 1. 
 \end{equation}
It follows from the inclusion $\mathsf{G'}\subset \{\widetilde{G}_{D\cup \mathcal{C}}(x,x)\asymp 1\}$ that  
 \begin{equation}\label{verynew_B43}
 	\begin{split}
 		\mathbb{E}^D\big[ \mathbbm{1}_{\mathsf{G}} \cdot     \mathcal{H}_x  \land 1   \big]   \gtrsim   &		\mathbb{E}^D\big[ \mathbbm{1}_{\mathsf{G}\cap \mathsf{G}'} \cdot   \big( [\widetilde{G}_{D\cup \mathcal{C}}(x,x)]^{-\frac{1}{2}}   \mathcal{H}_x   \big) \land 1 \big]\\
 		\overset{(\text{FKG})}{\gtrsim  } &	\mathbb{E}^D\big[\big( [\widetilde{G}_{D\cup \mathcal{C}}(x,x)]^{-\frac{1}{2}}   \mathcal{H}_x   \big) \land 1 \big] \cdot \mathbb{P}^D\big( \mathsf{G}\big)\cdot 	\mathbb{P}^D\big( \mathsf{G}'\big)\\
 		\overset{(\text{Lemma}\ \mathrm{\ref{lemmaB1}}),(\mathrm{\ref{verynewB42}}) }{\gtrsim}  & \mathbb{P}^D\big( A \xleftrightarrow{\ge 0} x \big)      \cdot \mathbb{P}^D\big( \mathsf{G}\big), 
 	\end{split}
 \end{equation}
 where the use of the FKG inequality is based on the fact that $\mathsf{G}$, $\mathsf{G}'$, $[\widetilde{G}_{D\cup \mathcal{C}}(x,x)]^{-\frac{1}{2}}$ and $\mathcal{H}_x $ are all measurable with respect to $\mathcal{F}_{\mathcal{C}}$, and decreasing with respect to the GFF. Plugging (\ref{verynew_B43}) into (\ref{weB40}), we get 
 \begin{equation}\label{weB42}
 \begin{split}
 	\mathbb{V}_1 \gtrsim & R^{d-2} \mathbb{P}^{D}\big( x\xleftrightarrow{\ge 0} y \big)       \cdot \mathbb{P}^D\big( A \xleftrightarrow{\ge 0} x \big)      \cdot \mathbb{P}^D\big( \mathsf{G}\big). 
 \end{split}
 \end{equation}
 For $\mathbb{P}^D\big( \mathsf{G}\big)$, by the FKG inequality, (\ref{order_green_function}) and (\ref{211}), we have 
\begin{equation}\label{greatB37}
\begin{split}
		\mathbb{P}^D\big( \mathsf{H} \big):=& \mathbb{P}^D\big( \cap_{z\in \widetilde{\mathbb{Z}}^d:|z-y|=\frac{1}{2}\chi} \big\{z\xleftrightarrow{\le 0} y  \big\} \big) \\
		 \overset{(\text{FKG})}{\ge}  & \prod\nolimits_{z\in \widetilde{\mathbb{Z}}^d:|z-y|=\frac{1}{2}\chi}\mathbb{P}^D\big(  z\xleftrightarrow{\le 0} y  \big)\\
		 \overset{(\ref{211})}{\asymp }  & \prod\nolimits_{z\in \widetilde{\mathbb{Z}}^d:|z-y|=\frac{1}{2}\chi} \widetilde{\mathbb{P}}_{y}\big( \tau_{z}<\tau_{D} \big) \cdot \sqrt{\tfrac{\widetilde{G}_D(z,z)}{\widetilde{G}_D(y,y)}} \gtrsim 1,  
\end{split}
\end{equation}
where the last inequality follows from the facts that $\widetilde{\mathbb{P}}_{y}\big( \tau_{z}<\tau_{D} \big)\asymp 1$ (by $|y-z|\le \frac{1}{2}\mathrm{dist}(y,D)$), and that $\widetilde{G}_D(y,y)\asymp \widetilde{G}_D(z,z)$ (using (\ref{order_green_function})). In addition, since $\mathsf{H}\cap \mathsf{G}^c\subset \{A\xleftrightarrow{\le 0} y\}$, one has 
\begin{equation}\label{greatB38}
	\begin{split}
	\mathbb{P}^D\big(	A\xleftrightarrow{\ge 0} y \big) \overset{(\text{symmetry})}{ =	}\mathbb{P}^D\big(	A\xleftrightarrow{\le 0} y \big) \overset{(\text{FKG})}{\ge} \mathbb{P}^D\big( \mathsf{G}^c \big) \cdot \mathbb{P}^D\big( \mathsf{H} \big) \overset{(\mathrm{\ref{greatB37}})}{\gtrsim } \mathbb{P}^D\big( \mathsf{G}^c \big). 
	\end{split}
\end{equation}
 Combined with (\ref{weB42}), it implies that 
  \begin{equation}\label{greatB39}
 	\mathbb{V}_1 \gtrsim \big[1-C'' \mathbb{P}^D\big(	A\xleftrightarrow{\ge 0} y \big) \big] \cdot R^{d-2} \mathbb{P}^D\big( A \xleftrightarrow{\ge 0 } x \big)\mathbb{P}^D\big( x\xleftrightarrow{\ge 0} y\big). 
 \end{equation}

We now bound the quantity $\mathbb{V}_2$. By (\ref{greatB21}) and $\widetilde{G}_{D}(y,y)\lesssim 1$ one has 
\begin{equation}\label{weB46}
	\begin{split}
			\mathbb{V}_2 \lesssim & R^{-1}  \sum\nolimits_{z\in  [B(\frac{R}{10})]^c, z''\in \partial \mathcal{B}(\sqrt{c}R)}    |y-z|^{2-d}|z|^{2-d}  \cdot  \mathbb{E}^D\big[   \mathbbm{1}_{A\xleftrightarrow{\le 0} \widetilde{B}_{z}(1)} \cdot   \mathcal{H}_{z''}    \big].
	\end{split}
\end{equation}
 In addition, since $\big\{ A\xleftrightarrow{\le 0} \widetilde{B}_{z}(1)\big\}$ (resp. $\mathcal{H}_{z'} $) is decreasing (resp. increasing) with respect to the GFF, using the FKG inequality, we have 
  \begin{equation}\label{greatB41}
 	\begin{split}
 		\mathbb{E}^D\big[   \mathbbm{1}_{A\xleftrightarrow{\le 0} \widetilde{B}_{z}(1)} \cdot   \mathcal{H}_{z''}    \big] \le \mathbb{P}^D\big(A\xleftrightarrow{\le 0} \widetilde{B}_{z}(1) \big) \cdot  \mathbb{E}^D\big[ \mathcal{H}_{z''} \big]. 
 	\end{split}
 \end{equation}
  Recall that $A\subset \widetilde{B}(cR)$. By the isomorphism theorem, one has: for $z\in  [B(\frac{R}{10})]^c$, 
   \begin{equation}\label{greatB42}
 	\begin{split}
 		\mathbb{P}^D\big(A\xleftrightarrow{\le 0} \widetilde{B}_{z}(1) \big) \le  \sum\nolimits_{z'\in \widetilde{\partial} \widetilde{B}_z(1)} \mathbb{P}\big(A \xleftrightarrow{ }z' \big) \lesssim |z|^{2-d}[\mathrm{diam}(A)]^{d-4},
 	\end{split}
 \end{equation}
 where the last inequality is derived from \cite[Corollary 2.18]{cai2024quasi}. Meanwhile, it follows from \cite[Lemma 3.4]{cai2024one} that 
 \begin{equation}\label{greatB43}
 	\mathbb{E}^D\big[ \mathcal{H}_{z''} \big] \lesssim \mathbb{P}^D\big( A\xleftrightarrow{\ge 0} z'' \big) \overset{(\text{Lemma}\ \ref{lemma_point_to_set})}{\asymp}\mathbb{P}^D\big( A\xleftrightarrow{\ge 0} x \big). 
 \end{equation}
  Plugging (\ref{greatB42}) and (\ref{greatB43}) into (\ref{greatB41}), and combining it with (\ref{weB46}), we get 
 \begin{equation}\label{greatB44}
 		\mathbb{V}_2 \lesssim R^{d-2}[\mathrm{diam}(A)]^{d-4} \mathbb{P}^D\big( A \xleftrightarrow{\ge 0 } x \big) \sum\nolimits_{z\in  [B(\frac{R}{10})]^c } |y-z|^{2-d}|z|^{4-2d}. 
 	 \end{equation}
 	 For the sum on the right-hand side, since $|z|\gtrsim |y|$ for all $z\in B_y(\frac{|y|}{10})$, we have 
 	 \begin{equation}\label{weB52}
 	 	\sum\nolimits_{z\in B_y(\frac{|y|}{10})} |y-z|^{2-d}|z|^{4-2d} \lesssim |y|^{4-2d} 	\sum\nolimits_{z\in B_y(\frac{|y|}{10})} |y-z|^{2-d} \overset{(\ref{computation_d-a})}{\lesssim } |y|^{6-2d}. 
 	 \end{equation}
 	 In addition, since $|z-y|\gtrsim |y|$ for all $z\in [B(\frac{R}{10})]^c \setminus  B_y(\frac{|y|}{10}) $, one has 
 	 \begin{equation}\label{weB53}
 	 \begin{split}
 	 	 	 	&\sum\nolimits_{z\in [B(\frac{R}{10})]^c \setminus  B_y(\frac{|y|}{10}) }   |y-z|^{2-d}|z|^{4-2d} \\
 	 	 	 	 \lesssim & |y|^{2-d} \sum\nolimits_{z\in [B(\frac{R}{10})]^c } |z|^{4-2d} \lesssim |y|^{2-d}\sum\nolimits_{k\ge \frac{R}{10}} k^{d-1}\cdot k^{4-2d}\lesssim |y|^{2-d}R^{4-d}.  
 	 \end{split}
 	 \end{equation}
 	 Note that $|y|^{6-2d}\lesssim |y|^{2-d}R^{4-d}$ since $|y|\gtrsim R$. Therefore, combining (\ref{greatB44}), (\ref{weB52}) and (\ref{weB53}), we obtain 
 	 \begin{equation}\label{weB54}
 	 	\begin{split}
 	 		 \mathbb{V}_2 \lesssim R^{2}[\mathrm{diam}(A)]^{d-4}|y|^{2-d}\mathbb{P}^D\big( A \xleftrightarrow{\ge 0} x \big). 
 	 	\end{split}
 	 \end{equation}

 By (\ref{greatB33}), (\ref{greatB39}) and (\ref{weB54}), there exists $C'''>0$ such that 
 \begin{equation}\label{greatB45}
 \begin{split}
 		\mathbb{P}^D\big(	A\xleftrightarrow{\ge 0} y \big) \gtrsim   & R^{d-2}\cdot \mathbb{P}^D\big( A \xleftrightarrow{\ge 0 } x \big) \cdot  \\
 		&\cdot  \Big( \mathbb{P}^D\big( x\xleftrightarrow{\ge 0} y\big)\cdot \big[1- C''\mathbb{P}^D\big(	A\xleftrightarrow{\ge 0} y \big) \big]-  \tfrac{C'''[\mathrm{diam}(A)]^{d-4}}{R^{d-4}|y|^{d-2}}   \Big). 
 \end{split}
 \end{equation}
 If $\mathbb{P}^D\big(	A\xleftrightarrow{\ge 0} y \big)\ge (2C'')^{-1}$, (\ref{weB21}) holds immediately (since $\mathbb{P}^D\big( x\xleftrightarrow{\ge 0} y\big)\lesssim R^{2-d}$); otherwise, one has $1- C''\mathbb{P}^D\big(	A\xleftrightarrow{\ge 0} y \big) \ge \frac{1}{2}$, and hence (\ref{weB21}) follows from (\ref{greatB45}). We thus complete the proof of Lemma \ref{lemma_new_decompose_point_to_set}.     \qed

  \section{Proof of Lemma \ref{lemma_large_loop}} \label{app_large_loop}

  For $3\le d\le 5$, the desired bound (\ref{lemma_large_loop}) has been proved in \cite[(2.37)]{cai2024incipient}. For the high-dimensional case $d\ge 7$, we present the following lemma as a preliminary.

   
   \begin{lemma}\label{lemma_goodc1}
  	 For $d\ge 7$, there exists $C>0$ such that for any $r\ge 1$, $A,D\subset \widetilde{B}(r)$ and $R\ge Cr^{\frac{d-4}{2}}$:
  	 \begin{equation}\label{goodC2}
  	 	\begin{split}
  \mathbb{P}\big( A\xleftrightarrow{(D)} \partial B(r) \big) \lesssim 	  \big( \tfrac{R}{r} \big)^2 \cdot 	\mathbb{P}\big( A\xleftrightarrow{(D)} \partial B(R) \big).  	 	\end{split}
  	 \end{equation}
  \end{lemma}
 \begin{proof}
 	According to \cite[Proposition 1.9]{cai2024quasi}, one has: for any $z\in \partial B(R)$, 
 	\begin{equation}
 		\begin{split}
 			\mathbb{P}\big( A\xleftrightarrow{(D)} \partial B(R) \big) \asymp R^{d-4}\cdot \mathbb{P}\big( A\xleftrightarrow{(D)}z  \big). 
 		\end{split}
 	\end{equation} 
 Combined with Lemma \ref{lemma_213}, it yields the desired bound (\ref{goodC2}).  
 \end{proof}
  

  We now turn to the proof of Lemma \ref{lemma_large_loop} for $d\ge 7$. By \cite[Corollary 2.2]{cai2024high},  
 \begin{equation}\label{B1}
 	\mathbb{P}\big(\mathfrak{L}[m,M]\neq 0  \big) \lesssim \big(\tfrac{m}{M} \big)^{d-2}. 
 \end{equation}
  By the BKR inequality and (\ref{B1}), we have 
  \begin{equation}\label{concludeC5}
  \begin{split}
  	  	\mathbb{P}\big( \mathsf{F} \big):=  &	\mathbb{P}\big(  \big\{ \mathfrak{L}[m,M]\neq 0 \big\} \circ \big\{A\xleftrightarrow{(D)} \partial B(N) \big\} \big) \\
  	   \overset{}{ \lesssim  }  & \big(\tfrac{m}{M} \big)^{d-2} \cdot \mathbb{P}\big(   A\xleftrightarrow{(D)} \partial B(N) \big).
  \end{split}
  \end{equation}

 When $\{\mathfrak{L}[m,M]\neq 0, A \xleftrightarrow{(D)} \partial B(N)\}$ and $  \mathsf{F}^c$ both occur, there exists a loop $\widetilde{\ell}_\dagger\in \mathfrak{L}[m,M]$ that is pivotal for $\{A\xleftrightarrow{(D)} \partial B(N)\}$. Otherwise, the loop $\widetilde{\ell}_\dagger$ certifies ${\mathfrak{L}[m,M]\neq 0}$, while the collection of other loops certifies ${A \xleftrightarrow{(D)} \partial B(N)}$; hence $\mathsf{F}$ occurs, a contradiction. Recall that $m$ and $M$ satisfy $M\ge Cm$. Let $L^-:=C^{\frac{1}{20}}m$ and $L^+:=C^{\frac{1}{10}}m$. Since $\widetilde{\ell}_\dagger\in \mathfrak{L}[m,M]$, it must cross the annulus $B(L^+)\setminus B(L^-)$. We now employ the notations for crossing paths of this annulus. By the pivotality of $\widetilde{\ell}_\dagger$, the following event $\mathsf{F}'$ must occurs: there exist crossing paths $\eta_1$, $\eta_2$ and $\eta_3$, and clusters $\mathcal{C}_1$ and $\mathcal{C}_2$ constructed from two disjoint collections of loops in $\widetilde{\mathcal{L}}_{1/2}^{D}-\mathfrak{L}[L^-,L^+]$, such that $\mathcal{C}_1$ connects $\eta_1$ and $A$, $\mathcal{C}_2$ connects $\eta_2$ and $\partial B(N)$, and that $\eta_3$ intersects $\partial B(M)$. Thus, by (\ref{concludeC5}), it remains to show that for $d\ge 7$, 
 \begin{equation}\label{greatC5}
 		\mathbb{P}\big( \mathsf{F}' \big) \lesssim \big( \tfrac{m}{M}\big)^{d-4}\cdot \mathbb{P}\big(   A\xleftrightarrow{(D)} \partial B(N) \big).  
 \end{equation}

    \textbf{Case 1: $\mathcal{C}_1\cap \partial B(m) \neq \emptyset$ and $\eta_2\neq \eta_3$.} In this case, the cluster $\mathcal{C}_1$ certifies $A\xleftrightarrow{(D)} \partial B(m)$. In addition, $\eta_2$ and $\mathcal{C}_2$ certify the event that for some $1\le i\le \kappa$ and $\diamond \in \{\mathrm{F}, \mathrm{B}\}$, the path $\widetilde{\eta}_i^{\diamond}$ is connected to $\partial B(N)$ by $\widetilde{\mathcal{L}}_{1/2}^{D}-\mathfrak{L}[L^-,L^+]$ (we denote this event by $\mathsf{G}_i^{\diamond}$). On the event $\mathsf{G}_i^{\diamond}$, either $\widetilde{\eta}_i^{\diamond}$ intersects $\partial B(\frac{N}{2})$ (which has probability $O((\frac{m}{N})^{d-2})$), or it stays inside $\widetilde{B}(\frac{N}{2})$ and is connected to $\partial B(N)$ by $\widetilde{\mathcal{L}}_{1/2}$ (which occurs with probability $O(m^2 N^{-2})$, since $\widetilde{\eta}_i^{\diamond}$ has expected volume $O(m^2)$ and by (\ref{one_arm_high}), each point in $\widetilde{B}(\frac{N}{2})$ is connected to $\partial B(N)$ by $\widetilde{\mathcal{L}}_{1/2}$ with probability $O(N^{-2})$). Meanwhile, $\eta_3$ certifies that one of the backward crossing path intersects $\partial B(M)$, which occurs with probability $O(\kappa\cdot (\frac{m}{M})^{d-2})$. Thus, by the BKR inequality and the union bound, the probability of this case is at most proportional to 
   \begin{equation}\label{greatC6}
   	\begin{split}
  &\mathbb{P}\big( A\xleftrightarrow{(D)} \partial B(m) \big) \cdot \big[(\tfrac{m}{N})^{d-2} + m^2N^{-2} \big]  (\tfrac{m}{M})^{d-2} \cdot   \mathbb{E}\big[  \kappa^2\big] \\
  \overset{(\mathrm{\ref{decay_A10}}),\mathrm{Lemma\  \ref{lemma_goodc1}}}{\lesssim } & (\tfrac{m}{M})^{d-2}\cdot \mathbb{P}\big( A\xleftrightarrow{(D)} \partial B(N) \big). 
   	\end{split}
   \end{equation}

  \textbf{Case 2: $\mathcal{C}_1\cap \partial B(m)\neq \emptyset$ and $\eta_2=\eta_3$.} As in \textbf{Case 1}, $\mathcal{C}_1$ certifies $A\xleftrightarrow{(D)} \partial B(m)$. Moreover, $\eta_2$($=\eta_3$), which is a backward crossing path) together with $\mathcal{C}_2$ certifies that for some $1\le i\le \kappa$, the path $\widetilde{\eta}^{\mathrm{B}}_i$ intersects $\partial B(M)$ and is connected to $\partial B(N)$ by $\widetilde{\mathcal{L}}_{1/2}^{D}-\mathfrak{L}[L^-,L^+]$ (we denote this event by $\mathsf{H}_i$). Conditioned on $\widetilde{\eta}^{\mathrm{B}}_i$ intersecting $\partial B(M)$ (which occurs with probability $O((\frac{m}{M})^{d-2})$), its expected volume is $O(M^2)$, as the Brownian motion on $\widetilde{\mathbb{Z}}^d$ has two-dimensional scaling. Proceeding as in the analysis of $\mathsf{G}_i^{\diamond}$ (where we distinguish whether $\widetilde{\eta}^{\mathrm{B}}_i$ reaches $\partial B(\frac{N}{2})$), it follows that the probability of $\mathsf{H}_i$ is at most of order $(\frac{m}{N})^{d-2} +  (\frac{m}{M})^{d-2}\cdot  M^2N^{-2}$. Therefore, the probability of this case is at most proportional to 
     \begin{equation}
    	\begin{split}
    	&	\mathbb{P}\big(A\xleftrightarrow{(D)} \partial B(m)  \big)\cdot \big[ (\tfrac{m}{N})^{d-2} +  (\tfrac{m}{M})^{d-2}\cdot  M^2N^{-2} \big]\cdot  \mathbb{E}[\kappa]   \\
    		\overset{(\mathrm{\ref{decay_A10}}),\mathrm{Lemma\  \ref{lemma_goodc1}}}{\lesssim } & (\tfrac{m}{M})^{d-4}\cdot \mathbb{P}\big( A\xleftrightarrow{(D)} \partial B(N) \big). 
    	\end{split}
    \end{equation}

 \textbf{Case 3: $\eta_1\neq \eta_2$ and $\eta_2\neq \eta_3$.} In this case, $\eta_1$ together with the cluster $\mathcal{C}_1$ certifies the event $A\xleftrightarrow{(D)} \partial B(L^-)$. By (\ref{veryniceA11}), the conditional probability of this event given $\mathcal{F}_{\mathbf{yz}}$ is at most of order $(\kappa+1)\cdot \mathbb{P}\big( A\xleftrightarrow{(D)} \partial B(m) \big)$. Meanwhile, $\eta_2$ and $\mathcal{C}_2$ certify $\cup_{1\le i\le \kappa,\diamond\in \{ \mathrm{F}, \mathrm{B}\}}\mathsf{G}_i^{\diamond}$, and $\eta_3$ certifies $\cup_{1\le i\le \kappa}\{ \mathrm{ran}(\widetilde{\eta}^{\mathrm{B}}_i)\cap \partial B(M)\neq \emptyset \}$. Thus, similar to \textbf{Case 1}, the probability of this case is at most proportional to
  \begin{equation}
  \begin{split}
  	  	&\mathbb{P}\big( A\xleftrightarrow{(D)} \partial B(m) \big) \cdot \big[(\tfrac{m}{N})^{d-2} + m^2N^{-2} \big]  (\tfrac{m}{M})^{d-2} \cdot   \mathbb{E}\big[(\kappa+1) \kappa^2\big] \\
  \overset{(\mathrm{\ref{decay_A10}}),\mathrm{Lemma\  \ref{lemma_goodc1}}}{\lesssim } & (\tfrac{m}{M})^{d-2}\cdot \mathbb{P}\big( A\xleftrightarrow{(D)} \partial B(N) \big). 
  \end{split}
  \end{equation}

 \textbf{Case 4: $\eta_1\neq \eta_2$ and $\eta_2=\eta_3$.} Like in \textbf{Case 3}, $\eta_1$ and $\mathcal{C}_1$ certify $A\xleftrightarrow{(D)} \partial B(L^-)$. In addition, $\eta_2$ and $\mathcal{C}_2$ together certify the event $\mathsf{H}_i$. Thus, applying the same argument as in \textbf{Case 2}, the probability of this case is at most of order 
 \begin{equation}
 	\begin{split}
 		 	&	\mathbb{P}\big(A\xleftrightarrow{(D)} \partial B(m)  \big)\cdot \big[ (\tfrac{m}{N})^{d-2} +  (\tfrac{m}{M})^{d-2}\cdot  M^2N^{-2} \big]\cdot  \mathbb{E}[(\kappa+1)\kappa]   \\
    		\overset{(\mathrm{\ref{decay_A10}}),\mathrm{Lemma\  \ref{lemma_goodc1}}}{\lesssim } & (\tfrac{m}{M})^{d-4}\cdot \mathbb{P}\big( A\xleftrightarrow{(D)} \partial B(N) \big). 
 	\end{split}
 \end{equation}

 It remains to consider the following case.

 \textbf{Case 5: $\mathcal{C}_1\subset \widetilde{B}(m)$ and $\eta_1=\eta_2$.} If $\eta_1$($=\eta_2$) and $\eta_3$ belong to two distinct loops, then the events $\mathfrak{L}[L^-,L^+]\neq \emptyset$ and $\mathsf{F}'$ occur disjointly, since the former is certified by the loop containing $\eta_3$, whereas the latter is certified by the remaining loops. By the BKR inequality, the probability of this event is upper-bounded by 
 \begin{equation}
 		\mathbb{P}\big(\mathsf{F}' \big)  \cdot \mathbb{P}\big(\mathfrak{L}[L^-,L^+]\neq \emptyset  \big) \overset{(\mathrm{\ref{B1}})}{\lesssim } C^{\frac{2-d}{20}}\cdot \mathbb{P}\big( \mathsf{F}' \big). 
 \end{equation} 
 We now consider the subcase when $\eta_1$ and $\eta_3$ belong to the same loop. In fact, since $\eta_3$ reaches $\partial B(M)$, the path $\eta_1$ must be contained in a forward crossing path for the annulus $B(L^\star)\setminus B(L^-)$ with $L^\star:=C^{\frac{1}{5}}m$ (for this annulus, we denote by $\kappa_\star$ the number of crossings, and enumerate the crossing paths as $\{\widetilde{\eta}_{i}^{\diamond,\star}\}_{1\le i\le \kappa_\star,\diamond\in \{\mathrm{F},\mathrm{B}\}}$). In other word, there exists $1\le i\le \kappa_\star$ such that $A$ and $\partial B(N)$ are connected by $\cup(\widetilde{\mathcal{L}}_{1/2}^D-\mathfrak{L}[L^-,L^+])\cup [\mathrm{ran}(\widetilde{\eta}_{i}^{\mathrm{F},\star})\cap \widetilde{B}(L^-)]$. Moreover, the probability of this event is at most $\mathbb{P}\big(A\xleftrightarrow{(D)} \partial B(N)\big)$, since $\mathrm{ran}(\widetilde{\eta}_{i}^{\mathrm{F},\star})\cap \widetilde{B}(L^-)$ is stochastically dominated by $\cup \mathfrak{L}[L^-,L^+]$ (see \cite[Lemma 4.2]{cai2024quasi}). Meanwhile, $\mathrm{ran}(\eta_3)\cap \partial B(M)\neq \emptyset$ also implies that for some $1\le i\le \kappa_\star$, the backward crossing path $\widetilde{\eta}_{i}^{\mathrm{B},\star}$ intersects $\partial B(M)$, which occurs with probability $O((\frac{m}{M})^{d-2})$. Thus, by the BKR inequality, the probability of this subcase is at most proportional to 
 \begin{equation}\label{greatC11}
 	\begin{split}
 		\mathbb{P}\big(A\xleftrightarrow{(D)} \partial B(N)\big) \cdot (\tfrac{m}{M})^{d-2}  \cdot \mathbb{E}\big[ \kappa_\star^2 \big]  \overset{(\mathrm{\ref{decay_A10}}) }{\lesssim } (\tfrac{m}{M})^{d-2}  \cdot \mathbb{P}\big(A\xleftrightarrow{(D)} \partial B(N)\big). 
 	\end{split}
 \end{equation}

 Combining the estimates (\ref{greatC6})-(\ref{greatC11}), we obtain 
 \begin{equation}
 	\mathbb{P}\big( \mathsf{F}' \big) \lesssim C^{\frac{2-d}{20}}\cdot \mathbb{P}\big( \mathsf{F}' \big) + \big[(\tfrac{m}{M})^{d-4} + (\tfrac{m}{M})^{d-2} \big]  \cdot \mathbb{P}\big(A\xleftrightarrow{(D)} \partial B(N)\big). 
 \end{equation}
By choosing $C>0$ sufficiently large, this implies (\ref{greatC5}), which completes the proof of Lemma \ref{lemma_large_loop}.  \qed

 {\color{blue}

  }

\section{Proof of Lemma \ref{lemma_highd_volume}}\label{app_lemma_highd_volume}

 Recall that $N\ge CM$, $D\subset \widetilde{B}(cM)$, $y\in \partial B(N)$, $w_\dagger\in \partial B(\frac{N}{2})$ and $v_e^-\in \widetilde{B}_{w_\dagger}(\frac{M}{\Cref{const_boundary1}^2})$. Therefore, by (\ref{211}) one has 
\begin{equation}
	\mathbb{P}\big( v_e^- \xleftrightarrow{(D)} y  \big)  \asymp \widetilde{G}_D(v_e^- ,y) \asymp N^{2-d}. 
\end{equation}
Thus, by $\widetilde{\mathcal{L}}_{1/2}^D\le \widetilde{\mathcal{L}}_{1/2}$, it is sufficient to show that 
\begin{equation}\label{C2}
 \mathbb{I}:=\sum\nolimits_{z\in \mathbb{Z}^d} \mathbb{P}\big(y  \xleftrightarrow{} z ,y \xleftrightarrow{} v_e^- \big) \cdot |z|^{2-d} \lesssim N^{8-2d}. 
\end{equation}

On the event $\{y  \xleftrightarrow{} z , y \xleftrightarrow{} v_e^-\}$, by the tree expansion argument (see \cite[Section 3.4]{cai2024high}), there exists a loop $\widetilde{\ell}_\dagger\in \widetilde{\mathcal{L}}_{1/2}$ such that $\mathrm{ran}(\widetilde{\ell}_\dagger)\xleftrightarrow{} y$, $\mathrm{ran}(\widetilde{\ell}_\dagger)\xleftrightarrow{} z $ and $\mathrm{ran}(\widetilde{\ell}_\dagger)\xleftrightarrow{} v_e^-$ occur disjointly. In other words, there exist $w_1,w_2,w_3\in \mathbb{Z}^d$ such that $\widetilde{\ell}_\dagger$ intersects $\widetilde{B}_{j}(1)$ for $j\in \{1,2,3\}$, and that $w_1\xleftrightarrow{} y$, $w_2\xleftrightarrow{} z $ and $w_3\xleftrightarrow{} v_e^-$ occur disjointly. Thus, by the BKR inequality, (\ref{two-point1}) and (\ref{A3_loop}), we have 
\begin{equation}\label{C3}
 \begin{split}
  \mathbb{I} \lesssim & \sum\nolimits_{z,w_1,w_2,w_3\in \mathbb{Z}^d}  |w_1-w_2|^{2-d}|w_2-w_3|^{2-d}|w_3-w_1|^{2-d} \\
  &\ \ \ \ \ \ \ \ \ \ \ \ \ \ \ \ \ \  \cdot |z|^{2-d} | w_1-y|^{2-d} |w_2-z|^{2-d} |w_3-v_e^-|^{2-d}\\
   \overset{}{\lesssim } & \sum\nolimits_{w_1,w_2,w_3\in \mathbb{Z}^d} |w_1-w_2|^{2-d}|w_2-w_3|^{2-d}|w_3-w_1|^{2-d} \\
 &\ \ \ \ \ \ \ \ \ \ \ \ \ \ \ \  \cdot  | w_1-y|^{2-d} |w_2|^{4-d} |w_3-v_e^-|^{2-d}. 
   \end{split}
\end{equation}
where the second inequality follows from the following fact (see \cite[Lemma 4.3]{cai2024high}): 
 	\begin{equation}\label{2-d_2-d}
 		\sum\nolimits_{z\in \mathbb{Z}^d} |v_1-z|^{2-d}|z-v_2|^{2-d}\lesssim |v_1-v_2|^{4-d}.
 	\end{equation}

We decompose the sum on the right-hand side of (\ref{C3}) into three parts, which we denote by $\widehat{\mathbb{I}}_1$, $\widehat{\mathbb{I}}_2$ and $\widehat{\mathbb{I}}_3$, according to the following regions:
\begin{itemize}
	\item $\widehat{\mathbb{I}}_1$: restricted to $w_1 \in [B_{y}(\frac{N}{10})]^c$;

	\item $\widehat{\mathbb{I}}_2$: restricted to $w_3 \in [B_{v_e^-}(\frac{N}{10})]^c$;

	\item $\widehat{\mathbb{I}}_3$: restricted to $w_1 \in B_{y}(\frac{N}{10})$ and $w_3 \in B_{v_e^-}(\frac{N}{10})$.

\end{itemize}
In what follows, we estimate $\widehat{\mathbb{I}}_1$, $\widehat{\mathbb{I}}_2$ and $\widehat{\mathbb{I}}_3$ separately.

 \textbf{For $\widehat{\mathbb{I}}_1$.} Since $|w_1-y|\gtrsim N$, we have 
 \begin{equation}\label{C4}
 	\begin{split}
 		\widehat{\mathbb{I}}_1 \lesssim  & N^{2-d} \sum\nolimits_{w_1,w_2,w_3\in \mathbb{Z}^d} |w_1-w_2|^{2-d}|w_2-w_3|^{2-d}|w_3-w_1|^{2-d} \\
 &\ \ \ \ \ \ \ \ \ \ \ \ \ \ \ \ \ \ \ \ \ \ \ \cdot   |w_2|^{4-d}  |w_3-v_e^-|^{2-d}\\
  \overset{(\mathrm{\ref{2-d_2-d}}),(\mathrm{\ref{6-2d_2-d}})}{ \lesssim} & N^{2-d} \sum\nolimits_{w_2\in \mathbb{Z}^d}  |w_2|^{4-d} |w_2-v_e^-|^{2-d} \overset{(\mathrm{\ref{4-d_2-d}})}{ \lesssim} N^{8-2d},
 	\end{split}
 \end{equation}  
where in the last inequality we used the following bound (see \cite[(4.10)]{cai2024high}): 
 	\begin{equation}\label{4-d_2-d}
 		\sum\nolimits_{z\in \mathbb{Z}^d} |v_1-z|^{4-d}|z-v_2|^{2-d}\lesssim |v_1-v_2|^{6-d}.  
 	\end{equation}

 \textbf{For $\widehat{\mathbb{I}}_2$.} Similarly, since $|w_3-v_e^-|\gtrsim N$, we have 
  \begin{equation}\label{C5}
 	\begin{split}
 		\widehat{\mathbb{I}}_2 \lesssim  & N^{2-d} \sum\nolimits_{w_1,w_2,w_3\in \mathbb{Z}^d} |w_1-w_2|^{2-d}|w_2-w_3|^{2-d}|w_3-w_1|^{2-d} \\
 &\ \ \ \ \ \ \ \ \ \ \ \ \ \ \ \ \ \ \ \ \ \ \ \cdot | w_1-y|^{2-d}  |w_2|^{4-d}  \\
\overset{(\mathrm{\ref{2-d_2-d}}),(\mathrm{\ref{6-2d_2-d}})}{ \lesssim} & N^{2-d} \sum\nolimits_{w_2\in \mathbb{Z}^d}  |w_2|^{4-d} |w_2-y|^{2-d}  \overset{(\mathrm{\ref{4-d_2-d}})}{ \lesssim} N^{8-2d}. 
 	\end{split}
 \end{equation}

 \textbf{For $\widehat{\mathbb{I}}_3$.} In this case, one has $|w_1-w_3|\gtrsim N$. Thus, we have 
 \begin{equation}\label{C6}
 	\begin{split}
 		\widehat{\mathbb{I}}_3 \lesssim &  N^{2-d}    \sum\nolimits_{w_1,w_2,w_3\in \mathbb{Z}^d} |w_1-w_2|^{2-d}|w_2-w_3|^{2-d}| w_1-y|^{2-d} \\
 &\ \ \ \ \ \ \ \ \ \ \ \ \ \ \ \ \ \ \ \ \ \ \  \cdot   |w_2|^{4-d} |w_3-v_e^-|^{2-d}\\
   \overset{(\mathrm{\ref{2-d_2-d}})}{ \lesssim} & N^{2-d}  \cdot \big[ \mathbb{J}(B_y(\tfrac{N}{10}))+ \mathbb{J}(B(2N)\setminus B_y(\tfrac{N}{10}))+ \mathbb{J}([B(2N)]^c)   \big],  
 	\end{split}
 \end{equation}
 where we denote $\mathbb{J}(A):= \sum\nolimits_{w\in A } |w|^{4-d}|w-y|^{4-d}|w-v_e^-|^{4-d}$ for $A\subset \mathbb{Z}^d$.

Firstly, since $|w-v_e^-|\gtrsim N$ and $|w|\lesssim N$ for all $w\in B_y(\tfrac{N}{10})$, one has 
\begin{equation}\label{C7}
	\begin{split}
 		\mathbb{J}(B_y(\tfrac{N}{10})) \lesssim N^{6-d}  \sum\nolimits_{w\in \mathbb{Z}^d} |w|^{2-d}|w-y|^{4-d} \overset{(\mathrm{\ref{4-d_2-d}})}{ \lesssim} N^{12-2d}.  
	\end{split}
\end{equation}
 Secondly, since $|w-y|\gtrsim N$ and $|w|\lesssim N$ for all $w\in B(2N)\setminus B_y(\tfrac{N}{10})$, we have 
 \begin{equation}\label{C8}
	\begin{split}
 	\mathbb{J}(B(2N)\setminus B_y(\tfrac{N}{10})) \lesssim N^{6-d}  \sum\nolimits_{w\in \mathbb{Z}^d} |w|^{2-d}|w-v_e^-|^{4-d} \overset{(\mathrm{\ref{4-d_2-d}})}{ \lesssim} N^{12-2d}.  
	\end{split}
\end{equation}
Thirdly, since $|w|\asymp |w-y|\asymp |w-v_e^-|$ for all $w\in [B(2N)]^c$, one has 
\begin{equation}\label{C9}
	\mathbb{J}([B(2N)]^c) \lesssim \sum\nolimits_{w\in \mathbb{Z}^d}|w|^{12-3d}\overset{|\partial B(k)|\asymp k^{d-1}}{\lesssim} \sum\nolimits_{k\ge N}k^{11-2d} \lesssim N^{12-2d}. 
\end{equation}
Plugging (\ref{C7}), (\ref{C8}) and (\ref{C9}) into (\ref{C6}), we get 
\begin{equation}\label{C10}
	\widehat{\mathbb{I}}_3 \lesssim N^{14-3d} \lesssim N^{8-2d}. 
\end{equation}

  Combining (\ref{C3}), (\ref{C4}), (\ref{C5}) and (\ref{C10}), we obtain (\ref{C2}), and thus complete the proof of Lemma \ref{lemma_highd_volume}. \qed

\end{document}